\title{Decomposition of (infinite) digraphs along directed 1-separations}
\author{Nathan Bowler
\and
    Florian Gut
\and
    Meike Hatzel
\and 
    Ken-ichi Kawarabayashi
\and 
    Irene Muzi
\and 
    Florian  Reich}
\date{}
\DeclareRobustCommand{\authorthing}{
	\begin{center}
	    Nathan Bowler -- Universität Hamburg, Germany\\
	    \href{mailto:nathan.bowler@uni-hamburg.de}{nathan.bowler@uni-hamburg.de}
	    
	    \medskip
	    Florian Gut\thanks{F.~Gut was supported by the Japan Society for the Promotion of Science (JSPS) under grant no. SP22319.} -- Universität Hamburg, Germany\\
	    \href{mailto:florian.guth@uni-hamburg.de}{florian.gut@uni-hamburg.de}
	    
	    \medskip
	    Meike Hatzel\thanks{M.~Hatzel was supported by the Federal Ministry of Education and
Research (BMBF) and by a fellowship within the IFI programme of the German Academic Exchange Service (DAAD).} -- National Institute of Informatics, Tokyo, Japan\\
	    \href{mailto:research@meikehatzel.com}{research@meikehatzel.com}\\
	    
	    \medskip
	    Ken-ichi Kawarabayashi\thanks{K.~ Kawarabayashi was supported by JSPS Kakenhi JP18H0529, by JSPS Kakenhi JP20A402 and by JSPS Kakenhi 22H05001.} -- National Institute of Informatics, Tokyo, Japan\\
        \href{mailto:k_keniti@nii.ac.jp}{k\_keniti@nii.ac.jp}\\
	    
        \medskip 
        Irene Muzi\thanks{For the purposes of open access, the author has applied a CC BY public copyright licence to any author accepted manuscript version arising from this submission.} -- Birkbeck, University of London, United Kingdom\\
        \href{mailto:irene.muzi@gmail.com}{irene.muzi@gmail.com}
        
	    \medskip
	    Florian Reich -- Universität Hamburg, Germany\\
	    \href{mailto:florian.reich@uni-hamburg.de}{florian.reich@uni-hamburg.de}
\end{center}}
\author{\authorthing}
\begin{document}
\maketitle

\begin{textblock}{20}(-1.8, 5.2)
   \includegraphics[width=80px]{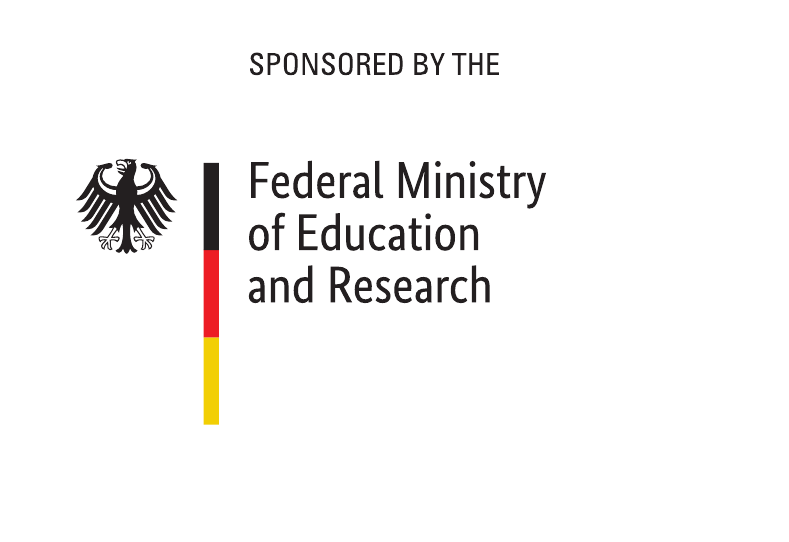}%
\end{textblock}

\begin{abstract}
    We introduce \emph{torsoids}, a canonical structure in matching covered graphs, corresponding to the bricks and braces of the graph.
    This allows a more fine-grained understanding of the structure of finite and infinite directed graphs with respect to their 1-separations.

    \textbf{Keywords:} directed separations, digraphs, infinite digraphs, decomposition, perfect matching, tight cuts, torsoids
\end{abstract}

\newpage

\section{Introduction}
\label{sec:introduction}

Separations in (di-)graphs are a structural property that has long been of interest for mathematicians.
In undirected graphs for small $k$, there are simple and canonical combinatorial structures displaying all $k$-separations of a given $k$-connected undirected graph and the relationships between them.
For $k=0$ this is trivial; it is simply the decomposition of the graph into its connected components.
For $k = 1$ this is the block-cut decomposition \cite[Chapter 3.1]{diestel17} and for $k=2$ it is the Tutte decomposition \cite{Tutte66}.
Recent work of Carmesin and Kurkofka has developed an analogous such structure with $k=3$ \cite{Carmesin23}.

For digraphs the situation is very different, with almost nothing being known.
In the case $k=0$ we again have the decomposition of the digraph into its strongly connected components, together with the partial order on those components given by the reachability relation.
However, already for $k=1$ previous progress on this problem was limited to a partial result by Lov\'asz~\cite{Lovasz1987}\footnote{In fact Lov\'asz, like us, works in the more general framework of matching covered graphs.}.

In order to explain what Lov\'asz' result means for digraphs, we need to consider an operation on strongly connected digraphs.
Let $D$ be a digraph, and let $(A,B)$ be a directed 1-separation of $D$ with separator $v.$
Let $D_A$ be the digraph obtained from $D$ by contracting all of $B$ onto $v$ and let $D_B$ be the digraph obtained by contracting all of $A$ onto $v.$
We say that $D_A$ and $D_B$ are obtained from $D$ by \emph{pulling it apart} along $(A,B).$ 

Now imagine taking a strongly connected digraph, pulling it apart along some directed 1-separation, then continuing to pull those parts apart for as long as possible until you are left with a list of strongly 2-connected digraphs.
In this context, Lov\'asz' result says that it does not matter in what order you carry out this process or which 1-separations you pick, you will always end up with the same list up to list rearrangement and digraph isomorphism.

To understand the limitations of this result, let's compare it with the Tutte decomposition.
An analogous decomposition procedure would be the following.
Given a 2-connected \emph{undirected} graph $G$ and a 2-separation $(A,B)$ of $G$ with separator $\{x,y\}$, we could define $G_A$ and $G_B$ to be the graphs obtained from $G[A]$ and $G[B]$ respectively by adding the edge $xy$ to each of them.
We could once again consider a procedure of repeatedly pulling a graph apart along such separations until we are left with a list of 3-connected graphs, and it follows from the Tutte decomposition that you will always end up with the same list of 3-connected graphs up to list rearrangement and graph isomorphism.

However, this adaption to 3-connected graphs yields a weaker result than the original Tutte decomposition in a number of ways.
First, the elements of the list are only given up to isomorphism, but in the Tutte decomposition they are given as the torsos of a canonical tree-decomposition of $G.$
Second, the adapted decomposition procedure cuts up the 2-connected graph too much.
In the Tutte decomposition, some of the torsos are cycles, and these are sensibly not cut up any further because there is no way to do so canonically.
But the adapted decomposition procedure will happily cut up an $n$-cycle into $n-2$ triangles, and in so doing lose the possibility of finding canonical representatives in the original graph.
Third, the Tutte decomposition provides a global tree structure along which the parts are arranged, and this information is not contained in the list of 3-connected graphs obtained by the adapted decomposition procedure.

Lov\'asz' result has the same 3 limitations.
The elements of the list are only given up to isomorphism, not as canonical structures within the original digraph.
It cuts any directed $n$-cycle up into $n-1$ directed 2-cycles, although there is no hope of finding canonical representatives for these in the original graph.
Finally, it does not provide any global structure along which the parts are arranged.

In this paper, we resolve the first two limitations of Lov\'asz' result, but not the third.
More precisely, we find canonical structures, which we call torsoids, within the digraph corresponding to the parts into which it would be cut by Lov\'asz' procedure.
However, we do not cut up directed cycles any further, since there would be no way to find canonical structures representing the parts.
As an illustration of the canonicity of our results and the deeper structural understanding they provide, we are able to extend Lov\'asz' results to infinite digraphs.

Having discussed the limitations of Lov\'asz' result, it is worth noting one major advantage.
He was able to prove his result in the more general context of {\em matching covered graphs} (that is, connected graphs such that every edge is contained in at least one perfect matching).
There is a well-understood correspondence between strongly connected digraphs and bipartite matching-covered graphs~\cite{mccuaig2000evendicycles,rst1999pfaffianorientations}, under which directed 1-separations correspond to tight cuts.
Lov\'asz' result deals with the process of cutting up (not necessarily bipartite) matching covered graphs along their tight cuts.

Since all our arguments also work just as easily in this more general context, and since there has been a renewed interest in the structure of matching covered graphs in recent years \cite{Norine2006,Norine2007,hatzel2019,Giannopoulou21}, we also phrase all of our results in this paper in these more general terms.

Our paper is structured as follows.
After introducing some preliminaries in \cref{sec:preliminaries} and exploring the basic structure of infinite matching covered graphs and their tight cuts in \cref{sec:even_odd} and \cref{sec:crossings}, we introduce our new canonical torsoids in \cref{sec:torsoids}.
After exploring their relation to tight cuts in \cref{sec:relation_tigh_sets_torsoids}, we generalise Lov\'asz' result to infinite matching covered graphs in \cref{sec:torsos_to_torsoids}.
   
\section{Preliminaries}
\label{sec:preliminaries}

We refer to the vertex set of a given (possibly infinite) graph $G$ by $\V{G}$ and to the edge set by $\E{G}.$

A graph $G$ is called \emph{bipartite} if its vertex set can be partitioned into two independent sets $V_0$ and $V_1$, which we also refer to as the \emph{colour classes} of $G.$
When depicting bipartite graphs we depict elements of $V_1$ as filled circles and elements of $V_0$ as empty circles.

For a graph $G$ and a set $X\subseteq \V{G}$ we define the \emph{cut induced by $X$}, $\Cut{G}{X} \coloneqq \Set{e \in \E{G} \colon \Abs{e \cap X}=1} $, to be the set of all edges with exactly one endpoint in $X.$

\subsection{Connection to matching covered graphs}

Every strongly connected (possibly infinite) directed graph corresponds to a bipartite graph with a perfect matching.
We can obtain this bipartite graph with colour classes $V_1$ (black) and $V_0$ (white) from a digraph $D$ by splitting every vertex $v$ into two vertices $v_0,v_1$, one of each colour class, that are connected by an edge, such that all in-edges of $v$ become in-edges of the `white' vertex $v_0$ and all out-edges of $v$ become out-edges of the `black' vertex $v_1.$
Then we remove all directions from the edges.
Clearly, this yields a bipartite graph and the new edges form a perfect matching in it.
We refer to this graph as $\MatchingGraph{D}$ and to the canonical perfect matching in it as $\MatchingGraphMatching{D}.$

\begin{figure}[ht!]
    \centering
    \begin{tikzpicture}
        \node (directed) at (-3,0) {
            \begin{tikzpicture}
                \node (center) at (0,0) {};
                \foreach\i in {0,...,4}
                {
                    \node[vertex] (v-\i) at ($(center)+({360/5*\i}:2)$) {};
                }
                \draw[directededge,bend right] (v-0) to (v-1);
                \draw[directededge,bend right] (v-1) to (v-0);
                \draw[directededge,bend right] (v-2) to (v-3);
                \draw[directededge,bend right] (v-3) to (v-2);
                
                \draw[directededge,bend left] (v-1) to (v-2);
                \draw[directededge,bend left] (v-2) to (v-1);
                \draw[directededge,bend left] (v-3) to (v-4);
                \draw[directededge,bend left] (v-4) to (v-3);
                \draw[directededge,bend left] (v-0) to (v-4);
                \draw[directededge,bend left] (v-4) to (v-0);
            \end{tikzpicture}};
        
        \node (matching) at (3,0) {
            \begin{tikzpicture}
                \node (center) at (0,0) {};
                \foreach\i in {1,3,4}
                {
                    \node[vertexB] (v-o-\i) at ($(center)+({360/5*\i}:2.3)$) {};
                    \node[vertexW] (v-i-\i) at ($(center)+({360/5*\i}:1)$) {};
                }
                \foreach\i in {0,2}
                {
                    \node[vertexW] (v-o-\i) at ($(center)+({360/5*\i}:2.3)$) {};
                    \node[vertexB] (v-i-\i) at ($(center)+({360/5*\i}:1)$) {};
                }
                
                \draw[edge] (v-o-0) to (v-o-1);
                \draw[edge] (v-i-1) to (v-i-0);
                \draw[edge] (v-o-2) to (v-o-3);
                \draw[edge] (v-i-3) to (v-i-2);
                
                \draw[edge] (v-i-1) to (v-i-2);
                \draw[edge] (v-o-2) to (v-o-1);
                \draw[edge] (v-i-3) to (v-o-4);
                \draw[edge] (v-i-4) to (v-o-3);
                \draw[edge] (v-o-0) to (v-o-4);
                \draw[edge] (v-i-4) to (v-i-0);
                
                \foreach\i in {0,...,4}
                {
                    \draw[matching=myOrange](v-o-\i) to (v-i-\i);
                }
            \end{tikzpicture}};
    \end{tikzpicture}
    \caption{A strongly connected directed graph $D$ and the corresponding bipartite graph $\MatchingGraph{D}$ with the canonical perfect matching \textcolor{myOrange!80!red}{$\MatchingGraphMatching{D}$}.}
    \label{fig:directed_to_matching}
\end{figure}
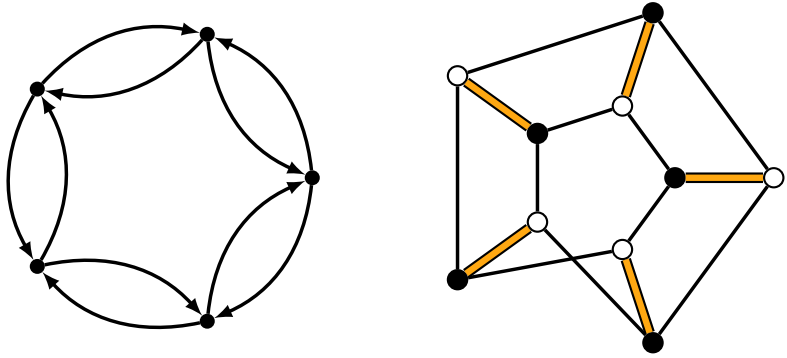

Conversely, we can also obtain a directed graph from any given (possibly infinite) bipartite graph with a perfect matching, by directing all non-matching edges from $V_1$ to $V_0$ and then contracting all matching-edges into single vertices.
For a given graph $G$ with perfect matching $M$, we call this the $M$-direction of $G$ and denote it $\DirM{G}{M}.$

For an undirected graph $G$ we refer to the set of all perfect matchings in $G$ by $\Perf{G}.$
An undirected graph $G$ is called \emph{matching covered with respect to a perfect matching $M \in \Perf{G}$}, if every edge lies in a perfect matching that has finite symmetric difference to the matching $M.$
We write $\Perf{G,M}$ for the set of all perfect matchings that have finite symmetric difference to $M.$
We simply say $G$ is \emph{matching covered}, when it is matching covered with respect to some matching.
Note that on finite graphs this is equivalent to saying that every edge lies in a perfect matching as $\Perf{G,M} = \Perf{G}$ for all perfect matchings $M$ in $G.$

\begin{observation}
    \label{obs:symmetric_difference}
    Given an undirected graph $G$ that is matching covered with respect to a perfect matching $M$, then the symmetric difference between any two perfect matchings in $\Perf{G,M}$ is a collection of disjoint cycles.
\end{observation}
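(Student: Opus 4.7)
The plan is to combine the standard degree-count argument (which works for any pair of perfect matchings) with the finiteness hypothesis built into $\Perf{G,M}$ to exclude infinite components.

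First, I would take two arbitrary perfect matchings $M_1, M_2 \in \Perf{G,M}$ and look at the spanning subgraph $H$ of $G$ with edge set $M_1 \triangle M_2$. Because both $M_1$ and $M_2$ are perfect matchings, every vertex $v$ is incident to exactly one edge of $M_1$ and exactly one edge of $M_2$. These two edges either coincide (in which case $v$ is isolated in $H$) or they are distinct (in which case $v$ has degree exactly $2$ in $H$). Thus every connected component of $H$ is either a single vertex, a finite cycle, or a two-way infinite path (a double ray), with edges alternating between $M_1$ and $M_2$.

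The next step is to rule out double rays using the hypothesis $M_1, M_2 \in \Perf{G,M}$. Since symmetric difference satisfies the triangle-style containment
\[
M_1 \triangle M_2 \;\subseteq\; (M_1 \triangle M) \;\cup\; (M \triangle M_2),
\]
and both sets on the right-hand side are finite by definition of $\Perf{G,M}$, the edge set of $H$ is finite. In particular no component of $H$ can contain infinitely many edges, so double rays cannot occur. Therefore every nontrivial component of $H$ is a finite cycle, which is exactly the conclusion. I do not expect a genuine obstacle here; the only real content beyond the classical finite-graph statement is the observation that the definition of $\Perf{G,M}$ has been set up precisely so that the finiteness of each of $M_i \triangle M$ transfers to $M_1 \triangle M_2$ and thereby excludes the infinite-component case.
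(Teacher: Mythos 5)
Your argument is correct, and it is exactly the standard reasoning the paper leaves implicit: \cref{obs:symmetric_difference} is stated without proof, and the intended justification is precisely the degree-$0$-or-$2$ decomposition into isolated vertices, alternating cycles and double rays, with the containment $M_1 \triangle M_2 \subseteq (M_1 \triangle M) \cup (M \triangle M_2)$ and the finiteness built into $\Perf{G,M}$ ruling out the double rays. Nothing further is needed.
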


\begin{definition}[tight cuts and tight sets, \BoBs]
    Let $G$ be a graph that is matching covered with respect to a perfect matching $M.$
    A subset $X \subseteq \V{G}$ is a \emph{tight set} in $G$ if every $M' \in \Perf{G,M}$ has exactly one edge in $\cut{}{X}$ and we refer to $\Cut{}{X}$ as a \emph{tight cut}.
    
    A tight set $X$ and its corresponding tight cut $\Cut{}{X}$ are called \emph{trivial} if $\Abs{X} = 1$ or $\Abs{\V{G}\setminus X} = 1.$

    Tight cuts $C_1, C_2$ are \emph{nested} if there are $X_1, X_2 \subset \V{G}$ with $X_1 \cap X_2 = \emptyset$ such that $C_i = \cut{}{X_i}$ for $i \in [2].$ Tight sets $X_1$ and $X_2$ are \emph{nested} if $\Cut{}{X_1}$ and $\Cut{}{X_2}$ are nested tight cuts.
    
    A bipartite graph with no non-trivial tight sets is called a \emph{brace}, while a non-bipartite graph with no non-trivial tight sets is called a \emph{brick}.
    Since thus every graph with no non-trivial tight sets is either a brick or a brace, we refer to such a graph as a \emph{\BoB} (Brick or Brace) for short.

    For a graph $G$ and a maximal family $\mathcal{C}$ of nested tight cuts in $G$, we define $\tightCutsToSets{\mathcal{C}} \coloneqq \Set{X \subseteq \V{G}: \cut{}{X} \in \mathcal{C}}$, i.e.~the set of tight sets corresponding to tight cuts in $\mathcal{C}.$
    Note that a tight set $X$ is contained in $\tightCutsToSets{\mathcal{C}}$ if $\cut{}{X}$ is nested with $\mathcal{C}$ since $\mathcal{C}$ is maximal.
    Furthermore, any two elements of $\tightCutsToSets{\mathcal{C}}$ are nested.
\end{definition}

In this paper we do not demand any additional properties of the perfect matching $M$ or the set $\Perf{G, M}$ of a matching covered graph $G.$
Therefore, unless stated otherwise, we consider tight cuts and tight sets with respect to an arbitrary but fixed perfect matching of $G$, without explicitly mentioning that perfect matching.

\begin{proposition} \label{prop:tight_cut_connected}
    Let $G$ be a connected, matching covered graph.
    For any tight set $X \subset \V{G}$ the subgraphs $G[X]$ and $G[\V{G} \setminus X]$ are connected.
\end{proposition}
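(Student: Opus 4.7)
The plan is to derive a contradiction if $G[X]$ (or symmetrically $G[\V{G}\setminus X]$) is disconnected. Fix the perfect matching $M$ with respect to which $G$ is matching covered, and let $e_0=uv$ be the unique edge of $M$ lying in $\Cut{G}{X}$, with $u\in X$ and $v\in\V{G}\setminus X$ (note $X\neq\V{G}$, since otherwise $\Cut{G}{X}$ would be empty and could not contain a matching edge). The central idea is that $e_0$ together with any edge produced via matching coveredness from a ``bad'' component of $G[X]$ must lie in a common cycle of a symmetric difference $M\triangle M'$, and this cycle forces a path inside $G[X]$ that should not exist.

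Assume that $G[X]$ has a component $C$ not containing $u$. Since $G$ is connected and $C\subsetneq \V{G}$, some edge $e$ of $G$ leaves $C$; because distinct components of $G[X]$ are separated inside $X$, this edge must in fact cross $\Cut{G}{X}$, so $e=cw$ with $c\in C\subseteq X$ and $w\in\V{G}\setminus X$. By matching coveredness, $e$ lies in some $M'\in\Perf{G,M}$, and since $\Cut{G}{X}$ is tight, $e$ is the only edge of $M'$ in $\Cut{G}{X}$. Hence $e_0$ and $e$ are precisely the edges of $M\triangle M'$ that cross $\Cut{G}{X}$.

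By \cref{obs:symmetric_difference} the set $M\triangle M'$ is a disjoint union of finite cycles, each of which meets $\Cut{G}{X}$ in an even number of edges. Therefore $e_0$ and $e$ lie on a common cycle $Z$ of $M\triangle M'$ that crosses $\Cut{G}{X}$ exactly twice. Following $Z$ through $X$ yields a path in $G[X]$ between $u$ and $c$, contradicting the assumption that these vertices lie in different components of $G[X]$. Thus every component of $G[X]$ contains $u$, so $G[X]$ is connected, and the same argument with $X$ and $\V{G}\setminus X$ exchanged gives the other half.

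The point I expect to need most care with is the infinite case: a shorter-sounding argument that counts parities of $\Abs{C}$ (using that $C$ would be fully matched internally by $M'$ but has exactly $u$'s $M$-partner outside) collapses when $C$ is infinite. The detour through a single cycle of $M\triangle M'$ is what lets the proof go through uniformly, since \cref{obs:symmetric_difference} guarantees that each such cycle is finite and hence produces a genuine finite path in $G[X]$.
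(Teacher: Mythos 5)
Your proof is correct and rests on the same key mechanism as the paper's: two distinct edges of the tight cut must lie on a common cycle of the (finite) symmetric difference of two matchings in $\Perf{G,M}$, and that cycle yields a path through $X$ between their endpoints in $X$. The paper packages this as a direct path-repair argument (replacing each excursion of an arbitrary path outside $G[X]$ by such a path), while you run it as a contradiction against a component of $G[X]$ avoiding the endpoint of the unique $M$-edge in the cut; this difference is only organisational.
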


\begin{proof}
For any two distinct edges $e$ and $f$ in $\cut{}{X}$, the symmetric difference of any matching containing $e$ with any matching containing $f$ is a disjoint union of cycles.
The cycle containing $e$ must also contain $f$, and so contains paths joining $e$ to $f$ through both $X$ and $V(G) \setminus X.$

Now for any two vertices $v$ and $w$ of $X$ we know there is a path $P$ joining them in $G$, and we can find a walk joining them in $G[X]$ by replacing each segment where $P$ leaves $G[X]$ with a path in $G[X]$ as above.
\end{proof}

\begin{definition}
    Let $D$ be a digraph.
    A tuple $\Brace{A,B}$ with $A,B \subseteq \V{D}$ and $A \cup B = \V{D}$ is a \emph{(directed) separation} if there is no edge with tail in $B\setminus A$ and head in $A\setminus B$ or there is no edge with tail in $A\setminus B$ and head in $B\setminus A.$
    The integer $k \coloneqq \Abs{A \cap B}$ is called the \emph{order} of the separation and we also refer to $\Brace{A,B}$ as a (directed) $k$-separation.
\end{definition}

\begin{proposition}
    The tight sets in a matching covered bipartite graph $G$ with respect to a perfect matching $M$ correspond one-to-one to the 1-separations in $\DirM{G}{M}.$
\end{proposition}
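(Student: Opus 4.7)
I would exhibit mutually inverse maps. To a tight set $X$, associate $(A_X, B_X)$ where $A_X$ consists of those vertices of $\DirM{G}{M}$ whose contracted $M$-edge meets $X$ and $B_X$ those whose contracted $M$-edge meets $V(G) \setminus X$; then $A_X \cap B_X$ is the singleton corresponding to the unique edge $e_X$ of $M$ in $\Cut{}{X}$. In the reverse direction, to a directed 1-separation $(A, B)$ with $A \cap B = \{v\}$, associate $X_{(A,B)} := \bigl(\bigcup_{w \in A \setminus B} e_w\bigr) \cup \{u_v\}$, where $u_v$ is the endpoint of $e_v$ in the colour class selected by the direction of the separation.

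The crux is the following \emph{key lemma}: for any tight set $X$ there is a colour $c \in \{0, 1\}$ such that every edge of $\Cut{}{X}$ has its $X$-endpoint in $V_c$. To prove it, take distinct $e_1, e_2 \in \Cut{}{X}$; matching-coveredness provides $M_i \in \Perf{G,M}$ with $e_i \in M_i$, and tightness forces $M_i \cap \Cut{}{X} = \{e_i\}$. The symmetric difference $M_1 \triangle M_2$ is finite (as the XOR of the finite $M_i \triangle M$), so by \cref{obs:symmetric_difference} it is a disjoint union of finite cycles; it contains the two cut edges $e_1, e_2$, and since every cycle crosses $\Cut{}{X}$ evenly often, exactly one cycle $C$ contains both. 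The two cut edges split $C$ into two arcs, one of which, $P$, lies in $G[X]$ and joins the $X$-endpoints $u_1, u_2$ of $e_1, e_2$. Since $P$ alternates $M_1$- and $M_2$-edges and $e_i \in M_i$ meets $P$ at $u_i$, the first edge of $P$ at $u_1$ lies in $M_2$ and the last at $u_2$ in $M_1$, forcing $P$ to have even length. Hence $u_1, u_2$ lie in the same colour class.

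Granted the lemma, every digraph edge of $\DirM{G}{M}$ between $A_X \setminus B_X$ and $B_X \setminus A_X$ arises from a non-matching edge of $\Cut{}{X}$, and the lemma forces all such edges to point in one direction; hence $(A_X, B_X)$ is a directed 1-separation. For the reverse map, choosing $u_v$ compatibly with the direction of $(A, B)$ ensures every edge of $\Cut{}{X_{(A,B)}}$ has its $X_{(A,B)}$-endpoint in a single colour class. Tightness of $X_{(A,B)}$ follows from a parity argument: for any $M' \in \Perf{G, M}$, each cycle of $M \triangle M'$ contributes equal numbers of $M$- and $M'$-cut-edges (by bipartite parity, since all cut edges are of the same type), hence $|M' \cap \Cut{}{X_{(A,B)}}| = |M \cap \Cut{}{X_{(A,B)}}| = 1$. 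The two maps are inverse by construction.

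The main obstacle is the key lemma; everything else is routine bookkeeping. The argument works verbatim for infinite $G$, since only finite symmetric differences of matchings from $\Perf{G, M}$ ever enter.
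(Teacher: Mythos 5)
Your two maps are the same as the paper's, and the fact you isolate as the key lemma --- that all edges of $\partial(X)$ have their $X$-endpoints in a single colour class --- is exactly the pivot of the paper's argument too; what genuinely differs is how the two crucial verifications are done. For the key lemma the paper argues by contradiction: a hypothetical cut edge whose $X$-endpoint is in the wrong colour class lies in some $M' \in \Perf{G,M}$, and an $M$-$M'$-alternating path grown from it inside $X$ can neither close a cycle nor terminate, contradicting finiteness of $M \triangle M'.$ You instead place two cut edges $e_1, e_2$ on the single finite cycle of $M_1 \triangle M_2$ that must contain both (the same observation that drives \cref{prop:tight_cut_connected}) and read the conclusion off the even length of the arc of that cycle inside $X$; this is a more symmetric argument and is correct (the degenerate case $u_1 = u_2$ is harmless). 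For tightness of the set built from a 1-separation the paper again argues by contradiction, splitting into the cases that some $M'$ meets the cut in zero or in at least two edges and producing an impossible infinite component of $M \triangle M'$; you give a direct count: since all cut edges have the same type, consecutive crossings of each cycle of $M \triangle M'$ lie in different matchings, so the crossings alternate and $|M' \cap \partial(X)| = |M \cap \partial(X)| = 1.$ Your counting argument is in effect a sharpened form of \cref{lem:even_odd_same_for_all_matchings} (exact count rather than parity), while the paper's case analysis is more local; both routes use only finiteness of symmetric differences, so both work for infinite $G$, as you note.

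One caveat on your closing sentence: the claim that the two maps are ``inverse by construction'' is stronger than what the construction gives. If there is no edge in either direction between $A \setminus B$ and $B \setminus A$ --- already for every trivial separation, and also for some non-trivial ones --- the direction of the separation selects no colour class, so your reverse map is ambiguous, and the forward map is then two-to-one (for instance both singletons $\Set{x_0}$ and $\Set{x_1}$ of a matching edge are sent to the separation $(\Set{x}, \V{\DirM{G}{M}})$). The paper's proof has exactly the same looseness: it only exhibits the two constructions and never checks injectivity, so the literal ``one-to-one'' in the statement is informal there as well. This is therefore a shared imprecision of the proposition rather than a gap specific to your argument, but be aware that making the correspondence a genuine bijection would require a more careful convention about how the colour class of the cut's $X$-endpoints fixes the orientation of the separation.
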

\begin{proof}
    Let $X$ be a tight set in $G.$
    Then $M$ has exactly one edge $e$ with one endpoint in $X$ and the other in $\V{G}\setminus X.$
    Let $X' \subseteq D \coloneqq \DirM{G}{M}$ be the set corresponding to the edges of $M$ lying in $X \setminus e$ and let $v_e$ be the vertex obtained by contracting the edge $e.$
    We show that $\Brace{X' \cup\Set{v_e},\V{D}\setminus X'}$ is a 1-separation.
    
    Without loss of generality assume the unique vertex in $e \cap X$ is in $V_1.$
    Then there is no edge between a vertex in $X\cap V_0$ and $\V{G}\setminus X$, for the following reason.
    Suppose there was such an edge, then, as $G$ is matching covered, there is a matching $M' \in \Perf{G,M}$ containing it.
    We construct a path in $X$ alternating between the two matchings $M$ and $M'$ starting with this edge.
    Then every edge from $M'$ ends in a vertex of $V_0$ and thus every edge of $M$ we add starts in a vertex of $V_0.$
    Therefore we never add $e$, and never close a cycle.
    But this implies that the symmetric difference between $M$ and $M'$ is infinite, a contradiction.
    Thus all edges between $X$ and $\V{G}\setminus X$ have their endpoint in $X$ within the colour class $V_1.$
    Therefore there are no edges from $\Brace{\V{D}\setminus X'}\setminus\Brace{X' \cup\Set{v_e}}$ to $\Brace{X' \cup\Set{v_e}}\setminus\Brace{\V{D}\setminus X'}.$
    
    Let $\Brace{A,B}$ be a 1-separation in $D$ with separation vertex $v$ such that there are no edges from $B\setminus A$ to $A \setminus B.$
    Let $v_0 \in V_0$ and $v_1 \in V_1$ be the two vertices such that the edge $\Brace{v_0,v_1} \in M$ gets contracted to $v.$
    
    Consider the set $X$ obtained by taking all vertices in $G$ that are contracted to a vertex in $A\setminus B$ together with the vertex $v_0.$
    We claim that $X$ is tight.
    Suppose towards a contradiction that it is not.
    First, we consider the case that there is a matching $M' \in \Perf{G,M}$ having no edge with one endpoint in $X$ and one in $\V{G}\setminus X.$
    Then consider the component $C$ of the symmetric difference between $M$ and $M'.$
    As $v_0$ has no neighbour in $X$ it is matched to by $M$ and $M'$ has no edge leaving $X$, $C$ cannot be a cycle, this contradicts, by \cref{obs:symmetric_difference}, that $M' \in \Perf{G,M}.$
    
    Second, we consider the case that there is a matching $M' \in \Perf{G,M}$ with more than one edge having exactly one endpoint in $X.$
    For every edge of $M'\setminus \Set{\Brace{v_0,v_1}}$ having exactly one endpoint in $X$ consider the component of the symmetric difference between $M$ and $M'$ containing it.
    As at most one of the edges can be contained in a cycle with $\Brace{v_0,v_1}$, there is at least one that is contained in an infinite path, contradicting that $M' \in \Perf{G,M}.$
\end{proof}

\subsection{Tight set partitions}
Given two partitions $\mathcal{P}, \mathcal{P}'$ of some set, we say \emph{$\mathcal{P}$ refines $\mathcal{P}'$} if every partition class of $\mathcal{P}$ is subset of a partition class of $\mathcal{P}'.$
\begin{definition}
    A partition $\mathcal{P}$ of the vertex set of a matching covered graph $G$ is a \emph{tight set partition} of $G$ if every $P \in \mathcal{P}$ is a tight set.
    
    For every tight set partition $\Partition$ of a matching covered graph $G$ we define the collapse $\collapse{\Partition}$ of $\Partition$ to be the graph with vertex set $\Partition$ and an edge between $P$ and $Q$ if and only if there are $p \in P$ and $q \in Q$ such that $pq \in \E{G}.$
\end{definition}

\begin{lemma}
    \label{lem:collapse_matching_covered}
    Let $\mathcal{P}$ be a tight set partition of a matching covered graph $G$, then $\collapse{\mathcal{P}}$ is matching covered.
\end{lemma}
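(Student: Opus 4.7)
My plan is to show that the graph $\collapse{\mathcal{P}}$ is matching covered by exhibiting a canonical perfect matching coming from $M$ and then, for each edge of the collapse, lifting it to a perfect matching of $G$ and projecting back.

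First, I would define the projection of a matching. For any $M' \in \Perf{G,M}$, the tightness of each $P \in \mathcal{P}$ guarantees that $M'$ contains exactly one edge of $\Cut{}{P}$. Declaring $\widetilde{M'}$ to be the set of all edges $PQ$ in $\collapse{\mathcal{P}}$ such that the unique edge of $M' \cap \Cut{}{P}$ has its other endpoint in $Q$ (for every part $P$) yields a set of edges in $\collapse{\mathcal{P}}$ in which every vertex has degree exactly one. Hence $\widetilde{M'}$ is a perfect matching of $\collapse{\mathcal{P}}$. Taking $M' = M$ gives a distinguished perfect matching $\widetilde{M}$.

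Next, I would verify the matching covered property. Fix an arbitrary edge $e = PQ$ of $\collapse{\mathcal{P}}$. By definition there exist $p \in P$ and $q \in Q$ with $pq \in \E{G}$, and since $G$ is matching covered with respect to $M$ there is some $M' \in \Perf{G,M}$ containing $pq$. Since $pq$ is the unique edge of $M'$ in $\Cut{}{P}$ (and equally the unique one in $\Cut{}{Q}$), the projection $\widetilde{M'}$ contains the edge $e$.

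Finally, I need $\widetilde{M} \triangle \widetilde{M'}$ to be finite. The key observation is that an edge incident to $P$ in $\collapse{\mathcal{P}}$ lies in $\widetilde{M} \triangle \widetilde{M'}$ only if the unique $M$-edge in $\Cut{}{P}$ differs from the unique $M'$-edge in $\Cut{}{P}$, and in that case at least one of those two edges lies in $M \triangle M'$. Since $M \triangle M'$ is finite, only finitely many parts $P$ can be affected, so only finitely many edges of $\collapse{\mathcal{P}}$ can lie in $\widetilde{M} \triangle \widetilde{M'}$. Thus $\widetilde{M'} \in \Perf{\collapse{\mathcal{P}},\widetilde{M}}$ and contains $e$, proving that $\collapse{\mathcal{P}}$ is matching covered with respect to $\widetilde{M}$. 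The main subtlety is the bookkeeping at this last step — ensuring that the projection really respects symmetric differences — but this follows cleanly from tightness.
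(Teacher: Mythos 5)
Your proposal is correct and follows essentially the same route as the paper: project each $M' \in \Perf{G,M}$ to the collapse via the unique $M'$-edge in each cut $\Cut{}{P}$ (tightness makes this a perfect matching), lift an arbitrary edge of $\collapse{\mathcal{P}}$ to a matching of $G$ containing a witnessing edge, and project back. Your bookkeeping for why the projected matchings have finite symmetric difference is slightly more explicit than the paper's one-line assertion, but the argument is the same.
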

\begin{proof}
    Let $M$ be the perfect matching of $G$ with respect to which $G$ is matching covered.
    For every perfect matching $M' \in\Perf{G,M}$ we define
    \begin{align*}
        \Fkt{M'}{\mathcal{P}} \coloneqq \Set{\Set{P_i,P_j} \colon \text{there exist $x_i\in P_i$ and $x_j \in P_j $ with } \Set{x_i,x_j}\in M'}.
    \end{align*}
    We first show that this yields a perfect matching in the collapse.
    \begin{claim}
        \label{claim:perfect_matching}
        For every $M' \in\Perf{G,M}$ the obtained set of edges $\Fkt{M'}{\mathcal{P}}$ is a perfect matching of $\collapse{\mathcal{P}}.$
    \end{claim}
    \begin{claimproof}
        As $P \in \mathcal{P}$ is tight $\Abs{M' \cap \Cut{G}{P}} = 1.$
        Thus, every $P \in \V{\collapse{\mathcal{P}}}$ lies in exactly one edge of $\Fkt{M'}{\mathcal{P}}.$
    \end{claimproof}
    
    Next we show that the collapse is matching covered.
    \begin{claim}
        \label{claim:collapse}
            The collapse $\collapse{\mathcal{P}}$ is matching covered with respect to $\Fkt{M}{\mathcal{P}}.$
    \end{claim}
    \begin{claimproof}
        Let $\Set{P_i,P_j} \in\E{\collapse{\mathcal{P}}}.$
        By definition of collapses, there are vertices $x_i \in P_i$ and $x_j \in P_j$ with $\Set{x_i,x_j} \in \E{G}.$
        So there is a matching $M' \in\Perf{G,M}$ with $\Set{x_i,x_j}\in M'.$
        Because the symmetric difference between $M$ and $M'$ is finite, so is the symmetric difference between $\Fkt{M'}{\mathcal{P}}$ and $\Fkt{M}{\mathcal{P}}.$
        Thus we obtain $\Fkt{M'}{\mathcal{P}} \in \Perf{\collapse{\mathcal{P}},\Fkt{M}{\mathcal{P}}}.$\qedhere
    \end{claimproof}
    Together \cref{claim:perfect_matching} and \cref{claim:collapse} imply the statement.
\end{proof}

\begin{lemma}
    \label{lem:tight_sets_in_collapse}
    Let $\mathcal{P}$ be a tight set partition of a matching covered graph $G$ and let $X$ be a subset of $\mathcal{P}.$
    Then $X$ is tight in $\collapse{\mathcal{P}}$ if and only if $\bigcup X$ is tight in $G.$
\end{lemma}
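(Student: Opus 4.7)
The plan is to exploit a correspondence between the edges of a perfect matching $M' \in \Perf{G, M}$ that cross $\bigcup X$ in $G$ and the edges of $\Fkt{M'}{\mathcal{P}}$ that cross $X$ in $\collapse{\mathcal{P}}$. Since every part $P \in \mathcal{P}$ is either contained in $\bigcup X$ or disjoint from it, an $M'$-edge with both endpoints in a single part cannot lie in $\Cut{G}{\bigcup X}$, and an $M'$-edge with endpoints in distinct parts $P_i, P_j$ lies in $\Cut{G}{\bigcup X}$ precisely when $\{P_i, P_j\} \in \Cut{\collapse{\mathcal{P}}}{X}$. Because each $P \in \mathcal{P}$ is tight, $M'$ has exactly one edge in $\Cut{G}{P}$, so the inter-part $M'$-edges are in bijection with the edges of $\Fkt{M'}{\mathcal{P}}$, yielding for every $M' \in \Perf{G, M}$ the identity
\[
    \Abs{M' \cap \Cut{G}{\bigcup X}} = \Abs{\Fkt{M'}{\mathcal{P}} \cap \Cut{\collapse{\mathcal{P}}}{X}}.
\]

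The ``$\Rightarrow$'' direction follows quickly from this identity together with \cref{lem:collapse_matching_covered}: if $X$ is tight in $\collapse{\mathcal{P}}$, then for any $M' \in \Perf{G, M}$ the image $\Fkt{M'}{\mathcal{P}}$ lies in $\Perf{\collapse{\mathcal{P}}, \Fkt{M}{\mathcal{P}}}$, so the right-hand side equals $1$ and hence so does the left, giving $\bigcup X$ tight in $G$.

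For the ``$\Leftarrow$'' direction I would prove an auxiliary \emph{lifting claim}: every $N \in \Perf{\collapse{\mathcal{P}}, \Fkt{M}{\mathcal{P}}}$ equals $\Fkt{M'}{\mathcal{P}}$ for some $M' \in \Perf{G, M}$. Granting this, applying the identity to such an $M'$ gives $\Abs{N \cap \Cut{\collapse{\mathcal{P}}}{X}} = \Abs{M' \cap \Cut{G}{\bigcup X}} = 1$ by tightness of $\bigcup X$, showing $X$ is tight in $\collapse{\mathcal{P}}$. The symmetric difference $N \triangle \Fkt{M}{\mathcal{P}}$ is a finite disjoint union of alternating cycles in $\collapse{\mathcal{P}}$, so by induction on the number of such cycles the lifting claim reduces to the case where $N \triangle \Fkt{M}{\mathcal{P}}$ is a single alternating cycle $C = P_1 P_2 \ldots P_{2k} P_1$.

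The core obstacle is this single-cycle step, that is, constructing $M'' \in \Perf{G, M}$ with $\Fkt{M''}{\mathcal{P}} = \Fkt{M}{\mathcal{P}} \triangle C$. For each ``new'' edge $\{P_{2j}, P_{2j+1}\}$ of $C$ I would pick any underlying $G$-edge $e_j = \{u_j, v_j\}$ with $u_j \in P_{2j}$ and $v_j \in P_{2j+1}$; by matching-coverage of $G$ there is some $M_j^* \in \Perf{G, M}$ containing $e_j$, and since the parts are tight, $M_j^*$ restricts on $P_{2j}$ and $P_{2j+1}$ to perfect matchings of $P_{2j} \setminus \{u_j\}$ and $P_{2j+1} \setminus \{v_j\}$, each differing on only finitely many edges from the corresponding restriction of $M$. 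Assembling $M''$ from the $k$ new crossing edges $e_j$, these $2k$ near-perfect internal matchings on the parts of $C$, and $M$-edges everywhere else, the delicate verification is that this really yields a matching covering every vertex: at each $P_i$ on $C$, the chosen near-perfect internal matching is a perfect matching of $P_i$ missing exactly the new crossing endpoint, so that endpoint is externally matched by its $e_j$ while the old $M$-crossing endpoint (now released from its external match) is recovered internally. Finiteness of $M'' \triangle M$ then places $M''$ in $\Perf{G, M}$, and $\Fkt{M''}{\mathcal{P}} = \Fkt{M}{\mathcal{P}} \triangle C$ follows directly from the construction, closing the induction.
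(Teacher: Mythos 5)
Your proposal is correct and takes essentially the same route as the paper: both directions rest on lifting a perfect matching of $\collapse{\mathcal{P}}$ to some $M'' \in \Perf{G,M}$ with $\Fkt{M''}{\mathcal{P}}$ equal to the given matching, built exactly as you describe by choosing, for each new collapse edge, a crossing edge of $G$, a perfect matching through it, and its restriction to the two parts involved. The only difference is organisational: you lift one alternating cycle at a time by induction (using that $\Perf{G,M}$ is unchanged after each step), whereas the paper performs the replacement for all edges of $M' \setminus \Fkt{M}{\mathcal{P}}$ simultaneously.
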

\begin{proof}
    Let $M$ be the perfect matching of $G$ with respect to which $G$ is matching covered. By \cref{lem:collapse_matching_covered}, this implies that $\collapse{\mathcal{P}}$ is matching covered with respect to the matching $\Fkt{M}{\mathcal{P}}.$
    
    Suppose first of all that $X$ is tight in $\collapse{\mathcal{P}}.$
    Then for any $M' \in\Perf{G,M}$ the matching $\Fkt{M'}{\mathcal{P}}$ has exactly one edge in $\Cut{\collapse{\mathcal{P}}}{X}.$
    Thus, there is exactly one set $P \in X$ and exactly one $Q \in \V{\collapse{\mathcal{P}}}\setminus X$ with $PQ \in \E{\collapse{\mathcal{P}}} \cap \Fkt{M'}{\mathcal{P}}.$
    Thus there are vertices $x\in P$ and $y\in Q$ with $xy\in \E{G}\cap M'.$
    As $P$ and $Q$ are tight sets, there is exactly one such edge, thus $\Abs{M' \cap \Cut{G}{\bigcup X}} = 1.$
    
    Conversely, suppose that $\bigcup X$ is tight in $G$, and consider any $M' \in \Perf{\collapse{\mathcal{P}}, \Fkt{M}{\mathcal{P}}}.$ Let the finitely many edges in $M' \setminus \Fkt{M}{\mathcal{P}}$ be $P_1Q_1$, $P_2Q_2$, \ldots $P_nQ_n.$ For $i \leq n$ let $e_i$ be any edge of $G$ with one endpoint in $P_i$ and the other in $Q_i$, let $M_i$ be any element of $\Perf{G,M}$ containing $e_i$ and let $N_i$ be the set of edges in $M_i$ with both endpoints in $P_i \cup Q_i.$ Let $N \coloneqq \bigcup_{i = 1}^n N_i.$ Finally, let $M''$ be obtained from $M$ by removing all edges with an endpoint in any $P_i$ or $Q_i$ and adding the edges in $N.$ 
    
    By construction $M''$ is a perfect matching of $G$ whose symmetric difference with $M$ is a subset of $\bigcup_{i = 1}^n(M \triangle M_i)$ and so is finite. So $M'' \in \Perf{G,M}$, and so there is a unique edge in $M'' \cap \cut{}{\bigcup X}.$ Since by construction also $\Fkt{M''}{\mathcal{P}} = M'$, this implies that there is a unique edge in $M' \cap \cut{}{X}.$ Since $M'$ was arbitrary, this implies that $X$ is tight in $\collapse{\mathcal{P}}$, as required.
\end{proof}

\section{Even and odd sets in infinite graphs}
\label{sec:even_odd}

Let $G$ be a matching covered graph with respect to a perfect matching $M.$
A set $X \subseteq \V{G}$ has a \emph{parity} if $\Abs{\Cut{G}{X} \cap M}$ is finite.
For a set $X$ with a parity such that $\Abs{\Cut{G}{X} \cap M}$ is even, we say that $X$ is \emph{even} and if $\Abs{\Cut{G}{X} \cap M}$ is odd, we call $X$ \emph{odd}.

\begin{lemma}
    \label{lem:even_odd_same_for_all_matchings}
    Let $G$ be a matching covered graph with respect to a perfect matching $M.$
    If $X \subseteq \V{G}$ is odd, then every perfect matching $M' \in \Perf{G,M}$ contains a finite, odd number of edges with exactly one endpoint in $X.$
    Similarly, if $X \subseteq \V{G}$ is even, then every perfect matching $M' \in \Perf{G,M}$ contains a finite, even number of edges with exactly one endpoint in $X.$
\end{lemma}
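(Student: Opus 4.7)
The plan is to exploit \cref{obs:symmetric_difference}, which expresses $M \triangle M'$ as a disjoint union of cycles. Since $M' \in \Perf{G,M}$, this symmetric difference is finite, so it consists of finitely many finite cycles. The finiteness half of the lemma is then immediate from
\[
\Abs{M' \cap \Cut{G}{X}} \leq \Abs{M \cap \Cut{G}{X}} + \Abs{M \triangle M'},
\]
together with the hypothesis that $X$ has a parity with respect to $M$.

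For the parity statement, I would reduce everything to a single-cycle claim: for each cycle $C$ appearing in $M \triangle M'$, the two numbers $\Abs{M \cap \Cut{G}{X} \cap \E{C}}$ and $\Abs{M' \cap \Cut{G}{X} \cap \E{C}}$ have the same parity. Granting this, summing the finite telescoping equality
\[
\Abs{M \cap \Cut{G}{X}} - \Abs{M' \cap \Cut{G}{X}} = \sum_{C} \bigl( \Abs{M \cap \Cut{G}{X} \cap \E{C}} - \Abs{M' \cap \Cut{G}{X} \cap \E{C}} \bigr)
\]
over the finitely many cycles $C$ of $M \triangle M'$ shows that $\Abs{M \cap \Cut{G}{X}}$ and $\Abs{M' \cap \Cut{G}{X}}$ differ by an even integer, which is precisely what the lemma asserts.

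To establish the single-cycle claim, I would list the vertices of $C$ cyclically as $v_0, v_1, \ldots, v_{2k-1}$ with indices read modulo $2k$, in such a way that the edges $v_{2i}v_{2i+1}$ lie in $M$ and the edges $v_{2i+1}v_{2i+2}$ lie in $M'$. An edge $v_i v_{i+1}$ lies in $\Cut{G}{X}$ exactly when exactly one of its endpoints is in $X$, so the total number of edges of $C$ in $\Cut{G}{X}$ equals the number of sign changes in the cyclic binary sequence recording whether each $v_i$ belongs to $X$; this number is always even. Splitting the even total into contributions from the even-indexed edges (the $M$-edges of $C$) and the odd-indexed edges (the $M'$-edges) forces these two contributions to have equal parity, which is the desired claim.

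The only real subtlety is bookkeeping: one must invoke finiteness of $M \triangle M'$ both to guarantee that each constituent cycle is finite, so the vertex labelling $v_0,\ldots,v_{2k-1}$ makes sense, and to legitimate the finite sum used in the telescoping. Beyond this, the argument is an elementary parity count, and I do not anticipate any genuine obstacle.
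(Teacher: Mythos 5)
Your proposal is correct and follows essentially the same route as the paper: both decompose $M \triangle M'$ into finitely many finite $M$-$M'$-alternating cycles (via \cref{obs:symmetric_difference}), observe that each such cycle meets $\Cut{G}{X}$ in an even number of edges so its $M$-edges and $M'$-edges in the cut have equal parity, and note that the edges of $M \cap M'$ contribute identically to both counts. Your explicit telescoping sum and finiteness inequality are just slightly more formal bookkeeping of the same argument.
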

\begin{proof}
    Let $M' \in \Perf{G,M}$ and $X\subseteq \V{G}.$
    As the symmetric difference between $M$ and $M'$ is finite, $M'$ has finitely many edges with exactly one endpoint in $X$ if and only if $M$ does.
    Consider the case that both have finitely many edges with exactly one endpoint in $X.$
        
    We split the set of edges $\Cut{}{X}\cap \Brace{M \cup M'}$ in two disjoint parts:
        the ones lying in $M$-$M'$-alternating cycles and the ones lying in $M \cap M'.$
    As every cycle has an even number of edges in $\Cut{}{X}$, it either contains an even number of edges from both matchings or an odd number of edges from both matchings.
    Thus, $M$ and $M'$ have the same parity of edges lying in $M$-$M'$-alternating cycles.
    Clearly, the parity of edges from both matchings lying in $M \cap M'$ is also the same.
    Thus, the intersections $\Cut{}{X} \cap M$ and $\Cut{}{X} \cap M'$ have the same parity.
\end{proof}

In the following \namecref{lem:even_odd} we make some simple observations about even and odd sets and show that these definitions behave intuitively.
We use these properties throughout the paper, often without explicit reference to this \namecref{lem:even_odd}.

\begin{lemma}
\label{lem:even_odd}
    Let $G$ be a matching covered graph with respect to a perfect matching $M$ and $X,Y$ two subsets of $\V{G}.$
    \renewcommand{\labelenumi}{\textbf{\theenumi}}
    \renewcommand{\theenumi}{(P.\arabic{enumi})}
    \begin{enumerate}[labelindent=0pt,labelwidth=\widthof{\ref{last-item-even_odd}},itemindent=1em]
        \item \label{even-odd:odd_and_odd_is_even} if $X$ and $Y$ are disjoint and both odd, then $X \cup Y$ is even,
        \item \label{even-odd:odd_and_even_is_odd} if $X$ and $Y$ are disjoint and $X$ is even while $Y$ is odd, then $X \cup Y$ is odd,
            \label{last-item-even_odd}
        \item\label{even-odd:even_and_even_is_even} if $X$ and $Y$ are disjoint and both even, then $X \cup Y$ is even,
        \item\label{even-odd:intersections_are_even_or_odd} if $X$ and $Y$ both have a parity, then $X \cap Y$ has a parity, and
        \item\label{even-odd:differences_are_even_or_odd} if $X$ and $Y$ both have a parity, then $X \setminus Y$ has a parity.
    \end{enumerate}
\end{lemma}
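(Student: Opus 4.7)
The plan is to reduce everything to one elementary observation about disjoint sets. For disjoint $X, Y \subseteq \V{G}$, an edge has exactly one endpoint in $X \cup Y$ iff it lies in exactly one of $\Cut{G}{X}$ and $\Cut{G}{Y}$, so
\[
    \Cut{G}{X \cup Y} \;=\; \Cut{G}{X} \,\triangle\, \Cut{G}{Y}.
\]
Intersecting with $M$ gives $\Cut{G}{X \cup Y} \cap M = (\Cut{G}{X} \cap M) \triangle (\Cut{G}{Y} \cap M)$. If both sides on the right are finite (which is the hypothesis in \ref{even-odd:odd_and_odd_is_even}--\ref{even-odd:even_and_even_is_even}), then so is the left, and the identity $\Abs{A \triangle B} = \Abs{A} + \Abs{B} - 2\Abs{A \cap B}$ (valid since $A \cap B$ is finite) shows that $\Abs{\Cut{G}{X\cup Y} \cap M} \equiv \Abs{\Cut{G}{X} \cap M} + \Abs{\Cut{G}{Y} \cap M} \pmod 2$. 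Reading off the three parity combinations odd+odd, even+odd, even+even yields \ref{even-odd:odd_and_odd_is_even}, \ref{even-odd:odd_and_even_is_odd} and \ref{even-odd:even_and_even_is_even} at once.

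For \ref{even-odd:intersections_are_even_or_odd}, I would just need finiteness, not a specific parity. Classify each edge $e \in \Cut{G}{X \cap Y}$ by the location of its endpoint outside $X \cap Y$, using the partition of $\V{G} \setminus (X \cap Y)$ into $X \setminus Y$, $Y \setminus X$, and $\V{G} \setminus (X \cup Y)$. In all three cases $e$ belongs to $\Cut{G}{X} \cup \Cut{G}{Y}$: if the outside endpoint lies in $Y \setminus X$ or $\V{G}\setminus(X\cup Y)$ then $e \in \Cut{G}{X}$, and if it lies in $X \setminus Y$ then $e \in \Cut{G}{Y}$. Hence $\Cut{G}{X \cap Y} \cap M \subseteq (\Cut{G}{X} \cap M) \cup (\Cut{G}{Y} \cap M)$, which is finite by hypothesis, so $X \cap Y$ has a parity.

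For \ref{even-odd:differences_are_even_or_odd} the cleanest route is to combine the previous parts: $X$ is the disjoint union of $X \cap Y$ and $X \setminus Y$, so by \ref{even-odd:odd_and_odd_is_even}--\ref{even-odd:even_and_even_is_even} (applied to $X \cap Y$ and $X \setminus Y$), if both $X$ and $X \cap Y$ have parities then so does $X \setminus Y$. Since \ref{even-odd:intersections_are_even_or_odd} has already been established, this delivers \ref{even-odd:differences_are_even_or_odd}. (Alternatively, one could repeat the case analysis directly on $\Cut{G}{X \setminus Y}$ with the partition of $\V{G}\setminus(X\setminus Y)$ into $X \cap Y$, $Y \setminus X$, $\V{G}\setminus (X \cup Y)$ and observe again that every such cut edge lies in $\Cut{G}{X} \cup \Cut{G}{Y}$.)

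I do not expect any serious obstacle here: the whole proof is essentially the $\mathbb{F}_2$-linearity of the map $X \mapsto \Cut{G}{X} \cap M$, together with the mild care needed to ensure finiteness in the infinite setting. The only point worth flagging is that even in \ref{even-odd:intersections_are_even_or_odd} and \ref{even-odd:differences_are_even_or_odd} we are not claiming a formula for the resulting parity — which is good, because the parity of $\Abs{\Cut{G}{X \cap Y} \cap M}$ genuinely depends on the edges inside $\Cut{G}{X} \cap \Cut{G}{Y}$ and cannot be read off from the parities of $X$ and $Y$ alone.
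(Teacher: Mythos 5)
Your proof is correct and follows essentially the same route as the paper: the identity $\Cut{G}{X\cup Y}\cap M=(\Cut{G}{X}\cap M)\triangle(\Cut{G}{Y}\cap M)$ for disjoint $X,Y$ is just a cleaner packaging of the paper's case count on the number of $M$-edges between $X$ and $Y$, and your containment arguments for (P.4) and (in the parenthetical remark) for (P.5) are exactly the paper's. One small caveat: deducing (P.5) ``by (P.1)--(P.3) applied to $X\cap Y$ and $X\setminus Y$'' is strictly the converse of those statements (you need the cancellation direction, which follows from your symmetric-difference identity, namely $\Cut{G}{X\setminus Y}=\Cut{G}{X}\triangle\Cut{G}{X\cap Y}$, rather than from (P.1)--(P.3) as stated), but your alternative direct argument closes this gap immediately.
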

\begin{proof}
    We prove the points separately.
    
    \paragraph{\Cref{even-odd:odd_and_odd_is_even}}
        Assume that $X$ and $Y$ are disjoint and both odd.
        We can partition the edges in $\Cut{}{X \cup Y} \cap M$ into three parts:
            edges with one endpoint in $X$ and the other in $Y$, remaining edges with one endpoint in $X$ and remaining edges with one endpoint in $Y.$
            
        If there is an even number of edges in $M$ with one endpoint in $X$ and the other in $Y$, then the remaining two sets are both odd and thus $\Cut{}{X \cup Y} \cap M$ is even.
        If there is an odd number of edges in $M$ with one endpoint in $X$ and the other in $Y$, then the remaining two sets are both even and thus $\Cut{}{X \cup Y} \cap M$ is even.
        
    \paragraph{\Cref{even-odd:odd_and_even_is_odd}, \cref{even-odd:even_and_even_is_even}} Can be proved with arguments akin to \cref{even-odd:odd_and_odd_is_even}.
        
    \paragraph{\Cref{even-odd:intersections_are_even_or_odd}}
        Consider any $e \in M$ with exactly one endpoint in $X \cap Y.$
        If $e \subset X$, then $e$ has exactly one endpoint in $Y$, if $e \subset Y$, then $e$ has exactly one endpoint in $X$ and if $e \not\subset X$ and $e \not \subset Y$, then its other endpoint is in $\V{G} \subseteq (X \cup Y).$
        Thus, in every case $e$ lies in at least one of $M \cap \Cut{}{X}$ or $M \cap \Cut{}{Y}$ and therefore, there are only finitely many such edges.
    
    \paragraph{\Cref{even-odd:differences_are_even_or_odd}}
        Consider any $e \in M$ with exactly one endpoint in $X \setminus Y.$
        If $e \subset X$, then $e$ has exactly one endpoint in $Y$, thus $e$ lies in $M \cap \Cut{}{Y}.$
        If $e \not \subset X$, then $e$ lies in $M \cap \Cut{}{X}.$
        Therefore there are only finitely many such edges.
\end{proof}

\section{Crossing tight sets and tight set partitions}
\label{sec:crossings}

For the notion of torsoids (see \cref{sec:torsoids}) we have to understand how tight sets interact with each other.
We begin with the investigation of crossing tight sets in \cref{subsec:Basic_facts_about_tight_sets} and based on this, we define and study passable sets in \cref{subsec:passable_sets}.
Thereafter, we explore how tight set partitions interact with tight sets in \cref{subsec:interaction_tight_set_tight_set_partition} and introduce
a relation of tight set partitions that we call \emph{correspondence} in \cref{subsec:correspondence_tight_set_partitions}.

\subsection{Basic facts about crossing tight sets}\label{subsec:Basic_facts_about_tight_sets}
For this \namecref{subsec:Basic_facts_about_tight_sets} we fix a matching covered graph $G.$
The first few results in this \namecref{subsec:Basic_facts_about_tight_sets} were already known for finite matching covered graphs \cite{Edmonds1982,matchingtheory_book,Lovasz1987}, and their proofs for infinite matching covered graphs do not contain any new ideas.
However, to keep this paper self-contained we give these proofs here.

\begin{lemma}\label{lem:basictight}
    Let $X$ and $X'$ be tight sets with $X \cap X'$ odd.
    Then both $X \cap X'$ and $X \cup X'$ are tight sets and there is no edge with endpoints in both $X \setminus X'$ and $X' \setminus X.$
    If $X \setminus X' \neq \emptyset$ then there is an edge with endpoints in $X \cap X'$ and $X \setminus X'.$ 
\end{lemma}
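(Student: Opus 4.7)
The plan is to fix an arbitrary $M' \in \Perf{G,M}$ and track the edges of $M'$ that cross between the four regions $X \cap X'$, $X \setminus X'$, $X' \setminus X$, and $\V{G} \setminus (X \cup X')$. Writing $a,b,c$ for the numbers of $M'$-edges leaving $X \cap X'$ to $X \setminus X'$, $X' \setminus X$, $\V{G} \setminus (X \cup X')$ respectively, $d$ for the $M'$-edges between $X \setminus X'$ and $X' \setminus X$, and $e,f$ for the remaining outgoing $M'$-edges from $X \setminus X'$ and $X' \setminus X$, tightness of $X$ and of $X'$ yields the two equations $b+c+d+e=1$ and $a+c+d+f=1$. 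Meanwhile, \cref{lem:even_odd_same_for_all_matchings} applied to the odd set $X \cap X'$ forces $a+b+c$ to be a finite odd integer, so in particular positive.

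The two unit-sum equations force every variable into $\{0,1\}$, and a short case analysis then does the work. If $d=1$, then all other variables in both equations vanish, giving $a+b+c=0$ and contradicting oddness; so $d=0$. With $d=0$, the only configuration consistent with both equations but ruled out by parity is $a=b=1, c=0$, while the three remaining configurations $(a,b,c,d,e,f) \in \{(0,0,1,0,0,0), (0,1,0,0,0,1), (1,0,0,0,1,0)\}$ each satisfy $a+b+c=1$ and $c+e+f=1$. Since $M'$ was arbitrary, this yields $|M' \cap \Cut{}{X \cap X'}| = |M' \cap \Cut{}{X \cup X'}| = 1$ for every $M' \in \Perf{G,M}$, so both $X \cap X'$ and $X \cup X'$ are tight.

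The absence of an edge of $G$ between $X \setminus X'$ and $X' \setminus X$ now follows immediately from matching coverage: any such edge would lie in some $M' \in \Perf{G,M}$ and force $d \geq 1$ for that matching, contradicting the conclusion $d=0$ above. Finally, if $X \setminus X' \neq \emptyset$, then $X \cap X'$ is also nonempty (its cut being odd is in particular nonempty), and these two sets form a partition of $X$; since $G[X]$ is connected by \cref{prop:tight_cut_connected}, there must be an edge with one endpoint in $X \cap X'$ and the other in $X \setminus X'$. The main obstacle is organising the six-variable bookkeeping cleanly; once the two unit-sum equations and the parity condition are on the page, the rest is a mechanical verification.
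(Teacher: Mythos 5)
Your proof is correct, but it takes a noticeably more computational route than the paper's. For the tightness of $X \cap X'$ and $X \cup X'$ the paper avoids the six-variable bookkeeping altogether: every $M'$-edge of $\partial(X \cap X')$ lies in $\partial(X)$ or $\partial(X')$, so $|M' \cap \partial(X \cap X')| \leq |M' \cap \partial(X)| + |M' \cap \partial(X')| = 2$, and oddness forces the count to be exactly $1$ (similarly for $X \cup X'$). Your region-by-region analysis proves the same thing and is fine, but note one small slip in the enumeration: with $d=0$ the configuration $e=f=1$ (all other variables $0$) is also consistent with both unit-sum equations, so $(a,b,c,d,e,f)=(1,1,0,0,0,0)$ is not the only case excluded by parity; this is harmless, since that extra configuration is killed by the very same parity condition and your list of surviving configurations is exactly right. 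Your argument that no edge joins $X \setminus X'$ and $X' \setminus X$ is in substance the paper's (a matching through such an edge would put two matching edges into $\partial(X)$). For the final assertion you genuinely diverge: the paper takes a matching through some edge of $\partial(X \setminus X')$, notes this cut meets the matching in a positive even number of edges, hence at least two, and forces one of them into $\partial(X')$ with its second endpoint in $X \cap X'$; you instead invoke \cref{prop:tight_cut_connected} to get connectedness of $G[X]$ and observe that $X \cap X'$ and $X \setminus X'$ partition $X$ into two nonempty parts. That is shorter and legitimate --- the proposition precedes this lemma and its proof does not depend on it, so there is no circularity, and both arguments rely on connectivity of $G$ in the same way --- while the paper's counting argument buys an explicit identification of the crossing matching edge without appealing to that proposition.
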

\begin{proof}
    First we show that $X \cap X'$ is tight.
    For any matching $M$, \[|M \cap \partial(X \cap X')| \leq |M \cap \partial(X)| + |M \cap \partial(X')| = 2\,,\]
    but since $X \cap X'$ is odd, $|M \cap \partial(X \cap X')|$ must also be odd and so must be equal to one.
    An identical argument shows that $X \cup X'$ is tight.
    
    Next, suppose for a contradiction that there is an edge $e$ with endpoints in $X \setminus X'$ and $X' \setminus X.$
    Let $M$ be any matching containing $e$, and let $f$ be the unique edge in $M \cap \partial(X \cap X').$
    As $f \in \partial(X)$ and $\partial(X)$ contains both $e$ and $f$, contradicting tightness of $X.$
    Thus there can be no such an edge $e.$
    
    Finally, suppose that $X \setminus X'$ is nonempty.
    By connectivity of $G$ there must be some edge $e \in \partial(X \setminus X').$
    Let $M$ be a matching containing $e.$
    Then $|M \cap \partial(X \setminus X')|$ is a positive even number, so it is at least 2.
    Every edge in $M \cap \partial(X \setminus X')$ must be in one of $M \cap \partial(X)$ and $M \cap \partial(X')$, and each of those sets contains only one edge, so one of the edges of $M \cap \partial(X \setminus X')$ must be in $M \cap \partial(X').$
    Then that edge has one endpoint in $X \setminus X'$ and the other in $X'.$
    We have just seen that the endpoint in $X'$ cannot be in $X' \setminus X$, and so it must be in $X \cap X'.$
\end{proof}

\begin{lemma}\label{lem:basicnoedge}
    Let $X$, $X'$ and $X''$ be tight sets such that $X \cap X'$ and $X \cap X''$ are even and disjoint.
    Then there is no edge between these two sets.
\end{lemma}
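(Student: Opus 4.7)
The plan is to argue by contradiction. Suppose there is an edge $e$ with endpoints $u \in X \cap X'$ and $v \in X \cap X''$. Since $G$ is matching covered there is a matching $M' \in \Perf{G,M}$ containing $e$. Because $X \cap X'$ and $X \cap X''$ are disjoint, $u \notin X''$ and $v \notin X'$, so $e \in \partial(X') \cap \partial(X'')$; however both endpoints of $e$ lie in $X$, so $e \notin \partial(X)$. Let $a$ be the unique edge of $M' \cap \partial(X)$.

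Next I would focus on the boundary $\partial(X \cap X')$. Any edge of this boundary has one endpoint in $X \cap X'$ and the other outside, so it must be in either $\partial(X)$ or $\partial(X')$; hence $M' \cap \partial(X \cap X') \subseteq \{a, e\}$. Since $X \cap X'$ is even, \cref{lem:even_odd_same_for_all_matchings} tells us that $|M' \cap \partial(X \cap X')|$ is even, and since $e$ already lies in this set, $a$ must lie in it as well. In particular the endpoint of $a$ that is in $X$ must be in $X'$, i.e.\ it lies in $X \cap X'$.

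Running the symmetric argument with $X''$ in place of $X'$: edges of $\partial(X \cap X'')$ lie in $\partial(X) \cup \partial(X'')$, so $M' \cap \partial(X \cap X'') \subseteq \{a, e\}$; the set $X \cap X''$ is even and already contains $e$, so $a$ must also be in $M' \cap \partial(X \cap X'')$. Thus the endpoint of $a$ in $X$ also lies in $X''$, and so in $X \cap X''$.

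This forces the single $X$-endpoint of $a$ to lie simultaneously in $X \cap X'$ and $X \cap X''$, contradicting their disjointness. The main (but minor) point to get right is the parity bookkeeping: one needs to know that $|M' \cap \partial(Y)|$ has the same parity as $|M \cap \partial(Y)|$ for even sets $Y$, which is exactly what \cref{lem:even_odd_same_for_all_matchings} provides, so the only real content is the containment $\partial(X \cap X') \subseteq \partial(X) \cup \partial(X')$ together with the parity forcing $a$ into both intersections.
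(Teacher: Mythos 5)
Your proof is correct, but it takes a different route from the paper. The paper disposes of this in two lines by uncrossing: applying \cref{lem:basictight} to $X$ and the complement of $X'$ shows $X \setminus X'$ is tight (since $X \cap X'$ even makes $X \setminus X'$ odd), and then applying \cref{lem:basictight} to $X \setminus X'$ and the complement of $X''$ yields exactly the forbidden edge, because $X \cap X''$ sits inside $X \setminus X'$ while $X \cap X'$ sits inside the complement of $X''$ minus $X \setminus X'$. You instead argue from first principles: you take a perfect matching $M' \in \Perf{G,M}$ through a hypothetical edge $e$ between the two intersections, observe that $e$ is the unique $M'$-edge in both $\partial(X')$ and $\partial(X'')$ but not in $\partial(X)$, and then use $\partial(X \cap X') \subseteq \partial(X) \cup \partial(X')$ together with the parity statement of \cref{lem:even_odd_same_for_all_matchings} to force the unique $M'$-edge $a$ of $\partial(X)$ into $\partial(X \cap X')$, and symmetrically into $\partial(X \cap X'')$, so that the $X$-endpoint of $a$ lies in both intersections, contradicting disjointness. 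All steps check out (in particular $a \neq e$, and the $X \cap X'$-endpoint of $a$ is indeed its unique $X$-endpoint), and the parity lemma is exactly what makes the argument valid in the infinite setting. What the paper's proof buys is brevity and reuse of the uncrossing machinery already established; what yours buys is a self-contained, more elementary argument that in effect re-proves the ``no edge'' clause of \cref{lem:basictight} directly in the three-set configuration, making the parity bookkeeping explicit rather than delegating it.
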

\begin{proof}
    Applying \cref{lem:basictight} to $X$ and the complement of $X'$, we see that $X \setminus X'$ is tight.
    But then applying the same \namecref{lem:basictight} to $X \setminus X'$ and the complement of $X''$ we get the desired result.
\end{proof}

\begin{lemma}\label{lem:nothree}
    Let $X$, $X'$ and $X''$ be tight sets such that $X \cap X' = X \cap X'' = X' \cap X''$ is even.
    Then $X \cap X' = X \cap X'' = X' \cap X''$ is empty.
\end{lemma}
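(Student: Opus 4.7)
Denote the common pairwise intersection by $Y \coloneqq X \cap X' = X \cap X'' = X' \cap X''$. Since $Y \subseteq X''$, we have $Y = X \cap X' \cap X''$, and consequently the three sets $X \setminus Y$, $X' \setminus Y$, $X'' \setminus Y$ are pairwise disjoint. The plan is to argue by contradiction: if $Y$ were nonempty, a single application of \cref{lem:basictight} would forbid an edge between $Y$ and $X \setminus Y$, but \cref{prop:tight_cut_connected} would require one.

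The first step is a short parity calculation. Every tight set is odd, since its cut contains exactly one matching edge of every $M' \in \Perf{G,M}$; in particular $X$, $X'$ and $X''$ are all odd, whereas $Y$ is even by assumption. Combining \cref{even-odd:odd_and_even_is_odd} with the disjoint decomposition $X' = Y \sqcup (X' \setminus Y)$ then forces $X' \setminus Y$ to be odd as well.

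Next, I would apply \cref{lem:basictight} to $X'$ and the complement $\V{G} \setminus X''$, which is again tight since tightness is preserved under complementation. Their intersection $X' \cap (\V{G} \setminus X'') = X' \setminus X'' = X' \setminus Y$ is odd by the previous paragraph, so the hypothesis of the \namecref{lem:basictight} is met. Its conclusion rules out edges with one endpoint in $X' \setminus (\V{G} \setminus X'') = X' \cap X'' = Y$ and the other in $(\V{G} \setminus X'') \setminus X' = \V{G} \setminus (X' \cup X'')$. Since $X \cap X' = X \cap X'' = Y$, the set $X \setminus Y$ is disjoint from both $X'$ and $X''$, hence contained in $\V{G} \setminus (X' \cup X'')$, so no edge of $G$ joins $Y$ to $X \setminus Y$.

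To finish, suppose for contradiction that $Y \neq \emptyset$. Since $Y$ is even while $X$ is odd we cannot have $X = Y$, so $X \setminus Y$ is nonempty as well. By \cref{prop:tight_cut_connected} the subgraph $G[X]$ is connected, so there must be an edge in $G[X]$ between the nonempty sets $Y$ and $X \setminus Y$, contradicting the previous paragraph. I expect the only genuine subtlety to be choosing the right pair for \cref{lem:basictight}: applied to $X$ and $X'$ directly it fails because $X \cap X' = Y$ is even, so one must pass to a complement in order to recover an odd intersection.
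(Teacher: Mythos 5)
Your proof is correct and takes essentially the same route as the paper: the heart of both arguments is applying \cref{lem:basictight} to $X'$ and the complement of $X''$ to forbid any edge between $Y = X' \cap X''$ and $\V{G} \setminus (X' \cup X'')$, which contains $X \setminus Y$. The only (cosmetic) difference is how the contradicting edge is produced: the paper gets an edge from $X \cap X' = Y$ to $X \setminus X' = X \setminus Y$ from the final edge-existence clause of \cref{lem:basictight} applied to $X$ and the complement of $X'$, whereas you obtain it from connectivity of $G[X]$ via \cref{prop:tight_cut_connected} -- an equivalent step, since that clause of \cref{lem:basictight} is itself proved by a connectivity argument.
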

\begin{proof}
    Suppose not for a contradiction.
    By \cref{lem:basictight} applied to $X$ and the complement of $X'$ there is an edge $e$ with one end in $X \setminus X'$ and the other in $X \cap X' = X' \cap X''.$
    But this contradicts \cref{lem:basictight} applied to $X'$ and the complement of $X''.$
\end{proof}

\begin{lemma}\label{lem:infunion}
    Let $(X_i)_{i \in I}$ be a family of tight sets such that the union of any two of them is also tight.
    Then $X \coloneqq \bigcup_{i \in I} X_i$ is either also tight or is the whole vertex set.
\end{lemma}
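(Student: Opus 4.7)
The plan is to fix the perfect matching $M$ with respect to which $G$ is matching covered, and then show that either $X = \V{G}$ or $X$ is tight by bounding $|M' \cap \partial(X)|$ from above and below for every $M' \in \Perf{G,M}$.

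First I would prove the upper bound $|M' \cap \partial(X)| \leq 1$ for every $M' \in \Perf{G,M}$. Suppose for contradiction that $e_1$ and $e_2$ are two distinct edges of $M' \cap \partial(X)$. Each $e_k$ has one endpoint in $X$, hence in some $X_{i_k}$, while its other endpoint lies outside $X$ and therefore outside $X_{i_1} \cup X_{i_2}$. If $i_1 = i_2$ then both edges lie in $\partial(X_{i_1})$, contradicting tightness of $X_{i_1}$; otherwise both edges lie in $\partial(X_{i_1} \cup X_{i_2})$, contradicting tightness of the pairwise union $X_{i_1} \cup X_{i_2}$.

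Now assume $X \neq \V{G}$; I would show $|M \cap \partial(X)| = 1$. The upper bound already gives $|M \cap \partial(X)| \in \{0,1\}$. If $|M \cap \partial(X)| = 0$, then $X$ is even, so \cref{lem:even_odd_same_for_all_matchings} forces $|M' \cap \partial(X)|$ to be finite and even for every $M' \in \Perf{G,M}$; combined with the upper bound this yields $|M' \cap \partial(X)| = 0$ for all such $M'$. Consequently no edge of $\partial(X)$ lies in any matching of $\Perf{G,M}$, so the matching-covered assumption forces $\partial(X) = \emptyset$. But the connectedness of $G$ together with $\emptyset \neq X \neq \V{G}$ implies $\partial(X) \neq \emptyset$, a contradiction.

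Hence $|M \cap \partial(X)| = 1$, so $X$ is odd, and \cref{lem:even_odd_same_for_all_matchings} yields $|M' \cap \partial(X)|$ odd and finite for every $M' \in \Perf{G,M}$; combined with the upper bound, we conclude $|M' \cap \partial(X)| = 1$ for every such $M'$, so $X$ is tight. The main obstacle is the middle step: in the finite case a direct parity count on $|X|$ would already rule out $|M \cap \partial(X)| = 0$, but in the infinite setting one has to route the argument through \cref{lem:even_odd_same_for_all_matchings} and the matching-covered hypothesis, using the latter to convert the absence of cut-edges in any matching of $\Perf{G,M}$ into the absurd conclusion $\partial(X) = \emptyset$.
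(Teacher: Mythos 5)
Your proposal is correct and follows essentially the same route as the paper: both first bound $|M' \cap \partial(X)|$ by one using tightness of the pairwise unions, and then use connectivity, the matching-covered hypothesis and the parity argument (\cref{lem:even_odd_same_for_all_matchings}) to conclude that every matching in $\Perf{G,M}$ meets $\partial(X)$ in exactly one edge when $X \neq \V{G}$. Your version merely makes the parity step more explicit via the case $|M \cap \partial(X)| = 0$, which the paper handles more tersely.
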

\begin{proof}
    Suppose for a contradiction that there is a matching $M$ containing two edges $e$ and $f$ in the boundary of $X.$
    Choose $i,j \in I$ such that the endpoints of $e$ and $f$ in $X$ are in $X_i$ and $X_j$ respectively.
    Then both $e$ and $f$ are also in the boundary of $X_i \cup X_j$, contradicting the tightness of this set.
    
    Thus any matching $M$ contains at most one edge in $\cut{}{X}.$ If $X$ is not the whole vertex set then there is a matching containing some edge, and so exactly one edge, in the boundary of $X.$ Thus $X$ is odd, and any matching contains exactly one edge in the boundary of $X$ as required.
\end{proof}

Applying this to the complements of a family of tight sets also gives the following statement.

\begin{corollary}\label{cor:infintersection}
    Let $(X_i)_{i \in I}$ be a family of tight sets such that the intersection of any two of them is odd.
    Then $X \coloneqq \bigcap_{i \in I} X_i$ is either also tight or else is empty.\hfill$\square$
\end{corollary}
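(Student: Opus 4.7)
The plan is exactly as the hint suggests: dualise by passing to complements and invoke \cref{lem:infunion}. Set $Y_i \coloneqq \V{G} \setminus X_i$ for each $i \in I$ and aim to show that the family $(Y_i)_{i \in I}$ satisfies the hypotheses of \cref{lem:infunion}, so that $\bigcup_{i \in I} Y_i$ is either tight or the whole vertex set. Translating this conclusion back through De Morgan will then yield the claim, since
\[
    \bigcup_{i \in I} Y_i \;=\; \V{G} \setminus \bigcap_{i \in I} X_i\,,
\]
so a tight union corresponds to a tight intersection and a union equal to $\V{G}$ corresponds to an empty intersection.

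To carry this out, first observe that for any set $Z \subseteq \V{G}$ one has $\Cut{G}{Z} = \Cut{G}{\V{G} \setminus Z}$, so both tightness and parity are preserved under complementation. In particular each $Y_i$ is tight because each $X_i$ is. For the second hypothesis of \cref{lem:infunion}, I would compute
\[
    Y_i \cup Y_j \;=\; \V{G} \setminus (X_i \cap X_j)
\]
and argue that $X_i \cap X_j$ is tight by \cref{lem:basictight}, whose hypothesis is exactly that $X_i \cap X_j$ is odd — which is given. Hence $Y_i \cup Y_j$ is also tight, and \cref{lem:infunion} applies to the family $(Y_i)_{i \in I}$.

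There is no real obstacle here: the proof is essentially bookkeeping built on the two preceding lemmas. The only point to keep straight is that the complement operation interacts cleanly with the relevant notions — tightness, evenness, and oddness — all of which are invariant under complementation since $\Cut{G}{Z} = \Cut{G}{\V{G}\setminus Z}$. Once that is noted, the whole statement follows by a single invocation of \cref{lem:infunion} together with De Morgan's law.
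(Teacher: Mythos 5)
Your proposal is correct and is exactly the paper's intended argument: the paper derives \cref{cor:infintersection} by applying \cref{lem:infunion} to the complements, using that $\Cut{G}{Z} = \Cut{G}{\V{G}\setminus Z}$ preserves tightness and that \cref{lem:basictight} turns odd pairwise intersections of the $X_i$ into tight pairwise unions of the complements. Nothing is missing.
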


\begin{lemma}\label{lem:deletemany}
Let $X$ be a tight set and $(X_i)_{i 
\in I}$ a family of tight sets whose intersections with $X$ are even and disjoint.
Then $X \setminus \bigcup_{i \in I}X_i$ is tight.
\end{lemma}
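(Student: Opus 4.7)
The plan is to apply \cref{cor:infintersection} to the family $\Brace{X \setminus X_i}_{i \in I}$, whose intersection equals $Y \coloneqq X \setminus \bigcup_{i \in I} X_i$, and then separately to rule out the possibility that $Y = \emptyset$.

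First we would verify that each $X \setminus X_i$ is tight. Since $X$ is tight and hence odd, the disjoint decomposition $X = \Brace{X \cap X_i} \cup \Brace{X \setminus X_i}$ together with the evenness of $X \cap X_i$ forces $X \setminus X_i$ to be odd. Applying \cref{lem:basictight} to the two tight sets $X$ and $\V{G} \setminus X_i$, whose intersection $X \cap \Brace{\V{G} \setminus X_i} = X \setminus X_i$ is odd, then yields that $X \setminus X_i$ is tight. Next, for any distinct $i, j \in I$, the disjoint even sets $X \cap X_i$ and $X \cap X_j$ have even union by \cref{even-odd:even_and_even_is_even}, and removing it from the odd set $X$ shows that $\Brace{X \setminus X_i} \cap \Brace{X \setminus X_j} = X \setminus \Brace{X_i \cup X_j}$ is odd. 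These two observations put us in the setting of \cref{cor:infintersection}, which yields that $Y$ is either tight or empty.

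The main obstacle will be ruling out $Y = \emptyset$. Suppose for a contradiction that $Y$ is empty, so $X = \bigcup_{i \in I}\Brace{X \cap X_i}$. Since $X$ is tight it is in particular non-empty, so at least one set $X \cap X_{j_0}$ is non-empty. For any two distinct $j, k \in I$ with both $X \cap X_j$ and $X \cap X_k$ non-empty, \cref{lem:basicnoedge} applied to the tight sets $X, X_j, X_k$ implies that $G$ has no edge between $X \cap X_j$ and $X \cap X_k$. Hence the non-empty parts of the decomposition $X = \bigcup_{i \in I}\Brace{X \cap X_i}$ are pairwise non-adjacent in $G[X]$. But \cref{prop:tight_cut_connected} guarantees that $G[X]$ is connected (restricting to the component of $G$ containing $X$ if necessary), so there is only one non-empty part, and it equals $X$. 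This contradicts the parities, since $X \cap X_{j_0}$ is even while the tight set $X$ is odd. Therefore $Y$ must be non-empty, and the application of \cref{cor:infintersection} above delivers that $Y$ is tight.
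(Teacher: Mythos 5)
Your proof is correct, and its first half is exactly the paper's: you pass to the family $\Brace{X \setminus X_i}_{i \in I}$, check tightness of each member and oddness of pairwise intersections via \cref{lem:basictight} and the parity rules of \cref{lem:even_odd}, and invoke \cref{cor:infintersection} to conclude that $X \setminus \bigcup_{i \in I}X_i$ is tight or empty. Where you diverge is in ruling out emptiness. The paper argues directly with a perfect matching: it takes the unique matching edge $e$ in $\cut{}{X}$, whose endpoint in $X$ lies in some $X \cap X_i$; since $X \cap X_i$ is even, the matching must have a second edge leaving $X \cap X_i$, which (by tightness of $X$) cannot leave $X$ and hence runs into some $X \cap X_j$ with $j \neq i$, contradicting \cref{lem:basicnoedge}. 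You instead use \cref{lem:basicnoedge} to see that the non-empty parts $X \cap X_i$ are pairwise non-adjacent, then invoke connectedness of $G[X]$ from \cref{prop:tight_cut_connected} to conclude there is only one non-empty part, and finish with the parity clash between the even set $X \cap X_{j_0}$ and the odd (tight) set $X$. Both arguments are sound and of comparable length; the paper's stays entirely within the matching-parity toolkit of this section, while yours imports the connectivity statement from the preliminaries. One small remark: your parenthetical about restricting to the component containing $X$ is unnecessary (the paper's standing convention is that matching covered graphs are connected, and \cref{lem:basictight} already relies on this), and as phrased it is slightly off, since in a genuinely disconnected graph a tight set need not lie inside a single component; but no issue arises under the paper's assumptions.
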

\begin{proof}
    By repeated applications of \cref{lem:basictight}, $(X \setminus X_i)_{i \in I}$ is a family of tight sets and any intersection of two of its elements is tight.
    So by applying \cref{cor:infintersection} to this family we see that $X \setminus \bigcup_{i \in I}X_i$ is either tight or empty.
    
    Suppose for a contradiction that it is empty.
    Let $M$ be a perfect matching, let $e$ be the edge of $M$ in the boundary of $X$ and let $i \in I$ be chosen such that the endpoint of $e$ in $X$ is in $X_i.$
    Since $X \cap X_i$ is even there must be some other edge $f \in M$ in the boundary of $X_i$, and the endpoint of $f$ outside $X_i$ must lie in some $X_j$ with $j \neq i.$
    But this contradicts \cref{lem:basicnoedge}.
\end{proof}

\subsection{Characterising cycles}

\begin{lemma}\label{lem:charcycle}
    Let $G$ be a matching covered graph with at least six vertices and with a cyclic order on its vertex set such that any set of three consecutive vertices is tight.
    Then $G$ is a cycle.
\end{lemma}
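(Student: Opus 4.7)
The plan is to prove, in two stages, first that the cycle given by the cyclic order is a subgraph of $G$, and then that $G$ has no further edges. Write $v_i$ for the vertices in cyclic order and set $T_i \coloneqq \{v_{i-1}, v_i, v_{i+1}\}$, so every $T_i$ is tight by hypothesis.

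For the first stage, I would rule out every chord of cyclic distance two. Since $\Abs{\V{G}} \geq 6$, the tight sets $T_{i-1}$ and $T_{i+1}$ meet exactly in the singleton $\{v_i\}$, which is odd because every perfect matching uses one edge at $v_i$. \Cref{lem:basictight} then gives no edge between $T_{i-1} \setminus T_{i+1} = \{v_{i-2}, v_{i-1}\}$ and $T_{i+1} \setminus T_{i-1} = \{v_{i+1}, v_{i+2}\}$; in particular $v_{i-1}v_{i+1} \notin \E{G}$. Now \cref{prop:tight_cut_connected} forces the three-vertex subgraph $G[T_i]$ to be connected, so it must contain both of the remaining candidate edges $v_{i-1}v_i$ and $v_iv_{i+1}$. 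Varying $i$ shows the cyclic-order cycle sits inside $G$.

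For the second stage, suppose for contradiction that some chord $v_jv_k$ has cyclic distance at least three. Since $G$ is matching covered, some perfect matching $M \in \Perf{G, M_0}$ contains $v_jv_k$, where $M_0$ denotes the reference matching. Because $v_k$ lies outside $T_{j+1}$, the edge $v_jv_k$ is in $\cut{}{T_{j+1}}$, and by tightness of $T_{j+1}$ it is the unique edge of $M$ there. This forces $v_{j+1}$ and $v_{j+2}$ to be matched inside $T_{j+1}$ via the edge $v_{j+1}v_{j+2}$ established in the first stage. But then $v_jv_k$ (whose far endpoint lies outside $T_j$) and $v_{j+1}v_{j+2}$ (whose far endpoint $v_{j+2}$ lies outside $T_j$) are both edges of $M$ in $\cut{}{T_j}$, contradicting tightness of $T_j$.

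I expect the only real obstacle to be bookkeeping with cyclic indices: I use $\Abs{\V{G}} \geq 6$ both to ensure that $T_{i-1}$ and $T_{i+1}$ really intersect in just $\{v_i\}$, and to guarantee that a chord of cyclic distance at least three has its far endpoint outside $T_j \cup T_{j+1}$. Beyond this, the argument is entirely local, relying only on \cref{lem:basictight}, \cref{prop:tight_cut_connected}, and the definition of tightness.
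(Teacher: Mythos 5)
Your proof is correct, but it is organised differently from the paper's. The paper disposes of \emph{all} chords in a single argument: given a chord $vw$ and a matching $M$ containing it, tightness of the triple $\Set{x,v,y}$ around $v$ forces the $M$-edge at the cyclic neighbour $x$ to be $xy$, and then $vw$ and $xy$ both lie in the boundary of the shifted triple $\Set{x',x,v}$, contradicting its tightness (the hypothesis $\Abs{\V{G}}\geq 6$ only enters to choose, without loss of generality, $w \neq x'$). Your stage two is essentially this same trick, restricted to chords of cyclic distance at least three, while your stage one handles distance-two chords separately via \cref{lem:basictight} applied to the odd singleton intersection $T_{i-1}\cap T_{i+1}$; so you need two case distinctions where the paper needs none, and you additionally invoke \cref{prop:tight_cut_connected}. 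What your route buys is completeness on a point the paper glosses over: the paper's proof only shows that every edge of $G$ joins consecutive vertices and never argues that all consecutive pairs are actually joined (this needs connectivity plus matching-coveredness, or your explicit argument that $G[T_i]$ is connected and the edge $v_{i-1}v_{i+1}$ is absent), whereas you establish the presence of the cycle edges explicitly. Note also that in your stage two the appeal to stage one is not really needed at that point: once the $M$-edge at $v_{j+1}$ is forced to stay inside $T_{j+1}$ and cannot use $v_j$, the edge $v_{j+1}v_{j+2}$ exists simply because $M$ contains it.
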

\begin{proof}
    Suppose for a contradiction that there is an edge $e$ joining vertices $v$ and $w$ which are not adjacent in the cyclic order.
    Let $M$ be a matching containing $e.$
    Let $x$ and $y$ be the neighbours of $v$ in the cyclic order and let $x'$ and $y'$ be the neighbours of $x$ and $y$ in the cyclic order other than $v.$
    Since $G$ has at least $6$ vertices, $x' \neq y'$ and so without loss of generality $w \neq x'.$
    Since $e$ is in the boundary of $\Set{x,v,y}$, no other edge of $M$ can be, so the edge in $M$ incident to $x$ can only be $xy.$
    But then both $e$ and $xy$ are in the boundary of the tight set $\Set{x',x,v}$, giving the desired contradiction.
\end{proof}
For a cycle $C$ we call a nonempty proper subset $I$ of $V(C)$ an \emph{interval} if $C[I]$ is connected.
\begin{remark} \label{rem:cycle_tight_cuts}
    Let $C$ be a matching covered cycle.
    A set $X \subseteq V(C)$ is tight in $C$ if and only if it is an interval in $C$ with odd cardinality.
\end{remark}
\begin{lemma}\label{lem:extendcycle}
    Let $G$ be a matching covered graph and let $X = \Set{p_1, p_2, p_3}$ be a tight set such that collapsing $X$ in $G$\footnote{this means that we take the collapse of the tight set partition consisting of $X$ and the singletons of all vertices not in $X.$} gives rise to a graph $H$ whose vertices can be ordered cyclically such that any set of three consecutive vertices is tight.
    Let the neighbours of $X$ in $H$ be $q$ and $q'.$
    Suppose further that both $\Set{q, p_1, p_2}$ and $\Set{p_2, p_3, q'}$ are tight.
    Then $G$ is a cycle. 
\end{lemma}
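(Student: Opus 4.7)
The plan is to extend the cyclic order on $V(H)$ to one on $V(G)$ by replacing $X$ with the three-vertex segment $p_1, p_2, p_3$ (in the direction consistent with the hypotheses, namely $\ldots, q, p_1, p_2, p_3, q', \ldots$), and then apply \cref{lem:charcycle}. Note first that $H$ is matching covered by \cref{lem:collapse_matching_covered}, so $|V(H)|$ is even; since $X$ has two distinct neighbours $q \neq q'$ in the cyclic order, $|V(H)| \geq 4$, whence $|V(G)| = |V(H)| + 2 \geq 6$, which is what \cref{lem:charcycle} requires.

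Most three-consecutive-vertex triples in the new cyclic order are easily seen to be tight. Triples disjoint from $\{p_1, p_2, p_3\}$ coincide with tight triples of $H$ not containing $X$, and hence are tight in $G$ via \cref{lem:tight_sets_in_collapse}. The triples $X = \{p_1, p_2, p_3\}$, $\{q, p_1, p_2\}$, and $\{p_2, p_3, q'\}$ are tight by assumption. The only remaining triples are $\{q'', q, p_1\}$ and $\{p_3, q', q'''\}$, where $q''$ and $q'''$ denote the neighbours of $q$ and $q'$, respectively, in the cyclic order of $H$ other than $X$ (these can coincide when $|V(H)| = 4$).

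These two triples are the main obstacle; I treat $\{q'', q, p_1\}$, the other being symmetric. Set $A \coloneqq \{q'', q, p_1, p_2, p_3\}$, which is tight in $G$ as the lift of the tight triple $\{q'', q, X\}$ of $H$ via \cref{lem:tight_sets_in_collapse}, and let $B \coloneqq \{p_2, p_3, q'\}$, tight by assumption. Their intersection $A \cap B = \{p_2, p_3\}$ has even parity, because any two-vertex set is even (its two vertices are either matched to each other, contributing no matching edges to the boundary, or each matched outside, contributing two). Writing $A$ as the disjoint union $(A \cap B) \sqcup (A \setminus B)$ and applying \cref{lem:even_odd}, the odd parity of $A$ together with the even parity of $A \cap B$ forces $A \setminus B = \{q'', q, p_1\}$ to be odd. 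Since $V(G) \setminus B$ is tight (the boundary of a set equals the boundary of its complement) and has odd intersection $A \setminus B$ with the tight set $A$, \cref{lem:basictight} shows $\{q'', q, p_1\}$ is tight.

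With every three consecutive vertices of the extended cyclic order on $V(G)$ shown to be tight, \cref{lem:charcycle} yields that $G$ is a cycle.
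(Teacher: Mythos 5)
Your proof is correct and follows essentially the same route as the paper's: extend the cyclic order to $V(G)$, reduce to the two boundary triples $\Set{q'',q,p_1}$ and $\Set{p_3,q',q'''}$, and establish their tightness by applying \cref{lem:basictight} to the lifted five-element tight set and the complement of $\Set{p_2,p_3,q'}$. Your write-up merely spells out the parity bookkeeping and the $\Abs{\V{G}}\geq 6$ check that the paper leaves implicit.
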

\begin{proof}
    Let $r$ and $r'$ be the neighbours of $q$ and $q'$ other than $X$ after collapsing $X.$
    Note that since collapsing $X$ in $G$ gives rise to a cycle, $G$ has at least six vertices and $r,r^\prime \not \in \Set{p_1,p_2,p_3,q,q^\prime}.$
    By \cref{lem:charcycle} together with \cref{rem:cycle_tight_cuts} it suffices to show that $\Set{r, q, p_1}$ and $\Set{p_3, q', r'}$ are tight, and by symmetry it suffices to show that the first of these is.
    But this follows by applying \cref{lem:basictight} to $\Set{r, q, p_1, p_2, p_3}$ and the complement of $\Set{p_2, p_3, q'}.$ 
\end{proof}

\subsection{Passable sets}
\label{subsec:passable_sets}

We now need a notion capturing when we can make small modifications to tight set partitions by moving some elements from one partition class to another.
As in \cref{subsec:Basic_facts_about_tight_sets} we fix a matching covered graph $G$ for this \namecref{subsec:passable_sets}.

\begin{definition}[passable]
    A set $S$ is \emph{passable} for $P$ if $P \setminus S$  and $P \cup S$ are tight.
    It is \emph{passable between $P$ and $Q$} if $S \subseteq P \cup Q$ and $S$ is passable for both.
\end{definition}

\begin{remark}
    Note that any passable set is even, since both $P\setminus S$ and $P \cup S$ are tight and therefore odd.
\end{remark}

\begin{lemma}\label{lem:notodd}
  Let $S$ and $S'$ be sets which are passable between disjoint tight sets $P$ and $Q$ whose union is not the whole vertex set.
  Then $P \cap S \cap S'$ is even. 
\end{lemma}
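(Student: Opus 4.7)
The plan is to reduce the claim to showing that $P \setminus (A \cup A')$ is odd, and then to derive a contradiction from the failure via \cref{lem:nothree}. Write $A \coloneqq P \cap S$, $A' \coloneqq P \cap S'$ and $R \coloneqq \V{G} \setminus (P \cup Q)$. Since $S$ is passable, $P$ and $P \setminus A = P \setminus S$ are both tight, so from $P = (P \setminus A) \sqcup A$ I conclude that $A$ is even; likewise $A'$ is even. Moreover $R$ has parity (since $\Cut{}{P \cup Q} \subseteq \Cut{}{P} \cup \Cut{}{Q}$ is finite) and that parity is the sum of those of $P$ and $Q$, hence even.

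First I would establish a parity identity inside $P$. The decompositions $A = (A \cap A') \sqcup (A \setminus A')$ and $A' = (A \cap A') \sqcup (A' \setminus A)$ together with $A$, $A'$ being even force the three sets $A \cap A'$, $A \setminus A'$, $A' \setminus A$ to share a common parity. Summing parities over
\[P = (P \setminus (A \cup A')) \sqcup (A \cap A') \sqcup (A \setminus A') \sqcup (A' \setminus A)\]
and reducing $3 \equiv 1 \pmod 2$, the oddness of $P$ yields that $P \setminus (A \cup A')$ and $A \cap A'$ have opposite parities. So it suffices to prove that $P \setminus (A \cup A')$ is odd.

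Assume for contradiction that it is even; then $A \cap A'$, $A \setminus A'$, and $A' \setminus A$ are all odd. I would then apply \cref{lem:basictight} to the tight sets $P \setminus A$ and $Q \cup A'$: their intersection $(P \setminus A) \cap (Q \cup A') = A' \setminus A$ is odd, so their union $(P \setminus A) \cup (Q \cup A') = Q \cup (P \setminus (A \setminus A'))$ is tight, and consequently its complement $R \cup (A \setminus A')$ is tight. Applying \cref{lem:basictight} symmetrically to $P \setminus A'$ and $Q \cup A$ shows that $R \cup (A' \setminus A)$ is tight.

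The final and, I expect, trickiest step is to invoke \cref{lem:nothree} on the three tight sets $R \cup (A \setminus A')$, $R \cup (A' \setminus A)$, and $\V{G} \setminus P$. Since $A \setminus A'$ and $A' \setminus A$ are disjoint subsets of $P$ while $R$ is disjoint from $P$, all three pairwise intersections equal $R$, which is even. \Cref{lem:nothree} then forces $R = \emptyset$, contradicting the hypothesis $P \cup Q \neq \V{G}$. This is the only point in the argument where that hypothesis is used, and I expect spotting this precise choice of three tight sets to be the main obstacle.
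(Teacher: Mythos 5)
Your proof is correct and is essentially the paper's own argument: you build the same three tight sets $R \cup (A \setminus A')$, $R \cup (A' \setminus A)$ and $\V{G} \setminus P$ via the same applications of \cref{lem:basictight} and feed them into \cref{lem:nothree}, the only cosmetic difference being that the paper first normalises $S, S' \subseteq P$ while you work with $A = P \cap S$, $A' = P \cap S'$ and an equivalent parity reduction. Do make explicit that $Q \cup A'$ (and $Q \cup A$) is tight because $Q \cup A' = Q \cup S'$, using $S' \subseteq P \cup Q$ — this is exactly where passability for $Q$ and the normalisation enter.
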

\begin{proof}
    Suppose not for a contradiction.
    We may replace $S$ with $S \cap P$ if necessary, since it is passable between $P$ and $Q$ as well.
    Thus we may assume without loss of generality that $S$ is a subset of $P.$
    Similarly we may assume that $S'$ is also a subset of $P.$
    By assumption $P \cap S \cap S' = S \cap S'$ is odd, which implies that $S \setminus S'$ and $S' \setminus S$ are also odd, since $S$ and $S'$ are passable.
    
    The set $P \setminus S$ is tight since $S$ is passable for $P.$
    Let $A$ be $\V{G} \setminus (P \cup Q).$
    Now applying \cref{lem:basictight} again to $P \setminus S$ and $Q \cup S'$ we see that their union $(P \cup Q) \setminus (S \setminus S')$ is tight, hence so is its complement $A \cup (S \setminus S').$
    Similarly $A \cup (S' \setminus S)$ is tight.
    Finally, $A \cup Q$ is tight since it is the complement of $P.$
    Applying \cref{lem:nothree} to these three tight sets yields the desired contradiction, since by assumption $A$ is nonempty.
\end{proof}

\begin{lemma}\label{lem:passcup}
    Let $S$ and $S'$ be sets which are passable between disjoint tight sets $P$ and $Q$ whose union is not the whole vertex set.
    Then $S \cup S'$ is also passable between $P$ and $Q.$
\end{lemma}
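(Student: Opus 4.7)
The plan is to check the three conditions that make $S \cup S'$ passable between $P$ and $Q$. The inclusion $S \cup S' \subseteq P \cup Q$ is immediate, and by the symmetry of the hypotheses in $P$ and $Q$ it then suffices to verify that both $P \setminus (S \cup S')$ and $P \cup S \cup S'$ are tight. My approach is to express each of these as an intersection (respectively a union) of two already-tight sets whose intersection is odd, and then invoke \cref{lem:basictight}.

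For $P \setminus (S \cup S') = (P \setminus S) \cap (P \setminus S')$ the two factors are tight by passability of $S$ and $S'$. To see that their intersection is odd, I first apply \cref{lem:notodd} to conclude that $P \cap S \cap S'$ is even. Next, from the disjoint decomposition $P = (P \setminus S) \sqcup (P \cap S)$ in which both $P$ and $P \setminus S$ are odd (being tight), the parity rules of \cref{lem:even_odd} force $P \cap S$ to be even, and symmetrically $P \cap S'$ is even. Writing $P \cap (S \cup S') = (P \cap S) \sqcup \bigl((P \cap S') \setminus (P \cap S \cap S')\bigr)$ as a disjoint union of even sets (once more via \cref{lem:even_odd}) shows $P \cap (S \cup S')$ is even, hence $P \setminus (S \cup S')$ is odd. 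An application of \cref{lem:basictight} now gives its tightness.

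For $P \cup S \cup S' = (P \cup S) \cup (P \cup S')$ the two factors on the right are tight, and their intersection is $P \cup (S \cap S')$. Because $S \cap S' \subseteq P \cup Q$ and $P \cap Q = \emptyset$, we have $(S \cap S') \setminus P = Q \cap S \cap S'$, which is even by \cref{lem:notodd} applied with the roles of $P$ and $Q$ exchanged, the hypotheses of that lemma being symmetric in $P$ and $Q$. Thus $P \cup (S \cap S') = P \sqcup (Q \cap S \cap S')$ is odd, and a second application of \cref{lem:basictight} delivers tightness of $P \cup S \cup S'$. The main potential pitfall, rather than a genuine obstacle, is the parity bookkeeping itself: in the infinite setting ``even'' and ``odd'' refer to the cardinality of the intersection of the boundary with the fixed matching rather than to set sizes, so every combination of parities must be routed through \cref{lem:even_odd} rather than through naive inclusion--exclusion.
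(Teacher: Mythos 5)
Your proof is correct and follows essentially the same route as the paper's: reduce by symmetry to passability for $P$, use \cref{lem:notodd} (and its application with $P$ and $Q$ exchanged) together with the parity rules of \cref{lem:even_odd} to see that $P\setminus(S\cup S')$ and $P\cup(S\cap S')$ are odd, and then apply \cref{lem:basictight} to $P\setminus S,\,P\setminus S'$ and to $P\cup S,\,P\cup S'$. The only cosmetic difference is that you obtain the oddness of $P\cup(S\cap S')$ directly from the evenness of $Q\cap S\cap S'$, whereas the paper phrases this via the evenness of $Q\cap(S\cup S')$; the content is the same.
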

\begin{proof}
    By symmetry it suffices to show that $S \cup S'$ is passable for $P.$
    We know that $P \cap S$ and $P \cap S'$ are even since $P \setminus S$ and $P \setminus S'$ are odd, and $P \cap S \cap S'$ is even by \cref{lem:notodd}.
    $P \cap (S \cup S')$ is even and therefore $P \setminus (S \cup S')$ is odd.
    So applying \cref{lem:basictight} to $P \setminus S$ and $P \setminus S'$ shows that $P \setminus (S \cup S')$ is tight.
    
    A symmetric argument shows that $Q \cap (S \cup S')$ is also even and so $P \cup (S \cap S')$ is odd.
    Thus applying \cref{lem:basictight} to $P \cup S$ and $P \cup S'$ shows that $P \cup S \cup S'$ is tight.
    Thus $S \cup S'$ is passable for $P$, as required.
\end{proof}

\begin{lemma}\label{lem:maxpass}
    Let $P$ and $Q$ be disjoint tight sets whose union is not the whole vertex set. Let $\mathcal{S}$ be any set of sets which are passable between $P$ and $Q.$ 
    Then $S \coloneqq \bigcup \mathcal{S}$ is itself passable between $P$ and $Q.$ In particular, if $\mathcal{S}$ is the set of all sets which are passable between $P$ and $Q$ then $S$ is the largest such set.
\end{lemma}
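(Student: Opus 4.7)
The plan is to verify that each of $P\cup S$, $P\setminus S$, $Q\cup S$ and $Q\setminus S$ is tight; together with $S\subseteq P\cup Q$ (which is immediate since every $T\in\mathcal{S}$ is passable between $P$ and $Q$, hence contained in $P\cup Q$) this is exactly the definition of $S$ being passable between $P$ and $Q$. The central engine is \cref{lem:passcup}, which closes the collection of sets passable between $P$ and $Q$ under pairwise (hence finite) unions; this is precisely the condition needed to feed \cref{lem:infunion} and \cref{cor:infintersection}.

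First I would write $P\cup S=\bigcup_{T\in\mathcal{S}}(P\cup T)$. Each $P\cup T$ is tight by passability of $T$, and for any $T_1,T_2\in\mathcal{S}$, \cref{lem:passcup} makes $T_1\cup T_2$ passable between $P$ and $Q$, so $(P\cup T_1)\cup(P\cup T_2)=P\cup(T_1\cup T_2)$ is tight. Thus \cref{lem:infunion} applies and yields that $P\cup S$ is either tight or equals $\V{G}$; but $P\cup S\subseteq P\cup Q\subsetneq\V{G}$ by hypothesis, so $P\cup S$ is tight. The same argument with the roles of $P$ and $Q$ interchanged shows $Q\cup S$ is tight.

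For $P\setminus S=\bigcap_{T\in\mathcal{S}}(P\setminus T)$ I would instead invoke \cref{cor:infintersection}: each $P\setminus T$ is tight, and pairwise intersections $(P\setminus T_1)\cap(P\setminus T_2)=P\setminus(T_1\cup T_2)$ are tight (and therefore odd) by \cref{lem:passcup}. So $P\setminus S$ is either tight or empty, and the only real obstacle is to rule out the empty case.

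To do so I would argue that $S\cap P$ is \emph{even}, whereas $P$ is odd, so the two cannot coincide. Observe the disjoint decomposition $Q\cup S=Q\sqcup(S\cap P)$, which uses $S\subseteq P\cup Q$ and $P\cap Q=\emptyset$. Both $Q$ and $Q\cup S$ are tight, hence odd, and $S\cap P=(Q\cup S)\setminus Q$ has a parity by \cref{even-odd:differences_are_even_or_odd}. If $S\cap P$ were odd, then \cref{even-odd:odd_and_odd_is_even} would force $Q\cup S$ to be even, a contradiction; hence $S\cap P$ is even, so $S\cap P\neq P$ and $P\setminus S\neq\emptyset$. The symmetric argument shows $Q\setminus S$ is tight, completing the verification that $S$ is passable between $P$ and $Q$. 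The ``in particular'' clause is then immediate, since every $T\in\mathcal{S}$ satisfies $T\subseteq S$.
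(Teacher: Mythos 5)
Your proof is correct, and for the unions it follows the paper exactly: both you and the paper use \cref{lem:passcup} to close the passable sets under pairwise unions and then feed the families $(P\cup T)_{T\in\mathcal{S}}$ and $(Q\cup T)_{T\in\mathcal{S}}$ into \cref{lem:infunion}. The divergence is in how $P\setminus S$ (and $Q\setminus S$) is handled. The paper notes that, since $S\subseteq P\cup Q$ and $P\cap Q=\emptyset$, one has $P\setminus S=P\setminus(Q\cup S)$, and then applies \cref{lem:basictight} once to the complement of $P$ and the already-established tight set $Q\cup S$ (their intersection is $Q$, which is odd); this yields tightness of $P\setminus S$ in one stroke, with non-emptiness coming for free because the relevant union produced by \cref{lem:basictight} is tight and hence proper, and the paper moreover only argues one side, citing symmetry. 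You instead re-run the infinitary machinery via \cref{cor:infintersection} on the family $(P\setminus T)_{T\in\mathcal{S}}$, which leaves you the extra task of excluding the empty intersection; your parity argument for that (using $Q\cup S=Q\sqcup(S\cap P)$, \cref{even-odd:differences_are_even_or_odd} and \cref{even-odd:odd_and_odd_is_even} to show $S\cap P$ is even while $P$ is odd) is sound, and your explicit exclusion of the ``whole vertex set'' outcome of \cref{lem:infunion} via $P\cup S\subseteq P\cup Q\subsetneq\V{G}$ makes precise a point the paper leaves implicit. So the two arguments buy the same conclusion; the paper's is shorter because it recycles $Q\cup S$ through \cref{lem:basictight}, while yours is a touch more self-contained in its bookkeeping but needs the additional parity step.
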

\begin{proof}
    By symmetry it suffices to show that $S$ is passable for $P.$  
    $(P \cup T)_{T \in \mathcal{S}}$ is a family of tight sets and by \cref{lem:passcup} the union of any two of them is also tight, so by \cref{lem:infunion} the union of the whole family is tight.
    A similar argument shows that $Q \cup S$ is also tight, and so $P \setminus S = P \setminus (Q \cup S)$ is tight by \cref{lem:basictight} applied to the complement of $P$ and to $Q \cup S.$
\end{proof}

\begin{lemma}\label{lem:modifypass}
Let $P$ be a tight set, let $S$ be a passable set for $P$ and let $(S_i)_{i \in I}$ be a family of passable sets for $P$ such that $S$ and all of the $S_i$ are disjoint and such that $P \cup S \cup \bigcup_{i \in I}S_i$ is not the whole vertex set. Suppose further that all the sets of the form $(P \cup S) \setminus S_i$ or $(P \cup S_i) \setminus S$ are tight.
Then $S$ is passable for both $P \cup \bigcup_{i \in I}S_i$ and $P \setminus \bigcup_{i \in I}S_i.$
\end{lemma}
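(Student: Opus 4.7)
The plan is to set $T \coloneqq \bigcup_{i \in I} S_i$ and, using $S \cap T = \emptyset$, to reduce the desired statement (that $S$ is passable for $P \cup T$ and for $P \setminus T$) to the tightness of the four sets
\[
P \cup S \cup T, \qquad (P \setminus S) \cup T, \qquad (P \cup S) \setminus T, \qquad (P \setminus S) \setminus T.
\]
Each of these will be obtained by applying either \cref{lem:infunion} or \cref{cor:infintersection} to a carefully chosen $I$-indexed family of tight sets.

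For the first set, I will use the family $\{P \cup S \cup S_i\}_{i \in I}$: each member is tight by \cref{lem:basictight} applied to $P \cup S$ and $P \cup S_i$, whose intersection is the odd (tight) set $P$, and any pairwise union intersects in the odd (tight) set $P \cup S$, so pairwise unions are tight by \cref{lem:basictight}; then \cref{lem:infunion} together with the hypothesis $P \cup S \cup T \neq \V{G}$ finishes. For the second set I use $\{(P \cup S_i) \setminus S\}_{i \in I} = \{(P \setminus S) \cup S_i\}_{i \in I}$, which is tight by hypothesis and for which \cref{lem:infunion} applies by the same argument with $P \cup S$ replaced by $P \setminus S$. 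For the third set I apply \cref{cor:infintersection} to the hypothesis-tight family $\{(P \cup S) \setminus S_i\}_{i \in I}$: the pairwise intersection is $(P \cup S) \setminus (S_i \cup S_j)$, and since the passability of $S_i$ and $S_j$ for $P$ makes $S_i \cap P$ and $S_j \cap P$ even and disjoint while $S_k \cap S = \emptyset$, \cref{lem:even_odd} shows this intersection is odd. For the fourth set I take $\{P \setminus (S \cup S_i)\}_{i \in I}$; each member is tight as the intersection via \cref{lem:basictight} of the tight sets $P \setminus S$ and $P \setminus S_i$ (with odd intersection by the same parity computation), and the pairwise check for \cref{cor:infintersection} is analogous.

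The main obstacle is the degenerate alternative in \cref{cor:infintersection}, where the intersection may be empty rather than tight. I rule this out by a parity argument. If, for example, $(P \setminus S) \setminus T$ were empty then $P \setminus S \subseteq T$ and, since the $S_i$ are pairwise disjoint and disjoint from $S$, we would have $P \setminus S$ expressed as the disjoint union $\bigcup_{i \in I}(P \cap S_i)$ of sets each of which is even (because $S_i$ is passable for $P$), contradicting the odd parity of the tight set $P \setminus S$. The analogous argument handles $(P \cup S) \setminus T$: if it were empty then $S \subseteq T$ would force $S = \emptyset$ and in turn $P \subseteq T$, and the same parity argument applied to $P$ yields a contradiction. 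Once all four tightness claims are in hand, unpacking the definition of passable (and using $S \cap T = \emptyset$ to rewrite $(P \cup T) \setminus S = (P \setminus S) \cup T$ and $(P \setminus T) \cup S = (P \cup S) \setminus T$) gives the lemma.
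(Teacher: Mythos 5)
Your treatment of the first two sets is exactly the paper's argument: tightness of $P \cup S \cup T$ via \cref{lem:basictight} and \cref{lem:infunion} applied to the sets $P \cup S \cup S_i$, and tightness of $(P\setminus S)\cup T$ via \cref{lem:infunion} applied to the hypothesis-tight sets $(P\cup S_i)\setminus S$, with the hypothesis $P\cup S\cup T\neq V(G)$ excluding the degenerate alternative. The problem is in the second half. You obtain $(P\cup S)\setminus T$ and $(P\setminus S)\setminus T$ from \cref{cor:infintersection} and then exclude the empty alternative by arguing that an odd set (such as $P\setminus S$ or $P$) cannot be a disjoint union of the even sets $P\cap S_i$. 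That inference is only valid when finitely many parts are involved: in an infinite matching covered graph, parity is not countably additive, and an odd set can perfectly well be partitioned into infinitely many even sets. For a concrete instance, take the two-sided ladder with vertices $(i,j)$, $i\in\mathbb{Z}$, $j\in\{0,1\}$, matching covered with respect to the rung matching $M$; the set $Y=\{(-1,0)\}\cup\{(i,j)\colon i\geq 0\}$ is odd (exactly one $M$-edge leaves it), yet it is the disjoint union of the even sets $\{(-1,0),(0,1)\}$ and $\{(i,0),(i+1,1)\}$ for $i\geq 0$, each of which meets $M$ in exactly two boundary edges. Since the lemma is stated for an arbitrary, possibly infinite, family $(S_i)_{i\in I}$ and is applied in the paper in situations where this family can be infinite, your emptiness argument has a genuine gap.

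Ruling out emptiness really does require the tightness hypotheses on the sets $(P\cup S)\setminus S_i$ (which you use only to get membership in your \cref{cor:infintersection} family, not in the emptiness step), because one needs the structural consequence of \cref{lem:basicnoedge} that no matching edge can run between two of the even pieces. This is precisely what \cref{lem:deletemany} packages: its proof handles the empty case by locating a second matching edge in the boundary of one piece and contradicting \cref{lem:basicnoedge}. The paper's proof of this lemma accordingly obtains your third and fourth sets by applying \cref{lem:deletemany} to $P\cup S$, respectively $P\setminus S$, with the $X_i$ taken to be the complements of the tight sets $(P\cup S)\setminus S_i$; their intersections with $P\cup S$ (resp.\ $P\setminus S$) are the even, disjoint sets $P\cap S_i$, so the lemma applies and yields tightness, nonemptiness included. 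Replacing your \cref{cor:infintersection}-plus-parity step by this application of \cref{lem:deletemany} closes the gap; the rest of your argument stands.
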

\begin{proof}
    The set $(P \cup \bigcup_{i \in I} S_i) \cup S$ is tight by \cref{lem:infunion} applied to the sets $X_i \coloneqq (P \cup S_i) \cup S$, which are all tight by \cref{lem:basictight}. Similarly, the set $(P \cup \bigcup_{i \in I}S_i) \setminus S$ is tight by \cref{lem:infunion} applied to the sets $X_i \coloneqq (P \cup S_i) \setminus S.$ This shows that $S$ is passable for $P \cup \bigcup_{i \in I}S_i.$
    
    The set $(P \setminus \bigcup_{i \in I} S_i) \cup S$ is tight by \cref{lem:deletemany} applied to $P \cup S$ and to the $X_i$ given by the complements of the tight sets $(P \cup S) \setminus S_i.$ Similarly $(P \setminus \bigcup_{i \in I} S_i) \setminus S$ is tight by \cref{lem:deletemany} applied to $P \setminus S$ with the same choice of $X_i.$ This shows that $S$ is passable for $P \setminus \bigcup_{i \in I} S_i.$
\end{proof}

\subsection{The interaction between tight sets and tight set partitions}
\label{subsec:interaction_tight_set_tight_set_partition}

\begin{definition}
    Let $G$ be a matching covered graph.
    Let $\mathcal{P}$ be a tight set partition of $G$ and let $X$ be a tight set in $G.$
    We denote by $\oddIntersections{\mathcal{P}}{X}$ the set of all elements of $\mathcal{P}$ whose intersection with $X$ is odd.
\end{definition}
For the rest of this \namecref{subsec:interaction_tight_set_tight_set_partition}, fix a matching covered graph $G.$
\begin{lemma}\label{lem:uncrosscut}
    Let $\mathcal{P}$ be a tight set partition of $G$ and let $X$ be a tight set.
    Then $\bigcup \oddIntersections{\mathcal{P}}{X}$ is a tight set. Furthermore, $\oddIntersections{\mathcal{P}}{X} \notin \Set{\emptyset, \mathcal{P}}.$
\end{lemma}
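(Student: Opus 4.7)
The plan is to set $U \coloneqq \bigcup \oddIntersections{\mathcal{P}}{X}$ and prove tightness of $U$ by sandwiching it between two easier tight sets $U \cap X$ and $U \cup X$, both obtained from \cref{lem:deletemany}, and then using an edge-counting argument to handle $U$ itself. The second part of the statement will then drop out for free from the non-triviality of the sandwich sets.

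To show $U \cap X$ is tight, I apply \cref{lem:deletemany} to $X$ with the family $(P)_{P \in \mathcal{P}\setminus\oddIntersections{\mathcal{P}}{X}}$: each such $P$ is tight, $P \cap X$ is even by definition of $\oddIntersections{\mathcal{P}}{X}$, and the sets $P \cap X$ are pairwise disjoint as $\mathcal{P}$ is a partition. The conclusion rearranges to $X \cap U$ being tight. Dually, applying \cref{lem:deletemany} to the tight set $\V{G} \setminus X$ with the family $(P)_{P \in \oddIntersections{\mathcal{P}}{X}}$ works, because each $P \setminus X$ is even by \cref{lem:even_odd} (as $P$ is odd and $P \cap X$ is odd) and these are pairwise disjoint; this yields tightness of $(\V{G} \setminus X) \setminus U$, equivalently of $X \cup U$. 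The second part of the lemma already follows: tightness of $X \cap U$ forces $\oddIntersections{\mathcal{P}}{X} \neq \emptyset$, and tightness of $(\V{G}\setminus X)\setminus U$ ensures $X \cup U \neq \V{G}$, hence $\oddIntersections{\mathcal{P}}{X} \neq \mathcal{P}$.

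For tightness of $U$, partition $\V{G}$ into $V_1 \coloneqq U \cap X$, $V_2 \coloneqq U \setminus X$, $V_3 \coloneqq X \setminus U$, $V_4 \coloneqq \V{G} \setminus (X \cup U)$, and for $M' \in \Perf{G,M}$ let $n_{ij}$ be the number of edges of $M'$ with one endpoint in $V_i$ and the other in $V_j$. Tightness of $U \cap X$, $U \cup X$, and $X$ gives the three equations $n_{12}+n_{13}+n_{14} = 1$, $n_{14}+n_{24}+n_{34} = 1$, and $n_{12}+n_{14}+n_{23}+n_{34} = 1$, whose linear combination yields $|M' \cap \partial U| = n_{13}+n_{14}+n_{23}+n_{24} = 1 + 2n_{23}$. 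The main obstacle, and the crux of the argument, is showing $n_{23} = 0$: any vertex $x \in V_2$ lies in $P \setminus X$ for some $P \in \oddIntersections{\mathcal{P}}{X}$, and any $y \in V_3$ lies in $X$ but not in $P$ (since $P \subseteq U$), so $y \in X \setminus P$; an edge $xy$ would therefore join $P \setminus X$ to $X \setminus P$, contradicting the last clause of \cref{lem:basictight} applied to $P$ and $X$ (whose intersection is odd). Hence no edge of $G$ runs between $V_2$ and $V_3$, so $n_{23} = 0$ for every $M'$, giving $|M' \cap \partial U| = 1$ and thereby establishing that $U$ is tight.
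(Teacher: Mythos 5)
Your proof is correct, but the way you establish tightness of $U \coloneqq \bigcup \oddIntersections{\mathcal{P}}{X}$ itself differs from the paper's route. The paper also starts by applying \cref{lem:deletemany} to $\V{G} \setminus X$ with the family $\oddIntersections{\mathcal{P}}{X}$ to get that $X \cup U$ is tight, but then it simply applies \cref{lem:deletemany} a second time, to the tight set $X \cup U$ with the family $\mathcal{P} \setminus \oddIntersections{\mathcal{P}}{X}$ (whose intersections with $X \cup U$ are just the even sets $P \cap X$), and observes that $(X \cup U) \setminus \bigcup (\mathcal{P} \setminus \oddIntersections{\mathcal{P}}{X}) = U$; the non-triviality of $\oddIntersections{\mathcal{P}}{X}$ is then read off from the fact that the tight set $U$ is neither empty nor all of $\V{G}$. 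You instead establish both sandwich sets $X \cap U$ and $X \cup U$ via \cref{lem:deletemany} and then finish with a four-block count of matching edges, using the no-edge clause of \cref{lem:basictight} (applied to $X$ and each $P \in \oddIntersections{\mathcal{P}}{X}$) to kill the $V_2$--$V_3$ edges and force $|M' \cap \cut{}{U}| = 1 + 2n_{23} = 1$. Your counting argument is more hands-on and makes the structural reason visible (there are literally no edges between $U \setminus X$ and $X \setminus U$), at the cost of being longer; the paper's second \cref{lem:deletemany} application is slicker and avoids the case analysis entirely. A pleasant side effect of your version is that the ``furthermore'' part drops out already from the two sandwich sets, before $U$ is even shown tight, whereas the paper deduces it afterwards from tightness of $U$.
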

\begin{proof}
    Let $Y$ be the complement of $X.$
    Then by \cref{lem:deletemany} the set $Y \setminus \bigcup \oddIntersections{\mathcal{P}}{X}$ is tight, and therefore so is its complement $X \cup \bigcup \oddIntersections{\mathcal{P}}{X}.$ 
    Applying \cref{lem:deletemany} again, we get that \[\left(X \cup \bigcup \oddIntersections{\mathcal{P}}{X}\right) \setminus \bigcup (\mathcal{P} \setminus \oddIntersections{\mathcal{P}}{X})\] is tight, but this set is just $\bigcup \oddIntersections{\mathcal{P}}{X}$, giving the desired result.

    Since tight sets are neither empty nor the whole vertex set, and $\bigcup \oddIntersections{\Partition}{X}$ is one, $\oddIntersections{\mathcal{P}}{X}$ is neither empty nor $\mathcal{P}.$
\end{proof}

\begin{lemma}\label{lem:cutswell}
    Let $\Partition$ be a tight set partition of $G$ such that $\collapse{\Partition}$ is a \BoB, and let $X$ be a tight set.
    Then $\oddIntersections{\Partition}{X}$ is either a singleton or the complement of $\oddIntersections{\Partition}{X}$ in $\Partition$ is a singleton.
\end{lemma}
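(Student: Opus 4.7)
The plan is to reduce the statement to the defining property of a \BoB\ (namely, that it has no non-trivial tight sets) via the two earlier lemmas on collapses.

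First, I would apply \cref{lem:uncrosscut} to the tight set partition $\Partition$ and the tight set $X$. This gives two facts at once: the set $\bigcup \oddIntersections{\Partition}{X}$ is a tight set in $G$, and $\oddIntersections{\Partition}{X}$ is neither empty nor all of $\Partition$. In particular, $\oddIntersections{\Partition}{X}$ is a nonempty proper subset of the vertex set of $\collapse{\Partition}$.

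Next, I would transfer tightness from $G$ up to $\collapse{\Partition}$. Since $\oddIntersections{\Partition}{X} \subseteq \Partition = \V{\collapse{\Partition}}$ and its union (in $G$) is tight, \cref{lem:tight_sets_in_collapse} (in the ``$\Leftarrow$'' direction) gives that $\oddIntersections{\Partition}{X}$ is tight in $\collapse{\Partition}$. Note that \cref{lem:collapse_matching_covered} is implicitly needed here to ensure $\collapse{\Partition}$ is itself matching covered, so talking about tight sets in it is meaningful, but this is already supplied by that lemma.

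Finally, since $\collapse{\Partition}$ is a \BoB, every tight set in it is trivial, i.e.~either a singleton or has singleton complement (in $\V{\collapse{\Partition}} = \Partition$). Applying this to the tight set $\oddIntersections{\Partition}{X}$ gives exactly the conclusion.

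There is no real obstacle here; the whole point is that the earlier machinery (\cref{lem:uncrosscut} and \cref{lem:tight_sets_in_collapse}) was designed precisely so that statements of this form become one-line consequences of the \BoB\ hypothesis. The only subtlety to flag is the non-triviality clause from \cref{lem:uncrosscut}, which is what prevents the conclusion from being vacuously ``trivially tight'' in an uninformative way.
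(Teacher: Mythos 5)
Your argument is correct and is essentially identical to the paper's own proof: apply \cref{lem:uncrosscut} to see that $\bigcup \oddIntersections{\Partition}{X}$ is tight, transfer this to $\collapse{\Partition}$ via \cref{lem:tight_sets_in_collapse}, and conclude from the fact that a \BoB\ has no non-trivial tight sets. The extra remark about non-emptiness is harmless but not needed, since any tight set is automatically a nonempty proper subset of the vertex set.
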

\begin{proof}
    By \cref{lem:uncrosscut} the set $\bigcup \oddIntersections{\Partition}{X}$ is tight, meaning that $\oddIntersections{\Partition}{X}$ is a tight set in $\collapse{\Partition}$ by \cref{lem:tight_sets_in_collapse}.
    Since $\collapse{\Partition}$ is a \BoB, the result follows.
\end{proof}

\begin{lemma}\label{lem:cutscyclicpartition}
    Let $\mathcal{P}$ be a tight set partition of $G$ such that $\collapse{\mathcal{P}}$ is a cycle, and let $X$ be a tight set in $G.$
    Let $n\coloneqq |\mathcal{P}|$  and $m\coloneqq |\oddIntersections{\mathcal{P}}{X}|.$
    
    Then there is a cyclic enumeration $P_1, \dots, P_n$ of the vertices in $\collapse{\mathcal{P}}$ such that $P_i \in \oddIntersections{\mathcal{P}}{X}$ if and only if $i \in [m].$
    Furthermore,
    \begin{equation*}
        \bigcup_{1 < i < m} P_i \quad \subset \quad X \quad \subset \quad \V{G} \setminus \bigcup_{ m + 1 < i < n } P_i
    \end{equation*}
    holds.
    If $3 \leq m \leq n - 3$ holds, then $P_1 \setminus X, P_n \cap X$ are passable between $P_1$ and $P_n$, and $P_{m} \setminus X, P_{m+1} \cap X$ are passable between $P_{m}$ and $P_{m + 1}.$
\end{lemma}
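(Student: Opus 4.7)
The plan is to reduce everything to the interval structure of tight sets in the matching-covered cycle $\collapse{\mathcal{P}}$ and then to carry out some careful edge bookkeeping. First I apply \cref{lem:uncrosscut} and \cref{lem:tight_sets_in_collapse} to see that $\oddIntersections{\mathcal{P}}{X}$ is a tight set in the cycle $\collapse{\mathcal{P}}$; by \cref{rem:cycle_tight_cuts} it must be a cyclic interval of odd cardinality, so $m$ is odd, and I choose the enumeration $P_1,\dots,P_n$ of the cycle so that $\oddIntersections{\mathcal{P}}{X}=\Set{P_1,\dots,P_m}$. Running the same argument on $\V{G}\setminus X$ (whose odd intersections with $\mathcal{P}$ form exactly the complementary arc) shows $n-m$ is odd as well, so $n$ is even.

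For the inclusion $\bigcup_{1<i<m}P_i\subseteq X$, I fix $1<i<m$ and argue by contradiction. Iterated application of \cref{lem:basictight} to $X$ and $P_{i-1}$ and then to $X\cup P_{i-1}$ and $P_{i+1}$ (using that $X\cap P_{i\pm 1}$ is odd because $i\pm 1\in[m]$) gives that $X\cup P_{i-1}\cup P_{i+1}$ is tight; let $A$ be its complement. Then \cref{lem:basicnoedge}, applied to the three tight sets $P_{i-1}\cup P_i\cup P_{i+1}$ (tight as a length-$3$ cyclic interval), $\V{G}\setminus P_i$, and $A$, forces there to be no edge between $P_{i-1}\cup P_{i+1}$ and $P_i\setminus X$, because these are the two even and disjoint intersections of the other two sets with the first. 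On the other hand, $P_i\cap X$ is tight by \cref{lem:basictight}, so $G[\V{G}\setminus(P_i\cap X)]$ is connected by \cref{prop:tight_cut_connected}; any path inside it from a vertex of the assumed-nonempty $P_i\setminus X$ to a vertex outside $P_i$ must leave $P_i$ along some edge which, by the cycle adjacency of $\collapse{\mathcal{P}}$, enters $P_{i-1}\cup P_{i+1}$ — a contradiction. The dual inclusion $X\subseteq\V{G}\setminus\bigcup_{m+1<i<n}P_i$ follows by the same argument applied to $\V{G}\setminus X$.

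Under the hypothesis $3\leq m\leq n-3$, I finally establish the four passability claims. By the obvious symmetries (exchanging $X$ with $\V{G}\setminus X$ or reversing the cyclic orientation) it suffices to show that $P_n\cup(P_1\setminus X)$ is tight, since the already-established tightness of $P_1\cap X$ then yields that $P_1\setminus X$ is passable between $P_1$ and $P_n$. Applying \cref{lem:basictight} to $P_1$ and $X$ shows that no edge joins $P_1\setminus X$ and $X\setminus P_1$; combined with the inclusions $P_2\subseteq X$ and $P_n\cap X\subseteq X\setminus P_1$ from the previous step together with the cycle adjacency of $\collapse{\mathcal{P}}$, every edge leaving $P_1\setminus X$ lands in $P_1\cap X$ or $P_n\setminus X$. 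A direct boundary count for an arbitrary $M'\in\Perf{G,M}$, using the inclusion-exclusion identity for the cut of a disjoint union together with the tightness of $P_1$, $P_n$ and $P_1\cap X$, then yields $\Abs{\cut{}{P_n\cup(P_1\setminus X)}\cap M'}=1$.

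I expect this final passability step to be the main obstacle, since it requires combining the no-edge conclusions of \cref{lem:basictight}, the inclusions just established, and the cycle adjacency constraints to pin down precisely which edges can leave $P_1\setminus X$.
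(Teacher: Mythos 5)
Your proof is correct, and while it follows the paper's overall skeleton (the interval structure via \cref{lem:uncrosscut}, \cref{lem:tight_sets_in_collapse} and \cref{rem:cycle_tight_cuts}, then the inclusions, then a symmetry reduction of the four passability claims to a single nontrivial tight set), you handle both nontrivial steps differently. For the inclusion $P_i\subseteq X$ with $1<i<m$, the paper applies \cref{lem:basictight} to $\V{G}\setminus X$ and $\V{G}\setminus P_i$ to \emph{produce} an edge from $P_i\setminus X$ into a neighbouring class $P_j$ and then contradicts \cref{lem:basictight} for $X\cup P_j$; you instead \emph{forbid} all edges from $P_i\setminus X$ to $P_{i-1}\cup P_{i+1}$ via \cref{lem:basicnoedge} and then manufacture such an edge from the connectivity of $G[\V{G}\setminus(P_i\cap X)]$ (\cref{prop:tight_cut_connected}) together with cycle adjacency; both routes are sound. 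For passability the paper reduces to showing $P_1\cup(X\cap P_n)$ is tight, which it gets in one line by intersecting $X\cup P_1$ with the tight interval $P_{n-1}\cup P_n\cup P_1$, whereas you reduce to $P_n\cup(P_1\setminus X)$ and do a matching-edge count. Your count, though only sketched, does close: writing $a$, $b$, $c$ for the numbers of $M'$-edges between $P_1\setminus X$ and $P_1\cap X$, between $P_1\setminus X$ and $\V{G}\setminus P_1$ (all of which land in $P_n\setminus X$ by your edge analysis, which is where $m\geq 3$ enters via $P_2\subseteq X$), and between $P_1\cap X$ and $\V{G}\setminus P_1$, tightness of $P_1$ and of $P_1\cap X$ (the latter is just \cref{lem:basictight} applied to $P_1$ and $X$, not something established earlier) gives $b+c=1=a+c$, hence $a=b$, and your inclusion--exclusion identity yields exactly one $M'$-edge in $\cut{}{P_n\cup(P_1\setminus X)}.$ The paper's uncrossing argument is shorter and purely combinatorial on tight sets; yours is more computational but needs nothing beyond the inclusions already proved, so both approaches are legitimate.
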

\begin{proof}
    The set $\bigcup \oddIntersections{\mathcal{P}}{X}$ is tight by \cref{lem:uncrosscut} and thus $\oddIntersections{\mathcal{P}}{X}$ is tight in $\collapse{\mathcal{P}}$ by \cref{lem:tight_sets_in_collapse}.
    By \cref{rem:cycle_tight_cuts}, $\oddIntersections{\mathcal{P}}{X}$ is an interval in $\collapse{\mathcal{P}}.$
    Thus we can choose the desired enumeration of the vertices in $\collapse{\mathcal{P}}.$

    Let $1 <  i < m$ be arbitrary.
    We prove that $P_i \subset X.$
    Suppose for a contradiction that $P_i \setminus X \neq \emptyset.$
    As $P_i \in \oddIntersections{\mathcal{P}}{X}$, $P_i \cap X$ is odd, which implies that $\Brace{\V{G} \setminus X} \cap \Brace{\V{G} \setminus P_i}$ is odd.
    Applying \cref{lem:basictight} to $\V{G} \setminus X$ and $\V{G} \setminus P_i$ shows that there is an edge $e$ with endpoints in $ (\V{G}\setminus X) \setminus (\V{G}\setminus P_i) = P_i \setminus X$ and $(\V{G} \setminus X ) \cap (\V{G} \setminus P_i) = \V{G} \setminus \Brace{X \cup P_i}.$
    As $\collapse{\mathcal{P}}$ is cyclic, there is $j \in \Set{i - 1, i + 1}$ such that $e$ has an endpoint in $P_j.$
    The set $X'\coloneqq X \cup P_j$ is a tight set by \cref{lem:basictight}, as $P_j \in \oddIntersections{\mathcal{P}}{X}.$
    The edge $e$ is an edge with endpoints in $P_i \setminus X'$ and $X' \setminus P_i.$
    This contradicts \cref{lem:basictight}, as $P_i \cap X' = P_i \cap X$ is odd.
    Therefore $P_i \subset X.$
    Applying the same argument to the complement of $X$ shows that $P_i \cap X = \emptyset$ for all $m + 1 < i < n.$
    Thus the desired subset relation is true.

    From now on we suppose that $3 \leq m \leq n - 3$ holds.
    We show that $P_n \cap X$ is passable between $P_1$ and $P_n.$
    By considering the complement of $X$ and/or reversing the enumeration one can show that the other sets are passable as well.
    $P_n \cup (X \cap P_n) = P_n$ and $P_1 \setminus (X \cap P_n) = P_1$ are tight by definition.
    $P_n \cap X$ is even since $P_n \notin \oddIntersections{\Partition}{X}$, thus $P_n \setminus X = (\V{G} \setminus X) \cap P_n $ is odd and thus tight by \cref{lem:basictight}.
    To see that $P_1 \cup (X \cap P_n)$ is tight we first note that by \cref{rem:cycle_tight_cuts}, the set $P_1 \cup P_n \cup P_{n -1}$ is tight.
    Consider the set $X' \coloneqq X \cup P_1$, which is tight by \cref{lem:basictight} since $P_1 \in \oddIntersections{\Partition}{X}.$
    By assumption, $m < n-1$ and thus $X' \cap \Brace{P_1 \cup P_n \cup P_{n -1}} = P_1 \cup \Brace{X \cap P_n}$ is odd and therefore tight.
    This concludes the proof that $X \cap P_n$ is passable between $P_1$ and $P_n.$
\end{proof}

\subsection{Correspondences between tight set partitions}
\label{subsec:correspondence_tight_set_partitions}

Throughout this subsection we work with a fixed matching covered graph $G.$

\begin{definition}
    Let $\mathcal{P}$ and $\mathcal{Q}$ be tight set partitions of $G.$
    A \emph{correspondence between $\mathcal{P}$ and $\mathcal{Q}$} is a bijection $\rho \colon \mathcal{P} \to \mathcal{Q}$ such that for any $P \in \mathcal{P}$ and $Q \in \mathcal{Q}$ we have that $P \cap Q$ is odd if and only if $Q = \rho(P).$
    If there is such a $\rho$ then we say that $\mathcal{P}$ \emph{is in correspondence with} $\mathcal{Q}.$ 
\end{definition}

This relation is fundamental for our later constructions - roughly, two tight set partitions arising from different tight set decompositions encode `the same' \BoB\ precisely when they are in correspondence. 

To begin to make this more precise, we first note some basic properties of correspondences of tight set partitions.

\begin{lemma}\label{lem:passatedge}
    Let $\rho \colon \mathcal{P} \to \mathcal{Q}$ be a correspondence of tight set partitions of $G$ and let $P$ and $P'$ be distinct elements of $\mathcal{P}.$
    Then $P \cap \rho(P')$ is passable from $P$ to $P'.$
    If it is nonempty then $P$ and $P'$ are neighbours in $\collapse{\mathcal{P}}.$ 
\end{lemma}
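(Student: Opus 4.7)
The plan is to verify the four conditions defining passability of $S \coloneqq P \cap \rho(P')$ between $P$ and $P'$, and then to deduce adjacency in $\collapse{\mathcal{P}}$ from \cref{prop:tight_cut_connected}. Two of the four conditions come for free: since $S \subseteq P$ and $P$, $P'$ are disjoint partition classes, we have $P \cup S = P$ and $P' \setminus S = P'$, both of which are tight by assumption, and $S \subseteq P \subseteq P \cup P'$. So what remains is to verify that $P \setminus S$ and $P' \cup S$ are tight.

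The first of these is easy. Because $\rho$ is a correspondence and $P \neq P'$, the intersection $P \cap \rho(P')$ is even (the only element of $\mathcal{Q}$ having odd intersection with $P$ is $\rho(P)$), so \cref{lem:deletemany} applied to $X = P$ with the singleton family $\{\rho(P')\}$ directly yields that $P \setminus \rho(P')$ is tight. I expect the hard part to be the tightness of $P' \cup S$: the trick I would use is to start from a larger tight set and then carve it down. Since $P' \cap \rho(P')$ is odd, \cref{lem:basictight} tells us that $P' \cup \rho(P')$ is tight, and the plan is to remove from it all of $\rho(P')$ that sits outside $P \cup P'$. Concretely, applying \cref{lem:deletemany} with $X \coloneqq P' \cup \rho(P')$ and the family $(P'')_{P'' \in \mathcal{P} \setminus \{P, P'\}}$ should do the job: each intersection $X \cap P'' = \rho(P') \cap P''$ is even because $\rho(P'') \neq \rho(P')$, and these intersections are pairwise disjoint since $\mathcal{P}$ is a partition. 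The resulting set $X \setminus \bigcup_{P'' \neq P, P'} P'' = (P' \cup \rho(P')) \cap (P \cup P')$ simplifies to $P' \cup (P \cap \rho(P'))$, yielding the desired tightness.

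For the adjacency statement, suppose $S \neq \emptyset$ and set $T \coloneqq P' \cup S$. The set $T$ is tight by what we just showed, and both $P'$ and $S \subseteq P$ are nonempty and disjoint subsets of $T$. By \cref{prop:tight_cut_connected}, $G[T]$ is connected, so any path in $G[T]$ from a vertex of $S$ to a vertex of $P'$ must at some point traverse an edge with one endpoint in $P'$ and the other in $S \subseteq P$. This edge exhibits $P$ and $P'$ as neighbours in $\collapse{\mathcal{P}}$, completing the argument.
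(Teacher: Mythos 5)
Your proof is correct and follows essentially the same route as the paper: you verify the same four tightness conditions, and your key step for $P' \cup (P \cap \rho(P'))$ — writing it as $(P' \cup \rho(P')) \setminus \bigcup_{P'' \in \mathcal{P} \setminus \{P,P'\}} P''$ and combining \cref{lem:basictight} with \cref{lem:deletemany} — is exactly the paper's argument. The only (harmless) deviation is the adjacency step, where you invoke connectedness of $G[P' \cup S]$ via \cref{prop:tight_cut_connected} instead of the edge-existence clause of \cref{lem:basictight} applied to $P' \cup S$ and the complement of $P$; both yield the required edge between $S \subseteq P$ and $P'$.
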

\begin{proof}
    To show passability, we have to show that the following four sets are tight:
    \[
        P \setminus (P \cap \rho(P')), \,\, P \cup (P \cap \rho(P')), \,\, P' \setminus (P \cap \rho(P')) \text{ and } P' \cup (P \cap \rho(P'))
    \]
    The second and third of these are just $P$ and $P'.$
    The first is $P \setminus \rho(P')$, which is tight, since $\rho(P') \cap P$ is even by definition of $\rho$, thus $P \cap (\V{G} \setminus \rho(P'))$ is odd and then by \cref{lem:basictight} $P \cap (\V{G} \setminus \rho (P')) = P \setminus \rho(P')$ is tight.
    To show that the fourth is tight, we first note that it is equal to 
    \[
        (P' \cup \rho(P')) \setminus \bigcup_{P'' \in \mathcal{P} \setminus \Set{P,P'}} P''\,,
    \]
    which is tight by \cref{lem:deletemany} since $P' \cup \rho(P')$ is tight by \cref{lem:basictight}, since $P'\cap \rho (P')$ is odd by the definition of $\rho.$

    Now suppose that $P \cap \rho(P')$ is nonempty.
    Applying \cref{lem:basictight} to $P' \cup (P \cap \rho(P'))$ and the complement of $P$ we see that there is an edge from $P \cap \rho(P')$ to $P'$, which witnesses that $P$ and $P'$ are neighbours in $\collapse{\mathcal{P}}.$
\end{proof}

\begin{lemma}\label{lem:corrisiso}
  Let $\rho \colon \mathcal{P} \to \mathcal{Q}$ be a correspondence of tight set partitions of $G.$
  Then $\rho$ is also a graph isomorphism from $\collapse{\mathcal{P}}$ to $\collapse{\mathcal{Q}}.$  
\end{lemma}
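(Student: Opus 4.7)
The plan is as follows. Since $\rho$ is already given as a bijection between $\mathcal{P}$ and $\mathcal{Q}$, what I need to verify is that adjacency is preserved: for distinct $P, P' \in \mathcal{P}$, that $PP' \in \E{\collapse{\mathcal{P}}}$ if and only if $\rho(P)\rho(P') \in \E{\collapse{\mathcal{Q}}}$. The definition of a correspondence is symmetric in $\mathcal{P}$ and $\mathcal{Q}$, so $\rho^{-1} \colon \mathcal{Q} \to \mathcal{P}$ is itself a correspondence; hence it suffices to establish the forward implication, with the converse following by the same argument applied to $\rho^{-1}$.

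For the forward direction I will actually show the stronger statement that $\rho$ intertwines the induced matchings on the collapses: for every $M' \in \Perf{G,M}$ and every pair of distinct $P, P' \in \mathcal{P}$, the edge $PP'$ lies in $\Fkt{M'}{\mathcal{P}}$ if and only if $\rho(P)\rho(P')$ lies in $\Fkt{M'}{\mathcal{Q}}$. This suffices because by \cref{lem:collapse_matching_covered} each collapse is matching covered with respect to the canonical induced matching, so every edge of $\collapse{\mathcal{P}}$ occurs in some $\Fkt{M'}{\mathcal{P}}$ with $M' \in \Perf{G, M}$, and the intertwining immediately translates this into an edge in $\collapse{\mathcal{Q}}$.

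To prove the intertwining, I would assume $PP' \in \Fkt{M'}{\mathcal{P}}$ and pick $e = pp' \in M'$ with $p \in P, p' \in P'$. By tightness of $P$ and $P'$, $e$ is the unique $M'$-edge in each of $\partial(P)$ and $\partial(P')$. Let $f$ and $f'$ be the unique $M'$-edges leaving $\rho(P)$ and $\rho(P')$ respectively; the goal is $f = f'$ with this common edge running from $\rho(P)$ to $\rho(P')$. The odd set $P \cap \rho(P)$ is tight by \cref{lem:basictight}, and the inclusion $\partial(P \cap \rho(P)) \subseteq \partial(P) \cup \partial(\rho(P))$ forces $M' \cap \partial(P \cap \rho(P)) \subseteq \{e, f\}$; since $P \cap \rho(P)$ is odd this intersection has odd size, hence exactly one. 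The symmetric computation for $P' \cap \rho(P')$ gives $|M' \cap \partial(P' \cap \rho(P'))| = 1$ with the edge in $\{e, f'\}$. Combining these with the tightness of $P \cup \rho(P)$ and $P' \cup \rho(P')$ (another application of \cref{lem:basictight}), a case analysis driven by whether $p \in \rho(P)$ and whether $p' \in \rho(P')$ pins down $f = f'$ as the $M'$-edge between $\rho(P)$ and $\rho(P')$.

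The hard part will be the case analysis, specifically ruling out the configuration where $e$ is internal to some $\rho(P^*)$ and the edge $f$ leaving $\rho(P)$ exits instead toward some $\rho(P^{**})$ with $P^{**} \notin \{P, P'\}$. The parities of the four cross-intersections $P \cap \rho(P),\, P \cap \rho(P'),\, P' \cap \rho(P),\, P' \cap \rho(P')$ are rigidly dictated by the correspondence (the diagonal pair is odd, the off-diagonal pair is even), and applying \cref{lem:basicnoedge} or \cref{lem:nothree} to suitable triples drawn from $P, P', \rho(P), \rho(P'), P \cup \rho(P), P' \cup \rho(P')$ should deliver the required contradiction and close the argument.
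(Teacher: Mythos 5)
Your high-level architecture is sound: $\rho^{-1}$ is indeed a correspondence, the reduction to matchings is legitimate (for every edge $PP'$ of $\collapse{\mathcal{P}}$ there is an $M' \in \Perf{G,M}$ with $PP' \in \Fkt{M'}{\mathcal{P}}$, exactly as in the proof of \cref{lem:collapse_matching_covered}), and the intertwining claim you aim for is in fact true. The genuine gap is that you never prove it at the one place where it is hard, and the toolbox you name there does not suffice. In the configuration you yourself single out -- $e$ internal to some $\rho(P^*)$ while the $M'$-edge $f$ leaving $\rho(P)$ exits towards $\rho(P^{**})$ with $P^{**} \notin \Set{P,P'}$ -- every parity constraint obtainable from $P$, $P'$, $\rho(P)$, $\rho(P')$ and their unions and intersections is simultaneously satisfiable, so no triple drawn from your six listed sets yields the contradiction. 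What actually closes this case is forced to involve the third class: from the evenness of $P' \cap \rho(P)$ (crossed by $e$, hence crossed by a second $M'$-edge, which can only be $f$) one pins both endpoints of $f$ into $P'$, namely into $P' \cap \rho(P)$ and $P' \cap \rho(P^{**})$, and then \cref{lem:basicnoedge} applied to the triple $P'$, $\rho(P)$, $\rho(P^{**})$ forbids such an edge; the case $e=f$ with $e$ running from $\rho(P)$ to $\rho(P^{**})$ likewise needs the evenness of $P' \cup \rho(P^{**})$, and the symmetric case in which the unique $M'$-edge leaving $P \cap \rho(P)$ is $f$ rather than $e$ must be handled with the roles of $\mathcal{P}$ and $\mathcal{Q}$ exchanged. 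None of this is in your sketch, and ``should deliver the required contradiction'' is doing all the work, so as written the proof is incomplete.

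It is also worth comparing with the paper's argument, which is much lighter: it takes an edge of $G$ joining $\rho(P)$ and $\rho(P')$, shows via \cref{lem:basicnoedge} and \cref{lem:basictight} that both endpoints lie in $P \cup P'$, and then invokes \cref{lem:passatedge} -- if $P \cap \rho(P')$ is nonempty then $P$ and $P'$ are already neighbours in $\collapse{\mathcal{P}}$ -- which you never use. Your route, once the missing case analysis is supplied along the lines above, does buy a stronger statement ($\rho$ transports $\Fkt{M'}{\mathcal{P}}$ to $\Fkt{M'}{\mathcal{Q}}$ for every $M' \in \Perf{G,M}$), but the lemma itself does not need it, and the cost is precisely the global case analysis you left open.
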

\begin{proof}
    First we show that $\rho^{-1}$ is a graph homomorphism.
    Let $e$ be an edge joining $\rho(P)$ to $\rho(P')$ in $\collapse{\mathcal{Q}}.$
    Let $v$ be the endpoint of $e$ in $\rho(P)$ and $w$ the endpoint in $\rho(P').$ 

    Suppose for a contradiction that $v$ is in an element $P''$ of $\mathcal{P}$ other than $P$ and $P'.$
    Applying \cref{lem:basicnoedge} to $P''$, $\rho(P)$ and $\rho(P')$, we see that $w$ cannot also be in $P''.$ But then we get the desired contradiction by applying \cref{lem:basictight} to $P''$ and the complement of $\rho(P).$
    
    Thus $v$ must lie in $P \cup P'$, and a symmetric argument shows that $w$ is also in $P \cup P'.$
    If $w \in P$ then $P \cap \rho(P')$ is nonempty and so by \cref{lem:passatedge} $P$ and $P'$ are neighbours in $\collapse{\mathcal{P}}.$
    The same result follows if $v$ is in $P'$, since then $P' \cap \rho(P)$ is nonempty.
    If neither of these happens then $v$ is in $P$ and $w$ is in $P'$, so $e$ itself witnesses that $P$ and $P'$ are neighbours in $\collapse{\mathcal{P}}.$
    So in any case they are neighbours, completing the proof that $\rho^{-1}$ is a graph homomorphism.
    
    Since $\rho^{-1}$ is also a correspondence from $\mathcal{Q}$ to $\mathcal{P}$, the same argument applied to $\rho^{-1}$ shows that $\rho$ is also a graph homomorphism, and thus a graph isomorphism.
\end{proof}

\section{Torsoids}
\label{sec:torsoids}

The relation of being in correspondence is not an equivalence relation, as illustrated in \cref{fig:hexagons}.
However, our next aim is to show that its restriction to some important classes of tight set partitions is an equivalence relation, and to introduce some canonical objects, called torsoids, displaying the equivalence classes.

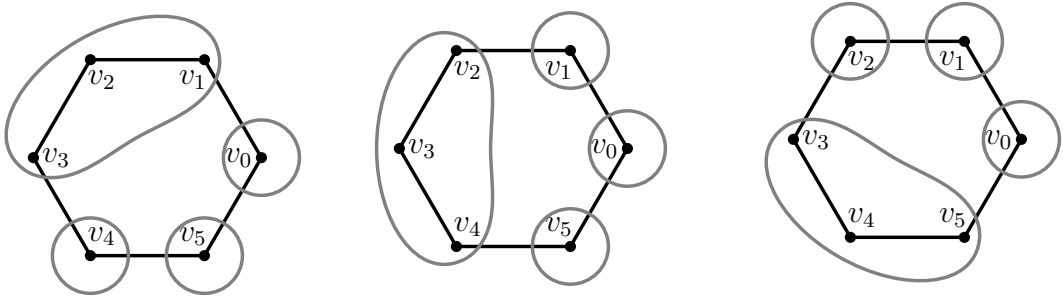
\begin{figure}
    \centering
    \begin{tikzpicture}
        \def\radius{1.5}
        \def\dist{0.5}
        \def\largedist{0.2}
        \def\smalldist{0.3}
        \node (left) at (-5,0) {
            \begin{tikzpicture}
                \node (center) at (0,0) {};
                \foreach \i in {0,...,5}
                {
                    \node[vertex,scale=0.7] (v-\i) at ($(center)+({360/6*\i}:\radius)$) {};
                    \node (v-\i-l) at ($(v-\i)+({180+360/6*\i}:0.3)$) {$v_{\i}$};
                }
                \foreach \i in {0,...,5}
                {
                    \pgfmathtruncatemacro{\j}{mod(\i+1,6)}%
                    \draw[edge] (v-\i.center) to (v-\j.center);
                }
                \foreach \i in {0,4,5}
                {
                    \draw ($(v-\i)+({90}:\dist)$) edge[edge,myGrey,curve through={($(v-\i)+({180}:\dist)$) ($(v-\i)+({270}:\dist)$)},closed] ($(v-\i)+({0}:\dist)$);
                }
                \begin{scope}
                \pgfmathtruncatemacro{\i}{1}
                \pgfmathtruncatemacro{\ii}{mod(\i-1,6)}
                \pgfmathtruncatemacro{\iii}{mod(\i-2,6)}
                \pgfmathtruncatemacro{\j}{2}
                \pgfmathtruncatemacro{\k}{3}
                \pgfmathtruncatemacro{\kk}{mod(\k+1,6)}
                \draw ($(v-\j)+({360/6*\j}:\smalldist)$) 
                    edge[edge,myGrey,curve through={
                    ($(v-\i)+({360/6*\ii}:\largedist)$)
                    ($(v-\i)+({360/6*\iii}:\smalldist)$) 
                    ($(v-\j)+({180+360/6*\j}:{\radius-\smalldist})$) 
                    ($(v-\k)+({360/6*\iii}:\smalldist)$) },closed] 
                    ($(v-\k)+({360/6*\kk}:\largedist)$);
                \end{scope}
            \end{tikzpicture}
        };
        \node (middle) at (0,0) {
            \begin{tikzpicture}
                \node (center) at (0,0) {};
                \foreach \i in {0,...,5}
                {
                    \node[vertex,scale=0.7] (v-\i) at ($(center)+({360/6*\i}:\radius)$) {};
                    \node (v-\i-l) at ($(v-\i)+({180+360/6*\i}:0.3)$) {$v_{\i}$};
                }
                \foreach \i in {0,...,5}
                {
                    \pgfmathtruncatemacro{\j}{mod(\i+1,6)}%
                    \draw[edge] (v-\i.center) to (v-\j.center);
                }
                \foreach \i in {0,1,5}
                {
                    \draw ($(v-\i)+({90}:\dist)$) edge[edge,myGrey,curve through={($(v-\i)+({180}:\dist)$) ($(v-\i)+({270}:\dist)$)},closed] ($(v-\i)+({0}:\dist)$);
                }
                \begin{scope}
                \pgfmathtruncatemacro{\i}{2}
                \pgfmathtruncatemacro{\ii}{mod(\i-1,6)}
                \pgfmathtruncatemacro{\iii}{mod(\i-2,6)}
                \pgfmathtruncatemacro{\j}{3}
                \pgfmathtruncatemacro{\k}{4}
                \pgfmathtruncatemacro{\kk}{mod(\k+1,6)}
                \draw ($(v-\j)+({360/6*\j}:\smalldist)$) 
                    edge[edge,myGrey,curve through={
                    ($(v-\i)+({360/6*\ii}:\largedist)$)
                    ($(v-\i)+({360/6*\iii}:\smalldist)$) 
                    ($(v-\j)+({180+360/6*\j}:{\radius-\smalldist})$) 
                    ($(v-\k)+({360/6*\iii}:\smalldist)$) },closed] 
                    ($(v-\k)+({360/6*\kk}:\largedist)$);
                \end{scope}
            \end{tikzpicture}
        };
        \node (right) at (5,0) {
            \begin{tikzpicture}
                \node (center) at (0,0) {};
                \foreach \i in {0,...,5}
                {
                    \node[vertex,scale=0.7] (v-\i) at ($(center)+({360/6*\i}:\radius)$) {};
                    \node (v-\i-l) at ($(v-\i)+({180+360/6*\i}:0.3)$) {$v_{\i}$};
                }
                \foreach \i in {0,...,5}
                {
                    \pgfmathtruncatemacro{\j}{mod(\i+1,6)}%
                    \draw[edge] (v-\i.center) to (v-\j.center);
                }
                \foreach \i in {0,1,2}
                {
                    \draw ($(v-\i)+({90}:\dist)$) edge[edge,myGrey,curve through={($(v-\i)+({180}:\dist)$) ($(v-\i)+({270}:\dist)$)},closed] ($(v-\i)+({0}:\dist)$);
                }
                \begin{scope}
                \pgfmathtruncatemacro{\i}{3}
                \pgfmathtruncatemacro{\ii}{mod(\i-1,6)}
                \pgfmathtruncatemacro{\iii}{mod(\i-2,6)}
                \pgfmathtruncatemacro{\j}{4}
                \pgfmathtruncatemacro{\k}{5}
                \pgfmathtruncatemacro{\kk}{mod(\k+1,6)}
                \draw ($(v-\j)+({360/6*\j}:\smalldist)$) 
                    edge[edge,myGrey,curve through={
                    ($(v-\i)+({360/6*\ii}:\largedist)$)
                    ($(v-\i)+({360/6*\iii}:\smalldist)$) 
                    ($(v-\j)+({180+360/6*\j}:{\radius-\smalldist})$) 
                    ($(v-\k)+({360/6*\iii}:\smalldist)$) },closed] 
                    ($(v-\k)+({360/6*\kk}:\largedist)$);
                \end{scope}
            \end{tikzpicture}
        };
    \end{tikzpicture}
    \caption{The tight set partitions on the left and the right are both in correspondence with the one in the middle.
    But they are not in correspondence with each other.
    Thus correspondence is not transitive.}
    \label{fig:hexagons}
\end{figure}

The example in \cref{fig:hexagons} shows that being in correspondence is not even an equivalence relation when restricted to tight set partitions whose collapses are \BoBs, since the collapses of the tight set partitions in the figure are all cycles of length 4.
However, the cycle of length 4 is the only \BoB which is problematic in this sense; we will show that being in correspondence is an equivalence relation on all other tight set partitions that collapse to \BoBs.

The problem with the tight set partitions in \cref{fig:hexagons} is that they can be refined to larger cycles.
More precisely, we say that a tight set partition $\mathcal{P}$ is \emph{cyclic} if $\collapse{\mathcal{P}}$ is a cycle, and \emph{maximal cyclic} if it is cyclic but cannot be refined to a finer cyclic tight set partition.
In the first part of this \namecref{sec:torsoids} we show that being in correspondence is an equivalence relation on maximal cyclic tight set partitions.
We call a tight set partition that either is maximal cyclic or whose collapse is a \BoB other than $C_4$ \emph{\TorsoidInducing}.
The \TorsoidInducing tight set partitions, as their name suggests, play a central role in the theory that is built throughout this paper.

For the rest of the paper let $G$ be a fixed matching covered graph.
As a preliminary step, we show that this class of tight set partitions is closed under correspondence.
First we establish a special case.

\begin{lemma}\label{lem:onestepmax}
    Let $\mathcal{P}$ be a maximal cyclic tight set partition of $G$ and let $P$ and $Q$ be consecutive vertices on the cycle $\collapse{\mathcal{P}}.$
    Let $S \subseteq P$ be a passable set between $P$ and $Q.$
    Then $\mathcal{P}' \coloneqq (\mathcal{P} \setminus \Set{P,Q}) \cup \Set{P \setminus S, Q \cup S}$ is also maximal cyclic.
\end{lemma}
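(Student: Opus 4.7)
The set $\mathcal{P}'$ is clearly a tight set partition because $P \setminus S$ and $Q \cup S$ are tight by passability of $S$. To show $\collapse{\mathcal{P}'}$ is a cycle I shall invoke \cref{lem:corrisiso} via an explicit correspondence: enumerate the cycle $\collapse{\mathcal{P}}$ as $P_1 = P, P_2 = Q, P_3, \ldots, P_n$ and let $\rho \colon \mathcal{P} \to \mathcal{P}'$ send $P \mapsto P \setminus S$, $Q \mapsto Q \cup S$, and $P_i \mapsto P_i$ for $i \geq 3$. Verifying the correspondence condition is immediate: each $R \cap \rho(R) \in \{P \setminus S, Q, P_i\}$ is tight (hence odd), and the only nonempty ``crossed'' intersection is $P \cap (Q \cup S) = S$, which is even since passable sets are even. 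Then \cref{lem:corrisiso} gives $\collapse{\mathcal{P}'} \cong \collapse{\mathcal{P}}$, which is a cycle.

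For maximality I argue by contradiction: suppose $\mathcal{P}''$ is a cyclic tight set partition strictly refining $\mathcal{P}'$. By \cref{lem:tight_sets_in_collapse} together with \cref{rem:cycle_tight_cuts}, the parts of $\mathcal{P}''$ inside any single part of $\mathcal{P}'$ form an odd-length interval in $\collapse{\mathcal{P}''}$. Write $a$, $b$ and $c_i$ for the respective numbers of such parts inside $P \setminus S$, $Q \cup S$ and $P_i$ (for $i \geq 3$); strictness of the refinement forces at least one of these to be $\geq 3$. In the easy sub-case where $c_i \geq 3$ for some $i \geq 3$, I replace $P_i$ in $\mathcal{P}$ by the $c_i$ parts of $\mathcal{P}''$ inside $P_i$; since in $\collapse{\mathcal{P}''}$ those $c_i$ parts form a cyclic interval whose only outside neighbours are refinements of $P_{i-1}$ and $P_{i+1}$, the collapse of the new partition inherits the cycle structure from $\collapse{\mathcal{P}''}$, yielding a cyclic strict refinement of $\mathcal{P}$ and contradicting maximality.

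The substantive sub-case is $a \geq 3$ or $b \geq 3$; by symmetry I treat $b \geq 3$. Applying \cref{lem:cutscyclicpartition} to $\mathcal{P}''$ with the tight set $P$, the odd-intersection interval consists of the $a$ parts $A_1, \ldots, A_a$ inside $P \setminus S$ together with a (possibly empty, necessarily even-length) contiguous segment $B_1, \ldots, B_t$ of the parts $B_1, \ldots, B_b$ inside $Q \cup S$ whose intersection with $S$ is odd. The Lemma pins down the structure: $B_1, \ldots, B_{t-1} \subseteq S$, $B_{t+2}, \ldots, B_b \subseteq Q$, and only the two transition parts $B_t, B_{t+1}$ may meet both $S$ and $Q$ nontrivially. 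When the odd-intersection interval has length at least $3$, the last clause of \cref{lem:cutscyclicpartition} directly supplies the passable sets $B_t \cap Q$ and $B_{t+1} \cap S$ between $B_t$ and $B_{t+1}$; in the degenerate length-$1$ case I instead derive passability of $S$ between $A_1$ and $B_1$ from \cref{lem:basictight}, observing that $B_1 \cap Q$ is then tight (it is an odd intersection of tight sets). Performing the resulting at-most-two swaps in $\mathcal{P}''$ (each of which preserves cyclicity by the correspondence argument of the first paragraph) yields a cyclic tight set partition $\mathcal{P}'''$ of the same cardinality as $\mathcal{P}''$, in which the transition parts have been split along the $S/Q$ boundary, so that every part of $\mathcal{P}'''$ lies inside a single part of $\mathcal{P}$. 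This $\mathcal{P}'''$ is then a cyclic strict refinement of $\mathcal{P}$, contradicting maximality of $\mathcal{P}$. The main obstacle is precisely this final bookkeeping step: verifying that one or two carefully chosen swaps suffice and that each operates on a genuinely passable set, which is exactly what \cref{lem:cutscyclicpartition} provides by pinning down where $S$ sits inside $\mathcal{P}''$.
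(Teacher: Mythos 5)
Your proof is correct in substance, and it takes a genuinely different route from the paper's. The paper first passes to a cyclic refinement of $\mathcal{P}'$ with as few parts as possible, so that exactly one part of $\mathcal{P}'$ is split into exactly three, and then treats the three possibilities for that part ($P\setminus S$, $Q\cup S$, or some other $P_i$) by short explicit constructions of a cyclic refinement of $\mathcal{P}$, including a parity sub-case analysis when the split part is $Q\cup S$. You skip the minimal-counterexample reduction and instead analyse an arbitrary strict cyclic refinement $\mathcal{P}''$ of $\mathcal{P}'$: via \cref{lem:tight_sets_in_collapse}, \cref{rem:cycle_tight_cuts} and \cref{lem:cutscyclicpartition} applied to the tight set $P$, you pin down that at most two consecutive parts of $\mathcal{P}''$ straddle the $S$/$Q$ boundary, and you repair that single boundary to obtain a cyclic strict refinement of $\mathcal{P}$, contradicting maximality of $\mathcal{P}$. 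This is more uniform than the paper's argument (in fact your ``easy sub-case'' and the $a\ge 3$ versus $b\ge 3$ split are unnecessary: the boundary-repair argument works whenever $\mathcal{P}''$ strictly refines $\mathcal{P}'$, and the two sub-cases are not related by any genuine symmetry, since $P\setminus S$ and $Q\cup S$ play different roles), at the price of heavier structural bookkeeping; the paper's minimality trick keeps everything local and elementary.

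Two points need tightening. The ``by symmetry'' claim is inaccurate but harmless, since your analysis never actually uses $b\ge 3$. More substantively, \cref{lem:cutscyclicpartition} only gives passability of $B_t\setminus P$ and $B_{t+1}\cap P$ between the \emph{original} parts $B_t$ and $B_{t+1}$; after the first swap the parts have changed, so the second swap is not ``exactly what \cref{lem:cutscyclicpartition} provides''. The missing step is easy: one checks that $(B_t\cup B_{t+1})\cap S$ and $(B_t\cup B_{t+1})\cap Q$ are tight, by \cref{lem:basictight} applied to the tight sets $B_t\cup(B_{t+1}\cap P)$ and $P$ (their intersection is the first set and is odd), respectively to $B_{t+1}\cup(B_t\setminus P)$ and $\V{G}\setminus P$ (their intersection is the second set and is odd). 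Replacing $B_t,B_{t+1}$ by these two sets is then a correspondence with $\mathcal{P}''$ (the crossed intersections $B_t\cap Q$ and $B_{t+1}\cap S$ are even), so cyclicity is preserved exactly as in your first paragraph, and one should also note that when $t=0$ the transition pair is $A_a,B_1$ rather than $B_t,B_{t+1}$, where a single swap of $B_1\cap P$ suffices (and the condition $m\le n-3$ of \cref{lem:cutscyclicpartition} holds automatically, since $t\le b-1$ and $|\mathcal{P}|\ge 4$). With these repairs, and your correct treatment of the degenerate case $m=1$ (where $S\subseteq B_1$ and $B_1\cap Q$ is tight by \cref{lem:basictight}), the argument is complete.
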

\begin{proof}
    Suppose not for a contradiction.
    It is certainly cyclic by \cref{lem:corrisiso}, so it cannot be maximal.
    Choose a proper cyclic refinement $\mathcal{Q}'$ with as few vertices as possible.
    We will construct a tight set partition $\mathcal{Q}$ refining $\mathcal{P}$ which is in correspondence with $\mathcal{Q}'$ and therefore also cyclic by \cref{lem:corrisiso}.
    This contradicts the fact that $\mathcal{P}$ is maximal cyclic, giving the desired contradiction.

    Since all elements of $\mathcal{Q}$ and $\mathcal{Q}'$ are odd sets, the minimality of $\mathcal{Q}'$ implies $|\mathcal{Q}'| = |\mathcal{Q}| + 2.$
    Thus, $\mathcal{Q}'$ must be of the form $(\mathcal{P}' \setminus \Set{X}) \cup \Set{X_1,X_2,X_3}$ for some $X \in \mathcal{P}'.$
    If $X$ is not equal to either of $P \setminus S$ or $Q \cup S$ then we can simply set 
    \[
        \mathcal{Q} \coloneqq (\mathcal{Q}' \setminus \Set{P \setminus S, Q \cup S}) \cup \Set{P,Q}\,.
    \]
    
    Thus $X$ is one of $P \setminus S$ or $Q \cup S.$
    Suppose first of all that it is $P \setminus S.$
    Let $R$ be the neighbour of $Q$ other than $P$ in $\collapse{\mathcal{P}}$, and suppose without loss of generality that $X_3$ is a neighbour of $Q \cup S$ in $\mathcal{Q}'.$
    Then $X_3 \cup S$ is tight by \cref{lem:basictight} applied to $X_3 \cup (Q \cup S) \cup R$ and $P$, so we can set 
    \[
        \mathcal{Q} \coloneqq (\mathcal{Q}' \setminus \Set{X_3, Q \cup S}) \cup \Set{X_3 \cup S, Q}\,.
    \]
    
    Now assume instead that $X$ is $Q \cup S.$
    Once again, let $R$ be the neighbour of $Q$ other than $P$ in $\collapse{\mathcal{P}}.$
    Suppose without loss of generality that $X_1$ is a neighbour of $P \setminus S$ and $X_3$ is a neighbour of $R$ in $\mathcal{Q}'.$
    Let $e$ be an edge with one endpoint $v$ in $X_3$ and the other endpoint in $R.$
    By \cref{lem:basictight} applied to $V(G) \setminus P$ and $Q \cup S$, $v$ cannot lie in $X_3 \cap S$ and so must lie in $X_3 \cap Q.$ If $X_3 \setminus Q$ is odd then applying \cref{lem:basictight} to $X_3$ and the complement of $Q$ contradicts the existence of $e.$ So $X_3 \setminus Q$ must be even, meaning that $X_3 \cap Q$ is odd. 
    
    Since $(X_1 \cap Q) \cup (X_2 \cap Q) \cup (X_3 \cap Q) = Q$ is odd, it follows that $(X_1 \cap Q) \cup (X_2 \cap Q)$ is even and so $X_1 \cap Q$ and $X_2 \cap Q$ have the same parity.
    There are now two cases, according to whether both are odd or both are even.
    
    If both are odd we can note that all sets $X_i \cap Q$ are tight by \cref{lem:basictight} and so we can set 
    \begin{equation*}
        \mathcal{Q} \coloneqq (\mathcal{Q'} \setminus \Set{P \setminus S, X_1, X_2, X_3}) \cup \Set{P, X_1 \cap Q, X_2 \cap Q, X_3 \cap Q}\,.
    \end{equation*}
    
    If both are even then $X_1 \cap S$ is tight by \cref{lem:basictight} applied to $X_1$ and $P$, and $S \setminus X_1$ is tight by \cref{lem:basictight} applied to $P$ and $X_2 \cup X_3 \cup R.$
    So we can set
    \begin{equation*}
        \mathcal{Q} \coloneqq (\mathcal{Q}' \setminus \Set{X_1, X_2, X_3}) \cup \Set{X_1 \cap S, S \setminus X_1, Q}\,. \qedhere
    \end{equation*}
\end{proof}

\begin{lemma}\label{lem:max}
   Let $\mathcal{P}$ and $\mathcal{Q}$ be tight set partitions of $G$ such that there is a correspondence $\rho \colon \mathcal{P} \to \mathcal{Q}.$
   If $\mathcal{P}$ is maximal cyclic then so is $\mathcal{Q}.$ 
\end{lemma}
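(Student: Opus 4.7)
By \cref{lem:corrisiso}, $\rho$ is a graph isomorphism from $\collapse{\mathcal{P}}$ to $\collapse{\mathcal{Q}}$; since the former is a cycle so is the latter, showing that $\mathcal{Q}$ is cyclic. The real task is to prove that $\mathcal{Q}$ is \emph{maximal} cyclic, which I plan to do by transforming $\mathcal{P}$ into $\mathcal{Q}$ via a finite sequence of moves of the form provided by \cref{lem:onestepmax}, each of which preserves maximal cyclicity.

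Enumerate $\mathcal{P} = \{P_0, \dots, P_{n-1}\}$ in cyclic order along $\collapse{\mathcal{P}}$ and set $Q_i \coloneqq \rho(P_i)$, obtaining the cyclic order on $\mathcal{Q}$. For each $i$, let $S_i \coloneqq P_i \cap Q_{i+1}$ and $T_i \coloneqq P_{i+1} \cap Q_i$ (indices modulo $n$). By \cref{lem:passatedge}, each of $S_i$ and $T_i$ is passable between $P_i$ and $P_{i+1}$, and moreover $P_a \cap Q_b = \emptyset$ whenever $a \neq b$ and $P_a, P_b$ are not neighbours in $\collapse{\mathcal{P}}$. The plan is to process the edges $(0,1), (1,2), \dots, (n-1,0)$ in cyclic order; at each edge $(i, i+1)$ I apply \cref{lem:onestepmax} twice, first to move $S_i$ from position $i$ to position $i+1$, then to move $T_i$ from position $i+1$ to position $i$. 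A direct calculation in the lattice of intersections $P_i \cap Q_j$ then shows that the $T_i$-move leaves position $i$ holding precisely $Q_i$, so after all $n$ edges have been processed (with the wrap-around at edge $(n-1,0)$ finally fixing position $0$) the resulting partition is $\mathcal{Q}$.

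The main obstacle is verifying the passability hypothesis of \cref{lem:onestepmax} at each intermediate step, since the current class at position $i$ or $i+1$ is typically no longer $P_i$ or $P_{i+1}$ but carries the cumulative effect of earlier moves. The required tightness facts -- such as that $(P_{i+1} \cup S_i) \setminus T_i$ and $((P_0 \setminus S_0) \cup T_0) \cup S_{n-1}$ are tight -- all follow from the same pattern: apply \cref{lem:deletemany} to the tight set $P_i \cup Q_i$, which is tight by \cref{lem:basictight} since $P_i \cap Q_i$ is odd by definition of the correspondence, removing the tight sets $P_{i+1}$ and $Q_{i+1}$ whose intersections with $P_i \cup Q_i$ are precisely the disjoint even sets $T_i$ and $S_i$. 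Once each of the $2n$ moves is verified in this way, the conclusion follows at once: the final partition $\mathcal{Q}$ is the last member of a chain of maximal cyclic partitions starting at $\mathcal{P}$, hence itself maximal cyclic.
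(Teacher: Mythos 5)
Your overall strategy---turning $\mathcal{P}$ into $\mathcal{Q}$ by a scheduled sequence of \cref{lem:onestepmax} moves---can be made to work, but the one concrete verification you offer is wrong, and it concerns exactly the step you yourself flag as the main obstacle. Applying \cref{lem:deletemany} to $P_i\cup Q_i$ and removing $P_{i+1}$ and $Q_{i+1}$ (intersections $T_i$ and $S_i$) proves tightness of $(P_i\cup Q_i)\setminus(S_i\cup T_i)=(P_i\setminus S_i)\cup S_{i-1}$, which is none of the sets your moves require. In particular it does not show that $(P_{i+1}\cup S_i)\setminus T_i=(P_{i+1}\setminus T_i)\cup S_i$ is tight (needed so that $T_i$ is passable for the current class at position $i+1$ in your second move), nor that $((P_0\setminus S_0)\cup T_0)\cup S_{n-1}=Q_0\cup T_{n-1}$ is tight (needed at the wrap-around). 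Correct derivations exist but use different sets: $(P_{i+1}\setminus T_i)\cup S_i=(P_{i+1}\cup Q_{i+1})\setminus(Q_i\cup P_{i+2})$, so apply \cref{lem:deletemany} to the tight set $P_{i+1}\cup Q_{i+1}$ with the disjoint even intersections $T_i$ and $T_{i+1}$; and $Q_0\cup T_{n-1}=(P_0\cup Q_0)\setminus S_0$ is tight by \cref{lem:basictight} applied to $P_0\cup Q_0$ and the complement of $Q_1$. You also leave unverified the other non-trivial condition of your first move, namely that the current class at position $i$, which is $(P_i\setminus T_{i-1})\cup S_{i-1}$, remains tight after removing $S_i$ (it equals $Q_i\setminus T_i$, tight by \cref{lem:basictight}), and the claim that the process ends in exactly $\mathcal{Q}$ is asserted rather than checked (it is true: position $j$ finishes as $(P_j\setminus(T_{j-1}\cup S_j))\cup S_{j-1}\cup T_j=Q_j$).

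For comparison, the paper avoids all of this bookkeeping: it inducts on $\lvert\{(i,j): i\neq j,\ P_i\cap Q_j\neq\emptyset\}\rvert$, performs a single \cref{lem:onestepmax} move emptying one such intersection, and observes that the reindexed map onto $\mathcal{Q}$ is still a correspondence, so \cref{lem:passatedge} re-supplies passability between the \emph{current} classes at every stage for free. If you want to keep your explicit cyclic schedule, this is also the cleanest repair: after each move the modified partition is still in correspondence with $\mathcal{Q}$ (each of $S_i$, $T_i$ is even and contained in a single class of $\mathcal{Q}$, so all intersection parities are preserved), and then every passability hypothesis you need is an instance of \cref{lem:passatedge} applied to the updated correspondence, rather than an ad hoc tightness computation.
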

\begin{proof}
    Since $\mathcal{P}$ is cyclic, we can enumerate its elements as $P_1$, $P_2$, \ldots $P_n$ for some $n.$ For any $i$, let $Q_i \coloneqq \rho(P_i).$
    We will argue by induction on $\vert\Set{(i,j) \in [n]^2 \colon i \neq j \text{ but } P_i \cap Q_j \neq \emptyset}\vert.$
    The base case is that this set is empty.
    In that case for any $i \leq n$ we must have $P_i \cap Q_j = \emptyset$ for all $j \neq i$ and so $P_i \subseteq Q_i.$
    Similarly $Q_i \cap P_j = \emptyset$ for all $j \neq i$ and so $Q_i \subseteq P_i.$
    Thus in this case $P_i = Q_i$ for all $i \leq n$ and so $\mathcal{Q} = \mathcal{P}$, giving the desired result. 
    
    For the induction step, let $(i,j) \in [n]^2$ be such that $i \neq j$ but $P_i \cap Q_j \neq \emptyset.$
    By \cref{lem:passatedge} the set $S \coloneqq P_i \cap Q_j$ is passable from $P_i$ to $P_j$, and $P_i$ and $P_j$ are consecutive on $\collapse{\mathcal{P}}.$
    We set $P_i' \coloneqq P_i \setminus S$, $P_j' \coloneqq P_j \cup S$ and $P_k' \coloneqq P_k$ for all other $k.$ Let $\mathcal{P}' \coloneqq \Set{P'_i \colon i \leq n}.$ By \cref{lem:onestepmax}, $\mathcal{P}'$ is also maximal cyclic. For any $(i', j') \in [n]^2$ other than $(i, j)$ we have $P'_{i'} \cap Q_{j'} = P_{i'} \cap Q_{j'}$, whereas $P'_i \cap Q_j = \emptyset.$ So we are done by applying the induction hypothesis to $\mathcal{P}'$, $\mathcal{Q}$ and $\rho'.$
\end{proof}

Now we return to the construction of the objects displaying the correspondence equivalence classes.
The construction relies on the following key lemma.

\begin{lemma}\label{lem:disjointpass}
    Let $\mathcal{P}$ be a \TorsoidInducing tight set partition of $G.$
    Let $P$, $Q$ and $Q'$ be distinct elements of $\mathcal{P}.$
    Let $S$ be passable between $P$ and $Q$ and let $S'$ be passable between $P$ and $Q'.$
    Then $S \cap S' = \emptyset.$
\end{lemma}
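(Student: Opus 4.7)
The plan is to argue by contradiction: suppose $T := S \cap S' \neq \emptyset$, and derive a contradiction from the torsoid-inducing structure of $\mathcal{P}$.

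Observe first that since $S \subseteq P \cup Q$, $S' \subseteq P \cup Q'$ and the partition classes $Q, Q'$ are disjoint, $T \subseteq P$. Moreover, the restriction $S \cap P$ remains passable between $P$ and $Q$: one verifies directly that $P \setminus (S \cap P) = P \setminus S$, $P \cup (S \cap P) = P$, $Q \setminus (S \cap P) = Q$ and $Q \cup (S \cap P) = Q \cup S$ are all tight. The analogous statement holds for $S' \cap P$. Since $(S \cap P) \cap (S' \cap P) = T$, we may replace $S, S'$ by $S \cap P, S' \cap P$ and assume from now on that $S, S' \subseteq P$.

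Now consider the tight sets $A := Q \cup S$ and $A' := Q' \cup S'$. A direct computation using $Q \cap Q' = \emptyset$ together with $S, S' \subseteq P$ (so disjoint from $Q \cup Q'$) gives $A \cap A' = T$. I split according to the parity of $T$. If $T$ is odd, then \cref{lem:basictight} applied to the pair $A, A'$ yields that $Y := A \cup A' = Q \cup Q' \cup S \cup S'$ is tight; a parity calculation via \cref{lem:even_odd} shows $Y \cap P = S \cup S'$ is odd (as the union of two even sets overlapping in the odd set $T$), and hence $\oddIntersections{\mathcal{P}}{Y} = \{P, Q, Q'\}$. If $T$ is even, apply \cref{lem:basictight} instead to the tight sets $P \setminus S$ and $P \setminus S'$, whose intersection $P \setminus (S \cup S')$ has odd parity (since $T$ even makes $S \cup S'$ even): this yields that $P \setminus T$ and $P \setminus (S \cup S')$ are both tight, so $T$ sits inside $P$ as a nonempty even set whose removal leaves a tight proper subset.

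Now derive a contradiction in each case of the definition of \TorsoidInducing. If $\collapse{\mathcal{P}}$ is a brick or brace other than $C_4$, then in the odd case \cref{lem:cutswell} forces the 3-element set $\oddIntersections{\mathcal{P}}{Y}$ to be a singleton or co-singleton; combined with \cref{lem:uncrosscut} this reduces to $|\mathcal{P}| = 4$ with $\collapse{\mathcal{P}} = K_4$, and then a second application of the same analysis with the fourth class playing the role of $Q$ or $Q'$ produces a further tight set whose odd-intersection set violates \cref{lem:cutswell}. The even case is handled analogously by applying \cref{lem:nothree} to the triple $P \setminus T$, $A$, $A'$ (once one sets up a tight third member inside the sandwich $P \setminus (S \cup S') \subseteq \cdot \subseteq P \setminus T$). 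If on the other hand $\mathcal{P}$ is maximal cyclic, \cref{lem:cutscyclicpartition} forces $\oddIntersections{\mathcal{P}}{Y}$ (odd case) to be a cyclic interval of length three, pinning $P$ between $Q$ and $Q'$ in $\collapse{\mathcal{P}}$; using the tight proper subset $T$ of $P$ (odd case) or $P \setminus T$, $P \setminus (S \cup S')$ (even case), together with the passability conclusions of \cref{lem:cutscyclicpartition}, one invokes \cref{lem:onestepmax} to construct a strictly finer cyclic tight set partition of $G$, contradicting the maximality of $\mathcal{P}$.

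The hard part will be handling the exceptional $K_4$ sub-case in the brick/brace setting, and making the construction of the cyclic refinement in the maximal cyclic case completely explicit; both rely on carefully invoking the passability-preservation lemmas (\cref{lem:passcup}, \cref{lem:maxpass}, \cref{lem:modifypass}) to check that the new sets one introduces are still tight and fit into a valid refinement.
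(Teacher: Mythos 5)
Your overall skeleton matches the paper's (split on the parity of $S\cap S'$; in the odd case build the tight set $Q\cup S\cup Q'\cup S'$ whose odd-intersection set is $\{P,Q,Q'\}$ and contradict \cref{lem:cutswell}; in the even case finish with \cref{lem:nothree}), but the three places where your sketch deviates are exactly where it breaks. First, the $K_4$ subcase: when $|\mathcal{P}|=4$ the set $\{P,Q,Q'\}$ \emph{is} the complement of a singleton, so \cref{lem:cutswell} gives no contradiction, and your proposed ``second application with the fourth class playing the role of $Q$ or $Q'$'' has nothing to run on --- no passable set between $P$ and that fourth class is given or constructible from the hypotheses. The paper instead partitions $P$ into the three tight sets $P_1=P\setminus S'$, $P_2=S\cap S'$, $P_3=(P\cap S')\setminus S$, checks that $Q\cup P_1\cup P_2$ and $P_2\cup P_3\cup Q'$ are tight, and feeds this into \cref{lem:extendcycle} to conclude the refined collapse is a $6$-cycle, forcing $\collapse{\mathcal{P}}=C_4$, a contradiction. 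Second, the maximal cyclic case: \cref{lem:onestepmax} cannot do what you ask of it --- it only says that shifting a passable set between adjacent classes \emph{preserves} maximal cyclicity; it does not manufacture a strictly finer cyclic partition. The refinement you need is again $(\mathcal{P}\setminus\{P\})\cup\{P_1,P_2,P_3\}$, shown to be cyclic via \cref{lem:extendcycle}, which then contradicts maximality directly.

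Third, your even case is set up incorrectly. You want a third tight set $X''$ to apply \cref{lem:nothree} together with $A=Q\cup S$ and $A'=Q'\cup S'$, and you propose to find it in the ``sandwich'' $P\setminus(S\cup S')\subseteq X''\subseteq P\setminus T$. But \cref{lem:nothree} needs all three pairwise intersections to be \emph{equal}; since $A\cap A'=T$, you need $X''\cap A=X''\cap A'=T$, which forces $T\subseteq X''$ --- impossible for any $X''\subseteq P\setminus T$ unless $T$ is already empty. The correct third set is $P'=(P\setminus(S\cup S'))\cup T$, which \emph{contains} $T$; its tightness is not free, and the paper obtains it by first showing $Q\cup(S\setminus S')$ and $Q'\cup(S'\setminus S)$ are tight (via \cref{lem:basictight} applied to $Q\cup S$ and the complement of $Q'\cup S'$, and symmetrically) and then applying \cref{lem:deletemany} to $P$ with these two sets. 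Your reduction to $S,S'\subseteq P$ and the parity bookkeeping in the odd case are fine, but as written the $K_4$ case, the maximal cyclic case, and the even case all lack a working argument.
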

\begin{proof}
    First we suppose for a contradiction that $S \cap S'$ is odd.
    We begin by establishing some notation.
    Let $P_1 \coloneqq P \setminus S'$, $P_2 : = S \cap S'$ and $P_3 \coloneqq (P \cap S') \setminus S.$
    Thus $P_1$, $P_2$ and $P_3$ partition $P.$
    All three of these are tight sets.
    $P_1$ is tight by passability of $S'.$
    $P_2$ is tight by \cref{lem:basictight} applied to $Q \cup S$ and $Q' \cup S'.$
    Finally, $P \setminus S$ is tight because $S$ is passable for $P$, and $P_3$ is tight by \cref{lem:basictight} applied to $P \setminus S$ and the complement of $P_1.$
    
    We know that $P = P_1 \cup P_2 \cup P_3$ is tight.
    We can show that also $Q \cup P_1 \cup P_2$ is tight by applying \cref{lem:basictight} to $Q \cup S$ and $P_1.$
    Finally, we know that $P_2 \cup P_3 \cup Q' = Q' \cup S'$ is tight.
    
    There are now three cases:
    \begin{description}
        \item[Case 1: $\collapse{\mathcal{P}}$ is a \BoB\ with more than four vertices.]
            Applying \cref{lem:basictight} to $Q \cup S$ and $Q' \cup S'$ shows that the set $Q \cup S \cup Q' \cup S'$ is tight.
            But $\oddIntersections{\mathcal{P}}{Q \cup S \cup Q' \cup S'} = \Set{P, Q, Q'}$ is neither a singleton nor the complement of a singleton in $\mathcal{P}$, contradicting \cref{lem:cutswell}.
        \item[Case 2: $\collapse{\mathcal{P}}$ is a \BoB\ with four vertices but is not a cycle.]
            In this case, applying \cref{lem:extendcycle} to $\collapse{(\mathcal{P} \setminus \Set{P}) \cup \Set{P_1, P_2, P_3}}$ shows that this graph is a cycle of length $6$, and hence $\collapse{\mathcal{P}}$ is a cycle of length $4$, contradicting our assumptions.
        \item[Case 3: $\mathcal{P}$ is maximal cyclic.]
            Applying \cref{lem:extendcycle} to $\collapse{(\mathcal{P} \setminus \Set{P}) \cup \Set{P_1, P_2, P_3}}$ shows that this graph is a cycle, contradicting maximality of $\mathcal{P}.$
    \end{description}
    Since we reached a contradiction in all three cases, our assumption that $S \cap S'$ is odd must have been false.
    So $S \cap S'$ is even.
    
    By \cref{lem:basictight}
    applied to $Q \cup S$ and the complement of $Q' \cup S'$, we know that $Q \cup (S \setminus S')$ is tight.
    Similarly $Q' \cup (S' \setminus S)$ is tight.
    Applying \cref{lem:deletemany} to $P$, $Q \cup (S\setminus S')$ and $Q' \cup (S' \setminus S)$ yields that $P' \coloneqq (P \setminus (S \cup S')) \cup (S \cap S')$ is tight.
    Applying \cref{lem:nothree} to $Q \cup S$, $Q' \cup S'$ and $P'$, we see that their intersection $S \cap S'$ is empty as required. 
\end{proof}

This allows us to define canonical objects which determine the correspondence equivalence classes for such tight set partitions.

\begin{definition}[torsoid]
    \label{def:torsoid}
    \label{def:correspondence}
    A \emph{torsoid} $\mathcal{T}$ in a matching covered graph $G$ is a pair $\Brace{H,\edgefkt{}}$ with:
    \renewcommand{\labelenumi}{\textbf{\theenumi}}
	\renewcommand{\theenumi}{(T\arabic{enumi})}
	\begin{enumerate}[labelindent=0pt,labelwidth=\widthof{\ref{last-item-torsoid}},itemindent=1em]
        \item \label{torsoid-def-1} $H$ is a matching covered graph on at least 4 vertices that is a \BoB\ or a cycle
        \item \label{torsoid-def-2} the elements of $V(H)$ are tight sets of $G$
        \item \label{torsoid-def-3}  $\edgefkt{}: \E{H} \to 2^{\V{G}}$
        \item \label{torsoid-def-4} $V(H) \cup \image{\edgefkt{}}$ is a near partition of $\V{G},$ where $\image{\edgefkt{}}$ is the image of $\edgefkt{}$
        \item \label{torsoid-def-5} for $vw \in \E{H}$, there is an edge from $\vertexfkt{v} \cup \edgefkt{vw}$ to $\vertexfkt{w}$ and  $\edgefkt{vw}$ is largest among all subsets of $\vertexfkt{v} \cup \edgefkt{vw} \cup \vertexfkt{w}$ that are passable for both $\vertexfkt{v}$ and $\vertexfkt{w}$
        \item \label{torsoid-def-6} for $vw \notin \E{H}$, there is no edge from $\vertexfkt{v}$ to $\vertexfkt{w}$ in $G$
        \item \label{torsoid-def-7} if $H$ is a cycle and $v$ is a vertex of $H$ with neighbours $u$ and $w$ then there is no partition of $\edgefkt{uv} \cup \vertexfkt{v} \cup \edgefkt{vw}$ into tight sets $P_1,P_2,P_3$ such that both $\vertexfkt{u} \cup P_1 \cup P_2$ and $P_2 \cup P_3 \cup \vertexfkt{w}$ are tight.
            \label{last-item-torsoid}
    \end{enumerate}
    
    We call $\mathcal{T}$ \emph{cyclic} if $H$ is a cycle and \emph{noncyclic} otherwise.
    
    A \emph{correspondence} between $\mathcal{T}$ and a tight set partition $\mathcal{P}$ is a bijection $\sigma \colon V(H) \to \mathcal{P}$ such that for every $v \in V(H)$ we have 
    \[
        v \subseteq \sigma(v) \subseteq v \cup \bigcup_{w \in N_H(v)} \edgefkt{vw}\,.
    \]
    We call $\sigma$ \emph{strong} if for every vertex $v$ of $H$ the set $\sigma(v)$ is a union of $v$ and some of the sets $\edgefkt{vw}$ with $w$ a neighbour of $v$ in $H.$
    
    We say $\mathcal{P}$ is in (strong) correspondence with $\mathcal{T}$ if there is such a (strong) correspondence $\sigma.$
\end{definition}

There is another slightly different perspective on the set of tight set partitions in strong correspondence with $\mathcal{T}.$
For any graph $H$, we say a function $\kappa \colon E(H) \to V(H)$ is a \emph{choice function} for $H$ if $\kappa(e)$ is an endvertex of $e$ for all edges $e$ of $H.$
For any pair $\mathcal{T} = (H,\edgefkt{})$ of a graph and a function defined on its edge set and any choice function $\kappa$ for $H$, we take $\mathcal{P}(\mathcal{T},\kappa)$ to be the set of all sets of the form $v \cup \bigcup_{\kappa(e) = v} \edgefkt{e}.$

\begin{lemma}
    \label{choicetostrong}
    For any torsoid $\mathcal{T} = (H, \edgefkt{})$ in $G$ and any choice function $\kappa$ for $H$, the set $\mathcal{P}(\mathcal{T},\kappa)$ is a tight set partition which is in strong correspondence with $\mathcal{T}.$
    Furthermore, all tight set partitions in strong correspondence with $\mathcal{T}$ arise in this way.
\end{lemma}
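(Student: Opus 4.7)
The plan is to first establish the forward direction---that $\mathcal{P}(\mathcal{T},\kappa)$ is a tight set partition together with a canonical strong correspondence $\sigma_\kappa$---and then to read off the converse by observing that any strong correspondence determines such a $\kappa.$ Fix a choice function $\kappa$ for $H$ and write $\sigma_\kappa(v) \coloneqq v \cup \bigcup_{\kappa(e) = v} \edgefkt{e}.$ By (T4) the family consisting of the $v \in V(H)$ together with the $\edgefkt{e}$ for $e \in E(H)$ is a near partition of $V(G)$, and since $\kappa$ sends each edge to exactly one of its endpoints, the sets $\sigma_\kappa(v)$ are pairwise disjoint with union $V(G).$ The assignment $v \mapsto \sigma_\kappa(v)$ is then a bijection $V(H) \to \mathcal{P}(\mathcal{T},\kappa)$: surjectivity is immediate from the definition of $\mathcal{P}(\mathcal{T},\kappa)$, and injectivity follows since each $v$ is nonempty (being tight by (T2)) and is contained in $\sigma_\kappa(v)$ but in no other $\sigma_\kappa(v')$ by (T4). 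The inclusions $v \subseteq \sigma_\kappa(v) \subseteq v \cup \bigcup_{w \in N_H(v)} \edgefkt{vw}$ hold by construction and $\sigma_\kappa(v)$ is visibly a union of $v$ with some of the $\edgefkt{vw}$, so once tightness of each $\sigma_\kappa(v)$ is established, $\sigma_\kappa$ will be a strong correspondence.

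For tightness, (T5) says that $\edgefkt{e}$ is passable for $v$ whenever $e$ is incident to $v$, so each set $v \cup \edgefkt{e}$ is tight. For any two such edges $e, e'$ incident to $v$ the intersection $(v \cup \edgefkt{e}) \cap (v \cup \edgefkt{e'})$ equals $v$ by (T4), which is tight and hence odd, so \cref{lem:basictight} yields tightness of the union $v \cup \edgefkt{e} \cup \edgefkt{e'}.$ Applying \cref{lem:infunion} to the family $(v \cup \edgefkt{e})_{\kappa(e) = v}$ then gives that $\sigma_\kappa(v)$ is either tight or equal to $V(G)$; the latter is impossible because, by (T1), there is some $v' \in V(H)$ distinct from $v$, which is nonempty and disjoint from $\sigma_\kappa(v)$ by (T4). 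Thus every $\sigma_\kappa(v)$ is tight and $\mathcal{P}(\mathcal{T},\kappa)$ is a tight set partition in strong correspondence with $\mathcal{T}$ via $\sigma_\kappa.$ I expect this tightness step to require the most care, since a vertex of $H$ may have infinitely many incident edges, but \cref{lem:infunion} was proved precisely for such infinite unions.

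For the converse, let $\sigma \colon V(H) \to \mathcal{P}$ be any strong correspondence, so for each $v$ there is some $S(v) \subseteq N_H(v)$ with $\sigma(v) = v \cup \bigcup_{w \in S(v)} \edgefkt{vw}.$ Given an edge $e = vw$ with $\edgefkt{e} \neq \emptyset$, the fact that $\mathcal{P}$ is a partition forces exactly one of $w \in S(v)$ and $v \in S(w)$ to hold; set $\kappa(e)$ to be the corresponding endvertex. For edges with $\edgefkt{e} = \emptyset$ pick $\kappa(e)$ arbitrarily. Then at each vertex $v$ the empty $\edgefkt{e}$ contribute nothing to either union, while the nonempty $\edgefkt{vw}$ with $w \in S(v)$ are exactly those with $\kappa(vw) = v$, so $v \cup \bigcup_{\kappa(e) = v} \edgefkt{e} = \sigma(v).$ Hence $\mathcal{P} = \mathcal{P}(\mathcal{T},\kappa)$ and $\sigma = \sigma_\kappa$, as required.
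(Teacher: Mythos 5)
Your proof is correct and follows essentially the same route as the paper's (which is only a two-sentence sketch): tightness of each class via \ref{torsoid-def-5} and \cref{lem:infunion}, the partition property via \ref{torsoid-def-4}, and the converse by reading off $\kappa(e)$ as the endvertex whose class contains $\edgefkt{e}.$ You merely spell out the details the paper leaves implicit, such as the \cref{lem:basictight} step needed before invoking \cref{lem:infunion}, ruling out the whole-vertex-set case, and the treatment of empty $\edgefkt{e}$ in the converse.
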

\begin{proof}
    All elements of $\mathcal{P}(\mathcal{T}, \kappa)$ are tight by \cref{torsoid-def-5} and \cref{lem:infunion}, and they partition $\V{G}$ by \cref{torsoid-def-4}.
    The function $\sigma$ sending each element of $\mathcal{P}(\mathcal{T},\kappa)$ to the unique vertex of $H$ which it includes is a strong correspondence. 
    
    For the last part, given a strong correspondence $\sigma$ between $\mathcal{T}$ and a tight set partition $\mathcal{P}$, let $\kappa$ be the function sending each edge $e$ of $H$ to the unique vertex $v$ with $\edgefkt{e} \subseteq \sigma(v).$
    Then $\mathcal{P} = \mathcal{P}(\mathcal{T},\kappa).$
\end{proof}

\begin{proposition} \label{prop:edges_of_passable_set}
    Let $\mathcal{T}=\Torsoid{H}$ be a torsoid in $G$ and $uv \in E(H).$
    Any edge in $G$ with an endvertex in $\edgefkt{uv}$ has both endvertices in $\vertexfkt{u} \cup \vertexfkt{v} \cup \edgefkt{uv}.$ 
\end{proposition}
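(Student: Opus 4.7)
The plan is to argue by contradiction using the parity of the passable set $\edgefkt{uv}$ together with the tightness of a small family of natural sets. Suppose there is an edge $xy \in E(G)$ with $x \in \edgefkt{uv}$ but $y \notin \vertexfkt{u} \cup \vertexfkt{v} \cup \edgefkt{uv}$. By matching-coverage, I can fix a perfect matching $M \in \Perf{G,M_0}$ containing $xy$ (where $M_0$ is the reference matching).

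The four tight sets I intend to use are $\vertexfkt{u}$, $\vertexfkt{v}$, $\vertexfkt{u} \cup \edgefkt{uv}$ and $\vertexfkt{v} \cup \edgefkt{uv}$; the first two are tight by \ref{torsoid-def-2} and the others are tight because $\edgefkt{uv}$ is passable for both $\vertexfkt{u}$ and $\vertexfkt{v}$ by \ref{torsoid-def-5}. Hence each has exactly one $M$-edge in its cut. Since $x \in \edgefkt{uv} \subseteq \vertexfkt{u}\cup\edgefkt{uv}$ and $y$ lies outside this set, $xy$ is the unique $M$-edge in $\partial(\vertexfkt{u} \cup \edgefkt{uv})$, and similarly the unique one in $\partial(\vertexfkt{v} \cup \edgefkt{uv})$.

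Let $e_u$ be the unique $M$-edge in $\partial\vertexfkt{u}$. Its non-$\vertexfkt{u}$ endpoint cannot lie outside $\vertexfkt{u}\cup\edgefkt{uv}$: otherwise $e_u$ would belong to $\partial(\vertexfkt{u}\cup\edgefkt{uv})$ and hence equal $xy$, which is impossible because neither endpoint of $xy$ is in $\vertexfkt{u}$ (as $x\in\edgefkt{uv}$ and $y$ was chosen outside $\vertexfkt{u}$). Therefore $e_u$ has one endpoint in $\vertexfkt{u}$ and the other in $\edgefkt{uv}$. By the same argument, the unique $M$-edge $e_v$ of $\partial\vertexfkt{v}$ has one endpoint in $\vertexfkt{v}$ and the other in $\edgefkt{uv}$.

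Now I count edges in $M\cap\partial\edgefkt{uv}$. The three edges $e_u$, $e_v$, $xy$ are pairwise distinct (their $\vertexfkt{u}$-, $\vertexfkt{v}$-, and outside-endpoints distinguish them) and all lie in this set. But $\edgefkt{uv}$ is passable and therefore even, so $|M \cap \partial\edgefkt{uv}|$ is finite and even, hence at least $4$. Pick a fourth edge $f = ab$ with $a\in\edgefkt{uv}$ and $b\notin\edgefkt{uv}$. Since $\vertexfkt{u}$ and $\vertexfkt{v}$ each host exactly one $M$-edge in their respective cuts, $b\notin\vertexfkt{u}\cup\vertexfkt{v}$, so $b\notin\vertexfkt{u}\cup\edgefkt{uv}$. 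Then $f\in M\cap\partial(\vertexfkt{u}\cup\edgefkt{uv})=\{xy\}$, contradicting $f\neq xy$. The only genuinely non-bookkeeping step is recognising that the evenness of the passable set, combined with the three forced matching edges, manufactures a fourth edge which no tight cut can accommodate.
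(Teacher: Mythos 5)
Your proof is correct, but it takes a genuinely different (and more elementary) route than the paper. The paper's proof is a one-line application of \cref{lem:basictight} to the tight sets $\vertexfkt{u} \cup \edgefkt{uv}$ and the complement of $\edgefkt{uv} \cup \vertexfkt{v}$: their intersection is $\vertexfkt{u}$, which is odd, so the ``no edge between $X\setminus X'$ and $X'\setminus X$'' clause of that lemma immediately forbids any edge from $\edgefkt{uv}$ to $\V{G}\setminus(\vertexfkt{u}\cup\vertexfkt{v}\cup\edgefkt{uv})$. You instead rerun, in this special case, the kind of counting that underlies \cref{lem:basictight}: fix a matching through the offending edge, locate the unique matching edges in $\partial\vertexfkt{u}$, $\partial\vertexfkt{v}$, $\partial(\vertexfkt{u}\cup\edgefkt{uv})$, $\partial(\vertexfkt{v}\cup\edgefkt{uv})$, and use evenness of the passable set $\edgefkt{uv}$ to manufacture a fourth edge that overloads a tight cut. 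Each step checks out (the tightness claims follow from \ref{torsoid-def-2} and \ref{torsoid-def-5}, disjointness from \ref{torsoid-def-4}, and the transfer of evenness to your chosen matching from \cref{lem:even_odd_same_for_all_matchings}), so the argument is sound and self-contained; what the paper's route buys is brevity, since the crossing lemma already packages this reasoning. Incidentally, your own argument can stop earlier: once you know $e_u$ runs from $\vertexfkt{u}$ into $\edgefkt{uv}$, the edges $e_u$ and $xy$ are two distinct $M$-edges in $\partial(\vertexfkt{v}\cup\edgefkt{uv})$, contradicting its tightness directly, so the parity count and the fourth edge are not needed.
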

\begin{proof}
    This follows by applying \cref{lem:basictight} to $\vertexfkt{u} \cup \edgefkt{uv}$ and the complement of $\edgefkt{uv} \cup \vertexfkt{v}.$
\end{proof}

\begin{lemma}\label{lem:strongcorrisiso}
    If $\sigma$ is a strong correspondence between a torsoid $\mathcal{T} = \Torsoid{H}$ in $G$ and a tight set partition $\mathcal{P}$ of $G$ then $\sigma$ is a graph isomorphism from $H$ to $\collapse{\mathcal{P}}.$
\end{lemma}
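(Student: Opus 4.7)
My plan is to first invoke \cref{choicetostrong} so that $\sigma$ is presented via a choice function $\kappa$ on $\E{H}$, giving $\sigma(v) = v \cup \bigcup_{\kappa(e) = v} \edgefkt{e}$ for every $v \in \V{H}$. Since the definition of correspondence already forces $\sigma$ to be a bijection between $\V{H}$ and $\V{\collapse{\mathcal{P}}} = \mathcal{P}$, the task reduces to showing that $\sigma$ both preserves and reflects adjacency.

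For the forward direction, I would fix $uv \in \E{H}$ and exploit (T5), which is symmetric in the two endpoints of an edge of $H$ and thus also yields a $G$-edge from $\vertexfkt{v} \cup \edgefkt{uv}$ to $\vertexfkt{u}$. Choosing the appropriate orientation according to whether $\kappa(uv) = u$ or $\kappa(uv) = v$ places the edge-part $\edgefkt{uv}$ inside the correct $\sigma$-image, so the guaranteed $G$-edge witnesses adjacency of $\sigma(u)$ and $\sigma(v)$ in $\collapse{\mathcal{P}}$.

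The reverse direction is where the real work lies. Given a $G$-edge $xy$ with $x \in \sigma(u)$ and $y \in \sigma(v)$ for distinct $u, v \in \V{H}$, I would classify by where $x$ and $y$ each lie, distinguishing the vertex-parts $\vertexfkt{u}, \vertexfkt{v}$ from the edge-parts $\edgefkt{uw}$ with $\kappa(uw) = u$ and $\edgefkt{vw'}$ with $\kappa(vw') = v$. The pure vertex-part case follows directly from the contrapositive of (T6). When one endpoint lies in a vertex-part and the other in an edge-part $\edgefkt{vw'}$, I would apply \cref{prop:edges_of_passable_set} to confine both endpoints of $xy$ to $\vertexfkt{v} \cup \vertexfkt{w'} \cup \edgefkt{vw'}$; the near-partition property (T4) then forces $u = w'$, so $uv \in \E{H}$. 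In the final case, where both endpoints lie in edge-parts, \cref{prop:edges_of_passable_set} traps $y \in \edgefkt{vw'}$ inside $\vertexfkt{u} \cup \vertexfkt{w} \cup \edgefkt{uw}$; since $\edgefkt{vw'}$ is disjoint from every vertex-part by (T4), the two edge-parts would have to coincide, so this common nonempty set would lie in both $\sigma(u)$ and $\sigma(v)$, contradicting the partition property of $\mathcal{P}$ and ruling this case out.

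The main obstacle is the bookkeeping in the reverse direction: one must keep straight which edge-parts belong to which $\sigma$-image under $\kappa$, and combine \cref{prop:edges_of_passable_set} with the disjointness clauses of both the near partition (T4) and the partition $\mathcal{P}$ to dispose of each case. Once the cases are set up correctly the arguments inside each case are short and mechanical.
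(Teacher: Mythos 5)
Your proposal is correct and follows exactly the route the paper intends: its proof of \cref{lem:strongcorrisiso} simply declares the statement immediate from \cref{torsoid-def-5}, \cref{torsoid-def-6} and \cref{prop:edges_of_passable_set}, and your case analysis (with \cref{torsoid-def-4} and the disjointness of $\mathcal{P}$ handling the bookkeeping) is precisely the elaboration of that remark. No gaps; you have merely written out the details the paper leaves implicit.
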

\begin{proof}
    This is immediate from \cref{torsoid-def-5}, \cref{torsoid-def-6} and \cref{prop:edges_of_passable_set}.
\end{proof}

We can now show that any tight set partition of the kinds discussed above induces a torsoid.

\begin{definition}\label{def:inducedtorsoid}
    Let $\mathcal{P}$ be a \TorsoidInducing tight set partition of $G.$
    For any edge $PQ$ of $\collapse{\mathcal{P}}$ we define $\delta_{\mathcal{P}}(PQ)$ to be the largest passable set between $P$ and $Q$ (this exists by \cref{lem:maxpass}).
    For $P \in \mathcal{P}$, we define $\tau_{\mathcal{P}}(P)$ to be $P \setminus \bigcup_{Q \in N_{\collapse{\mathcal{P}}}(P)}\delta(PQ).$ 
    
    Let $H_{\mathcal{P}}$ be the unique graph on the image of $\tau_{\mathcal{P}}$ making $\tau_{\mathcal{P}}$ a graph isomorphism from $\collapse{\mathcal{P}}$ to $H_{\mathcal{P}}.$
    Let $\sigma_{\mathcal{P}}$ be the inverse of $\tau_{\mathcal{P}}.$
    Let $\varepsilon_{\mathcal{P}} \colon E(H) \to 2^{\V{G}} ; vw \mapsto \delta_{\mathcal{P}}(\sigma_{\mathcal{P}}(v)\sigma_{\mathcal{P}}(w)).$
    Then we call $\mathcal{T}_{\mathcal{P}} \coloneqq \Brace{H_{\mathcal{P}},\varepsilon_{\mathcal{P}}}$ the \emph{induced torsoid} of $\mathcal{P}.$
\end{definition}

\begin{theorem}\label{thm:inducedtorsoid}
    In the context of \cref{def:inducedtorsoid}, $\mathcal{T}_{\mathcal{P}}$ is a torsoid and $\sigma_{\mathcal{P}}$ is a correspondence.
\end{theorem}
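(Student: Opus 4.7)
The plan is to verify each of the conditions \cref{torsoid-def-1}--\cref{torsoid-def-7} of \cref{def:torsoid} for $(H_{\mathcal{P}}, \varepsilon_{\mathcal{P}})$ in turn, and then to check that $\sigma_{\mathcal{P}}$ is a correspondence. The workhorses are \cref{lem:deletemany} and \cref{lem:modifypass}, with \cref{lem:disjointpass} discharging disjointness hypotheses throughout.

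First, for each $P \in \mathcal{P}$ I would apply \cref{lem:deletemany} with tight set $X \coloneqq P$ and family of tight sets $X_R \coloneqq R \cup \delta_{\mathcal{P}}(PR)$ (tight by passability) indexed by neighbours $R$ of $P$ in $\collapse{\mathcal{P}}$: each intersection $X \cap X_R = P \cap \delta_{\mathcal{P}}(PR)$ is even, because $P$ and $P \setminus \delta_{\mathcal{P}}(PR)$ are both odd, and these intersections are pairwise disjoint by \cref{lem:disjointpass}. The lemma yields $\tau_{\mathcal{P}}(P) = P \setminus \bigcup_R \delta_{\mathcal{P}}(PR)$ tight, hence nonempty; since the $\tau_{\mathcal{P}}(P) \subseteq P$ lie in pairwise disjoint partition classes, $\tau_{\mathcal{P}}$ is injective and $H_{\mathcal{P}}$ is a well-defined graph isomorphic to $\collapse{\mathcal{P}}$. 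Condition \cref{torsoid-def-1} follows because $\collapse{\mathcal{P}}$ is either a \BoB\ with at least four vertices or a cycle of length at least four by the definition of \TorsoidInducing; \cref{torsoid-def-2} and \cref{torsoid-def-3} are immediate, and \cref{torsoid-def-4} follows from $\tau_{\mathcal{P}}(P) \subseteq P$, $\delta_{\mathcal{P}}(PQ) \subseteq P \cup Q$, and \cref{lem:disjointpass}. Condition \cref{torsoid-def-6} is clear: if $vw \notin \E{H_{\mathcal{P}}}$, then $\sigma_{\mathcal{P}}(v)\sigma_{\mathcal{P}}(w) \notin \E{\collapse{\mathcal{P}}}$, so no edge of $G$ runs between these two classes and hence none between their subsets $\vertexfkt{v}$ and $\vertexfkt{w}$.

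The bulk of the work is \cref{torsoid-def-5}. Fix $vw \in \E{H_{\mathcal{P}}}$ with $P \coloneqq \sigma_{\mathcal{P}}(v)$ and $Q \coloneqq \sigma_{\mathcal{P}}(w)$. To show $\delta_{\mathcal{P}}(PQ)$ is passable for $\tau_{\mathcal{P}}(P)$, I apply \cref{lem:modifypass} to $P$ with $S \coloneqq \delta_{\mathcal{P}}(PQ)$ and family $S_R \coloneqq \delta_{\mathcal{P}}(PR)$ for $R \in N_{\collapse{\mathcal{P}}}(P) \setminus \{Q\}$: disjointness is \cref{lem:disjointpass}, and tightness of $(P \cup S) \setminus S_R$ and $(P \cup S_R) \setminus S$ follows from \cref{lem:deletemany} applied to the tight sets $P \cup \delta_{\mathcal{P}}(PQ)$ and $R \cup \delta_{\mathcal{P}}(PR)$, whose intersection is the even set $P \cap \delta_{\mathcal{P}}(PR)$. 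The conclusion gives passability of $\delta_{\mathcal{P}}(PQ)$ for $P \setminus \bigcup_{R \neq Q}\delta_{\mathcal{P}}(PR)$; taking union and difference with $\delta_{\mathcal{P}}(PQ)$ produces $\tau_{\mathcal{P}}(P) \cup \delta_{\mathcal{P}}(PQ)$ and $\tau_{\mathcal{P}}(P)$ respectively, both tight, and symmetrically for $\tau_{\mathcal{P}}(Q)$. An edge from $\vertexfkt{v} \cup \edgefkt{vw}$ to $\vertexfkt{w}$ is obtained by a case analysis of any $P$-to-$Q$ edge (guaranteed by $PQ \in \E{\collapse{\mathcal{P}}}$): \cref{lem:basictight} applied to $R \cup \delta_{\mathcal{P}}(PR)$ and the complement of $P$ (and symmetrically for $Q$) rules out endpoints in $\delta_{\mathcal{P}}(PR) \cap P$ for $R \neq Q$ and $\delta_{\mathcal{P}}(QR') \cap Q$ for $R' \neq P$, and a refinement of this argument—if no $P$-to-$Q$ edge reached $\tau_{\mathcal{P}}(Q)$, then $\delta_{\mathcal{P}}(PQ)$ could be strictly enlarged, contradicting \cref{lem:maxpass}—forces some such edge to have its $Q$-endpoint in $\tau_{\mathcal{P}}(Q)$. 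Maximality of $\delta_{\mathcal{P}}(PQ)$ among passable sets between $\tau_{\mathcal{P}}(P)$ and $\tau_{\mathcal{P}}(Q)$ is the subtlest point: any candidate $S' \subseteq \vertexfkt{v} \cup \edgefkt{vw} \cup \vertexfkt{w}$ is extended via a reverse application of \cref{lem:modifypass}, in which the sets $\delta_{\mathcal{P}}(PR) \cap P$ (each passable for $\tau_{\mathcal{P}}(P)$ by the same \cref{lem:deletemany} argument applied to $P$ with the family indexed by $R' \neq R$) are added back, upgrading passability of $S'$ from $\tau_{\mathcal{P}}(P)$ to $P$ and symmetrically to $Q$; thus $S'$ is passable between $P$ and $Q$, and $S' \subseteq \delta_{\mathcal{P}}(PQ)$ by \cref{lem:maxpass}.

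For \cref{torsoid-def-7}, applicable when $H_{\mathcal{P}}$ is a cycle (equivalently, $\mathcal{P}$ is maximal cyclic), any tight partition $P_1, P_2, P_3$ of $\edgefkt{uv} \cup \vertexfkt{v} \cup \edgefkt{vw}$ with $\vertexfkt{u} \cup P_1 \cup P_2$ and $P_2 \cup P_3 \cup \vertexfkt{w}$ tight would, after re-absorbing the remaining $\delta$-pieces of $\sigma_{\mathcal{P}}(v)$ into adjacent new partition classes, yield a strict cyclic refinement of $\mathcal{P}$ with two more vertices in its collapse, contradicting maximality. Finally, $\sigma_{\mathcal{P}}$ is a correspondence because by construction $v = \tau_{\mathcal{P}}(\sigma_{\mathcal{P}}(v)) \subseteq \sigma_{\mathcal{P}}(v) = v \cup \bigcup_R (\delta_{\mathcal{P}}(\sigma_{\mathcal{P}}(v)R) \cap \sigma_{\mathcal{P}}(v)) \subseteq v \cup \bigcup_{w \in N_{H_{\mathcal{P}}}(v)} \edgefkt{vw}$. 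I expect the principal technical obstacle to be the maximality claim in \cref{torsoid-def-5}, where the tightness conditions needed for the reverse application of \cref{lem:modifypass} must be verified one $\delta$-piece at a time.
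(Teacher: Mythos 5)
Most of your outline tracks the paper's own proof: conditions \cref{torsoid-def-1}--\cref{torsoid-def-4} and \cref{torsoid-def-6} via \cref{lem:disjointpass} and \cref{lem:deletemany}, the passability and maximality claims in \cref{torsoid-def-5} via \cref{lem:modifypass} and \cref{lem:maxpass}, and \cref{torsoid-def-7} by exhibiting a cyclic refinement. But your argument for the edge-existence part of \cref{torsoid-def-5} contains a step that is false. You assert that some $G$-edge between the classes $P=\sigma_{\mathcal{P}}(v)$ and $Q=\sigma_{\mathcal{P}}(w)$ must have its $Q$-endpoint in $\tau_{\mathcal{P}}(Q)=w$, on the grounds that otherwise $\delta_{\mathcal{P}}(PQ)$ could be strictly enlarged. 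The witnessing edge demanded by \cref{torsoid-def-5} only has to run from $v\cup\edgefkt{vw}$ to $w$, and it may lie entirely inside $Q$, joining $Q\cap\edgefkt{vw}$ to $w$; nothing forces any $P$--$Q$ edge of $G$ to reach $w$. Concretely, in the configuration of \cref{fig:interval_resident}, take $P$ to be the torsoid vertex $b$ and $Q$ to be the torsoid vertex $c$ together with the whole four-vertex set $\edgefkt{bc}$: every $P$--$Q$ edge of $G$ ends in $\edgefkt{bc}=\delta_{\mathcal{P}}(PQ)$, none ends in $\tau_{\mathcal{P}}(Q)$, and $\delta_{\mathcal{P}}(PQ)$ is already the largest passable set, so no enlargement contradiction is available; yet \cref{torsoid-def-5} holds because of the edges from $\edgefkt{bc}$ to $c$ inside $Q$. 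The missing idea is the paper's fallback: your ruling-out step does show that the $P$- and $Q$-endpoints of a $P$--$Q$ edge lie in $v\cup\edgefkt{vw}$ and $\edgefkt{vw}\cup w$ respectively; if the $Q$-endpoint lies in $w$ you are done, and otherwise $\edgefkt{vw}\neq\emptyset$, whereupon \cref{lem:basictight} applied to the tight sets $w\cup\edgefkt{vw}$ and the complement of $v\cup\edgefkt{vw}$ (whose intersection is $w$, odd) produces an edge between $\edgefkt{vw}$ and $w$. Your maximality argument cannot replace this application of \cref{lem:basictight}.

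A secondary gloss concerns \cref{torsoid-def-7}: the sets $P_1,P_2,P_3$ partition $\edgefkt{uv}\cup v\cup\edgefkt{vw}$, which need not be a union of classes of $\mathcal{P}$, so the cyclic partition you build does not refine $\mathcal{P}$ and hence does not directly contradict maximality of $\mathcal{P}$. The paper first passes to the strongly corresponding partition $\mathcal{P}(\mathcal{T},\kappa)$ in which $u$ and $w$ are themselves classes, transfers maximal cyclicity to it via \cref{lem:max}, and only then derives the contradiction through \cref{lem:extendcycle}; your sketch needs this detour (or an equivalent) made explicit.
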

\begin{proof}
   Conditions \cref{torsoid-def-1,torsoid-def-3,torsoid-def-6} are clear from the construction.
   \Cref{torsoid-def-2} follows from \cref{lem:disjointpass} and \cref{lem:deletemany}.
   \Cref{torsoid-def-4} follows from \cref{lem:disjointpass}.
   
   For \cref{torsoid-def-5}, first we note that passability of $\varepsilon_{\mathcal{P}}(vw)$ follows from \cref{lem:basictight} together with \cref{lem:modifypass}.
   Similarly any set $S \subseteq v \cup \varepsilon_{\mathcal{P}}(vw) \cup w$ which is passable for both $v$ and $w$ is also passable between $\sigma_{\mathcal{P}}(v)$ and $\sigma_{\mathcal{P}}(w)$ by \cref{lem:modifypass}, hence $\varepsilon_{\mathcal{P}}(vw)$ is maximal amongst such sets.
   To see that there is an edge from $v \cup \varepsilon_{\mathcal{P}}(vw)$ to $w$, let $e$ be any edge from $\sigma_{\mathcal{P}}(v)$ to $\sigma_{\mathcal{P}}(w)$ in $G.$
   For any $u \in N_{H_{\mathcal{P}}}(v) \setminus \Set{w}$, the endpoint of $e$ in $\sigma_{\mathcal{P}}(v)$ cannot be in $\varepsilon_{\mathcal{P}}(uv)$ by \cref{lem:basictight} applied to $u \cup \varepsilon_{\mathcal{P}}(uv)$ and the complement of $\varepsilon_{\mathcal{P}}(uv) \cup v.$
   So it must be in $v \cup \varepsilon_{\mathcal{P}}(vw).$
   Similarly the other endpoint of $e$ must be in $\varepsilon_{\mathcal{P}}(vw) \cup w.$
   If either of these endpoints is in $\varepsilon_{\mathcal{P}}(vw)$ then we get the desired edge by \cref{lem:basictight} applied to $v \cup \varepsilon_{\mathcal{P}}(vw)$ and the complement of $\varepsilon_{\mathcal{P}}(vw) \cup w.$
   Otherwise we can take $e$ itself as the desired edge.
   
   For \cref{torsoid-def-7}, suppose $H_{\mathcal{P}}$ is a cycle and let $\kappa$ be a choice function for $H_{\mathcal{P}}$ such that $\kappa^{-1}(u)$ and $\kappa^{-1}(w)$ are empty.
   Then $\mathcal{P}(\mathcal{T},\kappa)$ is a tight set partition by \cref{torsoid-def-5} and \cref{lem:basictight} applied to the sets of the form $\kappa(e) \cup \edgefkt{e}$, and is in correspondence with $\mathcal{P}$ by construction.
   By \cref{torsoid-def-5,torsoid-def-6}, $\collapse{\mathcal{P}(\mathcal{T},\kappa)}$ is a cycle, and hence so is $\mathcal{P}$ by \cref{lem:corrisiso}.
   So $\mathcal{P}$ is cyclic, and by assumption it must be maximal cyclic.
   Thus $\mathcal{P}(\mathcal{T},\kappa)$ is also maximal cyclic by \cref{lem:max}.
   But sets $P_1$, $P_2$ and $P_3$ as in \cref{torsoid-def-7} would contradict maximality due to \cref{lem:extendcycle}.
   So they cannot exist.
   
   Finally, it follows directly from the construction that $\sigma_{\mathcal{P}}$ is a correspondence from $\mathcal{T}_\mathcal{P}$ to $\mathcal{P}.$ 
\end{proof}

\begin{theorem}\label{thm:corriseq}
    Let $\mathcal{P}$ and $\mathcal{Q}$ be \TorsoidInducing tight set partitions of $G.$
    Then $\mathcal{P}$ and $\mathcal{Q}$ are in correspondence if and only if $\mathcal{T}_{\mathcal{P}} = \mathcal{T}_{\mathcal{Q}}.$
    In particular, being in correspondence is an equivalence relation for such tight set partitions.
\end{theorem}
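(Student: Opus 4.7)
The plan is to prove the two implications separately; the final ``in particular'' statement follows at once. For $(\Leftarrow)$, suppose $\mathcal{T}_\mathcal{P} = \mathcal{T}_\mathcal{Q} =: \Torsoid{H}$. By \cref{thm:inducedtorsoid}, both $\sigma_\mathcal{P} \colon V(H) \to \mathcal{P}$ and $\sigma_\mathcal{Q} \colon V(H) \to \mathcal{Q}$ are torsoid correspondences, so I set $\rho \coloneqq \sigma_\mathcal{Q} \circ \sigma_\mathcal{P}^{-1}$ and check, for each $P \in \mathcal{P}$ and $Q \in \mathcal{Q}$, that $P \cap Q$ is odd exactly when $v \coloneqq \sigma_\mathcal{P}^{-1}(P)$ equals $v' \coloneqq \sigma_\mathcal{Q}^{-1}(Q)$. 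When $v \neq v'$, the near-partition \cref{torsoid-def-4} together with the containments $P \subseteq v \cup \bigcup_{u \in N_H(v)} \edgefkt{vu}$ and the analogue for $Q$ force $P \cap Q \subseteq \edgefkt{vv'}$ if $vv' \in E(H)$ and $P \cap Q = \emptyset$ otherwise; a parity analysis of the two passable (hence even) subsets $P \cap \edgefkt{vv'}$ and $Q \cap \edgefkt{vv'}$ of the even set $\edgefkt{vv'}$ gives $P \cap Q$ even. When $v = v'$, both $P$ and $Q$ contain the tight set $v$ and lie inside $v \cup \bigcup_u \edgefkt{vu}$; I apply \cref{lem:basictight} to $P$ and $v \cup \edgefkt{vu}$ (whose intersection $v \cup (P \cap \edgefkt{vu})$ is odd, since $P \cap \edgefkt{vu}$ is even by passability) to get $v \cup (P \cap \edgefkt{vu})$ tight, once more to this set and the analogous $v \cup (Q \cap \edgefkt{vu})$ to get $v \cup (P \cap Q \cap \edgefkt{vu})$ tight, and finally \cref{lem:infunion} across $u \in N_H(v)$ to conclude that $P \cap Q$ itself is tight, hence odd.

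For $(\Rightarrow)$, assume $\rho \colon \mathcal{P} \to \mathcal{Q}$ is a correspondence, so by \cref{lem:corrisiso} it is a graph isomorphism $\collapse{\mathcal{P}} \to \collapse{\mathcal{Q}}$. The key step is to establish, for each edge $PP'$ of $\collapse{\mathcal{P}}$, the equality
\[
    \delta_\mathcal{P}(PP') = (P \cap \rho(P')) \cup (\rho(P) \cap P').
\]
The inclusion $\supseteq$ follows from \cref{lem:passatedge} together with \cref{lem:passcup}. For $\subseteq$, I take $v \in \delta_\mathcal{P}(PP') \cap P$ and use \cref{lem:passatedge} to obtain $P^* \in N_{\collapse{\mathcal{P}}}(P) \cup \Set{P}$ with $v \in \rho(P^*)$; the near-partition \cref{torsoid-def-4} of $\mathcal{T}_\mathcal{P}$ makes the sets $P \cap \delta_\mathcal{P}(PP^*)$ for distinct $P^* \in N(P)$ pairwise disjoint, which rules out $P^* \in N(P) \setminus \Set{P'}$, and a further application of the near-partition together with the torsoid-inducing hypothesis on $\mathcal{P}$ rules out $P^* = P$. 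From the key equality, $\tau_\mathcal{P}(P) = P \setminus \bigcup_{P' \in N(P)} \delta_\mathcal{P}(PP') = P \cap \rho(P)$ and by symmetry $\tau_\mathcal{Q}(\rho(P)) = P \cap \rho(P)$, so $H_\mathcal{P}$ and $H_\mathcal{Q}$ share their vertex sets; by \cref{lem:corrisiso} $\rho$ identifies the edges of their collapses, and the edge labels $\varepsilon_\mathcal{P},\varepsilon_\mathcal{Q}$ agree via the key equality, yielding $\mathcal{T}_\mathcal{P} = \mathcal{T}_\mathcal{Q}$.

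Being in correspondence on \TorsoidInducing tight set partitions is thus equivalent to inducing the same torsoid, which is manifestly an equivalence relation. The main obstacles are the parity bookkeeping in the $v = v'$ case of $(\Leftarrow)$ and the exclusion of the ``mixed'' case $v \in P \cap \rho(P) \cap \delta_\mathcal{P}(PP')$ in $(\Rightarrow)$; both rely crucially on the near-partition \cref{torsoid-def-4}, which is the structural feature that fails for the hexagonal example in \cref{fig:hexagons}.
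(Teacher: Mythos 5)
Your forward direction rests on the identity $\delta_{\mathcal{P}}(PP') = (P \cap \rho(P')) \cup (\rho(P) \cap P')$, and this identity is false: a correspondence may move only \emph{part} of the maximal passable set across. Indeed, suppose $S$ is a nonempty passable set between $P$ and $P'$ with $S \subseteq P$ and $S \subsetneq \delta_{\mathcal{P}}(PP')$ --- such $S$ exist, for instance half of a ``fat'' torsoid edge in the graph of \cref{fig:interval_resident}, where one matching edge of a box together with its box-partner forms a set passable between the two incident classes while the whole box is the maximal passable set. Then $\mathcal{Q} \coloneqq (\mathcal{P} \setminus \Set{P,P'}) \cup \Set{P \setminus S, P' \cup S}$ is again \TorsoidInducing (\cref{lem:onestepmax}, \cref{lem:corrisiso}), the obvious bijection $\rho$ is a correspondence, but $(P \cap \rho(P')) \cup (\rho(P) \cap P') = S \subsetneq \delta_{\mathcal{P}}(PP')$, and likewise $P \cap \rho(P) = P \setminus S \neq \tau_{\mathcal{P}}(P)$. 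So both your key equality and the derived equality $\tau_{\mathcal{P}}(P) = P \cap \rho(P)$ fail, and the step where you ``rule out $P^* = P$'' cannot be repaired: elements of $\delta_{\mathcal{P}}(PP') \cap P \cap \rho(P)$ genuinely occur. What is true, and what the paper proves, are the invariance statements $\delta_{\mathcal{P}}(PQ) = \delta_{\mathcal{Q}}(\rho(P)\rho(Q))$ and $\tau_{\mathcal{P}}(P) = \tau_{\mathcal{Q}}(\rho(P))$; their proof transfers passability from $P$ to $\rho(P)$ by two applications of \cref{lem:modifypass}, machinery that is absent from your argument.

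In the backward direction the decisive point is exactly the one you dismiss as bookkeeping. Parity is additive over disjoint unions, so knowing that $P \cap \edgefkt{vv'}$, $Q \cap \edgefkt{vv'}$ and $\edgefkt{vv'}$ are all even is perfectly consistent with $P \cap Q$ being odd (then the three remaining pieces of $\edgefkt{vv'}$ are all odd and no parity constraint is violated); no parity analysis can yield evenness of $P \cap Q$ in the case $v \neq v'$. The same issue makes your $v = v'$ case circular: to apply \cref{lem:basictight} to $v \cup (P \cap \edgefkt{vu})$ and $v \cup (Q \cap \edgefkt{vu})$ you need their intersection to be odd, i.e.\ you need $P \cap Q \cap \edgefkt{vu}$ to be even, which is precisely the unproved claim. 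The paper closes this gap with structural input: \cref{lem:deletemany} gives tightness of $v \cup (P \cap \edgefkt{vw})$, so $P \cap \edgefkt{vw}$ and its $\mathcal{Q}$-analogue are passable between the disjoint tight sets $v$ and $\edgefkt{vw} \cup w$, and then \cref{lem:notodd} (resting on \cref{lem:nothree}) forces their intersection to be even; the oddness of the remaining intersection $P \cap \sigma_{\mathcal{Q}}(v)$ is then obtained for free from \cref{lem:uncrosscut} rather than proved directly, which is how the paper avoids the case you get stuck on. Your overall architecture (define $\rho = \sigma_{\mathcal{Q}} \circ \sigma_{\mathcal{P}}^{-1}$, check intersection parities; conversely recover the induced torsoid from $\rho$) matches the paper's, but both of its essential steps --- \cref{lem:notodd} in one direction and \cref{lem:modifypass} in the other --- are missing, so as it stands the proposal does not prove the theorem.
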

\begin{proof}
    Suppose first that there is a correspondence $\rho \colon \mathcal{P} \to \mathcal{Q}.$
    Then, by \cref{lem:passatedge}, any set of the form $P \cap \rho(Q)$ with $P \neq Q$ is passable between $P$ and $Q.$
    Using \cref{lem:modifypass} twice, we may therefore show first that $\delta_{\mathcal{P}}(PQ)$ is passable for $P \cap \rho(P) = P \setminus \bigcup_{R \in N_{\collapse{\mathcal{P}}}(P) \setminus \Set{Q}} (P \cap \rho(R))$ and then that it is passable for $\rho(P).$
    A similar argument shows that it is passable for $\rho(Q)$, so we have $\delta_{\mathcal{P}}(PQ) \subseteq \delta_{\mathcal{Q}}(\rho(P)\rho(Q)).$
    A similar argument applied to $\rho^{-1}$ implies the reverse inclusion, so we have $\delta_{\mathcal{P}}(PQ) = \delta_{\mathcal{Q}}(\rho(P)\rho(Q)).$ 
    
    Now let $x$ be any element of $\tau_{\mathcal{P}}(P)$ for any $P \in \mathcal{P}.$
    Let $Q$ be the element of $\mathcal{P}$ with $x \in \rho(Q).$
    Then $Q$ cannot be different from $P$, since then it would have to be a neighbour of $P$ in $\collapse{\mathcal{P}}$ by \cref{lem:passatedge}, so we would have $x \in P \cap \rho(Q) \subseteq \delta_{\mathcal{P}}(PQ)$, contradicting the definition of $\tau_{\mathcal{P}}(P).$
    Thus $x \in \rho(P).$
    By construction, for any neighbour $\rho(Q)$ of $\rho(P)$ in $\collapse{\mathcal{Q}}$ we have $x \not \in \delta_{\mathcal{P}}(PQ) = \delta_{\mathcal{Q}}(\rho(P)\rho(Q)).$
    Thus $x \in \tau_{\mathcal{Q}}(\rho(P)).$
    This argument shows that $\tau_{\mathcal{P}}(P) \subseteq \tau_{\mathcal{Q}}(\rho(P)).$
    A similar argument applied to $\rho^{-1}$ implies the reverse inclusion, so we have $\tau_{\mathcal{P}}(P) = \tau_{\mathcal{Q}}(\rho(P))$ for any $P.$ 
    
    This immediately implies that $H_{\mathcal{P}}$ and $H_{\mathcal{Q}}$ are equal.
    Finally, for any edge $vw$ of $H_{\mathcal{P}}$ with $v = \tau_{\mathcal{P}}(P)$ and $w = \tau_{\mathcal{P}}(Q)$, we have 
    \[
        \varepsilon_{\mathcal{P}}(vw) = \delta_{\mathcal{P}}(PQ) = \delta_{\mathcal{Q}}(\rho(P)\rho(Q)) = \varepsilon_{\mathcal{Q}}(vw)\,.
    \]
    This completes the proof that $\mathcal{T}_{\mathcal{P}} = \mathcal{T}_{\mathcal{Q}}.$
    
    For the reverse direction, suppose that $\mathcal{T}_{\mathcal{P}} = \mathcal{T}_{\mathcal{Q}}$, and let this torsoid be given by $(H,\edgefkt{}).$
    Let $\rho \coloneqq \sigma_{\mathcal{Q}} \cdot \tau_{\mathcal{P}}.$
    Let $PQ$ be any edge of $\collapse{\mathcal{P}}.$
    Set $v \coloneqq \tau_{\mathcal{P}}(P)$ and $w \coloneqq \tau_{\mathcal{P}}(Q).$
    Then by \cref{lem:deletemany} applied to $P$ and the sets $\delta_{\mathcal{P}}(P,Q')$ for all other neighbours $Q'$ of $P$, the set $v \cup (P \cap \edgefkt{vw})$ is tight.
    Similarly the set $w \cup (Q \cap \edgefkt{vw})$ is tight.
    Thus $P \cap \edgefkt{vw}$ is passable between $v$ and $\edgefkt{vw} \cup w.$
    A similar argument shows that $\rho(P) \cap \edgefkt{vw}$ is also passable between these sets.
    So by \cref{lem:notodd} their intersection $P \cap \rho(P) \cap \edgefkt{vw}$ is even.
    Since $P \cap \edgefkt{vw}$ is even, it follows that $(P \cap \edgefkt{vw}) \setminus \rho(P) = P \cap \rho(Q) \cap \edgefkt{vw}$ is also even.
    Since by construction any element of $P \cap \rho(Q)$ must lie in $\edgefkt{vw}$, this implies that $P \cap \rho(Q)$ is even. 
    
    Taking stock of the argument so far, if $Q \neq P$ is a neighbour of $P$ then $P \cap \rho(Q)$ is even.
    But if $Q$ is not a neighbour of $P$ then by construction $P \cap \rho(Q)$ is empty and so also even.
    Thus the set $\mathcal{Q}_P$ of all elements of $\mathcal{Q}$ whose intersection with $P$ is odd is either empty or equal to $\Set{\rho(P)}.$
    By \cref{lem:uncrosscut} its union is tight, so it cannot be empty.
    Thus it is $\Set{\rho(P)}.$
    That is, $\rho(P)$ is the unique element of $\mathcal{Q}$ whose intersection with $P$ is odd.
    Since this is true for all $P \in \mathcal{P}$, $\rho $ is a correspondence from $\mathcal{P}$ to $\mathcal{Q}.$
\end{proof}

\section{Relation of tight sets to torsoids}
\label{sec:relation_tigh_sets_torsoids}

In this \namecref{sec:relation_tigh_sets_torsoids} we investigate how torsoids interact with tight cuts and tight sets.
We introduce three classes of tight cuts with respect to a given torsoid and prove that these three classes partition the set of tight cuts.
This partition induces a partition of the tight sets.

\begin{definition}
    For a torsoid $\mathcal{T} = \Torsoid{H}$ in $G$ and a tight set $X \subseteq \V{G}$ we define \emph{$\oddIntersections{H}{X}$} to be the set of vertices of $H$ that have odd intersection with $X.$
    For a tight cut $C = \boundary{X}$ we define $\theta_{\mathcal{T}}(C)\coloneqq \min \Set{|\oddIntersections{H}{X}|, |\oddIntersections{H}{\V{G} \setminus X}|}.$
\end{definition}
The constant $\theta_{\mathcal{T}}(C)$ determines to which of the three classes a tight cut $C$ belongs: $C$ resides at an edge ($\theta_{\mathcal{T}}(C) = 0$), resides at a vertex ($\theta_{\mathcal{T}}(C) = 1$) or resides at an interval ($\theta_{\mathcal{T}}(C) > 1$).  Note that as $\oddIntersections{H}{\V{G} \setminus X} = V(H) \setminus \oddIntersections{H}{X}$ holds, the constant $\theta_{\mathcal{T}}(C)$ is bounded by $\frac{|V(H)|}{2}.$

\begin{lemma}\label{lem:tight_sets_contain_torsoid_vertices_whole_or_are_disjoint}
    Let $\mathcal{T} = \Torsoid{H}$ be a torsoid in $G$, $a \in \V{H}$ and $X$ a tight set in $G.$
    If $X \cap a$ is even then $X \setminus a$ is tight and if $X \cap a$ is odd then $X \cup a$ is tight.
\end{lemma}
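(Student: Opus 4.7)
The plan is to reduce both cases directly to \cref{lem:basictight}, exploiting the fact that $a$ is a tight set of $G$ by condition \cref{torsoid-def-2} of the torsoid definition.

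For the odd case, suppose $X \cap a$ is odd. Since both $X$ and $a$ are tight sets whose intersection is odd, \cref{lem:basictight} applied to $X$ and $a$ immediately yields that $X \cup a$ is tight, as required.

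For the even case, suppose $X \cap a$ is even. I would first observe that the complement $\V{G} \setminus a$ is also tight, because $\Cut{}{a} = \Cut{}{\V{G} \setminus a}$ and tightness is a property of the induced cut. Next I would determine the parity of $X \setminus a$: since $X$ is tight it is odd, and we have the disjoint decomposition $X = (X \cap a) \sqcup (X \setminus a)$ where $X \cap a$ is assumed even, so by \cref{even-odd:odd_and_even_is_odd} of \cref{lem:even_odd} the set $X \setminus a$ must be odd. Finally, since $X \cap (\V{G} \setminus a) = X \setminus a$ is odd, I would apply \cref{lem:basictight} to the two tight sets $X$ and $\V{G} \setminus a$ to conclude that their intersection $X \setminus a$ is tight.

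There is no genuine obstacle here; the whole argument is a direct bookkeeping application of \cref{lem:basictight} and the parity rules in \cref{lem:even_odd}. The only thing to be slightly careful about is remembering that vertices of $H$ are themselves tight subsets of $\V{G}$ (by \cref{torsoid-def-2}), so that \cref{lem:basictight} and its dual for the complement can be invoked at all.
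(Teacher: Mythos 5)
Your proof is correct and essentially matches the paper's: both cases reduce to \cref{lem:basictight} using the fact that $a$ is a tight set (\cref{torsoid-def-2}) and that complements of tight sets are tight. The only cosmetic difference is in the even case, where you complement $a$ and use the intersection conclusion of \cref{lem:basictight}, while the paper complements $X$ and uses the union conclusion -- these are dual forms of the same step.
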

\begin{proof}
    First suppose that $X \cap a$ is even.
    Then $\V{G} \setminus X$ and $a$ have odd intersection thus $\Brace{\V{G}\setminus X} \cup a $ is tight by \cref{lem:basictight} and so is $\V{G} \setminus (\Brace{\V{G} \setminus X} \cup a) = X \setminus a.$
    Now suppose that $X \cap a$ is odd.
    Then $X \cup a$ is tight by \cref{lem:basictight}.
\end{proof}

\begin{lemma}\label{lem:tight_sets_contain_torsoid_edges_they_meet_evenly_whole_or_are_disjoint}
    Let $\mathcal{T} = \Torsoid{H}$ be a torsoid in $G$, $ab \in \E{H}$ and $X$ a tight set in $G.$
    If $X \cap \Brace{a \cup \edgefkt{ab}}$ is odd, $X \cup a \cup \edgefkt{ab}$ is tight. If $X \cap \Brace{a \cup \edgefkt{ab}}$ is even, $X \setminus \Brace{a \cup \edgefkt{ab}}$ is tight.
\end{lemma}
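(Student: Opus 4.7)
The plan is to reduce this to a direct application of \cref{lem:basictight}, after observing that $Y \coloneqq a \cup \edgefkt{ab}$ is itself a tight set in $G$. This tightness is essentially built into the torsoid definition: by \cref{torsoid-def-5}, $\edgefkt{ab}$ is passable for $\vertexfkt{a}$, which means exactly that both $\vertexfkt{a} \setminus \edgefkt{ab}$ and $\vertexfkt{a} \cup \edgefkt{ab} = Y$ are tight. So $Y$ is a tight set and, in particular, a proper subset of $\V{G}$.

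For the first claim, assume $X \cap Y$ is odd. Then \cref{lem:basictight} applied to the two tight sets $X$ and $Y$ immediately gives that $X \cup Y = X \cup a \cup \edgefkt{ab}$ is tight, which is what we want.

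For the second claim, assume $X \cap Y$ is even. Here I would pass to the complement of $Y$: since $Y$ is tight and not the whole vertex set, $\V{G} \setminus Y$ is also tight. The key parity observation is that from the disjoint union $X = (X \cap Y) \sqcup (X \setminus Y)$, together with the fact that $X$ is odd (being tight) and $X \cap Y$ is even, \cref{even-odd:odd_and_even_is_odd} forces $X \setminus Y$ to be odd. But $X \setminus Y = X \cap (\V{G} \setminus Y)$, so \cref{lem:basictight} now applies to the tight sets $X$ and $\V{G} \setminus Y$ with odd intersection, yielding that $X \setminus Y = X \setminus (a \cup \edgefkt{ab})$ is tight.

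There is no real obstacle here, only a small bookkeeping point: one should check that in the second case $\V{G} \setminus Y$ is indeed a (nonempty, proper) tight set, which follows because $Y$ is tight and hence neither empty nor all of $\V{G}$. The whole argument mirrors the proof of \cref{lem:tight_sets_contain_torsoid_vertices_whole_or_are_disjoint}, with the single vertex $a$ replaced by the tight set $a \cup \edgefkt{ab}$.
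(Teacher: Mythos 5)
Your proof is correct and takes essentially the same route as the paper: both establish tightness of $a \cup \edgefkt{ab}$ and then reduce everything to a single application of \cref{lem:basictight}, with the even case handled by a complementation trick (the paper complements $X$ and reuses the odd case, while you complement $a \cup \edgefkt{ab}$ and use the intersection conclusion of \cref{lem:basictight} -- an immaterial difference). The only cosmetic point is that the parity step for $X \setminus (a \cup \edgefkt{ab})$ is more directly justified via \cref{even-odd:differences_are_even_or_odd} and \cref{even-odd:even_and_even_is_even} than via \cref{even-odd:odd_and_even_is_odd}, but the substance is fine.
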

\begin{proof}
    If $X \cap \Brace{a \cup \edgefkt{ab}}$ is odd, the tight sets $X$ and $a \cup \edgefkt{ab}$ intersect oddly. Thus $X \cup a \cup \edgefkt{ab}$ is tight by \cref{lem:basictight}. If $X \cap \Brace{a \cup \edgefkt{ab}}$ is even, apply the prior result to the complement of $X.$
\end{proof}

\begin{definition}[\pcut{}]
    Let $\mathcal{T}=\Torsoid{H}$ be a torsoid in $G$ and let $e$ be an edge of $H.$
    Then we say that a tight cut $C$ \emph{resides at an edge $e$ in $\mathcal{T}$} if there is a tight set $X \subseteq \edgefkt{e}$ such that $C = \boundary{X}$ and we call $X$ a \emph{\pcut{$\mathcal{T}$} at $e$}.
\end{definition}
See \cref{fig:vertex_resident} for an example of a \BoB with a \pcut{}.

\begin{lemma} \label{lem:one_edge_odd}
    Let $\mathcal{T} = \Torsoid{H}$ be a torsoid in $G.$
    Let $X \subseteq \V{G}$ be a tight set and $uv \in E(H)$ such that $X \cap \edgefkt{uv}$ is odd.
    Then $\boundary{X}$ resides at the edge $uv$ in $\mathcal{T}.$
\end{lemma}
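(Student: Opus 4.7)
The plan is to construct an explicit tight set $X' \subseteq \edgefkt{uv}$ with $\boundary{X'} = \boundary{X}$, thereby directly witnessing the residency of $\boundary{X}$ at $uv$. First I would show that either $X \cap \edgefkt{uv}$ or $\edgefkt{uv} \setminus X$ is a tight set, by a case analysis on the parities of $X \cap \vertexfkt{u}$ and $X \cap \vertexfkt{v}$, exploiting that $\vertexfkt{u} \cup \edgefkt{uv}$ and $\edgefkt{uv} \cup \vertexfkt{v}$ are tight by the passability part of the torsoid axiom \cref{torsoid-def-5}.

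If both $X \cap \vertexfkt{u}$ and $X \cap \vertexfkt{v}$ are even, then the parities of $X \cap (\vertexfkt{u} \cup \edgefkt{uv})$ and $X \cap (\edgefkt{uv} \cup \vertexfkt{v})$ are each odd, so \cref{lem:basictight} yields that both of these sets are tight; their intersection is $X \cap \edgefkt{uv}$, odd by hypothesis, so a second application of \cref{lem:basictight} shows that $X \cap \edgefkt{uv}$ is itself tight. If exactly one of $X \cap \vertexfkt{u}$, $X \cap \vertexfkt{v}$ is even (say $X \cap \vertexfkt{u}$), then $X \cap (\vertexfkt{u} \cup \edgefkt{uv})$ is tight by \cref{lem:basictight}, and because its intersection with $\vertexfkt{u}$ is even, \cref{lem:tight_sets_contain_torsoid_vertices_whole_or_are_disjoint} lets us trim off $\vertexfkt{u}$ to conclude $X \cap \edgefkt{uv}$ is tight. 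If both are odd, I apply the first subcase to $V(G) \setminus X$ (whose intersections with $\vertexfkt{u}$ and $\vertexfkt{v}$ are even, and whose intersection with $\edgefkt{uv}$ is odd, since $\edgefkt{uv}$ is passable and therefore even) to obtain that $\edgefkt{uv} \setminus X$ is tight. Denote the resulting tight subset of $\edgefkt{uv}$ by $X'$.

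Second, I would show $\boundary{X'} = \boundary{X}$. Applying \cref{lem:basictight} to $X$ and $X'$, whose intersection $X \cap X' = X'$ is odd, the final clause of that lemma asserts that if $X \setminus X'$ is non-empty, then there is an edge joining $X'$ to $X \setminus X'$. By \cref{prop:edges_of_passable_set} every such edge lies in $\vertexfkt{u} \cup \vertexfkt{v} \cup \edgefkt{uv}$, so its endpoint in $X \setminus X'$ lies in $\vertexfkt{u} \cup \vertexfkt{v}$. I would then combine the tightness of $X'$, the tightness of $\vertexfkt{u} \cup \edgefkt{uv}$ and $\edgefkt{uv} \cup \vertexfkt{v}$, and the parity constraints of the case analysis to re-apply \cref{lem:basictight} and derive a contradiction, forcing $X \setminus X' = \emptyset$ and hence $X \subseteq \edgefkt{uv}$. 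The symmetric argument applied to $V(G) \setminus X$ handles the third case of the construction, yielding $V(G) \setminus X \subseteq \edgefkt{uv}$; in either outcome $X$ or $V(G) \setminus X$ equals $X'$, so $\boundary{X'} = \boundary{X}$ and the residency follows from the definition of \pcut{}.

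The main obstacle will be the second step: ruling out that $X$ extends beyond $X'$ into $\vertexfkt{u}$ or $\vertexfkt{v}$. The parity bookkeeping is delicate, and the contradiction must be extracted by a careful re-application of the basic intersection lemmas to the tight sets identified in the first step, using the passability of $\edgefkt{uv}$ and the near-partition property \cref{torsoid-def-4} to close the argument.
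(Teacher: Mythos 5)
Your first step is sound: using \cref{lem:basictight}, \cref{lem:tight_sets_contain_torsoid_vertices_whole_or_are_disjoint} and the passability part of \cref{torsoid-def-5} one does obtain a tight set $X' \subseteq \edgefkt{uv}$ equal to $X \cap \edgefkt{uv}$ or to $\edgefkt{uv} \setminus X.$ The gap is in the second step, and it is not a matter of delicate parity bookkeeping. Since $G$ is connected, $\boundary{X'} = \boundary{X}$ for a tight $X' \subseteq \edgefkt{uv}$ forces $X \subseteq \edgefkt{uv}$ or $\V{G} \setminus X \subseteq \edgefkt{uv}$, and either alternative forces $X$ to meet $\vertexfkt{u}$ and $\vertexfkt{v}$ with the same parity (both empty, or both the whole odd sets). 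So in your middle case, where exactly one of $X \cap \vertexfkt{u}$, $X \cap \vertexfkt{v}$ is odd, the equality $\boundary{X'} = \boundary{X}$ you aim for is false for \emph{every} tight $X' \subseteq \edgefkt{uv}$: proving the lemma is exactly equivalent to showing this mixed case cannot occur, and that is where essentially all of the work lies. Moreover, it cannot be extracted from the tools you list, which are local to $\vertexfkt{u} \cup \edgefkt{uv} \cup \vertexfkt{v}$ and follow from \cref{torsoid-def-1}--\cref{torsoid-def-6} alone. Concretely, take $G = C_{12}$ with vertices $v_0,\dots,v_{11}$, let $H$ be the $C_4$ on the tight sets $\Set{v_0}, \Set{v_3}, \Set{v_6}, \Set{v_9}$ and let $\edgefkt{}$ assign to each edge the intermediate pair (e.g.\ $\Set{v_1,v_2}$ to the edge joining $\Set{v_0}$ and $\Set{v_3}$). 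This pair satisfies \cref{torsoid-def-1}--\cref{torsoid-def-6} but not \cref{torsoid-def-7}; all of \cref{lem:basictight}, \cref{prop:edges_of_passable_set} and \cref{lem:tight_sets_contain_torsoid_vertices_whole_or_are_disjoint} are available there, yet the tight set $X = \Set{v_2,v_3,v_4}$ meets $\Set{v_1,v_2}$ oddly while $\boundary{X}$ does not reside at that edge. Hence no contradiction of the kind you describe can follow from those lemmas alone; any proof must use \cref{torsoid-def-7} (and, more generally, the global structure of $H$).

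This is precisely how the paper proceeds: it rules out the mixed-parity case by a case analysis on $H$ --- for a \BoB\ on at least six vertices via \cref{lem:cutswell} applied to the two partitions $\mathcal{P}(\mathcal{T},\kappa)$ that differ only in which endpoint of $uv$ receives $\edgefkt{uv}$, for $K_4$ via an explicit six-set cyclic configuration and \cref{lem:charcycle}, and for the $C_4$ case via \cref{torsoid-def-7} together with \cref{lem:cutscyclicpartition} --- and even afterwards it invokes \cref{lem:max} and \cref{thm:corriseq} to conclude $X \subseteq \edgefkt{uv}.$ Note also that even in your equal-parity cases the sketch stops short: locating an edge endpoint in $\vertexfkt{u} \cup \vertexfkt{v}$ is not yet a contradiction, because $X \cap \vertexfkt{u}$ may be non-empty and even. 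Those cases can in fact be closed locally, but only by invoking the maximality clause of \cref{torsoid-def-5} (for instance by first showing $X \subseteq \vertexfkt{u} \cup \edgefkt{uv} \cup \vertexfkt{v}$ and then that $X \cup \edgefkt{uv}$ is passable for both $\vertexfkt{u}$ and $\vertexfkt{v}$), a clause your plan never uses.
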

\begin{proof}
    Let $\kappa_1$ be any choice function such that $\kappa_1(uv) = u$, and $\kappa_1(e) \notin \Set{u, v}$ for any $uv \neq e \in E(H).$ Let $\kappa_2$ be the choice function which agrees with $\kappa_1$ except at $uv$, where $\kappa_2(uv) = v.$
    Consider $\mathcal{P}^i \coloneqq \mathcal{P}(\mathcal{T}, \kappa_i)$ for $i \in [2].$
    We set $u_1\coloneqq u \cup \edgefkt{uv}, v_1\coloneqq v$ and $u_2\coloneqq u, v_2\coloneqq v \cup \edgefkt{uv}.$
    Note that $u_i, v_i \in \mathcal{P}^i$ for $i \in [2].$
    Furthermore, $\mathcal{P}^1 \setminus \Set{u_1, v_1} = \mathcal{P}^2 \setminus \Set{u_2, v_2}.$

    Let us first show that $X$ intersects either both $u, v$ evenly or both oddly.
    Without loss of generality we suppose for a contradiction that $X$ intersects $u$ oddly and $v$ evenly. There are now three cases:
    \begin{description}
        \item[$H$ is a \BoB~with more than four vertices.]
            Then $|\oddIntersections{\mathcal{P}}{X}^1|$ and $|\oddIntersections{\mathcal{P}}{X}^2|$ are elements of $\Set{1, |V(H)|-1}$ by \cref{lem:cutswell}.
            By construction, $|\oddIntersections{\mathcal{P}}{X}^1| + 2 = |\oddIntersections{\mathcal{P}}{X}^2|$ holds and this gives a contradiction since $|V(H)| \geq 6.$
        \item[$H$ is a \BoB~with four vertices but is not a cycle.]
            Note that $K^4$ is the only \BoB~on four vertices other than a cycle.
            At first, we assume that there exists $s \in \V{H}\setminus \Set{u,v}$ such that $Y_s \coloneqq s \cup \edgefkt{us} \cup u \cup \Brace{X \cap \edgefkt{uv}}$ is tight and show that this gives a contradiction. Afterwards, we show that such a vertex $s$ exists such that $Y_s$ is tight.
            \begin{figure}[ht]
                \centering
                \includegraphics[scale=1.10]{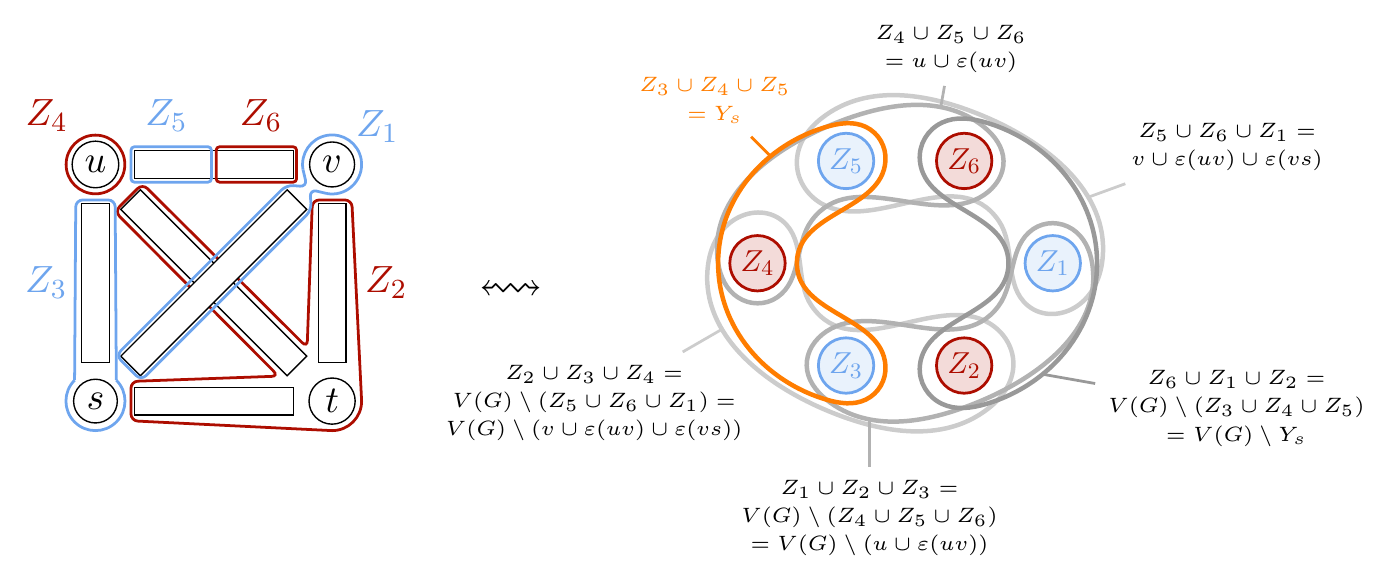}
                \caption{In the context of \cref{lem:one_edge_odd} if $H$ is a $K_4$ and $Y_s$ is a tight set as depicted, we deduce that the edge $ut$ of $H$ cannot exist for a contradiction.}
                \label{fig:passapble_cuts_in_K_4}
            \end{figure}
            Under these assumptions let $t$ be the vertex of $H$ distinct from $u, v, s$ and consider the sets $Z_1\coloneqq v \cup \edgefkt{vs}$ and $Z_2\coloneqq t \cup \bigcup_{w \in \E{H} \setminus \set{t}} \edgefkt{wt}$, $Z_3\coloneqq s \cup \edgefkt{us}$, $Z_4\coloneqq u$, $Z_5\coloneqq X \cap \edgefkt{uv}$, $Z_6\coloneqq \edgefkt{uv} \setminus X$ in this cyclic order.
            The sets $Z_1, Z_2, Z_3, Z_4$ are tight by definition. The set $Z_5$ is tight by applying \cref{lem:basictight} to $X \setminus v$ and $v \cup \edgefkt{uv}.$ The set $Z_6$ is tight by applying \cref{lem:basictight} to $\V{G} \setminus \Brace{X \cup u}$ and $u \cup \edgefkt{uv}.$

            Furthermore, $Z_3 \cup Z_4 \cup Z_5 = Y_s$ is tight by assumption. Also $Z_4 \cup Z_5 \cup Z_6 = u \cup \edgefkt{uv}$ and $Z_5 \cup Z_6 \cup Z_1 = v \cup \edgefkt{uv} \cup \edgefkt{vs}$ are tight. As $Z_6 \cup Z_1 \cup Z_2$, $Z_1 \cup Z_2 \cup Z_3$ and $Z_2 \cup Z_3 \cup Z_4$ are complements of the first three sets respectively, they are also tight.
            Thus these sets satisfy the conditions of \cref{lem:charcycle} and therefore $\collapse{\Set{Z_i: i \in [6]}}$ is a cycle. This contradicts that $ut$ is an edge of $H.$

            It remains to prove that there exist $s\in \V{H} \setminus \Set{u, v}$ such that $Y_s$ is indeed tight.
            The set $X'\coloneqq X \cup \bigcup \oddIntersections{H}{X} \setminus \bigcup \Brace{V(H) \setminus \oddIntersections{H}{X}}$ is tight by repeated application of \cref{lem:tight_sets_contain_torsoid_vertices_whole_or_are_disjoint} to $X.$ By construction, $X'$ has the property that any vertex $v \in \V{H}$ is either contained in $X'$ or disjoint from $X'.$ Next we apply \cref{lem:tight_sets_contain_torsoid_edges_they_meet_evenly_whole_or_are_disjoint} repeatedly to $X'$ and any edge $ab \in E(H)$ with $a \in \Set{s, t}$ to obtain a tight set $X''$ such that $X' \cap \Brace{u \cup v \cup \edgefkt{uv}} = X'' \cap \Brace{u \cup v \cup \edgefkt{uv}}$ and for any edge $e \in \E{H} \setminus \Set{uv}$ we have $X'' \cap \edgefkt{e} \in \Set{\emptyset, \edgefkt{e}}.$ Furthermore, any vertex $v \in \V{H}$ is also either contained in $X''$ or disjoint from $X''$ by construction.
            
            Note that $X''$ is odd, $X''$ intersects precisely one edge of $\mathcal{T}$ oddly and $X''$ intersects $u$ oddly and $v$ evenly. Thus there exists exactly one vertex $s \in \V{H} \setminus \Set{u, v}$ such that $s \subset X''.$
            Therefore the tight set $X''$ has the following properties: $X'' \cap \edgefkt{uv} = X \cap \edgefkt{uv}$, $\oddIntersections{H}{X''} = \Set{u,s}$ and for any $e \in \E{H} \setminus \Set{uv}$ we have $X'' \cap \edgefkt{e} \in \Set{\emptyset, \edgefkt{e}}.$

            We apply \cref{lem:tight_sets_contain_torsoid_edges_they_meet_evenly_whole_or_are_disjoint} to $X''$, the vertex $s$ and the edge $us$ to obtain a tight set $X'''$ with $X''' = X'' \cup \edgefkt{us}.$ Similarly we apply \cref{lem:tight_sets_contain_torsoid_edges_they_meet_evenly_whole_or_are_disjoint} repeatedly to $X'''$, the vertex $v$ with the edges $vs, vt$ and the vertex $t$ with the edge $st$ to obtain a tight set that coincides with $Y_s$, by construction. This completes this case.
            
        \item[$H$ is a cycle of length four.]
            We find a contradiction to \cref{torsoid-def-7}.
            Let $w$ be the neighbour of $v$ in $H$ distinct from $u.$
            Set $P_1 \coloneqq \edgefkt{uv} \cap X$, $P_2\coloneqq \edgefkt{uv} \setminus X$ and $P_3 \coloneqq v \cup \edgefkt{vw}.$
            By construction, $P_1, P_2, P_3$ partition $\edgefkt{uv} \cup v \cup \edgefkt{vw}.$
            The set $P_3$ is a tight set as $\edgefkt{vw}$ is passable.
            We consider the tight set $X'\coloneqq \Brace{X \cup u} \setminus v.$
            Now $P_1 = X' \cap \Brace{v \cup \edgefkt{uv}}$ and $P_2 = \Brace{\V{G} \setminus X'} \cap \Brace{u \cup \edgefkt{uv}}$ are tight sets.
            Furthermore by construction, $u \cup P_1 \cup P_2$ is a tight set.
            It remains to prove that $P_2 \cup P_3 \cup w$ is a tight set.

            We suppose without loss of generality that $\edgefkt{vw} \cup w$ is a partition class of $\mathcal{P}_1$ (otherwise modify $\mathcal{P}_1$ slightly).
            By choice, $V(G) \setminus X'$ intersects $u_1$ oddly, but does not contain $u_1.$ Furthermore, $V(G) \setminus X'$ intersects $v_1$ oddly.
            Applying \cref{lem:cutscyclicpartition} to $V(G) \setminus X'$ and $\mathcal{P}^1$, we see that $\oddIntersections{\mathcal{P}^1}{V(G) \setminus X'}$ is an interval with $u_1 = u \cup \edgefkt{uv}$ as an endpoint and containing $v.$
            Since this interval has odd length, it must also contain the element $\edgefkt{vw} \cup w$ of $\mathcal{P}^1$ containing $w.$
            By construction, $\oddIntersections{\mathcal{P}^1}{V(G) \setminus X'} \cap (u \cup \edgefkt{uv} \cup v \cup \edgefkt{vw} \cup w)$ is odd, and by \cref{lem:basictight} tight.
            Note that $\oddIntersections{\mathcal{P}^1}{V(G) \setminus X'} \cap (u \cup \edgefkt{uv} \cup v \cup \edgefkt{vw} \cup w) = P_2 \cup P_3 \cup w.$
    \end{description}
    Thus we can suppose that $X$ intersects either both $u,v$ oddly or both evenly. This implies that $X$ intersects exactly one of $u_i, v_i$ oddly for $i \in [2].$
    We prove that $X$ intersects either all elements of $\mathcal{P}^i \setminus \Set{u_i, v_i}$ evenly or all these elements oddly for $i \in [2].$
    If $H$ is a \BoB~this follows directly from \cref{lem:cutswell}.
    If $H$ is a cycle, we apply \cref{lem:cutscyclicpartition}.
    Then $\oddIntersections{\mathcal{P}^1}{X}$ and $\oddIntersections{\mathcal{P}^2}{X}$ are intervals in $\collapse{\mathcal{P}^1}$ and $\collapse{\mathcal{P}^2}.$
    As $X$ intersects $u_1$ oddly if and only if $X$ intersects $v_2$ oddly and similarly for $v_1$ and $u_2$, $X$ has to intersect the elements of $\mathcal{P}^i \setminus \Set{u_i, v_i}$ either all oddly or all evenly.

    From now on we suppose without loss of generality that $X$ intersects all elements of $\mathcal{P}^i \setminus \Set{u_i, v_i}$ evenly (otherwise consider the complement of $X$).
    Furthermore, we suppose without loss of generality that $X$ intersects $u_1$ and $v_2$ oddly (otherwise exchange $\kappa_1$ and $\kappa_2$).
    Then $\oddIntersections{\mathcal{P}}{X}^1 = \Set{u_1}$ and $\oddIntersections{\mathcal{P}}{X}^2 = \Set{v_2}$ holds.
    We define the following two tight set partitions:
    \[
        \Tilde{\mathcal{P}}^1\coloneqq \Set{P \setminus X: P \in \mathcal{P}^1 \setminus \Set{u_1}} \cup \Set{u_1 \cup X},
    \]
    \[
        \Tilde{\mathcal{P}}^2\coloneqq \Set{P \setminus X: P \in \mathcal{P}^2 \setminus \Set{v_2}} \cup \Set{v_2 \cup X}.
    \]
    By \cref{lem:basictight} these are indeed tight set partitions.
    By construction, $\Tilde{\mathcal{P}}^1$, $\mathcal{P}^1$ correspond and $\Tilde{\mathcal{P}}^2$, $\mathcal{P}^2$ correspond.
    By \cref{lem:corrisiso} and \cref{lem:max}, $\collapse{\Tilde{\mathcal{P}}^1}$ and $\collapse{\Tilde{\mathcal{P}}^1}$ are \TorsoidInducing.
    Thus we can apply \cref{thm:corriseq}, which shows that, $\mathcal{T}_{\Tilde{\mathcal{P}_1}} = \mathcal{T} = \mathcal{T}_{\Tilde{\mathcal{P}_2}}$ holds.
    This implies that $ u_1 \cup X \subseteq u \cup \bigcup_{w \in N_H(u)} \edgefkt{uw}$ and $ v_2 \cup X \subseteq v \cup \bigcup_{w \in N_H(v)} \edgefkt{vw}.$
    Then
    \[
        X \subset \Brace{ u \cup \bigcup_{w \in N_H(u)} \edgefkt{uw}} \cap \Brace{v \cup \bigcup_{w \in N_H(v)} \edgefkt{vw}} = \edgefkt{uv}
    \]
    holds.
    This proves that $X$ is an \pcut{}. Thus $\boundary{X}$ resides at an edge of $\mathcal{T}.$
\end{proof}

\begin{lemma} \label{lem:pcut_char}
    Let $\mathcal{T}$ be a torsoid in $G$ and $C \subseteq \E{G}$ any tight cut. The tight cut $C$ resides at an edge in $\mathcal{T}$ if and only if $\theta_{\mathcal{T}}(C) = 0$ holds.
\end{lemma}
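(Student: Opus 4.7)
The plan is to split the biconditional into its two directions.

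For the forward direction, if $C$ resides at some edge $e \in E(H)$ witnessed by a tight set $X \subseteq \edgefkt{e}$ with $\partial X = C,$ I would observe via the near-partition property in \cref{torsoid-def-4} that $X$ is disjoint from every $a \in V(H),$ so $X \cap a = \emptyset$ has even parity for every $a.$ Hence $\oddIntersections{H}{X} = \emptyset$ and $\theta_{\mathcal{T}}(C) = 0$ is immediate.

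For the reverse direction, I would write $C = \partial X$ and, after possibly swapping $X$ with its complement (which does not change $C$), reduce to the case $\oddIntersections{H}{X} = \emptyset,$ i.e.\ $X \cap a$ is even for every $a \in V(H).$ My plan is then to find an edge $e \in E(H)$ for which $X \cap \edgefkt{e}$ is odd, so that \cref{lem:one_edge_odd} immediately yields that $C$ resides at $e.$ I would assume for contradiction that $X \cap \edgefkt{e}$ is even for every $e \in E(H)$ and derive an impossibility.

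To execute the contradiction, I would first apply \cref{lem:deletemany} to $X$ together with the pairwise disjoint tight family $(a)_{a \in V(H)}$ (tight by \cref{torsoid-def-2}) to obtain that $X_1 \coloneqq X \setminus \bigcup V(H)$ is tight and, by \cref{torsoid-def-4}, lies in $\bigcup_{e \in E(H)} \edgefkt{e}.$ Since $\edgefkt{e} \cap V(H) = \emptyset,$ we have $X_1 \cap \edgefkt{e} = X \cap \edgefkt{e},$ still even. The decisive step is then \cref{prop:edges_of_passable_set}, which forbids edges of $G$ between $\edgefkt{e}$ and $\edgefkt{e'}$ for distinct $e, e' \in E(H)$: every $M$-edge of $\partial X_1$ has its $X_1$-endpoint in a unique $\edgefkt{e}$ and its other endpoint in $u \cup v \cup (\edgefkt{e} \setminus X_1)$ for that single $e = uv.$ This yields the disjoint decomposition
\[
    M \cap \partial X_1 = \bigsqcup_{e \in E(H)} \bigl( M \cap \partial(X_1 \cap \edgefkt{e}) \bigr),
\]
whose left side has cardinality $1$ by tightness of $X_1,$ while every summand on the right is a finite even integer, giving the desired contradiction.

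The hardest part of this argument will be the parity count in the last display: over a potentially infinite $V(H)$ or $E(H),$ one cannot in general compare parities of an infinite disjoint union with those of its parts. The essential input that rescues it is \cref{prop:edges_of_passable_set}, which decouples the different $\edgefkt{e}$'s edgewise and turns the decomposition into a genuine partition; the argument then reduces to the impossibility of writing $1$ as a sum of nonnegative even integers.
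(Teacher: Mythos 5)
Your proof is correct, and its overall frame matches the paper's: the forward direction is immediate from the definitions, and the reverse direction reduces, after complementing $X$ if necessary, to producing an edge $e$ of $H$ with $X \cap \edgefkt{e}$ odd and then invoking \cref{lem:one_edge_odd}. Where you diverge is in how that odd edge-part is found. The paper first passes (via \cref{lem:tight_sets_contain_torsoid_vertices_whole_or_are_disjoint}) to a set avoiding the torsoid vertices and then splits into cases: if $H$ is a cycle the claim is a finite parity count, while if $H$ is a \BoB\ it takes a tight set partition in strong correspondence with $\mathcal{T}$, uses \cref{lem:cutswell} to find a class $P$ meeting $X$ oddly, and derives a contradiction from \cref{lem:deletemany}. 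You instead argue uniformly, with no case distinction and no appeal to the collapse/correspondence machinery: strip the torsoid vertices off $X$ with \cref{lem:deletemany}, then use \cref{prop:edges_of_passable_set} to localise every $M$-edge of the boundary of $X_1$ (and, conversely, every $M$-edge of each $\partial(X_1 \cap \edgefkt{e})$) to a single edge $e$ of $H$, so that tightness of $X_1$ forces one of the even quantities $|M \cap \partial(X \cap \edgefkt{e})|$ to equal $1$. This is a genuine, somewhat more self-contained alternative for the key step: the paper's route reuses the partition machinery it has already built and is shorter in the \BoB\ case, whereas yours buys uniformity over cyclic and noncyclic torsoids and avoids \cref{lem:cutswell} entirely. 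One small point worth making explicit (the paper leaves it implicit as well): the dichotomy ``$X \cap \edgefkt{e}$ is odd for some $e$ or even for all $e$'' requires that each $X \cap \edgefkt{e}$ has a parity at all, which follows from \cref{lem:even_odd} because $X$ is tight and each $\edgefkt{e}$ is passable, hence even.
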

\begin{proof}
    By definition, any tight cut $C$ of $G$ that resides at an edge of $H$ in $\mathcal{T}$ satisfies $\theta_{\mathcal{T}}(C) = 0.$
    For the only if direction let $C$ be a tight cut in $G$ with $\theta_{\mathcal{T}}(C) = 0.$
    Let $ X \subseteq \V{G} $ such that $C = \boundary{X}.$
    By \cref{lem:tight_sets_contain_torsoid_vertices_whole_or_are_disjoint} we can assume that $X \cap v = \emptyset$ for any $v \in V(H).$
    We show that $X$ has odd intersection with $\edgefkt{e}$ for some $e \in E(H).$ If $H$ is a cycle, this holds true by parity of $X.$
    If $H$ is a \BoB, let $\mathcal{P}$ be any tight set partition in strong correspondence with $\mathcal{T}.$
    By \cref{lem:cutswell}, there is $P \in \mathcal{P}$ such that $X \cap P$ is odd.
    Suppose for a contradiction that $X$ has even intersection with $\edgefkt{e}$ for all $e \in E(H)$ with $\edgefkt{e} \subset P.$
    By \cref{lem:deletemany}, $X \setminus P = X \setminus \bigcup_{e \in E(H): \edgefkt{e} \subset P} \edgefkt{e} \cap X$ is a tight set.
    This contradicts the fact that $X \setminus P$ is even by choice of $P.$
    Thus there exists $e \in E(H)$ such that $X \cap \edgefkt{e}$ is odd.
    This implies via \cref{lem:one_edge_odd} that $\boundary{X}=C$ resides at the edge $e$ in $\mathcal{T}.$
\end{proof}

\begin{corollary} \label{cor:non_pcut_edge}
    Let $\mathcal{T} = \Torsoid{H}$ be a torsoid in $G$ and $X$ a tight set with $0 \neq |\oddIntersections{H}{X}| \neq |V(H)|.$ Then $X$ intersects $\edgefkt{e}$ evenly for any $e \in E(H).$
\end{corollary}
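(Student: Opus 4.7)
The plan is to deduce this directly from \cref{lem:one_edge_odd} combined with the characterisation in \cref{lem:pcut_char}. Both results are already established, so the corollary should follow from a short contrapositive argument.

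First, I would unpack what the hypothesis on $X$ says about $\theta_{\mathcal{T}}(\partial(X))$. Since $\oddIntersections{H}{\V{G}\setminus X} = V(H) \setminus \oddIntersections{H}{X}$, the assumption $0 \neq |\oddIntersections{H}{X}| \neq |V(H)|$ guarantees that both $|\oddIntersections{H}{X}|$ and $|\oddIntersections{H}{\V{G}\setminus X}|$ are strictly positive. Hence $\theta_{\mathcal{T}}(\partial(X)) \geq 1$, and in particular $\theta_{\mathcal{T}}(\partial(X)) \neq 0$.

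By the contrapositive of \cref{lem:pcut_char}, this means that $\partial(X)$ does not reside at any edge of $\mathcal{T}$. I would now argue by contradiction: suppose there exists an edge $e \in E(H)$ with $X \cap \edgefkt{e}$ odd. Then \cref{lem:one_edge_odd} applies directly and tells us that $\partial(X)$ resides at $e$ in $\mathcal{T}$, contradicting the previous sentence. Hence no such $e$ can exist, and $X$ must intersect $\edgefkt{e}$ evenly for every $e \in E(H)$.

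Since the heavy lifting has already been done in \cref{lem:one_edge_odd}, there is no real obstacle here; the only thing to be careful about is reading the definition of $\theta_{\mathcal{T}}$ correctly to confirm that the two hypotheses $|\oddIntersections{H}{X}| \neq 0$ and $|\oddIntersections{H}{X}| \neq |V(H)|$ together ensure $\theta_{\mathcal{T}}(\partial(X)) > 0$.
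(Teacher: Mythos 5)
Your argument is correct and is exactly the derivation the paper intends: the corollary is stated without proof as an immediate consequence of \cref{lem:one_edge_odd} together with \cref{lem:pcut_char}, via the same contrapositive reading of $\theta_{\mathcal{T}}$ that you give. The only point left tacit (in both your sketch and the paper) is that $X \cap \edgefkt{e}$ has a parity at all, which follows from \cref{lem:even_odd} since $X$ is odd and $\edgefkt{e}$, being passable, is even.
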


\begin{definition}[\vcut{}]
    \label{def:vcut}
    \label{def:pvcut}
    Let $\mathcal{T}=\Torsoid{H}$ be a torsoid in $G$ and let $v$ be a vertex of $H.$
    Then we say that a tight cut $C$ \emph{resides at a vertex $v$ in $\mathcal{T}$} if there is a tight set $X \subseteq v \cup \bigcup_{v \in e \in \E{H}} \edgefkt{e}$ with $X \cap \edgefkt{e}$ even for all $v \in e \in \E{H}$ and such that $C = \boundary{X}$, and we call $X$ a \emph{\vcut{$\mathcal{T}$} at $v$}.
    We call $X$ a \emph{\pvcut{$\mathcal{T}$}} if $v \subseteq X.$ Then $C$ \emph{resides properly at $v$}.
\end{definition}
See \cref{fig:vertex_resident} for an example of a \BoB with a \vcut{}.

\begin{figure}[ht]
    \centering
    \begin{tikzpicture}[scale=.75]
        \foreach \coordinate/\name in {(6,6)/e1,(8,6)/e2,(8,4)/e3,(6,4)/e4,(4,2)/f1,(10,2)/f2,(10,0)/f3,(4,0)/f4,(0,4)/u1,(2,5)/u2,(2,3)/u3,(2,1)/u4,(0,2)/u5,(4,5)/v,(10,5)/w,(12,5)/x1,(14,4)/x2,(14,2)/x3,(12,1)/x4,(12,3)/x5,(10,3)/y,(4,3)/z} \node[shape=coordinate] at \coordinate (\name) {};
        \foreach \x in {1,5} \foreach \y in {2,3,4} \draw[thick] (u\x) -- (u\y);
        \foreach \x in {2,3} \foreach \y in {1,4,5} \draw[thick] (x\x) -- (x\y);
        \foreach \x in {e,f} \foreach \y in {1,2,3,4} \foreach \z in {1,2,3,4} \draw[thick] (\x\y) -- (\x\z);
        \foreach \x in {u2,e1,e4} \draw[thick] (v) -- (\x);
        \foreach \x in {e2,e3,x1} \draw[thick] (w) -- (\x);
        \foreach \startvertex/\endvertex in {u3/z,x5/y,y/z,f1/u4,f4/u4,f2/x4,f3/x4} \draw[thick] (\startvertex) -- (\endvertex);

        \begin{pgfonlayer}{background}
        \draw [draw=vertexcolor!80!black,thick,fill=vertexcolor] plot [smooth cycle] coordinates {($(u1)+(-0.5,0.5)$) ($(u2)+(0.5,0.5)$) ($(u4)+(0.5,-0.5)$) ($(u5)+(-0.5,-0.5)$)};
        \draw [draw=vertexcolor!80!black,thick,fill=vertexcolor] plot [smooth cycle] coordinates {($(x1)+(-0.5,0.5)$) ($(x2)+(0.5,0.5)$) ($(x3)+(0.5,-0.5)$) ($(x4)+(-0.5,-0.5)$)};
        \draw [draw=edgecolor!80!black,thick,fill=edgecolor] plot coordinates {($(e1)+(-0.5,0.3)$) ($(e2)+(0.5,0.3)$) ($(e3)+(0.5,-0.3)$) ($(e4)+(-0.5,-0.3)$) ($(e1)+(-0.5,0.3)$)};
        \draw [draw=edgecolor!80!black,thick,fill=edgecolor] plot coordinates {($(f1)+(-0.5,0.3)$) ($(f2)+(0.5,0.3)$) ($(f3)+(0.5,-0.3)$) ($(f4)+(-0.5,-0.3)$) ($(f1)+(-0.5,0.3)$)};
        \foreach \vertex in {v,w,y,z} \draw[draw=vertexcolor!80!black,thick,fill=vertexcolor] (\vertex) circle [radius=.4];
        \end{pgfonlayer}
        \draw[very thick, blue, dashed] (x2) circle [radius=.4];
        \draw[very thick, red, dotted] (e2) circle [radius=.4];
        \draw [very thick, blue, dashed] plot [smooth cycle] coordinates {($(f1)+(-0.3,0.4)$) ($(f2)+(0.3,0.4)$) ($(f3)+(0.3,-0.4)$) ($(f4)+(-0.3,-0.4)$) ($(u4)+(-0.5,0)$)};
        \foreach \vertex in {e1,e2,e3,e4,f1,f2,f3,f4,u1,u2,u3,u4,u5,v,w,x1,x2,x3,x4,x5,y,z} \draw[fill] (\vertex) circle [radius=.09];
    \end{tikzpicture}
    \caption{A graph with a possible 6-vertex torsoid $\mathcal{T}$ that is a \BoB and some $\mathcal{T}$-residents.
    The vertex sets enclosed by smooth outlines shaded in \textcolor{vertexcolor!70!black}{darker grey} represent \textcolor{vertexcolor!80!black}{vertices of $\mathcal{T}$} and the two sets enclosed by rectangular outlines filled in \textcolor{edgecolor!70!black}{lighter grey} represent \textcolor{edgecolor!80!black}{edges of $\mathcal{T}$}.
    The vertex sets enclosed by \textcolor{blue}{dashed lines} represent \textcolor{blue}{\vcuts{$\mathcal{T}$}} and the vertex set enclosed by a \textcolor{red}{dotted line} represents a \textcolor{red}{\pcut{$\mathcal{T}$}}.}
    \label{fig:vertex_resident}
\end{figure}

\begin{lemma} \label{lem:vcut_char}
    Let $\mathcal{T}$ be a torsoid in $G.$
    A tight cut $C \subseteq \E{G}$ resides at a vertex of $\mathcal{T}$ if and only if $\theta_{\mathcal{T}}(C) = 1$.
\end{lemma}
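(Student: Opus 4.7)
The forward direction will be a direct parity check. If $C$ resides at $v$, witnessed by a tight set $X \subseteq v \cup \bigcup_{w \in N_H(v)} \edgefkt{vw}$ with $X \cap \edgefkt{e}$ even for every $v$-incident edge $e$ and $\boundary{X} = C$, then $X \cap u = \emptyset$ for each $u \in V(H) \setminus \{v\}$, and combining the oddness of $X$ with the parity rules of \cref{lem:even_odd} forces $X \cap v$ to be odd. Thus $\oddIntersections{H}{X} = \{v\}$ and $|\oddIntersections{H}{V(G) \setminus X}| = |V(H)| - 1 \geq 3$ by \cref{torsoid-def-1}, so $\theta_{\mathcal{T}}(C) = 1$.

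For the converse, let $X$ be a tight set with $\boundary{X} = C$; after replacing $X$ by its complement if necessary I may assume $\oddIntersections{H}{X} = \{v\}$ for some $v \in V(H)$, and by \cref{cor:non_pcut_edge} every $X \cap \edgefkt{e}$ is even. Since $|V(H)| \geq 4$ the complement of $X$ cannot witness residence at $v$, so it will suffice to prove $X \subseteq v \cup \bigcup_{w \in N_H(v)} \edgefkt{vw}$. I pick a tight set partition $\mathcal{P}$ in strong correspondence with $\mathcal{T}$ (which exists by \cref{choicetostrong}); such a $\mathcal{P}$ is \TorsoidInducing via \cref{lem:strongcorrisiso} in the \BoB\ case and via \cref{torsoid-def-7} combined with \cref{lem:extendcycle} in the cyclic case, and satisfies $\mathcal{T}_{\mathcal{P}} = \mathcal{T}$. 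Letting $P_v$ be the class containing $v$, the parity assumptions give that $X \cap P_v$ is odd while $X \cap P$ is even for every other $P \in \mathcal{P}$, so by \cref{lem:basictight} applied to $X$ and $P_v$ (intersection odd) and to $X$ and $V(G) \setminus P$ (intersection again odd, since $X \cap P$ is even) every member of the family
\[
    \tilde{\mathcal{P}} := \{P \setminus X : P \in \mathcal{P} \setminus \{P_v\}\} \cup \{P_v \cup X\}
\]
is tight; these sets clearly partition $V(G)$, so $\tilde{\mathcal{P}}$ is a tight set partition, and the obvious bijection $\mathcal{P} \to \tilde{\mathcal{P}}$ is a correspondence of tight set partitions. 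Hence $\tilde{\mathcal{P}}$ is itself \TorsoidInducing, via \cref{lem:corrisiso} in the \BoB\ case or \cref{lem:max} in the cyclic case, and \cref{thm:corriseq} yields $\mathcal{T}_{\tilde{\mathcal{P}}} = \mathcal{T}_{\mathcal{P}} = \mathcal{T}$. Unpacking \cref{def:inducedtorsoid} I then read off $P_v \cup X \subseteq v \cup \bigcup_{w \in N_H(v)} \edgefkt{vw}$, whence $X \subseteq v \cup \bigcup_{w \in N_H(v)} \edgefkt{vw}$.

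The main obstacle will be bridging the gap between the combinatorial fact that $X$ has odd intersection only with $v$ and the geometric fact that $X$ actually lives inside $v \cup \bigcup_{w \in N_H(v)} \edgefkt{vw}$: parity arguments in isolation only rule out odd intersections with other vertices and edge-sets, they cannot force emptiness. The modified partition $\tilde{\mathcal{P}}$ is the device that performs this upgrade, using the correspondence theorem \cref{thm:corriseq} to force the enlarged class $P_v \cup X$ into the torsoid-dictated shape, in direct analogy with the final step of the proof of \cref{lem:one_edge_odd}.
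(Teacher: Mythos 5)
Your proof is correct and takes essentially the same route as the paper's: reduce via \cref{cor:non_pcut_edge} and a tight set partition $\mathcal{P}$ in strong correspondence with $\mathcal{T}$, replace the class containing $v$ by its union with $X$ (shrinking the others) to obtain a corresponding tight set partition, and then use \cref{lem:corrisiso}, \cref{lem:max} and \cref{thm:corriseq} to force $P_v \cup X \subseteq v \cup \bigcup_{w \in N_H(v)}\edgefkt{vw}$. The only differences are cosmetic: you spell out the trivial forward direction and make explicit the step (left implicit in the paper) that the strong-correspondence partition is itself \TorsoidInducing, i.e.\ maximal cyclic in the cyclic case.
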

\begin{proof}
    By definition, any \vcut{$\mathcal{T}$} $C$ satisfies $\theta_{\mathcal{T}}(C) = 1.$
    For the only if direction let $C$ be a tight cut in $G$ with $\theta_{\mathcal{T}}(C) = 1$, let $X \subseteq \V{G}$ be the tight set with $ \boundary{X} = C$ and $|\V{H}_X| = 1$, and let $v \in V(H)$ be the unique vertex such that $X \cap v$ is odd.
    By \cref{cor:non_pcut_edge}, $X \cap \edgefkt{e}$ is even for any $e \in E(H).$
    It remains to prove that $X$ avoids $w \neq v$ and $\edgefkt{f}$ for $f \in E(H)$ with $v \notin f.$
    Consider an arbitrary tight set partition $\mathcal{P}$ in strong correspondence with $\mathcal{T}$ and let $Q \in \mathcal{P}$ with $v \subseteq Q.$
    As $\oddIntersections{H}{X} = \Set{v}$ and $X$ intersects $\edgefkt{f}$ evenly for $f \in E(H)$, one verifies that $Q$ is the unique element of $\mathcal{P}$ that intersects $X$ oddly.
    Construct a partition $\mathcal{P}'\coloneqq \Set{P \setminus X: v \not\subseteq P \in \mathcal{P}} \cup \Set{Q \cup X}.$
    The partition $\mathcal{P}'$ is indeed a tight set partition by \cref{lem:basictight}.
    Furthermore, $\mathcal{P}$ and $\mathcal{P}'$ correspond.
    By \cref{lem:corrisiso} and \cref{lem:max}, $\collapse{\mathcal{P}'}$ is also \TorsoidInducing.
    Thus we can apply \cref{thm:corriseq}, which shows that, $\mathcal{T}_{\mathcal{P}'} = \mathcal{T}$ holds.
    Thus $Q \cup X \subseteq v \cup \bigcup_{v \in e} \edgefkt{e}.$
    This shows that $X$ is a \vcut{$\mathcal{T}$}, thus $C$ resides at the vertex $v$ in $\mathcal{T}.$
\end{proof}

\begin{lemma}
    Let $\mathcal{T} = \Torsoid{H}$ be a noncyclic torsoid in $G.$
    Any tight cut $C \subseteq \E{G}$ satisfies $\theta_{\mathcal{T}}(C) \leq 1.$
\end{lemma}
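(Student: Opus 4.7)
The plan is to reach a contradiction by applying \cref{lem:cutswell} to a tight set partition strongly corresponding to $\mathcal{T}$. Let $X$ be a tight set with $C=\boundary{X}$ and suppose for contradiction that $\theta_{\mathcal{T}}(C) \geq 2$, so both $|\oddIntersections{H}{X}| \geq 2$ and $|V(H)\setminus \oddIntersections{H}{X}| \geq 2$ hold. In particular $0 \neq |\oddIntersections{H}{X}| \neq |V(H)|$, and \cref{cor:non_pcut_edge} immediately yields that $X \cap \edgefkt{e}$ is even for every edge $e$ of $H$.

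Next, I would pick an arbitrary choice function $\kappa$ for $H$ and set $\mathcal{P} \coloneqq \mathcal{P}(\mathcal{T},\kappa)$; by \cref{choicetostrong}, $\mathcal{P}$ is a tight set partition in strong correspondence with $\mathcal{T}$ via some bijection $\sigma \colon V(H) \to \mathcal{P}$, where $\sigma(v) = v \cup \bigcup_{\kappa(e) = v} \edgefkt{e}$ is a disjoint union by \cref{torsoid-def-4}. Since $X$ meets each $\edgefkt{e}$ evenly, the parity of $X \cap \sigma(v)$ agrees with that of $X \cap v$; hence $\oddIntersections{\mathcal{P}}{X} = \sigma(\oddIntersections{H}{X})$, and so both $\oddIntersections{\mathcal{P}}{X}$ and $\mathcal{P} \setminus \oddIntersections{\mathcal{P}}{X}$ have cardinality at least $2$.

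To conclude, observe that $\sigma$ is a graph isomorphism from $H$ to $\collapse{\mathcal{P}}$ by \cref{lem:strongcorrisiso}. Since $\mathcal{T}$ is noncyclic, $H$ is a \BoB by \cref{torsoid-def-1}, and this property transfers to $\collapse{\mathcal{P}}$ via the isomorphism---exactly the tacit step already used in the proofs of \cref{lem:one_edge_odd,lem:vcut_char}. Then \cref{lem:cutswell} forces $\oddIntersections{\mathcal{P}}{X}$ or its complement in $\mathcal{P}$ to be a singleton, contradicting our size bound. The main obstacle in this plan is the transfer of the \BoB property from $H$ to $\collapse{\mathcal{P}}$; once that is granted, the argument reduces cleanly to \cref{cor:non_pcut_edge} (to discard odd intersection with any $\edgefkt{e}$) followed by \cref{lem:cutswell} (to bound the number of torsoid vertices intersected oddly).
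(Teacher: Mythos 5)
Your proposal is correct and follows essentially the same route as the paper's proof: assume $\theta_{\mathcal{T}}(C)>1$, use \cref{cor:non_pcut_edge} to get even intersection with every $\edgefkt{e}$, pass to a tight set partition in strong correspondence with $\mathcal{T}$, and contradict \cref{lem:cutswell} since $H$ is a \BoB. The transfer of the \BoB\ property to $\collapse{\mathcal{P}}$ that you flag is exactly the tacit step the paper also makes (``as $H$ is a \BoB''), so no substantive difference remains.
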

\begin{proof}
    Suppose for a contradiction that $C$ is a tight cut in $G$ with $1 < \theta_{\mathcal{T}}(C).$
    Let $X \subseteq \V{G}$ be a vertex set with $\boundary{X}= C$ and $\mathcal{P}$ be any tight cut partition in strong correspondence with $\mathcal{T}.$
    By \cref{cor:non_pcut_edge}, the intersection $X \cap \edgefkt{e}$ is even for any $e \in E(H).$ One verifies that $X$ intersects an element $P \in \mathcal{P}$ oddly if, and only if there is $v \in \oddIntersections{H}{X}$ with $v \subseteq P.$ Thus $|\oddIntersections{\mathcal{P}}{X}| = \theta_{\mathcal{T}}(C).$
    This contradicts \cref{lem:cutswell}, as $H$ is a \BoB.
\end{proof}

The results from this \namecref{sec:relation_tigh_sets_torsoids} so far yield the following corollary:
\begin{corollary}\label{cor:cuts_in_noncyclic_torsoids}
    Let $\mathcal{T}$ be a noncyclic torsoid in $G.$
    Any tight cut of $G$ either resides at a vertex or an edge in $\mathcal{T}.$
    
    Furthermore, for any set $X$ that is tight in $G$ either $X$ or its complement is a \vcut{$\mathcal{T}$} or a \pcut{$\mathcal{T}$}.
    \qed
\end{corollary}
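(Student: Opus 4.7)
The first statement is almost immediate from the three preceding results. Given any tight cut $C$ of $G$, the unnamed lemma immediately above the corollary tells us that $\theta_{\mathcal{T}}(C) \leq 1$, so $\theta_{\mathcal{T}}(C) \in \Set{0,1}$. By \cref{lem:pcut_char} this means $C$ resides at an edge exactly when $\theta_{\mathcal{T}}(C) = 0$, and by \cref{lem:vcut_char} it resides at a vertex exactly when $\theta_{\mathcal{T}}(C) = 1$. One of these two cases must hold, giving the first part.

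For the statement about tight sets, let $X \subseteq V(G)$ be tight and set $C \coloneqq \partial X$. By the first part, $C$ resides at some edge $e$ or at some vertex $v$ of $\mathcal{T}$. By the definitions of residence this means there is a tight set $Y$ which is either a \pcut{$\mathcal{T}$} at $e$ or a \vcut{$\mathcal{T}$} at $v$, with $\partial Y = C = \partial X$. The plan now is to argue that a tight set in a matching covered graph is determined by its boundary up to complementation, so that $X = Y$ or $X = V(G) \setminus Y$.

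To see this, recall that $G$ is connected, and that $G[Y]$ and $G[V(G)\setminus Y]$ are connected by \cref{prop:tight_cut_connected}. If $X \cap Y$ is nonempty and $X \cap (V(G)\setminus Y)$ is nonempty, then by connectedness of $G[Y]$ there is an edge of $G$ with one endvertex in $X \cap Y$ and the other in $Y \setminus X$ (otherwise take a component), and symmetrically in the complement, forcing edges in $\partial X$ not lying in $\partial Y$, a contradiction (and the symmetric argument rules out $X$ being strictly contained in $Y$ or its complement). So $X$ is either exactly $Y$ or exactly $V(G) \setminus Y$, as required.

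The only real subtlety is the last step: turning equality of boundaries into equality of sets up to complementation. This is where connectedness of both sides of a tight cut (\cref{prop:tight_cut_connected}) is essential, since in a disconnected graph a boundary need not determine its set even up to complementation. Once that is in hand, both statements of the corollary drop out immediately from combining \cref{lem:pcut_char}, \cref{lem:vcut_char} and the bound $\theta_{\mathcal{T}}(C)\leq 1$ from the preceding lemma.
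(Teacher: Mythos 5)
Your proof is correct. The first part is exactly the paper's (implicit) argument: combine the unlabelled lemma preceding the corollary, which gives $\theta_{\mathcal{T}}(C)\leq 1$ for a noncyclic torsoid, with \cref{lem:pcut_char} and \cref{lem:vcut_char}. For the second part you take a mildly different route from what the paper intends. The paper's characterisation lemmas, as stated, only assert that \emph{some} resident with boundary $C$ exists, but their proofs in fact exhibit this resident as the given tight set or its complement (in \cref{lem:vcut_char} one takes the side with $\lvert\oddIntersections{H}{X}\rvert=1$, and in \cref{lem:one_edge_odd} the complement is taken without loss of generality before concluding $X\subseteq\edgefkt{uv}$), so the second sentence of the corollary is meant to be read off those proofs. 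You instead keep the lemma statements as black boxes and supply the missing bridge as a general fact: in a connected matching covered graph a tight cut determines its two sides up to complementation, which you justify via \cref{prop:tight_cut_connected} (both sides of the witnessing resident $Y$ are connected, so $G-\partial Y$ has exactly the two components $Y$ and $\V{G}\setminus Y$, and $X$, being a union of such components and neither empty nor all of $\V{G}$, must be one of them). This uniqueness step is correct and is arguably a cleaner way to get the second statement from the quoted lemmas alone; the paper's reading-off-the-proofs route avoids introducing the extra fact but relies on details inside the proofs of \cref{lem:one_edge_odd} and \cref{lem:vcut_char} rather than on their statements.
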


\begin{definition}[\icut{}]
    Let $\mathcal{T}=\Torsoid{H}$ be a cyclic torsoid in $G$ and let $I \subseteq \V{H}$ be an interval in $H$ with $3 \leq |I| \leq |V(H)|-3.$
    Then we say that a tight cut $C$ \emph{resides at the interval $I$ in $\mathcal{T}$} if there is a tight set $X \subseteq \V{G}$ with
    \begin{equation*}
        \bigcup I \cup \bigcup_{vw \in E(H): v,w \in I} \edgefkt{vw} \quad \subseteq \quad X \quad \subseteq \quad \bigcup I \cup \bigcup_{vw \in E(H): v \in I} \edgefkt{vw}
    \end{equation*}
    such that $C = \boundary{X}$ and we call $X$ a \emph{\icut{$\mathcal{T}$} at $I$}.
\end{definition}
See \cref{fig:interval_resident} for an example of a torsoid with an \icut{}.
Note that by \cref{lem:cutscyclicpartition}, any \icut{$\mathcal{T}$} at an interval $I$ intersects any set $\edgefkt{uv}$ evenly for $u \in I$ and $v \notin I.$

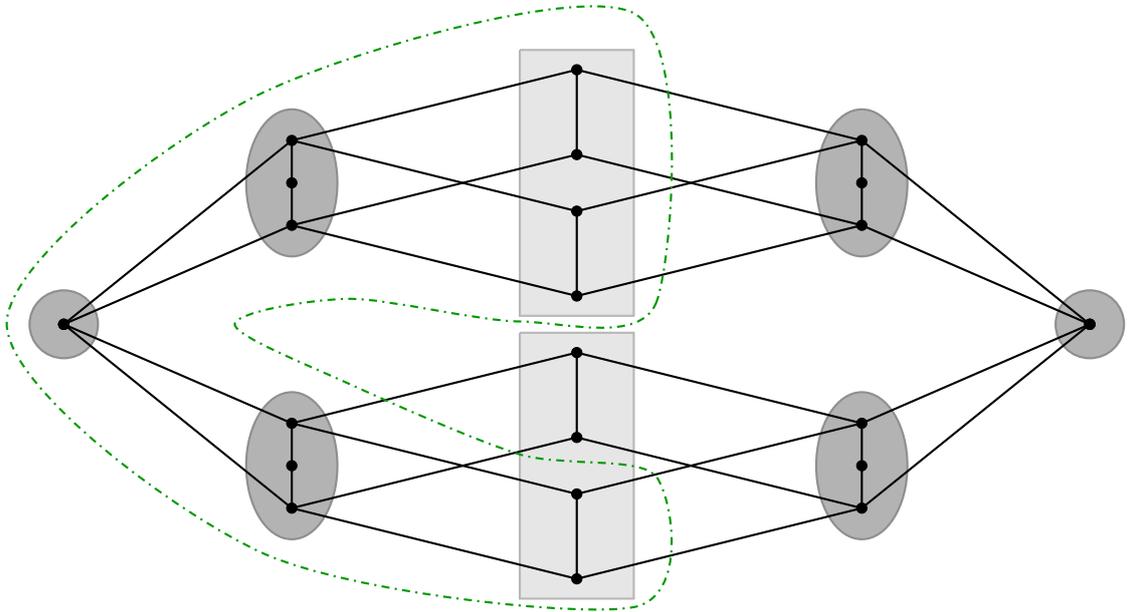
\begin{figure}[ht]
    \centering
    \begin{tikzpicture}[scale=.75]
        \foreach \coordinate/\name in {(0,4)/ao,(0,4)/am,(0,4)/ai,(4,0.75)/bo,(4,1.5)/bm,(4,2.25)/bi,(14,0.75)/co,(14,1.5)/cm,(14,2.25)/ci,(18,4)/do,(18,4)/dm,(18,4)/di,(14,7.25)/eo,(14,6.5)/em,(14,5.75)/ei,(4,7.25)/fo,(4,6.5)/fm,(4,5.75)/fi,(9,3.5)/bc1,(9,2)/bc2,(9,1)/bc3,(9,-0.5)/bc4,(9,8.5)/ef1,(9,7)/ef2,(9,6)/ef3,(9,4.5)/ef4} \node[shape=coordinate] at \coordinate (\name) {};
        \foreach \vertex in {a,d} \draw [draw=vertexcolor!80!black, thick, fill=vertexcolor] (\vertex m) circle [radius=.6];
        \foreach \vertex in {b,c,e,f} \draw [draw=vertexcolor!80!black, thick, fill=vertexcolor] (\vertex m) ellipse (0.8cm and 1.3cm);
        \foreach \group in {ef,bc} \draw [draw=edgecolor!80!black, thick, fill=edgecolor] plot coordinates {($(\group 1)+(-1,0.35)$) ($(\group 1)+(1,0.35)$)  ($(\group 4)+(1,-0.35)$) ($(\group 4)+(-1,-0.35)$) ($(\group 1)+(-1,0.35)$)};
        \foreach \start/\finish in {a/b,c/d,d/e,f/a} \foreach \position in {o,i} \draw[thick] (\start\position) to (\finish\position);
        \foreach \startvertex/\endvertex in {ef1/ef2,ef3/ef4,bc1/bc2,bc3/bc4} \draw[thick] (\startvertex) to (\endvertex);
        \foreach \first in {ef,bc} \foreach \second in {1,2,3,4} \draw[fill] (\first\second) circle [radius=.09];
        \foreach \group in {a,b,c,d,e,f} \draw[thick] (\group o) to (\group m);
        \foreach \group in {a,b,c,d,e,f} \draw[thick] (\group i) to (\group m);
        \foreach \startvertex in {bi,ci} \foreach \endvertex in {bc1,bc3} \draw[thick] (\startvertex) to (\endvertex);
        \foreach \startvertex in {bo,co} \foreach \endvertex in {bc2,bc4} \draw[thick] (\startvertex) to (\endvertex);
        \foreach \startvertex in {ei,fi} \foreach \endvertex in {ef2,ef4} \draw[thick] (\startvertex) to (\endvertex);
        \foreach \startvertex in {eo,fo} \foreach \endvertex in {ef1,ef3} \draw[thick] (\startvertex) to (\endvertex);
        \draw [thick, draw=green!60!black, dash dot] plot [smooth cycle] coordinates { ($(ef1)+(1,1)$) ($(ef4)+(1.4,-0.1)$) ($(ef4)+(-1,-0.45)$) ($(fi)+(1,-1.3)$) ($(am)+(3,0)$) ($(bi)+(1,0.7)$) ($(bc3)+(-1,0.7)$) ($(bc3)+(1.4,0.3)$) ($(bc4)+(1,-0.5)$) ($(bo)+(-0.5,-0.8)$) ($(ao)+(-1,0)$) ($(fo)+(-0.6,0.8)$)};
        \foreach \first in {a,b,c,d,e,f} \foreach \second in {o,m,i} \draw[fill] (\first\second) circle [radius=.09];
    \end{tikzpicture}
    \caption{A graph with a possible torsoid $\mathcal{T}$ that is a $C_6$ and a \icut{$\mathcal{T}$}.
    The vertex sets enclosed by smooth outlines shaded in \textcolor{vertexcolor!80!black}{darker grey} represent \textcolor{vertexcolor!80!black}{vertices of the torsoid $\mathcal{T}$} and the two sets enclosed by rectangular outlines filled in \textcolor{edgecolor!80!black}{lighter grey} represent \textcolor{edgecolor!80!black}{edges of $\mathcal{T}$}.
    The vertex set enclosed by the \textcolor{green!60!black}{dash dotted line} represents a \textcolor{green!60!black}{\icut{$\mathcal{T}$}}.}
    \label{fig:interval_resident}
\end{figure}

\begin{lemma}
    Let $\mathcal{T} = \Torsoid{H}$ be a cyclic torsoid in $G$ and $C \subseteq \E{G}$ any tight set. The set $C$ resides at an interval in $\mathcal{T}$ if and only if $3 \leq \theta_{\mathcal{T}}(C)$ holds.
\end{lemma}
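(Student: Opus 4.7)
If $X$ is a tight set witnessing that $C$ resides at an interval $I$, then the containment in the definition of a \icut{$\mathcal{T}$} forces $v \subseteq X$ for every $v \in I$ and $v \cap X = \emptyset$ for every $v \notin I$, so $\oddIntersections{H}{X} = I$; the constraint $3 \leq |I| \leq |\V{H}| - 3$ in that definition then yields $\theta_{\mathcal{T}}(C) \geq 3$.

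\textbf{Reverse direction, setup.} Take any $X$ with $\boundary{X} = C$, chosen (by complementing if necessary) so that $|\oddIntersections{H}{X}| = \theta_{\mathcal{T}}(C)$, and set $I \coloneqq \oddIntersections{H}{X}$. By \cref{cor:non_pcut_edge}, $X \cap \edgefkt{e}$ is even for every edge $e$. Fix any tight set partition $\mathcal{P}$ in strong correspondence with $\mathcal{T}$ (obtained from a choice function via \cref{choicetostrong}); by \cref{lem:strongcorrisiso} this provides an isomorphism $\collapse{\mathcal{P}} \cong H$, under which $\oddIntersections{\mathcal{P}}{X}$ matches $I$, since each class of $\mathcal{P}$ is a torsoid vertex $v$ together with a union of sets $\edgefkt{e}$ that $X$ intersects evenly. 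Hence \cref{lem:cutscyclicpartition} applies and shows that $I$ is an interval of $H$, with $3 \leq m \leq n - 3$ for $m = |I|$ and $n = |\V{H}|$. Enumerate the classes of $\mathcal{P}$ cyclically as $P_1, \dots, P_n$ with $\oddIntersections{\mathcal{P}}{X} = \{P_1, \dots, P_m\}$ and let $v_i$ denote the torsoid vertex associated to $P_i$; the same lemma yields $\bigcup_{1 < i < m} P_i \subseteq X$, $X \cap \bigcup_{m+1 < i < n} P_i = \emptyset$, and the passability of the four sets $P_1 \setminus X$, $P_n \cap X$ (between $P_1$ and $P_n$) and $P_m \setminus X$, $P_{m+1} \cap X$ (between $P_m$ and $P_{m+1}$).

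\textbf{Reverse direction, main step.} The crux is upgrading these parity-level inclusions to the geometric \icut{} containment at the four boundary vertices $v_1, v_m, v_{m+1}, v_n$. Using \cref{lem:passcup} to combine the passable sets at each boundary, I would build a new tight set partition $\mathcal{P}^*$ from $\mathcal{P}$ by replacing $\{P_1, P_n\}$ with $\{(P_1 \cup P_n) \cap X,\, (P_1 \cup P_n) \setminus X\}$ and $\{P_m, P_{m+1}\}$ with $\{(P_m \cup P_{m+1}) \cap X,\, (P_m \cup P_{m+1}) \setminus X\}$; tightness of each new class follows from \cref{lem:basictight,lem:deletemany} applied to the already-tight sets provided directly by passability. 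A routine parity check makes the natural bijection $\mathcal{P} \to \mathcal{P}^*$ a correspondence, so \cref{lem:corrisiso} and \cref{lem:max} yield that $\mathcal{P}^*$ is maximal cyclic, and \cref{thm:corriseq} gives $\mathcal{T}_{\mathcal{P}^*} = \mathcal{T}$. The resulting correspondence $\sigma \colon \V{H} \to \mathcal{P}^*$ must satisfy $v \subseteq \sigma(v)$, so each torsoid vertex lies entirely in a single class of $\mathcal{P}^*$. The only candidate classes containing any point of $v_1$ are $(P_1 \cup P_n) \cap X$ and $(P_1 \cup P_n) \setminus X$; since $v_1 \cap X$ is odd and nonempty, we conclude $v_1 \subseteq X$. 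Symmetric arguments give $v_m \subseteq X$ and $v_n \cap X = v_{m+1} \cap X = \emptyset$, so $\bigcup I \subseteq X$ and $X \cap v = \emptyset$ for every $v \notin I$.

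\textbf{Edges, and the main obstacle.} To finish, I would reapply \cref{lem:cutscyclicpartition} to further strong correspondence partitions in order to extract the edge containments. For an edge $e \subseteq I$, choose a choice function routing $\edgefkt{e}$ into a class $P'_{v_i}$ with $1 < i < m$; the lemma yields $P'_{v_i} \subseteq X$, hence $\edgefkt{e} \subseteq X$. For an edge $e$ with both endpoints in $\V{H} \setminus I$, the symmetric choice (routing $\edgefkt{e}$ into a class $P'_{v_i}$ with $m + 1 < i < n$) yields $P'_{v_i} \cap X = \emptyset$, hence $\edgefkt{e} \cap X = \emptyset$. Together these give the full containment $\bigcup I \cup \bigcup_{e \subseteq I} \edgefkt{e} \subseteq X \subseteq \bigcup I \cup \bigcup_{e:\, e \cap I \neq \emptyset} \edgefkt{e}$, so $X$ witnesses residence at $I$. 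The hard part is the construction of $\mathcal{P}^*$ together with the invocation of \cref{thm:corriseq}: this machinery is what converts the merely parity-theoretic condition "$v_1 \cap X$ is odd" into the geometric fact that $v_1$ cannot be split across $\mathcal{P}^*$, which is exactly what pins down the stronger $v_1 \subseteq X$ rather than a proper-subset witness.
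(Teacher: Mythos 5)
Your argument is correct, and its first half (forward direction, choosing $X$ with $\lvert\oddIntersections{H}{X}\rvert=\theta_{\mathcal{T}}(C)$, invoking \cref{cor:non_pcut_edge}, passing to a strong correspondence partition and applying \cref{lem:cutscyclicpartition}) coincides with the paper's proof. Where you diverge is the crux: the paper stays with the single partition $\mathcal{P}$ and observes that the four overhang sets $P_1\setminus X$, $P_n\cap X$, $P_m\setminus X$, $P_{m+1}\cap X$, already known to be passable between the boundary classes, become passable between the torsoid vertices themselves via \cref{lem:modifypass}, whence the maximality clause in \cref{torsoid-def-5} (or \cref{lem:epspass}) forces them into $\edgefkt{v_nv_1}$ and $\edgefkt{v_mv_{m+1}}$; both containments in the definition of residing at an interval then drop out at once, with no need to vary the choice function for the edge sets. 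You instead uncross $X$ against $\mathcal{P}$ at the two boundaries to form $\mathcal{P}^*$, run the correspondence machinery (\cref{lem:corrisiso}, \cref{lem:max}, \cref{thm:corriseq}, implicitly \cref{thm:uniquecorr} to identify $\mathcal{T}_{\mathcal{P}}$ with $\mathcal{T}$) to conclude that torsoid vertices cannot be split by $\mathcal{P}^*$, and then handle the edge sets separately by rerouting them into interior classes and reapplying \cref{lem:cutscyclicpartition}. This is essentially the strategy the paper uses for \cref{lem:vcut_char} and \cref{lem:one_edge_odd}, transplanted to the interval case, and it does work: the tightness of the four new classes follows from \cref{lem:basictight} as you indicate, and your parity check of the correspondence is sound. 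What the paper's route buys is brevity and locality (no new partition, no appeal to the uniqueness theorems); what yours buys is that the deduction ``$v_1\cap X$ odd $\Rightarrow v_1\subseteq X$'' is made structurally transparent. One caveat to flag in a write-up: your invocation of \cref{lem:max} and \cref{thm:corriseq} needs the strong-correspondence partition $\mathcal{P}(\mathcal{T},\kappa)$ of a cyclic torsoid to be \emph{maximal} cyclic (i.e.\ \TorsoidInducing); this is a background fact the paper itself uses without explicit proof (it ultimately rests on \cref{torsoid-def-7} and \cref{lem:extendcycle}), so you are on the same footing as the paper, but you should state that you are using it.
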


\begin{proof}
    By definition, any tight cut $C$ that resides at an interval in $\mathcal{T}$ satisfies $3 \leq \theta_{\mathcal{T}}(C) .$
    For the only if direction let $C$ be a tight cut with $3 \leq \theta_{\mathcal{T}}(C)$, let $X \subseteq \V{G}$ be the tight set with $\boundary{X} = C$, $\oddIntersections{H}{X} = \theta_{\mathcal{T}}(C)$ and let $\mathcal{P}$ be any tight set partition in strong correspondence with $\mathcal{T}.$
    Set $n\coloneqq |\mathcal{P}| = |V(H)|$ and $m\coloneqq |\oddIntersections{\mathcal{P}}{X}|.$
    We apply \cref{lem:cutscyclicpartition} to obtain a cyclic enumeration $P_1, \dots, P_{n}$ of $\mathcal{P}$ such that $\oddIntersections{\mathcal{P}}{X} = \Set{P_1, \dots, P_{m}}.$
    Let $v_i \in V(H)$ with $v_i \subseteq P_i$ for $i \in [n].$
    We assumed $\theta_{\mathcal{T}}(C) \neq 0$, this 
    implies that $|\oddIntersections{H}{X}| \notin \Set{0, |V(H)|}$,
    thus $X$ has even intersection with $\edgefkt{f}$ for any $f \in E(H)$ by \cref{cor:non_pcut_edge}.
    Thus for $i \in [n]$ we have $v_i \in \V{H}_X$ if and only if $P_i \in \oddIntersections{\mathcal{P}}{X}.$
    This implies $m = \theta_{\mathcal{T}}(C)$ and we can deduce by \cref{lem:cutscyclicpartition} that $X \setminus \bigcup \oddIntersections{\mathcal{P}}{X}$ can be partitioned into $S', S''$ such that $S'$ is passable between $P_{n}, P_1$ and $S''$ is passable between $P_{m}, P_{m + 1}.$
    By \cref{lem:modifypass}, $S'$ is passable between $v_n$ and $v_1.$
    Thus it is contained in $\edgefkt{v_n v_1}.$
    Similarly $S''$ is contained in $\edgefkt{v_m, v_{m +1}}.$ By the same argument $\bigcup \oddIntersections{\mathcal{P}}{X} \setminus X$ can be partitioned into passable sets between $v_n, v_1$ and $v_m, v_{m+1}.$
    This completes the proof.
\end{proof}
Note that for any tight cut $C$ the number $\theta_{\mathcal{T}}(C)$ is either $0$ or odd.
\begin{corollary}\label{cor:cuts_in_general_torsoids}
    Let $\mathcal{T}$ be a cyclic torsoid in $G.$
    Any tight cut of $G$ either resides at a vertex, an edge or an interval in $\mathcal{T}.$
    
    Furthermore, for any tight set $X$ of $G$ either $X$ or its complement is a \vcut{$\mathcal{T}$}, a \pcut{$\mathcal{T}$}, or a \icut{$\mathcal{T}$}.
    \qed
\end{corollary}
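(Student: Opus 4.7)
The plan is to combine the three preceding characterisation lemmas with the remark that $\theta_{\mathcal{T}}(C)$ is either $0$ or odd for any tight cut $C$. Once this parity observation is in hand, the three possibilities $\theta_{\mathcal{T}}(C)=0$, $\theta_{\mathcal{T}}(C)=1$, and $\theta_{\mathcal{T}}(C)\geq 3$ exhaust all cases, and these correspond via \cref{lem:pcut_char}, \cref{lem:vcut_char}, and the immediately preceding (unlabelled) lemma, respectively, to $C$ residing at an edge, at a vertex, or at an interval of $\mathcal{T}$. This yields the first assertion directly.

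For the second assertion, let $X$ be a tight set in $G$ and set $C \coloneqq \boundary{X}$. By the first part, $C$ resides at a vertex, at an edge, or at an interval in $\mathcal{T}$, and this residency is witnessed by some tight set $Y$ with $\boundary{Y}=C$. Appealing to \cref{prop:tight_cut_connected}, the only tight sets with boundary equal to $C$ are $X$ and $\V{G}\setminus X$, so $Y$ is one of these two. Hence $X$ or $\V{G}\setminus X$ is the corresponding \vcut{$\mathcal{T}$}, \pcut{$\mathcal{T}$}, or \icut{$\mathcal{T}$}, as required.

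There is no serious obstacle here; the corollary is essentially a repackaging of the preceding results. The only minor point to check is that the three cases genuinely partition the tight cuts: exclusivity is automatic because $\theta_{\mathcal{T}}(C)$ is a single number, while exhaustiveness follows from the parity remark combined with the trivial bound $\theta_{\mathcal{T}}(C)\leq |V(H)|/2$ noted earlier. In particular, no new lemma is needed and the proof can reasonably be closed with $\square$ on the spot.
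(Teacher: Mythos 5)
Your proposal is correct and takes essentially the same route as the paper, which states the corollary with \qed precisely because it follows by combining \cref{lem:pcut_char}, \cref{lem:vcut_char} and the interval characterisation with the remark that $\theta_{\mathcal{T}}(C)$ is $0$ or odd. Your additional appeal to \cref{prop:tight_cut_connected} to see that the witnessing tight set must be $X$ or its complement just makes explicit a step the paper leaves implicit, and it is sound since both sides of a tight cut induce connected subgraphs, so the cut determines the bipartition.
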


\subsection{Correspondences between torsoids and tight set partitions}
Using the results of the last subsection, we can show that not only strong correspondences, but arbitrary correspondences between torsoids and tight set partitions have some good properties.

\begin{lemma}\label{lem:epspass}
    Let $\mathcal{P}$ be a tight set partition of $G$ such that either $\collapse{\mathcal{P}}$ is a \BoB\ or $\mathcal{P}$ is a maximal cycle.
    Let $\sigma$ be a correspondence from a torsoid $\mathcal{T}= \Torsoid{H}$ in $G$ to $\mathcal{P}$ and let $vw$ be an edge of $H.$
    Then $\edgefkt{vw}$ is the largest set passable between $\sigma(v)$ and $\sigma(w).$ 
\end{lemma}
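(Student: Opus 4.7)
The plan is to combine the residence results from \cref{sec:relation_tigh_sets_torsoids} with basic passability arguments via \cref{lem:basictight}. The starting observation is that the given correspondence $\sigma$ forces $\sigma(v)$ itself to reside properly at the vertex $v$. Indeed, by \cref{torsoid-def-4} together with $\sigma(v) \subseteq v \cup \bigcup_{e \ni v} \edgefkt{e}$, every $u \in V(H) \setminus \Set{v}$ is disjoint from $\sigma(v)$, while $\sigma(v) \cap v = v$ is odd. Hence $\V{H}_{\sigma(v)} = \Set{v}$, and since the complementary candidate $V(G) \setminus \sigma(v)$ meets $|V(H)| - 1 \geq 3$ torsoid vertices oddly, \cref{lem:vcut_char} forces $\sigma(v)$ itself to be a \pvcut{} at $v$. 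In particular, writing $A_{v,w'} \coloneqq \sigma(v) \cap \edgefkt{vw'}$ for $w' \in N_H(v)$, each $A_{v,w'}$ is even; and since $\mathcal{P}$ partitions $V(G)$ while only $\sigma(v)$ and $\sigma(w)$ may meet $\edgefkt{vw}$ (no other $\sigma(u)$ contains anything in $\edgefkt{vw}$), we get $\edgefkt{vw} = A_{v,w} \sqcup A_{w,v}$ with both parts even.

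The second step verifies that $\edgefkt{vw}$ is passable between $\sigma(v)$ and $\sigma(w)$. Apply \cref{lem:basictight} to $\sigma(v)$ and the tight set $v \cup \edgefkt{vw}$ (from \cref{torsoid-def-5}), whose intersection $v \cup A_{v,w}$ with $\sigma(v)$ is odd, to obtain tightness of $\sigma(v) \cup \edgefkt{vw}$. Apply it again to $\sigma(v)$ and the complement of $w \cup \edgefkt{vw}$, whose intersection with $\sigma(v)$ is $\sigma(v) \setminus \edgefkt{vw}$ and is odd (since $\sigma(v) \cap (w \cup \edgefkt{vw}) = A_{v,w}$ is even), to obtain tightness of $\sigma(v) \setminus \edgefkt{vw}$. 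The analogous statements for $\sigma(w)$ follow by symmetry.

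For maximality, suppose $T$ is passable between $\sigma(v)$ and $\sigma(w)$ and set $X \coloneqq \sigma(v) \cup T$, which is tight. Since $T \subseteq \sigma(v) \cup \sigma(w)$, every $u \in V(H) \setminus \Set{v,w}$ is disjoint from $X$, so $\V{H}_X \subseteq \Set{v,w}$, and $v \in \V{H}_X$ since $v \subseteq X$; thus $|\V{H}_X| \in \Set{1,2}$. In the noncyclic case, \cref{cor:cuts_in_noncyclic_torsoids} forces $|\V{H}_X| \in \Set{0, 1, |V(H)|-1, |V(H)|}$, and since $|V(H)| \geq 4$ the only possibility is $|\V{H}_X| = 1$. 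In the cyclic case, \cref{cor:non_pcut_edge} shows $X$ meets every $\edgefkt{e}$ evenly, so for any tight set partition $\mathcal{P}'$ in strong correspondence with $\mathcal{T}$ (produced by \cref{choicetostrong}) we get $|\oddIntersections{\mathcal{P}'}{X}| = |\V{H}_X|$; by \cref{lem:uncrosscut} and \cref{lem:tight_sets_in_collapse} the set $\oddIntersections{\mathcal{P}'}{X}$ is tight in the cycle $\collapse{\mathcal{P}'}$, hence is an odd interval by \cref{rem:cycle_tight_cuts}, forcing $|\V{H}_X| = 1$ again. Either way, $w \notin \V{H}_X$, so $X$ resides properly at $v$, and by definition $X \subseteq v \cup \bigcup_{e \ni v} \edgefkt{e}$. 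The symmetric argument gives $\sigma(w) \cup T \subseteq w \cup \bigcup_{e \ni w} \edgefkt{e}$, and intersecting these two containments using \cref{torsoid-def-4} yields $T \subseteq \edgefkt{vw}$, as required.

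The main obstacle is the last step, specifically proving in the cyclic case that $|\V{H}_{\sigma(v) \cup T}|$ must be odd. This parity constraint is not visible from the torsoid alone and requires transferring the count to a strong-correspondence partition and exploiting that tight sets in a cycle are exactly the odd intervals; without this parity argument one could not rule out $|\V{H}_{\sigma(v) \cup T}| = 2$, which would allow $T$ to meet $\sigma(w) \setminus \edgefkt{vw}$.
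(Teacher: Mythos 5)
Your proof is correct, and its first half---showing that $\sigma(v)$ is a proper vertex-resident at $v$ and then deriving passability of $\edgefkt{vw}$ for $\sigma(v)$ and $\sigma(w)$ by two applications of \cref{lem:basictight}---is essentially the paper's argument (the paper cites \cref{cor:cuts_in_noncyclic_torsoids} and \cref{cor:cuts_in_general_torsoids} where you cite \cref{lem:vcut_char}; same content). The maximality half takes a genuinely different route. The paper stays inside the passability calculus: it takes the largest passable set $S$ between $\sigma(v)$ and $\sigma(w)$ (via \cref{lem:maxpass}), uses \cref{lem:disjointpass} and \cref{lem:modifypass} to show $S$ is passable for the torsoid vertices $v$ and $w$ themselves, and then invokes the maximality clause of \cref{torsoid-def-5} to conclude $S \subseteq \edgefkt{vw}$. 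You instead take an arbitrary passable set $T$, form $\sigma(v) \cup T$ and $\sigma(w) \cup T$, and use the residence classification of this section---ruling out $|\V{H}_{\sigma(v) \cup T}| = 2$ via \cref{cor:cuts_in_noncyclic_torsoids} in the \BoB\ case and via the odd-interval parity argument in the collapse of a strong-correspondence partition in the cyclic case---to conclude that both sets are proper vertex-residents, whence $T \subseteq \edgefkt{vw}$ by the near-partition property \cref{torsoid-def-4}. What your route buys: it avoids \cref{lem:maxpass}, \cref{lem:disjointpass}, \cref{lem:modifypass} and the maximality clause of \cref{torsoid-def-5}, and it never uses the hypothesis that $\collapse{\mathcal{P}}$ is a \BoB\ or that $\mathcal{P}$ is maximal cyclic, so it in fact establishes the conclusion for any tight set partition in correspondence with $\mathcal{T}$; the cost is the extra cyclic-case parity argument, which the paper's reduction to \cref{torsoid-def-5} renders unnecessary. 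One presentational remark: the step ``$w \notin \V{H}_X$, so $X$ resides properly at $v$'' tacitly repeats the complement-exclusion argument from your first paragraph (the complement contains at least two full torsoid vertices, so the vertex-resident provided by \cref{lem:vcut_char}, or by the corollaries, must be $X$ itself and its vertex must be $v$); a back-reference would make this airtight, but it is not a gap.
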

\begin{proof}
    Let $P = \sigma(v)$ and $Q = \sigma(w).$
    We begin by showing that $\edgefkt{vw}$ is passable between $P$ and $Q.$
    By symmetry it is enough to show that it is passable for $P.$
    By \cref{cor:cuts_in_noncyclic_torsoids} and \cref{cor:cuts_in_general_torsoids}, $P$ must be a vertex cut with respect to $(H,\edgefkt{}).$
    Thus $P \setminus \edgefkt{vw}$ is tight by \cref{lem:basictight} applied to $P$ and the complement of $w \cup \edgefkt{vw}$, and $P \cup \edgefkt{vw}$ is tight by \cref{lem:basictight} applied to $P$ and $v \cup \edgefkt{vw}.$ 
    
    Now let $S$ be the largest set passable between $P$ and $Q.$
    From the arguments in the last paragraph we already know that for any neighbour $x$ of $v$ other than $w$ in $H$ the set $\edgefkt{vx}$ is passable for $\mathcal{P}.$
    So by \cref{lem:disjointpass} and \cref{lem:modifypass} the set $S$ is passable for $P \setminus \bigcup_{x \in N_H(v) \setminus \Set{w}} \edgefkt{vx}.$
    Since $\edgefkt{vw} \subseteq S$ this implies that $S$ is also passable for $P \setminus \bigcup_{x \in N_H(v)} \edgefkt{vx}.$
    This latter set is just equal to $v$ because $v \subseteq P \subseteq v \cup \bigcup_{x \in N_H(v)} \edgefkt{vx}.$
    
    A similar argument shows that $S$ is passable for $w$, and by \cref{lem:disjointpass} it is a subset of $v \cup \edgefkt{vw} \cup w$, so we have $S \subseteq \edgefkt{vw}.$ Since, as we saw in the previous paragraph, $\edgefkt{vw}$ is passable between $P$ and $Q$, we also have $\edgefkt{vw} \subseteq S.$
    Thus $S = \edgefkt{vw}$, as required.
\end{proof}

\begin{theorem}\label{thm:uniquecorr}
    Let $\mathcal{P}$ be a \TorsoidInducing tight set partition of $G.$
    Then the only torsoid $\mathcal{T}$ in correspondence with $\mathcal{P}$ is $\mathcal{T}_{\mathcal{P}}$ and the only such correspondence is $\sigma_{\mathcal{P}}.$
\end{theorem}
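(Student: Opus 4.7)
The plan is to show $\mathcal{T} = \mathcal{T}_{\mathcal{P}}$ and $\sigma = \sigma_{\mathcal{P}}$ for any torsoid $\mathcal{T} = \Torsoid{H}$ with correspondence $\sigma \colon V(H) \to \mathcal{P}$, by identifying the data $(H, \edgefkt{})$ with what $\tau_{\mathcal{P}}$ and $\delta_{\mathcal{P}}$ produce from $\mathcal{P}.$ Existence of such a pair comes from \cref{thm:inducedtorsoid}, so only uniqueness needs to be argued.

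First I would verify that $\sigma$ realises a graph isomorphism $H \to \collapse{\mathcal{P}}.$ Suppose $\sigma(v)$ and $\sigma(w)$ are adjacent in $\collapse{\mathcal{P}};$ any witnessing edge has endpoints in $\sigma(v) \subseteq v \cup \bigcup_{vz \in E(H)} \edgefkt{vz}$ and $\sigma(w) \subseteq w \cup \bigcup_{wz' \in E(H)} \edgefkt{wz'}.$ A case analysis on which block of the near partition from \cref{torsoid-def-4} each endpoint lies in, combined with \cref{prop:edges_of_passable_set} and \cref{torsoid-def-6}, forces $vw \in E(H).$ Conversely, if $vw \in E(H)$ then \cref{lem:epspass} yields that $\edgefkt{vw}$ is passable between $\sigma(v)$ and $\sigma(w).$ If $\edgefkt{vw} = \emptyset$, \cref{torsoid-def-5} directly furnishes an edge from $v \subseteq \sigma(v)$ to $w \subseteq \sigma(w);$ otherwise $\edgefkt{vw}$ meets $\sigma(v)$ or $\sigma(w)$, and a single application of \cref{lem:basictight} to the tight sets $\sigma(w) \cup \edgefkt{vw}$ and $\sigma(w)$ (or to the symmetric pair) produces an edge across the cut.

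With this isomorphism in hand, \cref{lem:epspass} applied edge by edge gives $\delta_{\mathcal{P}}(\sigma(v)\sigma(w)) = \edgefkt{vw}$ for every $vw \in E(H).$ Substituting into the definition of $\tau_{\mathcal{P}}$ and exploiting $v \subseteq \sigma(v) \subseteq v \cup \bigcup_{w \in N_H(v)} \edgefkt{vw}$ together with \cref{torsoid-def-4}, the expression $\tau_{\mathcal{P}}(\sigma(v))$ collapses to $v.$ Hence $\sigma_{\mathcal{P}}(v) = \sigma(v)$ for every $v$, so $\sigma = \sigma_{\mathcal{P}};$ this also identifies $V(H_{\mathcal{P}})$ with $V(H)$, and via $\sigma$'s isomorphism with $\collapse{\mathcal{P}}$ the edge set of $H_{\mathcal{P}}$ with that of $H.$ Finally $\varepsilon_{\mathcal{P}}(vw) = \delta_{\mathcal{P}}(\sigma(v)\sigma(w)) = \edgefkt{vw}$ matches the edge labels, giving $\mathcal{T}_{\mathcal{P}} = \mathcal{T}.$

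I expect the main obstacle to be the direction $vw \in E(H) \Longrightarrow \sigma(v) \sim \sigma(w).$ The delicate case is when $\edgefkt{vw}$ happens to lie entirely inside one of $\sigma(v), \sigma(w)$: then the edge guaranteed by \cref{torsoid-def-5} from $\vertexfkt{v} \cup \edgefkt{vw}$ to $\vertexfkt{w}$ may be internal to that class and hence fail to witness adjacency in $\collapse{\mathcal{P}}.$ One has to route around this by invoking the passability of $\edgefkt{vw}$ via \cref{lem:epspass} and then extracting a cross-cut edge from \cref{lem:basictight}, rather than relying on the existence clause of \cref{torsoid-def-5} alone.
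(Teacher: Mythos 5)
Your proposal is correct and follows essentially the same route as the paper: \cref{lem:epspass} identifies $\edgefkt{vw}$ with $\delta_{\mathcal{P}}(\sigma(v)\sigma(w))$, whence $\tau_{\mathcal{P}}(\sigma(v)) = v$, so $\sigma = \sigma_{\mathcal{P}}$ and $\mathcal{T} = \mathcal{T}_{\mathcal{P}}.$ The only difference is that you spell out the verification that $\sigma$ is a graph isomorphism onto $\collapse{\mathcal{P}}$ (needed so that $\delta_{\mathcal{P}}(\sigma(v)\sigma(w))$ is defined and that $H_{\mathcal{P}} = H$), a point the paper leaves implicit in this proof and only records afterwards as \cref{cor:corrisiso}.
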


\begin{proof}
    Let $\sigma$ be any correspondence from any torsoid $\mathcal{T} = (H, \edgefkt{})$ to $\mathcal{P}.$
    By \cref{lem:epspass}, for any edge $vw$ of $H$ we have $\edgefkt{vw} = \delta_{\mathcal{P}}(\sigma(v)\sigma(w)).$
    For any vertex $v$ of $H$, since $v \subseteq \sigma(v) \subseteq v \cup \bigcup_{w \in N_H(v)}\edgefkt{vw}$ we have 
    \begin{equation*}
        v = \sigma(v) \setminus \bigcup_{w \in N_H(v)} \edgefkt{vw} = \sigma(v) \setminus \bigcup_{Q \in N_{\collapse{\mathcal{P}}}(\sigma(v))}\delta_{\mathcal{P}}(\sigma(v)Q) = \tau_{\mathcal{P}}(\sigma(v))\,.
    \end{equation*}
    
    Thus $\tau_{\mathcal{P}} = \sigma^{-1}$ and so $\sigma_{\mathcal{P}} = \sigma.$
    It follows that $H_{\mathcal{P}} = H$ and $\varepsilon_{\mathcal{P}} = \edgefkt{}$, giving the desired result.
\end{proof}

\begin{corollary}\label{cor:corrisiso}
    If $\sigma$ is a correspondence from a torsoid $\mathcal{T} =(H,\edgefkt{})$ in $G$ to a tight set partition $\mathcal{P}$ of $G$ then it is a graph isomorphism between $H$ and $\collapse{\mathcal{P}}.$
    \qed
\end{corollary}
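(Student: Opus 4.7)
Since the bijectivity of $\sigma$ is built into the definition of correspondence, the task reduces to matching the edge sets of $H$ and $\collapse{\mathcal{P}}$. I plan to begin by pinning down the shape of each class $\sigma(v)$ using the tight-cut classification from the previous section. Because $v\subseteq\sigma(v)$ while every $u\subseteq\sigma(u)$ and the classes of $\mathcal{P}$ are pairwise disjoint, we have $\sigma(v)\cap v = v$ odd and $\sigma(v)\cap u = \emptyset$ for every $u\neq v$, so $\oddIntersections{H}{\sigma(v)} = \{v\}$ and therefore $\theta_{\mathcal{T}}(\partial\sigma(v)) = 1$ (using $|V(H)|\geq 4$ from \cref{torsoid-def-1}). \Cref{lem:vcut_char} together with \cref{def:vcut} then deliver the key parity fact I will need later: for every $w\in N_H(v)$, the set $\sigma(v)\cap\edgefkt{vw}$ is even.

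For the non-edge direction I would assume $vu\notin E(H)$ and argue by contradiction from a hypothetical edge $xy$ with $x\in\sigma(v)$ and $y\in\sigma(u)$. Localising $x$ and $y$ within the near-partition $V(H)\cup\image{\edgefkt{}}$ from \cref{torsoid-def-4} gives a short case analysis: the vertex--vertex subcase is ruled out directly by \cref{torsoid-def-6}, while any subcase in which an endpoint lies in some $\edgefkt{e}$ is handled by \cref{prop:edges_of_passable_set}, which confines the edge to $v'\cup w'\cup\edgefkt{v'w'}$ whenever $e=v'w'$. Combined with the disjointness of distinct pieces of the near-partition, every subcase forces $uv$ to be an edge of $H$, contradicting our assumption.

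For the edge direction I would take $vu\in E(H)$ and extract an edge from $\sigma(v)$ to $\sigma(u)$. \Cref{torsoid-def-5} yields an edge $e=xy$ with $x\in v\cup\edgefkt{vu}$ and $y\in u\subseteq\sigma(u)$. Observing that $\edgefkt{vu}\subseteq\sigma(v)\cup\sigma(u)$ (any element of $\edgefkt{vu}$ must lie in some $\sigma(w)$, and the near-partition forces $w\in\{v,u\}$), the edge $e$ already witnesses the desired adjacency whenever $x\in v$ or $x\in\edgefkt{vu}\cap\sigma(v)$. The delicate remaining case is $x\in\edgefkt{vu}\cap\sigma(u)$, where $e$ is internal to $\sigma(u)$. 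Here I would invoke the parity fact from the first step: setting $A\coloneqq\sigma(v)\cap\edgefkt{vu}$, which is even, the intersection $\sigma(v)\cap(v\cup\edgefkt{vu}) = v\cup A$ is the disjoint union of the odd set $v$ and the even set $A$, hence odd. Applying \cref{lem:basictight} to the tight sets $\sigma(v)$ and $v\cup\edgefkt{vu}$ --- whose intersection is now known to be odd --- together with the non-emptiness of $(v\cup\edgefkt{vu})\setminus\sigma(v) = \edgefkt{vu}\setminus A$ (it contains $x$), produces an edge between $v\cup A\subseteq\sigma(v)$ and $\edgefkt{vu}\setminus A\subseteq\sigma(u)$, finishing the argument.

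The main obstacle is precisely this last subcase: the edge provided by \cref{torsoid-def-5} is allowed to lie entirely inside $\sigma(u)$, and manufacturing a genuine cross-edge between $\sigma(v)$ and $\sigma(u)$ requires both the vertex-cut classification (to know that $A$ is even, and hence that $v\cup A$ is odd) and \cref{lem:basictight}. All the other cases are either direct applications of \cref{torsoid-def-5,torsoid-def-6} or purely combinatorial exploitation of the near-partition structure from \cref{def:torsoid}.
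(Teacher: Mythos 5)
Your proposal is correct, but it takes a genuinely different route from the paper. The paper obtains \cref{cor:corrisiso} as an immediate consequence of \cref{thm:uniquecorr}: any correspondence from a torsoid to $\mathcal{P}$ must equal $\sigma_{\mathcal{P}}$, whose inverse $\tau_{\mathcal{P}}$ is a graph isomorphism from $\collapse{\mathcal{P}}$ to $H_{\mathcal{P}}=H$ by the very construction in \cref{def:inducedtorsoid}, so no edge-by-edge verification is needed. You instead check the isomorphism directly, in effect upgrading \cref{lem:strongcorrisiso} from strong to arbitrary correspondences: your non-edge direction is the same combination of \cref{torsoid-def-6} and \cref{prop:edges_of_passable_set} used there, while the edge direction needs the new parity input that $\sigma(v)\cap\edgefkt{vw}$ is even, which you extract from the residence classification --- your detour through $\theta_{\mathcal{T}}$ and \cref{lem:vcut_char} works (it implicitly needs the small extra step, via connectedness, that the witnessing \vcut{} is $\sigma(v)$ itself and not merely some tight set with the same boundary), though \cref{cor:non_pcut_edge} gives the evenness directly and avoids that step --- and then \cref{lem:basictight} applied to $\sigma(v)$ and $v\cup\edgefkt{vu}$ manufactures a cross-edge in the one delicate case where the edge promised by \cref{torsoid-def-5} lies entirely inside $\sigma(u)$; that application is sound, since $v\cup A$ is odd and $\edgefkt{vu}\setminus A$ is nonempty and contained in $\sigma(u)$. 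Both routes ultimately lean on the Section 5 classification (the paper through \cref{lem:epspass} inside \cref{thm:uniquecorr}), so neither is strictly more elementary; the paper's proof is shorter and simultaneously pins down $\mathcal{T}=\mathcal{T}_{\mathcal{P}}$ and $\sigma=\sigma_{\mathcal{P}}$, whereas yours is self-contained modulo the residence lemmas and delivers the isomorphism without invoking the uniqueness theorem at all.
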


\section{Relation of torsos to torsoids}
\label{sec:torsos_to_torsoids}

In this \namecref{sec:torsos_to_torsoids} we consider specific tight cut contractions of a graph with respect to a fixed maximal family of nested tight cuts and investigate how they relate to our concept of torsoids.
The results of this \namecref{sec:torsos_to_torsoids} emphasise that torsoids are a global tool capturing all of these tight cut contractions independently of the precise choice of a tight cut family.

\begin{definition}[torso]
    Let $\mathcal{C}$ be a maximal family of nested tight cuts in $G.$
    A \emph{maximal star} of $\mathcal{C}$ is a tight set partition $\Partition$ of size at least $4$ such that $\collapse{\Partition}$ is a \BoB and $\boundary{P} \in \mathcal{C}$ for every $P \in \Partition.$
    Then we call $\collapse{\Partition}$ a \emph{torso of $\mathcal{C}$ at the maximal star $\mathcal{\Partition}$}, or a \emph{torso} for short.
    
    Furthermore, we simply call a torso at some maximal star of some maximal family of nested tight cuts in $G$ a \emph{torso in $G$}.
\end{definition}
If a torso is a $C_4$, we call it a \emph{$C_4$-torso}.
Otherwise, the torso is a \BoB other than $C_4$ and we call it a \emph{non-$C_4$-torso}.

We want to emphasise the significant difference between torsoids and torsos.
Torsos are completely dependent on a fixed family of tight cuts $\mathcal{C}$, they capture how this specific family behaves and can only be used to describe the properties of $\mathcal{C}.$
This is different for torsoids, a torsoid is a global object that captures multiple ways to choose maximal families of nested tight cuts by keeping passable sets on its edges rather than assigning them rigidly and by bundling any maximal cyclic structure in one torsoid instead of considering it as different torsos.

The relation of torsos and torsoids that we investigate in this section is defined as follows:
\begin{definition}[cleave]
    Let $\mathcal{S}$ be a torso in $G$ and $\mathcal{T}$ a torsoid in $G.$
    We say $\mathcal{S}$ \emph{cleaves} $\mathcal{T}$ if every vertex of $\mathcal{S}$ contains a vertex of $\mathcal{T}.$
\end{definition}

In \cref{subsec:every_torso_a_unique_torsoid} we prove that each torso cleaves exactly one torsoid.
However, in the other direction there might be several torsos that cleave a given torsoid, as cyclic torsoids always induce maximal cyclic tight set partitions in contrast to cyclic torsos whose induced tight set partition can be non-maximal cyclic.
So, in \cref{subsec:torso_cleaving_torsoid} we prove the main result of this \namecref{sec:torsos_to_torsoids} that describes how many torsos cleave a given torsoid.

\begin{observation}
	\label{obs:refining_partition}
Note that all vertices of a torso $\mathcal{S}$ cleaving a torsoid $\mathcal{T}$ are either \pvcuts{$\mathcal{T}$} or \icuts{$\mathcal{T}$}.
Thus every vertex of $\mathcal{T}$ is contained in a vertex of $\mathcal{S}.$
Therefore there exists a partition in correspondence to $\mathcal{T}$ that refines the partition $V(\mathcal{S}).$
\end{observation}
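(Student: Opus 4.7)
Write $\mathcal{P} \coloneqq V(\mathcal{S})$ for the tight set partition underlying the torso $\mathcal{S}$, and let $\mathcal{T} = \Torsoid{H}$. The plan is to verify the three assertions of the observation in turn.

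For the first claim, fix $P \in \mathcal{P}$. The cleaving hypothesis provides some $v \in V(H)$ with $v \subseteq P$, so in particular $P \cap v = v$ is odd. I would invoke \cref{cor:cuts_in_noncyclic_torsoids} when $\mathcal{T}$ is noncyclic and \cref{cor:cuts_in_general_torsoids} when $\mathcal{T}$ is cyclic, giving that either $P$ or its complement is a \pcut{$\mathcal{T}$}, a \vcut{$\mathcal{T}$}, or (in the cyclic case) an \icut{$\mathcal{T}$}. I then rule out the unwanted options. $P$ cannot be a \pcut{}, since $v \subseteq P$ while $v$ is disjoint from every $\edgefkt{e}$ by \cref{torsoid-def-4}. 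Neither can $V(G) \setminus P$ be a \pcut{} or a \vcut{}: each of the $|\mathcal{P}| - 1 \geq 3$ remaining classes of $\mathcal{P}$ contains a distinct vertex of $V(H)$ by cleaving, so $V(G) \setminus P$ contains at least three vertices of $V(H)$, whereas \pcuts{} contain none and \vcuts{} at most one. The complement of an \icut{} at interval $I$ is, directly from the definition, an \icut{} at the complementary interval $V(H) \setminus I$, so this case collapses to $P$ being an \icut{}. Finally, if $P$ is a \vcut{} at a vertex $u$, the inclusion $v \subseteq P \subseteq u \cup \bigcup_{e \ni u}\edgefkt{e}$ combined with \cref{torsoid-def-4} forces $v = u$, promoting $P$ to a \pvcut{}.

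The second claim follows immediately: each $P \in \mathcal{P}$ is either a \pvcut{} at $u$ (so $P \subseteq u \cup \bigcup_{e \ni u}\edgefkt{e}$) or an \icut{} at $I$ (so $P \subseteq \bigcup I \cup \bigcup_{v \in I, vw \in E(H)}\edgefkt{vw}$), whence \cref{torsoid-def-4} guarantees that for each $v \in V(H)$ the set $v$ is either entirely contained in $P$ or disjoint from it. Since $v$ is nonempty and $\mathcal{P}$ partitions $V(G)$, exactly one class of $\mathcal{P}$ contains $v$.

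For the third claim I would construct $\mathcal{Q}$ explicitly. Each \pvcut{} class $P$ at vertex $u$ is kept intact, with $\sigma(u) \coloneqq P$. Each \icut{} class $P$ at interval $I = (v_1, \ldots, v_m)$ with exterior cyclic neighbours $v_0, v_{m+1}$ is split by setting
\[
    \sigma(v_1) \coloneqq v_1 \cup A \cup \edgefkt{v_1 v_2}, \qquad \sigma(v_j) \coloneqq v_j \cup \edgefkt{v_j v_{j+1}} \text{ for } 2 \leq j \leq m - 1, \qquad \sigma(v_m) \coloneqq v_m \cup B,
\]
where $A \coloneqq P \cap \edgefkt{v_0 v_1}$ and $B \coloneqq P \cap \edgefkt{v_m v_{m+1}}$. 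These pieces partition $P$ by construction. Tightness of the interior pieces $\sigma(v_j)$ for $2 \leq j \leq m-1$ is immediate from \cref{choicetostrong}. Tightness of $\sigma(v_1)$ (and symmetrically of $\sigma(v_m)$) follows by applying \cref{lem:cutscyclicpartition} to $P$ and the strong correspondence $\mathcal{P}(\mathcal{T}, \kappa_0)$ coming from a choice function $\kappa_0$ that routes both boundary edges of $I$ inward; the hypothesis $3 \leq m \leq |V(H)| - 3$ is automatic from the definition of an \icut{}. The lemma identifies $\edgefkt{v_0 v_1} \setminus A$ as passable for the $\mathcal{P}(\mathcal{T}, \kappa_0)$-class $v_1 \cup \edgefkt{v_0 v_1} \cup \edgefkt{v_1 v_2}$, whose removal yields precisely $\sigma(v_1)$. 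Taking $\mathcal{Q} \coloneqq \{\sigma(v) : v \in V(H)\}$, this is a tight set partition refining $\mathcal{P}$, and the bijection $\sigma \colon V(H) \to \mathcal{Q}$ satisfies $v \subseteq \sigma(v) \subseteq v \cup \bigcup_{e \ni v}\edgefkt{e}$ by construction, making it a correspondence. The main technical step is this boundary tightness; the remaining verifications are immediate from the definitions.
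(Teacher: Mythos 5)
Your proof is correct and follows exactly the route the paper intends for this (unproved) observation: the residence classification of \cref{cor:cuts_in_noncyclic_torsoids} and \cref{cor:cuts_in_general_torsoids}, combined with the cleaving condition and the near-partition property \cref{torsoid-def-4}, yields the first two sentences, and splitting each interval-resident class along the torsoid, with the boundary pieces handled via a strong correspondence partition and \cref{lem:cutscyclicpartition}, yields the refining partition in correspondence with $\mathcal{T}$. As a minor simplification, tightness of $\sigma(v_1)$ and $\sigma(v_m)$ also follows directly from \cref{lem:basictight}, since each is the odd intersection of the tight set $P$ with the corresponding (tight) class of $\mathcal{P}(\mathcal{T},\kappa_0)$.
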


\subsection{Every torso cleaves a unique torsoid}\label{subsec:every_torso_a_unique_torsoid}

\begin{lemma}\label{lem:cyclicnoncyclic}
    Let $\mathcal{S}$ be a torso in $G$ that cleaves a torsoid $\mathcal{T}$ in $G.$
    Then $\mathcal{T}$ is cyclic if and only if $\mathcal{S}$ is a $C_4.$
\end{lemma}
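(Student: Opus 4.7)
The plan is to work with a refining partition as in \cref{obs:refining_partition}: I would take $\mathcal{P}'$ to be a tight set partition in correspondence with $\mathcal{T}$ that refines $V(\mathcal{S})$. By \cref{cor:corrisiso} this correspondence is then a graph isomorphism between the graph $H$ underlying $\mathcal{T}$ and $\collapse{\mathcal{P}'}$. For every $P \in V(\mathcal{S})$ I would write $\mathcal{P}'_P$ for the subfamily of $\mathcal{P}'$ whose union is $P$; by \cref{lem:tight_sets_in_collapse} each $\mathcal{P}'_P$ is a tight set of $\collapse{\mathcal{P}'}$, and these $\mathcal{P}'_P$ partition $\mathcal{P}'$ as $P$ ranges over $V(\mathcal{S})$.

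For the forward direction I would assume $\mathcal{T}$ is cyclic, so that $H$ and hence $\collapse{\mathcal{P}'}$ is a cycle. By \cref{rem:cycle_tight_cuts} each $\mathcal{P}'_P$ is then an odd interval on this cycle, and the resulting partition of $V(\collapse{\mathcal{P}'})$ into $|V(\mathcal{S})| \geq 4$ cyclically consecutive intervals has the property that every edge of $\collapse{\mathcal{P}'}$ either lies inside one interval or joins two cyclically adjacent intervals. Consequently $\mathcal{S} = \collapse{V(\mathcal{S})}$ is a simple cycle of length $|V(\mathcal{S})|$; since $\mathcal{S}$ is also a \BoB and the only cycle that is a \BoB is $C_4$ (odd cycles lack a perfect matching and so are not matching covered, while any even cycle of length at least six contains a length-three interval as a nontrivial tight set), we conclude $\mathcal{S} = C_4$.

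For the backward direction I would suppose $\mathcal{S} = C_4$ and derive the cyclicity of $\mathcal{T}$ by contradiction. Assuming $\mathcal{T}$ is noncyclic makes $H$ a \BoB other than $C_4$, so every tight set of $H$ is trivial, that is either a singleton or the complement of a singleton. The four nonempty pairwise disjoint sets $\mathcal{P}'_{P_1}, \mathcal{P}'_{P_2}, \mathcal{P}'_{P_3}, \mathcal{P}'_{P_4}$ partitioning $\mathcal{P}'$ are tight sets in $\collapse{\mathcal{P}'} \cong H$, so each is a singleton or a co-singleton. Two disjoint co-singletons in a set of size $|\mathcal{P}'|$ would force $|\mathcal{P}'| \leq 2$, contradicting $|\mathcal{P}'| = |V(H)| \geq 4$; one co-singleton together with three singletons would total $(|\mathcal{P}'|-1)+3 \ne |\mathcal{P}'|$. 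Hence all four must be singletons and $|\mathcal{P}'| = 4$, whence $\mathcal{P}' = V(\mathcal{S})$ and $\collapse{\mathcal{P}'} = \mathcal{S} = C_4$, contradicting the assumption that $H \cong \collapse{\mathcal{P}'}$ is a non-$C_4$ \BoB.

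The main obstacle I expect is the forward direction's geometric step, namely that contracting a cycle along odd intervals produces a simple cycle of the corresponding length. This is essentially bookkeeping: each cyclic adjacency between neighbouring intervals in $\collapse{\mathcal{P}'}$ contributes exactly one edge of the quotient, and no other edges arise because $\collapse{\mathcal{P}'}$ is itself a cycle.
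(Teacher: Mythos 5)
Your proof is correct, and it reaches the conclusion by a route that differs from the paper's, mainly in the backward direction. The paper argues directly: for a cyclic $\mathcal{T}$, every vertex of $\mathcal{S}$ is a \pvcut{$\mathcal{T}$} or an \icut{$\mathcal{T}$}, so $\mathcal{S}$ is a cycle and hence $C_4$ (your forward direction is essentially a detailed implementation of this, passing through the refining partition of \cref{obs:refining_partition} and the quotient-of-a-cycle-along-odd-intervals argument); for a noncyclic $\mathcal{T}$, every vertex of $\mathcal{S}$ is a \pvcut{$\mathcal{T}$} (via \cref{cor:cuts_in_noncyclic_torsoids} and cleaving), so $V(\mathcal{S})$ itself is in correspondence with $\mathcal{T}$, whence $\mathcal{S} \cong H$ is not a cycle. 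Your backward direction instead keeps the refining partition $\mathcal{P}'$ and forces $\mathcal{P}' = V(\mathcal{S})$ by a counting argument on the trivial tight sets of $\collapse{\mathcal{P}'} \cong H$; this buys you independence from the noncyclic residence classification, while the paper's version is shorter and yields the stronger conclusion $\mathcal{S} \cong H$. Two small caveats on your version. First, since $H$ may be infinite, the arithmetic justification ``$(|\mathcal{P}'|-1)+3 \ne |\mathcal{P}'|$'' is not valid verbatim; use the containment argument instead: a co-singleton misses exactly one element of $\mathcal{P}'$, so the remaining three nonempty, pairwise disjoint parts would all have to sit inside that single element, which is impossible (your argument for two disjoint co-singletons already has this form). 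Second, the step ``every tight set of $\collapse{\mathcal{P}'}$ is trivial because $\collapse{\mathcal{P}'} \cong H$ and $H$ is a \BoB'' silently transfers tightness across a graph isomorphism, which in the infinite setting is relative to the induced matching class; this is consistent with how the paper itself handles \BoB-ness of torsoid graphs, but if you want to stay strictly within the stated lemmas you can instead observe via \cref{lem:tight_sets_in_collapse} and \cref{cor:cuts_in_noncyclic_torsoids} that a union of at least two but not cofinitely many parts of $\mathcal{P}'$ cannot be tight in $G$, since each part contains a torsoid vertex while \pcuts{$\mathcal{T}$} contain none and \vcuts{$\mathcal{T}$} meet only one.
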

\begin{proof}
    If $\mathcal{T}$ is a cyclic torsoid, then every vertex of $\mathcal{S}$ is either a \pvcut{$\mathcal{T}$}~or a \icut{$\mathcal{T}$}.
    Then $\mathcal{S}$ is a cycle.
    As $\mathcal{S}$ is a \BoB, it is a $C_4.$
    
    If $\mathcal{T}$ is a noncyclic torsoid, then every vertex of $\mathcal{S}'$ is a \pvcut{$\mathcal{T}'$}.
    This implies that the partition $\V{\mathcal{S}'}$ corresponds to $\mathcal{T}.$
    Therefore $\mathcal{S}'$ is not a cycle and thus a non-$C_4$-torso.
\end{proof}

\begin{lemma}
    Let $\mathcal{S}$ be a torso in $G.$
    There exists exactly one torsoid $\mathcal{T}$ in $G$ such that $\mathcal{S}$ cleaves $\mathcal{T}.$
\end{lemma}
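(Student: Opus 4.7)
The plan is to split on whether $\mathcal{S}$ is a $C_4$; in each case existence comes from an induced torsoid and uniqueness uses \cref{thm:uniquecorr}. Throughout let $\mathcal{P}$ be the maximal star with $\collapse{\mathcal{P}} = \mathcal{S}$.

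If $\mathcal{S}$ is a \BoB other than $C_4$, then $\mathcal{P}$ is \TorsoidInducing, and \cref{thm:inducedtorsoid} delivers the induced torsoid $\mathcal{T}_{\mathcal{P}}$; its vertices $\tau_{\mathcal{P}}(P) \subseteq P$ for $P \in \mathcal{P}$ immediately show that $\mathcal{S}$ cleaves $\mathcal{T}_{\mathcal{P}}$. For uniqueness, any cleaved torsoid $\mathcal{T}'$ is noncyclic by \cref{lem:cyclicnoncyclic}, and the proof of that lemma further establishes that $\mathcal{P}$ itself is in correspondence with $\mathcal{T}'$; since $\mathcal{P}$ is \TorsoidInducing, \cref{thm:uniquecorr} forces $\mathcal{T}' = \mathcal{T}_{\mathcal{P}}$.

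If $\mathcal{S} = C_4$, then $\mathcal{P}$ is cyclic but possibly not maximal. For existence I would apply Zorn's lemma to the poset of cyclic tight set partitions refining $\mathcal{P}$ to extract a maximal cyclic refinement $\mathcal{P}^*$; \cref{thm:inducedtorsoid} then yields the cyclic torsoid $\mathcal{T}_{\mathcal{P}^*}$, which is cleaved by $\mathcal{S}$ because each $P \in \mathcal{P}$ contains some $P' \in \mathcal{P}^*$ and hence the vertex $\tau_{\mathcal{P}^*}(P')$. For uniqueness, any cleaved torsoid $\mathcal{T}'$ is cyclic by \cref{lem:cyclicnoncyclic}, and \cref{obs:refining_partition} supplies a partition $\mathcal{P}'$ in correspondence with $\mathcal{T}'$ that refines $\mathcal{P}$. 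The crucial substep is showing $\mathcal{P}'$ is maximal cyclic: otherwise \cref{lem:extendcycle} would allow refining some element of $\mathcal{P}'$ into three tight pieces continuing the cycle, and translating through the correspondence together with \cref{lem:epspass} would produce a partition of $\edgefkt{uv} \cup v \cup \edgefkt{vw}$ contradicting \cref{torsoid-def-7} for $\mathcal{T}'$. Once $\mathcal{P}'$ is known to be \TorsoidInducing, \cref{thm:uniquecorr} gives $\mathcal{T}' = \mathcal{T}_{\mathcal{P}'}$, and by \cref{thm:corriseq} the final equality $\mathcal{T}_{\mathcal{P}^*} = \mathcal{T}_{\mathcal{P}'}$ reduces to constructing a correspondence between $\mathcal{P}^*$ and $\mathcal{P}'$.

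The hard part is this final comparison of two maximal cyclic refinements of $\mathcal{P}$. \Cref{fig:hexagons} warns that correspondence is not transitive on arbitrary cyclic partitions, but on \TorsoidInducing ones it is an equivalence relation by \cref{thm:corriseq}, so maximality provides enough rigidity. I would build the correspondence by applying \cref{lem:cutscyclicpartition} to each $P \in \mathcal{P}$ viewed inside the cycles $\collapse{\mathcal{P}^*}$ and $\collapse{\mathcal{P}'}$: its elements in $\mathcal{P}^*$ (resp.\ $\mathcal{P}'$) form an interval, and maximality together with the passable-set machinery of \cref{subsec:passable_sets} should force these two intervals to admit a canonical odd-intersection bijection whose pieces glue into the required global correspondence.
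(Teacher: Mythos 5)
Your non-$C_4$ case is essentially the paper's argument and is fine. The two genuine gaps are both in the $C_4$ case. First, existence: Zorn's lemma does not apply to the poset of cyclic tight set partitions refining $\mathcal{P}$ ordered by refinement, because chains need not have upper bounds. Every cyclic tight set partition is finite (its collapse is a finite cycle), and a strictly increasing chain of refinements has unboundedly many classes, so an infinite chain has no cyclic partition refining all of its members. Hence the existence of a maximal cyclic refinement is exactly the nontrivial termination statement that the paper isolates as \cref{claim:finite_process} (the refinement process stops after finitely many steps, proved via the path/chord argument inside $\collapse{\mathcal{P}_\ell}$); your proposal contains no substitute for this argument, and invoking Zorn silently assumes it away.

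Second, uniqueness: the step you yourself call the hard part — that your maximal cyclic refinement $\mathcal{P}^*$ and the partition $\mathcal{P}'$ in correspondence with an arbitrary cleaved torsoid $\mathcal{T}'$ are in correspondence — is only asserted (``maximality together with the passable-set machinery should force \dots a canonical odd-intersection bijection whose pieces glue''). This is essentially the whole content of uniqueness, and it is where the paper spends most of its proof: it shows by induction along the refinement sequence $(\mathcal{P}_i)$ that every class of every $\mathcal{P}_i$ contains a vertex of $\mathcal{T}'$ (\cref{claim:every_partition_contains_a_vertex_of_T}), using the classification of residents from \cref{sec:relation_tigh_sets_torsoids}, and only then concludes from maximality of $\mathcal{P}_n$ that the correspondence partition equals $\mathcal{P}_n$, so that \cref{thm:uniquecorr} applies. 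Nothing in your sketch replaces that induction. Moreover, your intermediate argument that $\mathcal{P}'$ is maximal cyclic is circular as written: you want to use \cref{lem:epspass}, but its hypothesis already requires $\mathcal{P}'$ to be a maximal cycle (or a \BoB{} collapse), which is precisely what you are trying to establish; you would also need to explain how the three pieces of $\sigma(v)$, which in general is a proper subset of $\edgefkt{uv} \cup v \cup \edgefkt{vw}$, are extended to the partition of the full set required to contradict \cref{torsoid-def-7}. So both halves of the $C_4$ case need real arguments that the proposal does not supply.
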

\begin{proof}
    If $\mathcal{S}$ is non-$C_4$, it is a \BoB~other than $C_4$ and thus by \cref{def:inducedtorsoid,thm:inducedtorsoid}, $\mathcal{T}_{\V{\mathcal{S}}}$ is a torsoid.
    By construction, $\mathcal{S}$ cleaves $\mathcal{T}_{\V{\mathcal{S}}}.$
    Furthermore, any torsoid $\mathcal{T}$ cleaved by $\mathcal{S}$ is noncyclic by \cref{lem:cyclicnoncyclic}.
    Thus any vertex of $\mathcal{S}$ is a \pvcut{$\mathcal{T}$} and thus $\mathcal{T} = \mathcal{T}_{\V{\mathcal{S}}}$, since $\mathcal{T}$ and $\V{\mathcal{S}}$ correspond.
    Thus we can assume that $\mathcal{S}$ is a $C_4$-torso.
    
    We use refinement to show that there exists a torsoid that $\mathcal{S}$ cleaves:
    Given any $C_4$-torso and its underlying tight set partition $\Partition_1\coloneqq \Partition$, we begin by, if possible, refining a partition class $P \in \Partition_1 $ in such a way that we split it up into three smaller sets to obtain a new partition $\Partition_2$ such that $\collapse{\Partition_2}$ is again a cycle.
    Note that if $P$ can be split up into finitely many smaller sets it can always be split up into exactly three, since in a cyclic partition any odd union of consecutive partition classes is a tight cut.
    We repeat this as long as possible.
    \begin{claim}
        \label{claim:finite_process}
        The process stops after finitely many refinement steps.
    \end{claim}
    \begin{claimproof}
        Suppose for a contradiction that infinitely many refining steps are possible.
    Then there are partitions $(\Partition_i)_{i \in \N}$ and tight sets $ P_1 \supset P_2 \supset P_3 \supset \dots $ where $P_i \in \Partition_i$, since any $\mathcal{P}_i$ is finite.
    Without loss of generality we may assume that $\Partition_{i}$ is a refinement of $\Partition_{i-1}$ in such a way that only $P_{i - 1}$ is refined (otherwise obtain such a partition by unifying some consecutive tight sets of $\Partition_{i}$).
    Furthermore we may assume that for each of the $\Partition_i$ we fixed a circular ordering of the partition sets in such a way that the order on $\Partition_i$ induces the one on $\Partition_{i-1}.$
    
    Let $Q$ be a partition set of $\Partition_1$ other than $P_1$ or one of its neighbours in $\collapse{\Partition_1}.$
    By assumption, for each $i \in \N$ we have $Q \in \Partition_i$ and furthermore the circular ordering on $\Partition_i$ induces a linear ordering on $\Partition_i \setminus \Set{Q}.$
    For $\sigma\in \set{-,+}$ we define $P_i^\sigma$ to be the union of all sets in $\Partition_i \setminus \Set{Q}$ that are $\set{\text{smaller},\text{larger}}$ than $P_i$ in this linear order.
    By definition it holds $P^\sigma_i \subseteq P^\sigma_{i+1}$ and we define $P_\infty^\sigma \coloneqq \bigcup_{i \in \N} P_i^\sigma $ for $\sigma \in \set{+,-}.$
    Note that this definition implies that $P_\infty^+$ and $P_\infty^-$ are disjoint.
    Next, it is clear that $\Abs{\mathcal{P}_i} \geq 4$, thus we may pick a path $R$ from $P_1^-$ to $P_1^+$ in $G$ that is disjoint from $Q.$
    For one $\sigma \in \set{+,-}$ we have $P_i^\sigma \subsetneq P_{i+1}^\sigma$ for infinitely many distinct $i \in \N$, without loss of generality let $\sigma=-.$
    Then there is an edge $vw$ of $R$ with $v \in P_\infty^-$ and $w \not\in P_\infty^-.$
    Furthermore, there are natural numbers $k<\ell$ such that $k$ is the smallest number with  $v \in P_k^-$ and $P_k^- \subsetneq P_\ell^-.$
    Therefore $vw$ witnesses that there is a chord in $\collapse{\mathcal{P}_\ell}$, a contradiction to the assumption that $\collapse{\mathcal{P}_\ell}$ is a cycle.
    Thus the refinement stops at some $n \in \N$, as desired.
    \end{claimproof}
    By \cref{claim:finite_process} there is $n \in \N$ and a sequence $(\mathcal{P}_i)_{i \in [n]}$ as described above such that $\Partition_n$ is maximal cyclic.
    Thus $\Partition_n$ induces a torsoid $\mathcal{T}_{\mathcal{P}_n}$ by \cref{thm:inducedtorsoid}, which is cleaved by $\mathcal{S}.$

    Let $\mathcal{T}$ be any torsoid that is cleaved by $\mathcal{S}.$
    By \cref{lem:cyclicnoncyclic}, the torsoid $\mathcal{T}$ is cyclic.
    We show:
    \begin{claim}
        \label{claim:every_partition_contains_a_vertex_of_T}
        For each $i \in [n]$ every $P \in \Partition_i$ contains a vertex of $\mathcal{T}.$
    \end{claim}
    \Cref{claim:every_partition_contains_a_vertex_of_T} implies that every $P \in \mathcal{P}_i$ is either a \icut{$\mathcal{T}$} or a \pvcut{$\mathcal{T}$} and there exists a partition in correspondence with $\mathcal{T}$ that refines $\mathcal{P}_i.$
    Let $\mathcal{Q}$ be the partition in correspondence with $\mathcal{T}$ that refines $\mathcal{P}_n.$
    Since $\mathcal{P}_n$ is maximal cyclic and $\mathcal{Q}$ is cyclic, $\mathcal{Q}$ coincides with $\mathcal{P}_n$ and thus $\mathcal{T} = \mathcal{T}_{\mathcal{P}_n}$ holds. Therefore $\mathcal{T}_{\mathcal{P}_n}$ is the unique torsoid cleaved by $\mathcal{S}.$

    \begin{claimproof}
    Proof by induction on $i \in [n].$ By construction, $\mathcal{P} = \mathcal{P}_1$ has the desired property.
    Suppose that any partition class of $\mathcal{P}_{i -1}$ contains a vertex of $\mathcal{T}.$
    We prove that also $\mathcal{P}_i$ has this property.
    
    Let $P \in \mathcal{P}_{i -1}$ and $P_1, P_2, P_3 \in \mathcal{P}_i$ in cyclic order such that $P = P_1 \disjointUnion P_2 \disjointUnion P_3$ with $\mathcal{P}_i = \Set{P_1, P_2, P_3} \cup \Brace{\mathcal{P}_{i - 1} \setminus \Set{P}}.$
    By \cref{obs:refining_partition}, there exists a tight set partition $\mathcal{R}$ in correspondence to $\mathcal{T}$ that refines $\mathcal{P}_{i-1}.$
    We define $\mathcal{P}_i'\coloneqq \Set{P} \cup \Set{R \in \mathcal{R}: R \subset \V{G} \setminus P}.$
    As $\mathcal{R}$ is a refinement of $\mathcal{P}_i'$ and $\collapse{\mathcal{R}}$ is cyclic, also $\collapse{\mathcal{P}_i'}$ is cyclic.
    We consider $\mathcal{P}_i''\coloneqq \Set{P_1, P_2, P_3} \cup \Set{R \in \mathcal{R}: R \subset \V{G} \setminus P}.$
    By construction, $\mathcal{P}_i''$ is a refinement of $\mathcal{P}_i'$ and $\mathcal{P}_i.$

    We prove that $\mathcal{P}_i''$ is also cyclic.
    Let $\hat{P}, \tilde{P}$ be the neighbours of $P$ in $\mathcal{P}_{i - 1}.$
    Let $\hat{R} \subseteq \hat{P}$ and $\tilde{R} \subseteq \tilde{P}$ be the neighbours of $P$ in $\mathcal{P}_i'.$
    Without loss of generality, $\cut{}{P}= \E{P_1, \hat{P}} \disjointUnion \E{P_3, \tilde{P}}$, since $\mathcal{P}_i$ is cyclic.
    Then $\cut{}{P} = \E{P, \hat{R}} \disjointUnion \E{P, \Tilde{R}}$, since $\mathcal{P}_i'$ is cyclic.
    This implies
    \begin{equation*}
        \cut{}{P} = \Brace{\E{P, \hat{R}} \cap \E{P_1, \hat{P}}} \disjointUnion \Brace{\E{P, \tilde{R}} \cap \E{P_3, \tilde{P}}} = \E{P_1, \hat{R}} \disjointUnion \E{P_3, \hat{P}},
    \end{equation*}
    which proves that $\mathcal{P}_i''$ is cyclic.
    
    If $P$ is a \pvcut{$\mathcal{T}$}, then $\mathcal{P}_i'$ corresponds to $\mathcal{T}.$
    Therefore $\mathcal{P}_i'$ is maximal cyclic, which contradicts that $\mathcal{P}_i''$ is cyclic and refines $\mathcal{P}_i'.$
    Thus $P$ is a \icut{$\mathcal{T}$}.
    In particular, $P$ contains at least $3$ vertices of $\mathcal{T}.$
    Suppose towards a contradiction that one of $P_1, P_2, P_3$ does not contain a vertex of $\mathcal{T}$, i.e.~it is either a \npvcut{$\mathcal{T}$} or a \pcut{$\mathcal{T}$}.
    If it is a \npvcut{$\mathcal{T}$}, then this \npvcut{$\mathcal{T}$} contains a proper, odd subset of a vertex of $\mathcal{T}.$ Since \pcuts{$\mathcal{T}$} and \icuts{$\mathcal{T}$} either are supersets of this vertex or avoid this vertex,
    all three tight sets $P_1, P_2, P_3$ are \vcuts{$\mathcal{T}$} at that same vertex. Thus $P_1, P_2, P_3$ do not contain any other vertex of $\mathcal{T}$, a contradiction as $P = P_1 \cup P_2 \cup P_3.$
    
    Thus one of $P_1, P_2, P_3$ is a \pcut{$\mathcal{T}$}.
    By parity, two of $P_1, P_2, P_3$ are \pcuts{$\mathcal{T}$} at the same edge.
    The other tight set is a \icut{$\mathcal{T}$}, since $P$ contains at least $3$ vertices of $\mathcal{T}.$
    The boundary of this \icut{} contains an edge of $\cut{}{P}$, as the two other tight sets are \pcut{} at the same edge.
    Thus either $P_1$ or $P_3$ is the \icut{} and we suppose without loss of generality the former.
    
    We prove that $\mathcal{R}$ is not maximal cyclic, a contradiction.
    Note that $P_2, P_3 \subset P$ are \pcuts{$\mathcal{T}$} at an edge $uw \in \E{H}$ with $u \subset P$ and $w \cap P = \emptyset.$
    Therefore, both are contained in the element $R \in \mathcal{R}$ that contains $u.$
    We show that $\mathcal{R}^* = \{R \cap P_1, P_2, P_3\} \cup \mathcal{R} \setminus \Set{R}$ is a tight set partition such that $\collapse{\mathcal{R}^*}$ is cyclic.
    By construction, $E(P_1, P_2) = E(P_1 \cap R, P_2)$ holds.
    Then since $\mathcal{P}_i''$ and $\mathcal{R}$ are cyclic, also $\mathcal{R}^*$ is cyclic, which provides the desired contradiction.
    \end{claimproof}
    This completes the proof.
\end{proof}

\subsection{Torsos cleaving a fixed torsoid}\label{subsec:torso_cleaving_torsoid}

As we have established that every torso cleaves exactly one torsoid we can introduce the following function.

\begin{definition}[$\kappa$]
    Let $\mathcal{C}$ be a maximal family of nested tight cuts in $G$, $\mathfrak{S}$ be the set of all torsos of $\mathcal{C}$ and $\mathfrak{T}$ the set of all torsoids in $G.$
    We define the map $\kappa_{\mathcal{C}} \colon \mathfrak{S} \longrightarrow \mathfrak{T}$ sending $\mathcal{S} \in \mathfrak{S}$ to the unique torsoid $\mathcal{T}$ such that $\mathcal{S}$ cleaves $\mathcal{T}.$
\end{definition}

Now we want to know what we can say about the torsos that cleave a given torsoid.
This question leads to the main result of this \namecref{sec:torsos_to_torsoids}, which reads as follows:

\begin{restatable}{theorem}{correspondTorsoTorsoidTheorem}
	\label{thm:correspond_torso_torsoid}
    Let $\mathcal{C}$ be a maximal family of nested tight cuts in $G.$
    Then the $\kappa_{\mathcal{C}}$-pre-image of a noncyclic torsoid consists of one non-$C_4$ torso and the $\kappa_{\mathcal{C}}$-pre-image of a cyclic torsoid with $n$ vertices consists of $\tfrac{n}{2} - 1$ $C_4$-torsos.
\end{restatable}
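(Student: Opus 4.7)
The plan is to split the argument via \cref{lem:cyclicnoncyclic}: every torso cleaving a noncyclic torsoid is non-$C_4$, and every torso cleaving a cyclic torsoid is $C_4$.

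For the noncyclic case, \cref{cor:cuts_in_noncyclic_torsoids} implies that the partition of any such torso consists of a \pvcut{$\mathcal{T}$} at each vertex of $H$. I plan to show existence by defining, for each $v \in V(H)$, the largest \pvcut{$\mathcal{T}$} at $v$ whose boundary is nested with $\mathcal{C}$ --- call it $P_v$ --- which lies in $\mathcal{C}$ by maximality. The collection $\{P_v : v \in V(H)\}$ is a tight set partition: distinct $P_u, P_v$ are nested in $\mathcal{C}$ and each contains a different vertex of $H$, forcing disjointness (since one containing the other is impossible when $|V(H)| \geq 4$); and no piece of edge material $\edgefkt{vw}$ can remain uncovered, since otherwise $P_v$ could be enlarged while remaining nested with $\mathcal{C}$, contradicting the maximality of $P_v$. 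The resulting collapse is isomorphic to $H$ by \cref{cor:corrisiso}, giving the desired non-$C_4$-torso. Uniqueness is immediate: for two such torso-partitions $\{P_v^1\}$ and $\{P_v^2\}$, both $P_v^i$ are \pvcuts{$\mathcal{T}$} at $v$ in $\mathcal{C}$ hence nested; a strict containment $P_v^1 \subsetneq P_v^2$ would force $P_v^2$ to intersect some $P_w^1$ (for $w \neq v$) nontrivially, contradicting their nestedness.

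For the cyclic case with $n = |V(H)|$, \cref{cor:cuts_in_general_torsoids} says every cleaving $C_4$-torso corresponds to a partition of $V(H)$ into four cyclic arcs of odd length summing to $n$. I proceed by induction on $n$. The base case $n = 4$ admits only length-$1$ arcs, and the noncyclic-case construction applied locally at each of the four dividers of $\mathcal{T}$ determines the distribution of edge material uniquely, giving a single $C_4$-torso matching $n/2 - 1 = 1$. For $n \geq 6$, the maximality of $\mathcal{C}$ forces some \icut{$\mathcal{T}$} cut $C_0 \in \mathcal{C}$ (any such candidate is nested with all \pvcuts{$\mathcal{T}$}, so one could always be added if none were present, contradicting maximality). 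Let $X_0$ be the associated tight set of odd arc length $k \in [3, n-3]$; contracting $V(G) \setminus X_0$ and $X_0$ yields cyclic sub-torsoids $\mathcal{T}_L, \mathcal{T}_R$ of lengths $k+1$ and $n-k+1$ respectively, each equipped with the restricted (still maximal nested) $\mathcal{C}$.

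The structural heart of the cyclic argument is a bijection between $C_4$-torsos cleaving $\mathcal{T}$ and the disjoint union of those cleaving $\mathcal{T}_L$ and $\mathcal{T}_R$. I would show that every cleaving $C_4$-torso has exactly one \emph{large} class whose tight set contains either $X_0$ or $V(G) \setminus X_0$: nestedness with $C_0$ forces each class into one of four cases (contained in, or containing, either side of $C_0$), and a parity argument on arc lengths (each side $X_0$ and $V(G) \setminus X_0$ has odd length and must be partitioned into an odd number of odd-length arcs) rules out configurations with no large class. If the large class contains $X_0$, contracting $X_0$ yields a cleaving $C_4$-torso of $\mathcal{T}_R$; if it contains $V(G) \setminus X_0$, one of $\mathcal{T}_L$. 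By the induction hypothesis, the total is $(k+1)/2 - 1 + (n-k+1)/2 - 1 = n/2 - 1$. The main obstacle will be verifying the bijection rigorously --- in particular, that the contractions preserve torsoid structure and that the restriction of $\mathcal{C}$ to each contracted side remains maximal nested so that induction applies.
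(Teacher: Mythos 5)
Your overall strategy is genuinely different from the paper's: you want to count by induction inside $G$, splitting along an interval-residing cut and contracting the two sides, whereas the paper never contracts $G$ at all — it projects $\mathcal{C}$ onto the torsoid graph $H$ (the family $\mathcal{C}^{\mathcal{T}}$ of \cref{def:tight_cut_residents}), proves that this projection is a maximal nested family in $H$ (\cref{prop:maximal_set}), transports cleaving torsos to torsos of $\mathcal{C}^{\mathcal{T}}$ via the canonical maximal residents (\cref{prop:maxcut_vertex,prop:maxcut_interval,prop:maxcut,prop:isomorphism_torso}), and only then runs the easy induction, inside $H$, of \cref{prop:number_torso}. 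The reason the paper takes this detour is exactly the step you yourself flag as ``the main obstacle'' and then leave open: that contracting a side of an \icut{$\mathcal{T}$} $X_0$ yields a cyclic torsoid of the contracted graph with the restricted family still maximal. This is a genuine gap, not a routine verification. In particular \cref{torsoid-def-5} and, above all, \cref{torsoid-def-7} must be re-established at the new vertex, and \cref{torsoid-def-7} does not transfer for free: a hypothetical partition $P_1,P_2,P_3$ of the contracted vertex together with its two residual edge sets lifts to tight sets of $G$ one of which contains all of $X_0$ and hence spans at least three vertices of $H$; such a configuration is not a refinement of any tight set partition in correspondence with $\mathcal{T}$, so maximal cyclicity of those partitions gives no immediate contradiction. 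Without this, the induction never gets started, and the subsequent ``large class'' bijection (which is otherwise plausible, including the parity argument) has nothing to biject with.

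Two further steps are asserted but would need real arguments. First, for $n \geq 6$ you claim $\mathcal{C}$ must contain an interval-residing cut because ``any such candidate is nested with all \pvcuts{$\mathcal{T}$}''; that is not sufficient, since the candidate must also be nested with the \pcuts{$\mathcal{T}$} and the non-proper \vcuts{$\mathcal{T}$} in $\mathcal{C}$ — an edge-residing member of $\mathcal{C}$ inside $\edgefkt{i_1 i_0}$ at an endpoint of your interval can cross a naively chosen candidate. The paper's \cref{prop:maximal_set} handles exactly this by building the candidate as $A \cup Y_1 \cup Y_2$, where $Y_1, Y_2$ are the subset-maximal residents at the two endpoints. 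Second, in the noncyclic case and in your $n=4$ base case, the claim ``no edge material can remain uncovered, since otherwise $P_v$ could be enlarged'' requires showing that $P_v \cup Y$ (for leftover $Y \subseteq \edgefkt{vw}$) is tight and nested with \emph{every} cut of $\mathcal{C}$, including those residing at the edge $vw$ and at other vertices; this is precisely the case analysis carried out in the surjectivity half of the paper's bijection $\beta$ and is not automatic from maximality of $P_v$ alone. Your uniqueness argument in the noncyclic case (via \cref{obs:refining_partition} and nestedness of the two partitions) is fine, and the existence of the maximal nested \pvcut{$\mathcal{T}$} at each vertex is recoverable as in \cref{prop:maxcut_vertex}, but as written the proposal's cyclic half rests on unproved structural claims about contractions that constitute the hard core of the theorem.
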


At first we consider the special case of \cref{thm:correspond_torso_torsoid} for \BoBs and cycles.
Note that every \BoB and every cycle contains a unique torsoid.
Furthermore, any torso in a \BoB or in a cycle cleaves this unique torsoid.

\begin{proposition}
\label{prop:number_torso}
    Let $H$ be a matching covered graph with a maximal family $\mathcal{C}$ of nested tight cuts in $H.$
    If $H$ is a \BoB, there is exactly one torso of $\mathcal{C}.$
    If $H$ is a cycle of even length $n$, there are exactly $\frac{n}{2} - 1$ torsos of $\mathcal{C}.$
\end{proposition}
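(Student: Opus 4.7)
The plan is to treat the two cases separately, reducing the cycle case to a count of polygon dissections.

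For the \BoB case, $H$ has no non-trivial tight sets, so every tight cut is $\boundary{\Set{v}}$ for some $v \in \V{H}$, and any two such singleton cuts are nested via the disjoint singleton representatives. Hence $\mathcal{C}$ consists of all tight cuts of $H$. A tight set partition of size at least $4$ cannot use a complement of a singleton (which would force size two), so it must consist entirely of singletons; the resulting singleton-partition collapses to $H$ itself, which is a \BoB by hypothesis, and is therefore the unique maximal star.

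For the cycle case I would encode tight cuts geometrically. Introduce an auxiliary convex $n$-gon $P$ whose vertices $s_0,\ldots,s_{n-1}$ correspond to the edges $e_0,\ldots,e_{n-1}$ of $H$, and colour $s_i$ black for $i$ even and white for $i$ odd. By \cref{rem:cycle_tight_cuts}, every tight cut of $H$ is a pair $\Set{e_i,e_j}$ with $j - i$ odd, so tight cuts are in bijection with the black-to-white segments of $P$ (polygon edges together with diagonals joining oppositely coloured vertices). Two tight cuts are nested if and only if the corresponding segments do not cross, so $\mathcal{C}$ corresponds to a maximal non-crossing family $\mathcal{D}$ of such segments; all polygon edges correspond to singleton cuts and are trivially nested with every tight cut, so they all lie in $\mathcal{D}$ and $\mathcal{D}$ dissects $P$.

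The key structural observation is that every bounded face of this dissection is a polygon whose vertices alternate in colour (each bounding segment connects oppositely coloured endpoints), hence is a $2k$-gon. Such a polygon admits a black-to-white diagonal whenever $k \geq 3$ (for instance the segment between its first and fourth vertices), while for $k = 2$ both diagonals connect same-coloured vertices. Maximality of $\mathcal{D}$ therefore forces every bounded face to be a quadrilateral. I would then establish that maximal stars are in bijection with these quadrilateral faces: a face $Q$ with cyclic vertices $s_{j_1},\ldots,s_{j_4}$ yields the partition of $\V{H}$ into the four odd arcs cut off by $e_{j_1},\ldots,e_{j_4}$, whose boundaries are the four sides of $Q$ and whose collapse is $C_4$, a brace; conversely, any maximal star singles out four cut-edges whose images in $P$ form a non-crossing quadrilateral of $\mathcal{D}$-segments, and since no $\mathcal{D}$-segment can sit strictly inside such a quadrilateral (both its diagonals are monochromatic, and all remaining vertices of $P$ lie on the outside boundary arcs), this quadrilateral is necessarily a face.

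Euler's formula finishes the count: writing $d$ for the number of diagonals in $\mathcal{D}$, the dissection has $d + 1$ bounded faces, and double-counting face-side incidences gives $4(d+1) = n + 2d$ (each polygon edge bounds one face, each diagonal bounds two), yielding $d = (n-4)/2$ and hence $n/2 - 1$ quadrilateral faces, as required. The main technical care will go into setting up the segments-to-cuts bijection and its compatibility with nestedness, and into verifying that no $\mathcal{D}$-segment lurks inside a quadrilateral face; both reduce to straightforward parity and convex-position arguments.
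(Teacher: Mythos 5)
Your proposal is correct, but it takes a genuinely different route from the paper's. The paper handles the \BoB\ case exactly as you do (only trivial cuts exist, so $H$ itself is the unique torso), and then proves the cycle case by induction on $n$: it picks a non-trivial cut $\partial(X) \in \mathcal{C}$, contracts each side to obtain two shorter cycles $H_1, H_2$ carrying the induced (still maximal) nested families, notes a canonical bijection between torsos of $\mathcal{C}$ and torsos of the two induced families, and adds the two counts, $\bigl(\tfrac{n_1}{2}-1\bigr)+\bigl(\tfrac{n_2}{2}-1\bigr)=\tfrac{n}{2}-1.$ You instead give a direct global count: tight cuts of the even cycle are pairs of cycle edges at odd distance (\cref{rem:cycle_tight_cuts}), which you encode as black--white chords of a convex $n$-gon, with nestedness translating to non-crossingness; maximality of $\mathcal{C}$ forces every bounded face of the resulting dissection to be a quadrilateral, quadrilateral faces biject with maximal stars (a maximal star in a cycle has exactly four parts, since the collapse of a partition into odd intervals is a cycle and $C_4$ is the only cycle that is a \BoB), and Euler's formula plus double counting of face--side incidences yields $\tfrac{n}{2}-1.$ The steps you defer (the cuts-to-segments dictionary, nested iff non-crossing, and that no segment of $\mathcal{D}$ can enter a quadrilateral whose sides lie in $\mathcal{D}$, since its diagonals are monochromatic and all other polygon vertices lie beyond its sides) are indeed routine parity and convexity checks, so I see no gap. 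The trade-off: the paper's induction is shorter and purely combinatorial, though it silently relies on the maximality of the induced families and the canonical bijection of torsos under splitting; your argument needs more setup but makes the count transparent, exhibiting each torso of $\mathcal{C}$ as a face of an explicit non-crossing dissection rather than only recursively.
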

\begin{proof}
    If $H$ is a \BoB, the set $\mathcal{C}$ contains only trivial tight sets and thus $H$ itself is the unique torso of $\mathcal{C}.$
    If $H$ is a cycle, we prove the statement by strong induction on $n.$
    For $n = 4$ the statement is true, as $C_4$ is a \BoB.
    Let $H$ be a cycle of length $n \geq 6$ and suppose the statement is true for all cycles of length at most $n - 2.$
    Let $\cut{}{X} \in \mathcal{C}$ be any non-trivial tight cut.
    Set $\mathcal{C}_1 \coloneqq \Set{C' \in \mathcal{C}: \exists Y \subseteq \V{G} \setminus X: C' = \cut{}{Y}}$ and $\mathcal{C}_2 \coloneqq \Set{C' \in \mathcal{C}: \exists Y \subseteq X: C' = \cut{}{Y}}.$
    Note that $\mathcal{C} = \mathcal{C}_1 \cup \mathcal{C}_2$ and $\Set{\cut{}{X}} = \mathcal{C}_1 \cap \mathcal{C}_2.$
    
    Let $H_1$ be the cycle obtained by contracting $X$ and consider the set $\hat{\mathcal{C}_1}$ of nested tight cuts induced by $\mathcal{C}_1.$
    It is maximal by construction.
    Let $H_2$ be the cycle obtained by contracting $\V{H} \setminus X$ and consider the the set $\hat{\mathcal{C}_2}$ of nested tight cuts induced by $\mathcal{C}_2$, which is also maximal.
    There is a canonical bijection between the set of torso of $\mathcal{C}$ and the set of torsos of $\hat{\mathcal{C}_1}$ and $\hat{\mathcal{C}_2}.$
    Applying the induction hypothesis to $\mathcal{C}_1, H_1$ and $\mathcal{C}_2, H_2$ gives the desired statement for $n.$
\end{proof}

In preparation of the proof for the general case of \cref{thm:correspond_torso_torsoid} we prove a couple of propositions that enable us to define a bijection between the $\kappa_{\mathcal{C}}$-pre-image of a torsoid $\mathcal{T}=\Torsoid{}$ and the set of torsos of a specific maximal family of nested tight cuts in $H$. Note that \cref{prop:number_torso} determines the size of the latter set, as $H$ is either a \BoB or a cycle.

Firstly, we show that the vertices of a torso cleaving $\mathcal{T}$ are $(\mathcal{T}, \tightCutsToSets{\mathcal{C}})$-\maxcut which we define below. Secondly, we introduce the specific family of nested tight cuts in $H$ and show that it is indeed maximal.

Now we return to the graph $G$ itself and consider the residents on vertices and intervals of a torsoid $\mathcal{T} = (H,\epsilon)$ in $G.$
Recall that $\oddIntersections{H}{X}$ is the set of vertices in $H$ that intersect $X$ oddly.
We call a \pvcut{$\mathcal{T}$} or an \icut{$\mathcal{T}$} $X \in \tightCutsToSets{\mathcal{C}}$ \emph{$(\mathcal{T}, \tightCutsToSets{\mathcal{C}})$-\maxcut}, if for every tight set $Y \in \tightCutsToSets{\mathcal{C}}$ with $\oddIntersections{H}{Y} = \oddIntersections{H}{X}$ holds $Y \subseteq X.$
Note that $\bigcup \oddIntersections{H}{X} \subseteq X$ holds for every \pvcut{$\mathcal{T}$} or \icut{$\mathcal{T}$} $X.$

The next two statements prove that for every vertex and every interval that has a resident such maximal resident exists.

\begin{proposition}\label{prop:maxcut_vertex}
    Let $\mathcal{C}$ be a maximal family of nested tight cuts in $G$ and $\mathcal{T}=(H,\epsilon)$ a torsoid in $G.$
    Then for every $v \in V(H)$ there is a $(\mathcal{T}, \tightCutsToSets{\mathcal{C}})$-\maxcut \pvcut{$\mathcal{T}$} at $v.$
\end{proposition}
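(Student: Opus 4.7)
The plan is to construct the desired $(\mathcal{T}, \tightCutsToSets{\mathcal{C}})$-\maxcut \pvcut{$\mathcal{T}$} at $v$ as the union of the family $\mathcal{X}_v \coloneqq \Set{Z \in \tightCutsToSets{\mathcal{C}} : Z \text{ is a \pvcut{$\mathcal{T}$} at } v}$, after showing that $\mathcal{X}_v$ is non-empty and totally ordered by inclusion. I would first record that the maximal candidate $M_v \coloneqq v \cup \bigcup_{v \in e \in \E{H}} \edgefkt{e}$ is tight, since it is the class containing $v$ in the tight set partition $\mathcal{P}(\mathcal{T}, \kappa)$ produced by \cref{choicetostrong} for any choice function $\kappa$ sending every edge incident to $v$ to $v$. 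Every member of $\mathcal{X}_v$ lies in $M_v$ and contains $v$; and since $\Abs{\V{H}} \geq 4$ combined with the near-partition property \cref{torsoid-def-4} leaves vertices of $\mathcal{T}$ other than $v$ outside $M_v$, we have $M_v \subsetneq \V{G}$, so any two $X_1, X_2 \in \mathcal{X}_v$ satisfy $X_1 \cap X_2 \supseteq v \neq \emptyset$ and $X_1 \cup X_2 \subseteq M_v \subsetneq \V{G}$. Their nestedness as members of $\tightCutsToSets{\mathcal{C}}$ then forces comparability under inclusion, making $\mathcal{X}_v$ a chain. Granted non-emptiness, $X \coloneqq \bigcup \mathcal{X}_v$ is tight by \cref{lem:infunion}, and a routine case-distinction on the four nestedness patterns propagates the nestedness of every $Z \in \tightCutsToSets{\mathcal{C}}$ with successive chain members to the union, placing $X \in \tightCutsToSets{\mathcal{C}}$ by maximality of $\mathcal{C}$. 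For every edge $e$ incident to $v$, evenness of $X \cap \edgefkt{e}$ follows from \cref{lem:one_edge_odd}: an odd intersection would force $\cut{}{X}$ to reside at an edge of $\mathcal{T}$, yet $v \subseteq X$ and the presence of other vertices of $\mathcal{T}$ in $\V{G} \setminus X$ preclude both $X$ and its complement from being a \pcut{$\mathcal{T}$}. Hence $X \in \mathcal{X}_v$ is its maximum element.

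The main obstacle is showing $\mathcal{X}_v \neq \emptyset$, which I plan to address by starting from $M_v$ and cleaning out its obstructions. Classifying any $Y \in \tightCutsToSets{\mathcal{C}}$ via \cref{cor:cuts_in_noncyclic_torsoids} and \cref{cor:cuts_in_general_torsoids} and applying \cref{lem:one_edge_odd} together with \cref{torsoid-def-4}, any $Y$ which fails to be nested with $M_v$ is (up to complementation) either a \vcut{$\mathcal{T}$} at a neighbour $w \neq v$ of $v$ in $H$ with $Y \cap \edgefkt{vw} \neq \emptyset$, or, when $\mathcal{T}$ is cyclic, an \icut{$\mathcal{T}$} meeting $v$ only at the boundary of its interval; in either case the contribution of $Y$ to $M_v$ lies inside a single $\edgefkt{vw}$. \Cref{lem:basictight} applied to $M_v$ with $Y$ or its complement shows that removing this contribution preserves tightness, and the laminarity of $\tightCutsToSets{\mathcal{C}}$ combined with \cref{lem:deletemany} and \cref{lem:disjointpass} allows all obstructions to be removed simultaneously, producing a tight \pvcut $X_0$ at $v$ nested with every member of $\tightCutsToSets{\mathcal{C}}$. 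By maximality of $\mathcal{C}$, $X_0 \in \tightCutsToSets{\mathcal{C}} \cap \mathcal{X}_v$, supplying the required starting point.

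For the \maxcut property, any $Y \in \tightCutsToSets{\mathcal{C}}$ with $\oddIntersections{H}{Y} = \Set{v}$ has $\theta_{\mathcal{T}}(\cut{}{Y}) = 1$ and so resides at a vertex of $\mathcal{T}$ by \cref{lem:vcut_char}; the alternative that $\V{G} \setminus Y$ rather than $Y$ is the \vcut{$\mathcal{T}$} representative is ruled out by $\Abs{\V{H} \setminus \Set{v}} \geq 3$, so $Y$ itself is a \vcut{$\mathcal{T}$} at $v$ and in particular $Y \subseteq M_v$. The comparability argument from the first paragraph then applies to $X$ and $Y$, giving $X \subseteq Y$ or $Y \subseteq X$; the former would place $Y$ in $\mathcal{X}_v$ strictly above the maximum $X$, a contradiction, so $Y \subseteq X$, as required.
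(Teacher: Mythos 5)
Your overall architecture (take a union of members of $\tightCutsToSets{\mathcal{C}}$ associated with $v$, show the union is tight via \cref{lem:infunion}, show its cut is nested with $\mathcal{C}$ and conclude membership in $\tightCutsToSets{\mathcal{C}}$ by maximality, then read off the \maxcut property) is sound and close in spirit to the paper's proof. The genuine gap is in the step you yourself flag as the main obstacle, the non-emptiness of $\mathcal{X}_v$. First, the ``clean out the obstructions'' construction is not justified by the lemmas you cite: \cref{lem:deletemany} requires the tight sets being deleted to have pairwise \emph{disjoint} (even) intersections with $M_v$, but two obstructions attached to the same neighbour $w$ (say two nested \vcuts{$\mathcal{T}$} at $w$, or a \vcut{} inside an \icut{}) have nested, overlapping contributions inside $\edgefkt{vw}$, so the lemma does not apply to the raw family; one must first amalgamate the obstructions per neighbour (e.g.\ by \cref{lem:basictight} and \cref{lem:infunion}), check that each amalgam is tight and meets $\edgefkt{vw}$ evenly (via \cref{cor:non_pcut_edge}), and only then delete — and \cref{lem:disjointpass} is about passable sets of a \TorsoidInducing partition and plays no role here; the disjointness across different neighbours comes from \cref{torsoid-def-4}. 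Second, and more seriously, the assertion that the cleaned set $X_0$ is ``nested with every member of $\tightCutsToSets{\mathcal{C}}$'' is exactly what is needed to invoke maximality of $\mathcal{C}$, and you give no argument for it. It is true, but proving it requires a case analysis over where an arbitrary member of $\mathcal{C}$ resides (vertex of $\mathcal{T}$, edge, interval) and how it interacts with the removed pieces — work of essentially the same size as the whole proposition, so it cannot be left as an unproved claim.

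It is also worth knowing that the paper dissolves the non-emptiness issue with a one-line observation you missed: for any $x \in v$ the singleton $\Set{x}$ is a trivial tight set, its cut is nested with every tight cut, hence it lies in $\tightCutsToSets{\mathcal{C}}$ by maximality of $\mathcal{C}$, and $\oddIntersections{H}{\Set{x}} = \Set{v}$. The paper therefore takes the union of \emph{all} $Y \in \tightCutsToSets{\mathcal{C}}$ with $\oddIntersections{H}{Y} = \Set{v}$ (not only the proper ones, which is the restriction that creates your obstacle); this union automatically contains $v$, is tight because any two members, being nested, either are disjoint (then $(Y' \cup v) \cap (Y'' \cup v) = v$ is odd) or comparable, and its nestedness with $\mathcal{C}$ follows from a short residence case analysis. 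If you adopt that larger family, your first and third paragraphs go through essentially unchanged and your second paragraph becomes unnecessary.
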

\begin{proof}
    Consider the set $A\coloneqq\Set{Y \in \tightCutsToSets{\mathcal{C}}: \oddIntersections{H}{Y}=\Set{v}}.$
    We show that $X\coloneqq \bigcup A$ is the desired set.
    By construction, the set $A$ contains only \vcuts{$\mathcal{T}$}.
    Thus $X$ avoids all vertices of $\mathcal{T}$ apart from $v.$
    Furthermore, the set $X$ contains the set $v$ as every single vertex in $v$ forms a \vcut{$\mathcal{T}$}, whose cut is clearly nested with $\mathcal{C}.$
    Therefore $\oddIntersections{H}{X} = \Set{v}$ holds.
    Thus $X$ is a \pvcut{$\mathcal{T}$}.
    We have to prove that $X$ is tight and an element of $\tightCutsToSets{\mathcal{C}}.$
    
    As $v \subseteq X$, $X = \bigcup_{Y \in A} \Brace{Y \cup v}$ holds.
    For all $Y', Y'' \in A$ the sets $Y' \cup v, Y'' \cup v$ are tight.
    Since $Y', Y''$ are nested, either the sets $Y', Y''$ are disjoint or one contains the other.
    In the former case, $(Y' \cup v) \cap (Y'' \cup v) = v$ is odd.
    In the latter case, $(Y' \cup v) \cap (Y'' \cup v) \in \Set{Y' \cup v, Y'' \cup v}$, which is also odd.
    In either case \cref{lem:basictight} implies that the union of the two is a tight set.
    Then, by \cref{lem:infunion}, $X$ is tight.

    It remains to show that $\cut{}{X}$ is nested with every tight cut in $\mathcal{C}.$
    Let $C$ be an arbitrary tight cut in $\mathcal{C}.$
    Note that $C$ is nested with every tight cut induced by an element of $A.$
    If $C$ resides at $v$, there is $Y \in A$ such that $C = \cut{}{Y}$ and thus $\cut{}{X}$ and $C$ are nested by construction.
    If $C$ resides at another vertex, its resident avoids all elements of $A$ and therefore $\cut{}{X}$ and $C$ are nested.
    If $C$ resides at an edge, let $Z$ be the \pcut{} of $C.$
    Either $Z$ is contained in an element of $A$, or $Z$ is disjoint to all elements of $A.$
    In the former case $Z \subseteq X$ holds, in the latter case $Z \cap X = \emptyset$ holds.
    In both cases $\cut{}{X}$ and $C$ are nested.
    If $C$ resides at an interval, let $Z$ be the \icut{} of $C$ containing $v.$
    For any $Y \in A$, the tight cuts $\cut{}{Y}$ and $C = \cut{}{Z}$ are nested and $Y, Z$ have nonempty intersection.
    Therefore $Y$ is contained in $Z.$
    Thus $X$ is contained in $Z$ and $\cut{}{X}$ and $C$ are nested.
\end{proof}

\begin{proposition}\label{prop:maxcut_interval}
    Let $\mathcal{C}$ be a maximal family of nested tight cuts in $G$ and $\mathcal{T}=\Torsoid{H}$ be a cyclic torsoid in $G.$ Furthermore, let $I$ be an interval in $H$ such that there exists $Y \in \tightCutsToSets{\mathcal{C}}$ with $\oddIntersections{H}{Y} = I$ .
    Then there is a $(\mathcal{T}, \tightCutsToSets{\mathcal{C}})$-\maxcut \icut{$\mathcal{T}$} $X$ with $\oddIntersections{H}{X} = I.$
\end{proposition}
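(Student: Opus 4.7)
The plan is to imitate the construction in \cref{prop:maxcut_vertex}: set $A \coloneqq \Set{Y \in \tightCutsToSets{\mathcal{C}} \colon \oddIntersections{H}{Y} = I}$, which is nonempty by hypothesis, define $X \coloneqq \bigcup A$, and show that $X$ is a $(\mathcal{T}, \tightCutsToSets{\mathcal{C}})$-\maxcut \icut{$\mathcal{T}$} at $I$.

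First I would verify that every $Y \in A$ is itself an \icut{$\mathcal{T}$} at $I$. By \cref{cor:non_pcut_edge}, $Y \cap \edgefkt{e}$ is even for every $e \in E(H)$. Pick a tight set partition $\mathcal{P}$ in strong correspondence with $\mathcal{T}$ whose underlying choice function sends every internal edge of $I$ to an interior vertex of $I$ (possible since $|I| \geq 3$) and every boundary edge of $I$ to its endpoint outside $I$. Then $\oddIntersections{\mathcal{P}}{Y}$ is precisely the interval of classes whose underlying vertex lies in $I$, so \cref{lem:cutscyclicpartition} combined with the near-partition clause \cref{torsoid-def-4} of the torsoid definition (making $\bigcup I$ disjoint from every $\edgefkt{e}$) forces the required inclusions $\bigcup I \cup \bigcup_{vw \in E(H) \colon v,w \in I} \edgefkt{vw} \subseteq Y \subseteq \bigcup I \cup \bigcup_{vw \in E(H) \colon v \in I} \edgefkt{vw}$.

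Second, I would observe that $A$ is totally ordered by inclusion. Any two $Y', Y'' \in A$ have nested cuts, so one of $Y' \cap Y'' = \emptyset$, $Y' \subseteq Y''$, $Y'' \subseteq Y'$, or $Y' \cup Y'' = \V{G}$ holds. The first is excluded because $\bigcup I \subseteq Y' \cap Y''$, and the last because $\V{G} \setminus (\bigcup I \cup \bigcup_{vw \colon v \in I} \edgefkt{vw})$ contains any vertex of $\V{H} \setminus I$ not adjacent in $H$ to $I$, which exists as $|\V{H} \setminus I| \geq 3$. Hence pairwise unions remain in $A$, so \cref{lem:infunion} yields that $X$ is tight (and not $\V{G}$, for the same containment reason). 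The \icut{} inclusions for $X$ are inherited from those for each $Y \in A$, confirming that $X$ is an \icut{$\mathcal{T}$} at $I$.

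The main obstacle is the final step: showing $\cut{}{X}$ is nested with every $\cut{}{Z}$ for $Z \in \tightCutsToSets{\mathcal{C}}$, which by maximality of $\mathcal{C}$ then places $X$ in $\tightCutsToSets{\mathcal{C}}$. For each $Y \in A$ one of $Y \cap Z = \emptyset$, $Z \subseteq Y$, $Y \subseteq Z$, or $Y \cup Z = \V{G}$ holds. If some $Y$ satisfies the upward-closed condition $Z \subseteq Y$ or $Y \cup Z = \V{G}$, the same condition passes to $X$ since $Y \subseteq X$. Otherwise every $Y \in A$ satisfies $Y \cap Z = \emptyset$ or $Y \subseteq Z$; a short chain argument using $\emptyset \neq \bigcup I \subseteq Y$ rules out these two types coexisting within $A$, so either $X \cap Z = \emptyset$ or $X \subseteq Z$. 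Finally, maximality of $X$ is immediate from $X = \bigcup A$: any $Z \in \tightCutsToSets{\mathcal{C}}$ with $\oddIntersections{H}{Z} = I$ belongs to $A$, so $Z \subseteq X$.
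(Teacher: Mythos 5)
Your construction is the same as the paper's: the paper also takes $A \coloneqq \Set{Y \in \tightCutsToSets{\mathcal{C}} \colon \oddIntersections{H}{Y} = I}$, sets $X \coloneqq \bigcup A$, observes that any two members of $A$ are comparable because they all contain $\bigcup I$, applies \cref{lem:infunion}, and then checks that $\cut{}{X}$ is nested with every cut in $\mathcal{C}$ by essentially your case analysis (your explicit treatment of the case $Y \cup Z = \V{G}$ is, if anything, more careful than the paper's), after which maximality of $\mathcal{C}$ and the definition of $(\mathcal{T}, \tightCutsToSets{\mathcal{C}})$-\maxcut finish the job exactly as you say. The one genuine gap is your first step, the claim that every $Y \in A$ is an \icut{$\mathcal{T}$} at $I.$ \Cref{cor:non_pcut_edge} together with \cref{lem:cutscyclicpartition} applied to your chosen partition only yields $\bigcup_{1 < i < m} P_i \subseteq Y \subseteq \V{G} \setminus \bigcup_{m+1 < i < n} P_i$ (in the lemma's notation), and adding \cref{torsoid-def-4} this says that $Y$ contains the interior vertices of $I$ and the internal edge sets and avoids everything beyond the two classes neighbouring the interval -- but it does not force the \icut{} inclusions. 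In particular it does not show that the two end-vertices of $I$ are wholly contained in $Y$ (you only know their intersections with $Y$ are odd), nor that $Y$ avoids the torsoid vertices just outside $I$ and any edge sets assigned to the classes $P_{m+1}$ and $P_n$ (you only know these intersections are even). These are exactly the facts your later steps lean on: $\bigcup I \subseteq Y$ drives the total-ordering and the chain argument in the nestedness step, and the upper inclusion is what excludes $Y' \cup Y'' = \V{G}$ and gives the \icut{} bounds for $X.$

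The missing ingredient is the second half of \cref{lem:cutscyclicpartition} (the passability statement) combined with \cref{lem:modifypass} and the maximality clause \cref{torsoid-def-5}, which place $Y \setminus \bigcup \oddIntersections{\mathcal{P}}{Y}$ and $\bigcup \oddIntersections{\mathcal{P}}{Y} \setminus Y$ inside the two boundary sets $\edgefkt{uv}$ with $u \in I$ and $v \notin I$; this is precisely the content of the unlabelled lemma characterising interval residence stated just before \cref{cor:cuts_in_general_torsoids}. Since the proposition comes after that lemma, the cleanest repair is simply to cite it (or \cref{cor:cuts_in_general_torsoids}: with $3 \leq |I| \leq |\V{H}| - 3$ and $\oddIntersections{H}{Y} = I$, the only possibility is that $Y$, or its complement at the complementary interval, is an \icut{$\mathcal{T}$}, and complements of \icuts{} are again \icuts{}). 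This is in effect what the paper does when it asserts without further argument that $A$ contains only \icuts{}. With that citation replacing your first paragraph, the remainder of your proof is correct and coincides with the paper's.
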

\begin{proof}
    Consider the set $A\coloneqq\Set{Y \in \tightCutsToSets{\mathcal{C}}: \oddIntersections{H}{Y}= I}.$
    We prove that $X\coloneqq \bigcup A$ is the desired set.
    As $A$ contains only \icuts{}, the set $X$ contains the vertices in $I$ and avoids all other vertices of $\mathcal{T}.$
    Therefore it holds $\oddIntersections{H}{X} = I.$
    For any two elements $Y', Y'' \in A$ one is contained in the other, since $\bigcup I \subseteq Y', Y''.$
    Therefore $Y', Y''$ intersect oddly and by \cref{lem:infunion} $X$ is tight.
    To complete the proof we have to show that the set $X$ is contained in $\tightCutsToSets{\mathcal{C}}.$
    
    Let $\cut{}{Z} \in \mathcal{C}$ be an arbitrary tight cut.
    It suffices to prove that $\cut{}{Z}$ and $\cut{}{X}$ are nested.
    If $Z$ is contained in an element of $A$, it is contained in $X$ and thus $\cut{}{X}$ and $\cut{}{Z}$ are nested.
    We suppose that $Z$ is not contained in any element of $A.$
    If $Z$ is disjoint to all elements of $A$, the sets $Z$ and $X$ are disjoint and thus $\cut{}{X}$ and $\cut{}{Z}$ are nested.
    Therefore we suppose that there is a $Y \in A$ such that $Y$ and $Z$ have nonempty intersection.
    As $\cut{}{Z}$ and $\cut{}{Y}$ are nested, $Y \subset Z$ holds by assumption.
    This implies $\bigcup I \subset Z.$
    Therefore any $Y' \in A$ intersects $Z$ and as $\cut{}{Y'}, \cut{}{Z}$ are nested, $Y'$ is contained in $Z$ by assumption.
    Thus $X \subseteq Z$, which shows that $\cut{}{X}$ and $\cut{}{Z}$ are nested.
\end{proof}

Next we show that the vertices of a torso cleaving $\mathcal{T}$ are indeed such maximal residents.

\begin{proposition} \label{prop:maxcut}
    Let $\mathcal{C}$ be a maximal family of nested tight cuts in $G$ and $\mathcal{T}$ any torsoid in $G.$
    Then every vertex of a torso of $\mathcal{C}$ cleaving $\mathcal{T}$ is $(\mathcal{T}, \tightCutsToSets{\mathcal{C}})$-\maxcut.
\end{proposition}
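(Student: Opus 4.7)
The plan is to fix an arbitrary vertex $P$ of a torso $\mathcal{S}$ of $\mathcal{C}$ cleaving $\mathcal{T}=\Torsoid{H}$, first verify that $P$ itself is a \pvcut{$\mathcal{T}$} or an \icut{$\mathcal{T}$} (so that the notion of $(\mathcal{T},\tightCutsToSets{\mathcal{C}})$-\maxcut is meaningful for $P$), and then verify the maximality condition by a direct case analysis based on nestedness with respect to $\mathcal{C}$. For the first step I would invoke \cref{obs:refining_partition}: since $\mathcal{S}$ cleaves $\mathcal{T}$, every vertex of $H$ sits wholly inside exactly one element of $V(\mathcal{S})$, so $\oddIntersections{H}{P}=\Set{v\in V(H):v\subseteq P}$. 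This set is nonempty by the definition of cleaving, and since $|V(\mathcal{S})|\geq 4$ each of the other (at least three) classes contributes at least one vertex of $H$ to the complement, yielding $|V(H)\setminus\oddIntersections{H}{P}|\geq 3$. A short parity argument combined with \cref{cor:cuts_in_noncyclic_torsoids} and \cref{cor:cuts_in_general_torsoids} then forces $P$ to be a \pvcut{$\mathcal{T}$} when $|\oddIntersections{H}{P}|=1$ and an \icut{$\mathcal{T}$} when $|\oddIntersections{H}{P}|\geq 3$.

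For the main step, fix any $Y\in\tightCutsToSets{\mathcal{C}}$ with $\oddIntersections{H}{Y}=\oddIntersections{H}{P}$ and assume for a contradiction that $Y\not\subseteq P$. Pick $x\in Y\setminus P$; as $V(\mathcal{S})$ partitions $V(G)$, there is some $P'\in V(\mathcal{S})\setminus\Set{P}$ with $x\in P'$, so $Y\cap P'\neq\emptyset$. Since $\cut{}{P'}$ and $\cut{}{Y}$ both lie in $\mathcal{C}$ they are nested, and ruling out $Y\cap P'=\emptyset$ leaves the three possibilities $Y\subseteq P'$, $P'\subseteq Y$, and $Y\cup P'=V(G)$.

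I would handle these three cases in turn. If $Y\subseteq P'$, then $Y\cap P=\emptyset$, so for any $v\in\oddIntersections{H}{P}$ (which exists by cleaving) one has $v\cap Y=\emptyset$, contradicting $v\in\oddIntersections{H}{Y}$. If $P'\subseteq Y$, pick a vertex $v'$ of $H$ with $v'\subseteq P'$; then $v'\subseteq Y$ and the tightness of $v'$ give $Y\cap v'=v'$ odd, so $v'\in\oddIntersections{H}{Y}$, whereas $v'\subseteq P'$ and $P'\cap P=\emptyset$ yield $v'\notin\oddIntersections{H}{P}$, another contradiction. Finally, if $Y\cup P'=V(G)$ then the inequality $|V(\mathcal{S})|\geq 4$ supplies a third class $P''\in V(\mathcal{S})\setminus\Set{P,P'}$ that is forced into $V(G)\setminus P'\subseteq Y$; applying the argument of the previous subcase to $P''$ in place of $P'$ closes the analysis.

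The main obstacle is really the last subcase, where the argument would fail without the hypothesis $|V(\mathcal{S})|\geq 4$ built into the definition of a maximal star; the rest is bookkeeping with nested tight cuts and the dictionary between the vertices of $\mathcal{S}$ and $\oddIntersections{H}{\cdot}$ provided by \cref{obs:refining_partition}.
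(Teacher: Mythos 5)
Your argument is correct, and it takes a noticeably different route from the paper's. The paper first invokes \cref{prop:maxcut_vertex} and \cref{prop:maxcut_interval} to produce the $(\mathcal{T},\tightCutsToSets{\mathcal{C}})$-\maxcut{} resident $Y$ with $\oddIntersections{H}{Y}=\oddIntersections{H}{X}$; this gives $X\subsetneq Y$ for a non-maximal vertex $X$ of the torso, so a class $X'$ meeting $Y\setminus X$ satisfies $X'\subseteq Y$ by nestedness, and then $X'$ can contain no vertex of $H$ (those in $\oddIntersections{H}{Y}$ lie in $X$, the rest are avoided by $Y$), contradicting cleaving. You instead work with an arbitrary witness $Y\in\tightCutsToSets{\mathcal{C}}$ of non-maximality and run the full nestedness case split of $\cut{}{Y}$ against $\cut{}{P'}$, where $P'$ is the class containing a point of $Y\setminus P$: in each of the cases $Y\subseteq P'$, $P'\subseteq Y$, $Y\cup P'=\V{G}$ you exhibit a vertex of $H$ (inside $P$, $P'$, or a third class $P''$, whose existence only needs $|V(\mathcal{S})|\geq 3$) belonging to exactly one of $\oddIntersections{H}{Y}$ and $\oddIntersections{H}{P}$, using the dictionary $\oddIntersections{H}{Q}=\Set{v\in V(H)\colon v\subseteq Q}$ supplied by \cref{obs:refining_partition}. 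What each approach buys: the paper's proof is shorter because it reuses the maximal-resident propositions, which it needs anyway for \cref{prop:maximal_set} and \cref{thm:correspond_torso_torsoid}; yours is self-contained, avoiding those propositions entirely at the cost of a three-case analysis. One small caveat: your opening claim that cleaving alone forces every vertex of $H$ to lie wholly in one class of $V(\mathcal{S})$ is really the content of \cref{obs:refining_partition} (classes are \pvcuts{$\mathcal{T}$} or \icuts{$\mathcal{T}$}), not of the definition of cleaving; since you do cite that observation, and your parity sketch for recovering it matches what the paper itself leaves unproved, this is fine, but the attribution should be to the observation rather than to cleaving.
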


\begin{proof}
    Suppose towards a contradiction that there is a torso $\mathcal{S}$ cleaving $\mathcal{T}$ with a vertex $X$ such that $X$ is not $(\mathcal{T}, \tightCutsToSets{\mathcal{C}})$-\maxcut.
    By \cref{prop:maxcut_vertex} and \cref{prop:maxcut_interval}, there is a $(\mathcal{T},\tightCutsToSets{\mathcal{C}})$-\maxcut tight set $Y$ in $\tightCutsToSets{\mathcal{C}}$ with $\oddIntersections{H}{X} = \oddIntersections{H}{Y}.$
    Then $X \subset Y$ holds and thus there exists a vertex $X'$ of $\mathcal{S}$ that contain an element of $Y \setminus X.$
    Since $\cut{}{Y}$ and $\cut{}{X'}$ are nested, $X'$ is contained in $Y.$

    By construction, $\bigcup \oddIntersections{H}{X} = \bigcup \oddIntersections{H}{Y}$ is contained in $X$, and $Y$ avoids any vertex of $V(H) \setminus \oddIntersections{H}{Y}.$
    Therefore $X'$ does not contain any vertex of $H$, which contradict the definition of cleaving.
\end{proof}

In the following we turn our attention to $H$ and define the specific family of nested tight cuts in $H$ for the proof of \cref{thm:correspond_torso_torsoid}:
\begin{definition}
    \label{def:tight_cut_residents}
    Let $\mathcal{C}$ be a maximal family of nested tight cuts in $G$ and $\mathcal{T}=\Torsoid{H}$ a torsoid in $G.$ We define the set
    \begin{align*}
        \mathcal{C}^{\mathcal{T}} \coloneqq \{ \cut{}{\oddIntersections{H}{X}} :{} &\cut{}{X} \in \mathcal{C} \text{ and }\\
            &\cut{}{X} \text{ resides at an interval or properly at a vertex of $\mathcal{T}$}\}.
    \end{align*}
\end{definition}

\begin{proposition}\label{prop:maximal_set}
    Let $\mathcal{C}$ be a maximal family of nested tight cuts in $G$ and $\mathcal{T}=\Torsoid{H}$ a torsoid in $G.$ Then $\mathcal{C}^{\mathcal{T}}$
    is a maximal family of nested tight cuts in $H.$
\end{proposition}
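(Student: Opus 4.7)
The plan is to verify, in order, that every element of $\mathcal{C}^{\mathcal{T}}$ is a tight cut of $H$, that any two elements are nested, and that the family is maximal.

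\textbf{Tightness.} Fix any choice function $\kappa$ for $H$ and let $\mathcal{P} \coloneqq \mathcal{P}(\mathcal{T},\kappa)$; by \cref{choicetostrong} this is a tight set partition in strong correspondence with $\mathcal{T}$ via a map $\sigma$, which by \cref{lem:strongcorrisiso} is a graph isomorphism $H \to \collapse{\mathcal{P}}$. For any \pvcut{$\mathcal{T}$} or \icut{$\mathcal{T}$} $X$ the intersection $X \cap \edgefkt{e}$ is even for every $e \in E(H)$. Since each $\sigma(v)$ is the union of $v$ and some $\edgefkt{e}$'s, the parities of $v \cap X$ and $\sigma(v) \cap X$ coincide, giving $\sigma(\oddIntersections{H}{X}) = \oddIntersections{\mathcal{P}}{X}$. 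Combining \cref{lem:uncrosscut} with \cref{lem:tight_sets_in_collapse} then forces $\oddIntersections{H}{X}$ to be tight in $H$.

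\textbf{Nestedness.} Take $X_1, X_2 \in \tightCutsToSets{\mathcal{C}}$ each a \pvcut{$\mathcal{T}$} or \icut{$\mathcal{T}$}, and pick disjoint representatives $X_1', X_2'$ of $\cut{G}{X_1}, \cut{G}{X_2}$ from $\Set{X_i, \V{G}\setminus X_i}$. By \cref{torsoid-def-4} the vertices of $\mathcal{T}$ are pairwise disjoint and disjoint from every $\edgefkt{e}$, so every $u \in V(H)$ satisfies $u \cap X_i \in \Set{\emptyset, u}$, a property preserved by complementation. Disjointness of $X_1', X_2'$ excludes any $u$ from both $\oddIntersections{H}{X_1'}$ and $\oddIntersections{H}{X_2'}$; since complementing $X_i$ merely complements $\oddIntersections{H}{X_i}$ within $V(H)$, this yields disjoint representatives of the corresponding cuts in $\mathcal{C}^{\mathcal{T}}$.

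\textbf{Maximality.} Let $\cut{H}{Y}$ be a tight cut of $H$ nested with every element of $\mathcal{C}^{\mathcal{T}}$; I will construct $X \in \tightCutsToSets{\mathcal{C}}$ that is a \pvcut{$\mathcal{T}$} or \icut{$\mathcal{T}$} with $\oddIntersections{H}{X} \in \Set{Y, V(H) \setminus Y}$, so that $\cut{H}{Y} \in \mathcal{C}^{\mathcal{T}}$ follows from maximality of $\mathcal{C}$. If $|Y| = 1$ or $|V(H) \setminus Y| = 1$, say $Y = \Set{v}$, I take $X = v$: \cref{cor:cuts_in_noncyclic_torsoids} and \cref{cor:cuts_in_general_torsoids} classify each $Z \in \tightCutsToSets{\mathcal{C}}$, and in every case $v$ is contained in or disjoint from $Z$, so $\cut{G}{v}$ is nested with $\mathcal{C}$ and maximality of $\mathcal{C}$ gives $\cut{G}{v} \in \mathcal{C}$. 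Otherwise $H$ is a cycle and $Y$ is an interval with $3 \leq |Y| \leq |V(H)|-3$; letting $M_v$ be the $(\mathcal{T},\tightCutsToSets{\mathcal{C}})$-\maxcut \pvcut{$\mathcal{T}$} at $v$ furnished by \cref{prop:maxcut_vertex}, I set
\[
    X \coloneqq \bigcup_{v \in Y} M_v \;\cup\; \bigcup_{e \in E(H),\, e \subseteq Y} \edgefkt{e}\,,
\]
show $X$ is tight and an \icut{$\mathcal{T}$} at $Y$ by iterated use of \cref{lem:basictight} and \cref{lem:deletemany} along the cyclic order on $Y$, and verify that $\cut{G}{X}$ is nested with every $\cut{G}{Z} \in \mathcal{C}$ by case analysis on where $Z$ resides. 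The main obstacle is precisely this last boundary analysis: the two boundary edges of $Y$ carry $\edgefkt{e}$'s into which \pvcuts{$\mathcal{T}$} at interior and exterior endpoints, as well as competing \pcuts{$\mathcal{T}$} and \icuts{$\mathcal{T}$}, can all reach; the maxcut choice absorbs every interior reach into $X$, disjointness of \pvcuts{$\mathcal{T}$} at distinct vertices keeps exterior reach out, and competing exterior \icuts{$\mathcal{T}$} are tamed exactly by the hypothesised nestedness of $\cut{H}{Y}$ with $\mathcal{C}^{\mathcal{T}}$, translated to $G$ via $\oddIntersections{H}{}$.
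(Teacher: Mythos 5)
Your tightness and nestedness steps are fine (the tightness detour through $\collapse{\mathcal{P}}$ is heavier than the paper's direct observation that $\oddIntersections{H}{X}$ is a single vertex or an odd interval, but it works), and your overall strategy for maximality -- exhibit, for each tight cut of $H$ nested with $\mathcal{C}^{\mathcal{T}}$, a member of $\tightCutsToSets{\mathcal{C}}$ residing properly at a vertex or at an interval whose $\oddIntersections{H}{\cdot}$ is the given set -- is the paper's strategy. However, both halves of your maximality argument have genuine gaps. For the trivial cuts you take $X = v$ and claim that every $Z \in \tightCutsToSets{\mathcal{C}}$ contains or avoids $v$, so that $\cut{G}{v}$ is nested with $\mathcal{C}$ and hence lies in $\mathcal{C}$. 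This is false: a \vcut{$\mathcal{T}$} at $v$ is only required to meet $v$ in an odd set, so a member of $\mathcal{C}$ such as the set consisting of a proper odd subset of $v$ together with part of some $\edgefkt{e}$ with $v \in e$ (see the lower dashed set in \cref{fig:vertex_resident}) crosses $\cut{G}{v}$, and maximality of $\mathcal{C}$ then gives nothing. The correct witness is not $v$ itself but the $(\mathcal{T},\tightCutsToSets{\mathcal{C}})$-\maxcut\ \pvcut{$\mathcal{T}$} at $v$ supplied by \cref{prop:maxcut_vertex}, which you invoke only later.

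For the interval case you have not given a proof at all: the verification that $X$ is tight, that $\oddIntersections{H}{X} = Y$, and above all that $\cut{G}{X}$ is nested with every member of $\mathcal{C}$ is exactly the substance of the proposition, and your proposal defers it to an unexecuted ``boundary analysis''. Moreover your specific construction, which glues only the \maxcut\ \pvcuts{$\mathcal{T}$} $M_v$ onto the interior, is in real danger at the two boundary edge sets: take $Z \in \tightCutsToSets{\mathcal{C}}$ an \icut{$\mathcal{T}$} at an interval $J$ that contains one endpoint of $Y$ and satisfies $J \cup Y = V(H)$ (so $\cut{H}{J}$ is nested with $\cut{H}{Y}$ only via the empty fourth region). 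At the other endpoint $i_k$ of $Y$ the boundary edge set $\edgefkt{e^*}$ may contain vertices lying in neither $M_{i_k}$ nor $Z$ (for instance vertices of a \pcut{$\mathcal{T}$} of $\mathcal{C}$ at $e^*$ disjoint from both); then $X \cap Z$, $X \setminus Z$, $Z \setminus X$ and $\V{G} \setminus (X \cup Z)$ are all nonempty, $\cut{G}{X}$ crosses $\mathcal{C}$, and your conclusion fails. The paper's proof is built to preclude exactly this: at each endpoint it takes the subset-maximal $(\mathcal{T},\tightCutsToSets{\mathcal{C}})$-\maxcut\ resident $Y_k$ (which may be an \icut{$\mathcal{T}$}, not merely a \pvcut{$\mathcal{T}$}), and the final contradiction -- any $Z$ crossing the constructed set must avoid $Y_1$ and $Y_2$ and then cannot be a \vcut{}, \pcut{} or \icut{} -- uses this subset-maximality in an essential way. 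Your ``absorb, keep out, tame'' sentence does not substitute for that argument.
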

Note that $\cut{}{X} = \cut{}{\V{G} \setminus X}$ and in particular $\cut{}{\oddIntersections{H}{X}} = \cut{}{\oddIntersections{H}{\V{G} \setminus X}}$ for every $X \subset \V{G}$.
\begin{proof}
    The elements of $\mathcal{C}^{\mathcal{T}}$ are nested, since $\bigcup \oddIntersections{H}{X} \subseteq X$ and $\bigcup \oddIntersections{H}{\V{G} \setminus X} \subseteq \V{G} \setminus X$ for any \pvcut{$\mathcal{T}$} or \icut{$\mathcal{T}$} $X.$
    The cuts in $\mathcal{C}^{\mathcal{T}}$ are indeed tight: for any cut $C \in \mathcal{C}^{\mathcal{T}}$ there is either a \vcut{$\mathcal{T}$} or an \icut{$\mathcal{T}$} $X$ such that $C = \Cut{}{}(\oddIntersections{H}{X}).$
    Then $\oddIntersections{H}{X}$ is a single vertex if $H$ is a \BoB.
    If $H$ is a cycle, $\oddIntersections{H}{X}$ is either a single vertex or an odd interval in $H.$
    This implies that $\Cut{}{}(\oddIntersections{H}{X})$ is a tight set.

    It remains to prove that $\mathcal{C}^{\mathcal{T}}$ is maximal.
    By \cref{prop:maxcut_vertex}, for every vertex $v \in V(H)$ there is a \pvcut{} in $\mathcal{C}$ containing $v$ and therefore all trivial tight cuts of $H$ are contained in $\mathcal{C}^{\mathcal{T}}.$ 
    If $H$ is a \BoB, we are done.
    Thus we can suppose that $H$ is a cycle.
    
    We suppose for a contradiction that there exists a tight cut in $H$ nested with $\mathcal{C}^{\mathcal{T}}$ but not contained in $\mathcal{C}^{\mathcal{T}}.$
    We already observed, that $\mathcal{C}^{\mathcal{T}}$ contains all trivial tight cuts.
    Thus this tight cut is non-trivial and therefore of the form $\cut{}{I}$ for an interval $I$ with $3 \leq |I| \leq |\V{H}|-3.$
    Let $i_1$ be the first and $i_2$ be last element of $I$ (regarding the cyclic order of $\V{H}$).
    We construct a \icut{$\mathcal{T}$} $Y$ with $\cut{}{Y} \in \mathcal{C}$ that holds $\oddIntersections{H}{Y} = I.$
    This contradicts that $\cut{}{I} \notin \mathcal{C}^{\mathcal{T}}.$
    
    For $k \in \Set{1, 2}$ consider all $(\mathcal{T}, \tightCutsToSets{\mathcal{C}})$-\maxcut  \pvcuts{$\mathcal{T}$} and \icuts{$\mathcal{T}$} containing $i_k$ and let $Y_k$ be the subset-maximal one.
    By \cref{prop:maxcut_vertex} and \cref{prop:maxcut_interval}, such $Y_k$ exists.
    It contains all tight sets containing $i_k.$
    As $\cut{}{I}$ is nested with $\cut{}{\oddIntersections{H}{Y_k}} \in \mathcal{C}^{\mathcal{T}}$, $\oddIntersections{H}{Y_k} \subseteq I$ holds for $k \in \Set{1, 2}.$
    Furthermore, since $\cut{}{I}$ is not contained in $\mathcal{C}^{\mathcal{T}}$, $\oddIntersections{H}{Y_k} \subset I$ holds for $k \in \Set{1, 2}.$
    Then $i_1 \cap Y_2 = \emptyset$ and $i_2 \cap Y_1 = \emptyset$, since $Y_1, Y_2$ are \icuts{$\mathcal{T}$} or \vcuts{$\mathcal{T}$}.
    As $\cut{}{Y_1}, \cut{}{Y_2}$ are nested and $i_1 \subseteq Y_1, i_2 \subseteq Y_2$ but $i_1 \cap Y_2 = \emptyset, i_2 \cap Y_1 = \emptyset$, the sets $Y_1, Y_2$ are disjoint.
    
    We consider the set $A\coloneqq \bigcup_{vw \in E(H): v, w \in I} v \cup w \cup \edgefkt{vw}$, which is tight by construction.
    Since $Y_k$ is a \icut{$\mathcal{T}$} or a \pvcut{$\mathcal{T}$} that holds $\oddIntersections{H}{Y_k} \subset I$, $Y_k$ intersects $A$ oddly.
    We set $Y\coloneqq A \cup Y_1 \cup Y_2$, which is tight by \cref{lem:basictight}.
    By construction, $\oddIntersections{H}{Y} = I$ holds.
    We show that $\cut{}{Y}$ is nested with $\mathcal{C}.$
    Then it is contained in $\mathcal{C}$, which gives the desired contradiction.
    
    Suppose that there exists $\cut{}{Z} \in \mathcal{C}$ which crosses $\cut{}{Y}.$
    Then $Z$ crosses $Y.$
    Let $k \in \Set{1, 2}.$
    As $Z$ crosses $Y$, $Z$ is not a subset of $Y_k.$
    Since $Y_k$ is the subset-maximal \pvcut{$\mathcal{T}$} or \icut{$\mathcal{T}$} containing $i_k$, $Z$ is not a superset of $Y_k.$
    Thus $Z$ avoids $Y_k$, as $Z, Y_k$ are nested.
    Therefore it contains elements of both
    \begin{align*}
        \V{G} \setminus Y & \subseteq \bigcup \Brace{V(H) \setminus I} \cup \bigcup_{vw \in E(H): v \notin I} \edgefkt{vw}, \text{ and }\\
        Y \setminus \bigcup \Set{Y_1, Y_2} & \subseteq \bigcup \Brace{I \setminus \Set{i_1, i_2}} \cup \bigcup_{vw \in E(H): v \in I \setminus \Set{i_1, i_2}} \edgefkt{vw}.
    \end{align*}
    The tight set $Z$ clearly can neither be a \vcut{} nor an \pcut{}.
    Since $X$ avoids $i_1 \cup i_2 \subseteq Y_1 \cup Y_2$ it neither can be an \icut{}.
\end{proof}

To obtain the main theorem of this section about the number of torsos cleaving a torsoid it now remains to prove that there is a bijection between the $\kappa_{\mathcal{C}}$-pre-image of a torsoid $\mathcal{T}$, i.e. the set of torsos of $\mathcal{C}$ that cleave $\mathcal{T}$, and the set of torsos of $\mathcal{C}^{\mathcal{T}}$.

\correspondTorsoTorsoidTheorem*

First of all, every torso of $\mathcal{C}$ cleaving $\mathcal{T}$ maps canonically to a torso of $\mathcal{C}^{\mathcal{T}}$:
\begin{proposition}\label{prop:isomorphism_torso}
     Let $\mathcal{C}$ be a maximal family of nested tight cuts in $G$.
     Then every torso $\mathcal{S}$ of $\mathcal{C}$ cleaving a torsoid $\mathcal{T}=\Torsoid{H}$ in $G$ is isomorphic to $\collapse{\Brace{\oddIntersections{H}{X}}_{X \in V(\mathcal{S})}}$.
\end{proposition}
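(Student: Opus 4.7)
The plan is to refine $V(\mathcal{S})$ to a tight set partition that is in correspondence with $\mathcal{T}$ and then read off $\mathcal{S}$ from how its parts group the vertices of $H$. By \cref{obs:refining_partition} there is a tight set partition $\mathcal{R}$ of $G$ in correspondence with $\mathcal{T}$ that refines $V(\mathcal{S})$. Write $\mathcal{R} = \Set{R_v : v \in \V{H}}$, where $R_v$ is the unique part of $\mathcal{R}$ with $\vertexfkt{v} \subseteq R_v$, and let $X_v \in V(\mathcal{S})$ be the unique element containing $R_v$. Because every $X \in V(\mathcal{S})$ is a \pvcut{$\mathcal{T}$} or an \icut{$\mathcal{T}$} by \cref{obs:refining_partition}, $X$ either contains $\vertexfkt{v}$ entirely or is disjoint from $\vertexfkt{v}$, so $R_v \subseteq X$ if and only if $v \in \oddIntersections{H}{X}$. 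Consequently $X = \bigcup_{v \in \oddIntersections{H}{X}} R_v$, and $\mathcal{Q} \coloneqq \Set{\oddIntersections{H}{X} : X \in V(\mathcal{S})}$ is a partition of $\V{H}$ whose classes are single vertices or odd intervals of $H$, and hence tight in $H$ (using \cref{rem:cycle_tight_cuts} in the cyclic case). Therefore $\mathcal{Q}$ is a tight set partition of $H$ and the map $\phi \colon V(\mathcal{S}) \to \mathcal{Q}$, $X \mapsto \oddIntersections{H}{X}$, is a bijection.

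It remains to check that $\phi$ is a graph isomorphism. For the forward direction, let $e \in \E{G}$ have endpoints in distinct $X, Y \in V(\mathcal{S})$. By \cref{torsoid-def-6} and \cref{prop:edges_of_passable_set}, $e$ lies in $\vertexfkt{v} \cup \edgefkt{vw} \cup \vertexfkt{w}$ for some $vw \in \E{H}$. Using \cref{torsoid-def-4} and \cref{lem:disjointpass}, $\edgefkt{vw}$ is disjoint from every $\vertexfkt{u}$ and from every $\edgefkt{e'}$ with $e' \neq vw$; hence any $Z \in V(\mathcal{S})$ that meets $\edgefkt{vw}$ must contain $\vertexfkt{v}$ or $\vertexfkt{w}$, since $Z$ is either a \pvcut{$\mathcal{T}$} at some $u \in \Set{v,w}$ or an \icut{$\mathcal{T}$} at some interval containing $v$ or $w$. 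So $\vertexfkt{v} \cup \edgefkt{vw} \cup \vertexfkt{w} \subseteq X_v \cup X_w$, forcing $\Set{X, Y} = \Set{X_v, X_w}$, and the edge $vw$ of $H$ witnesses the adjacency of $\oddIntersections{H}{X}$ and $\oddIntersections{H}{Y}$ in $\collapse{\mathcal{Q}}$. Conversely, if $vw \in \E{H}$ with $v \in \oddIntersections{H}{X}$ and $w \in \oddIntersections{H}{Y}$, then by \cref{cor:corrisiso} the correspondence $\sigma_{\mathcal{R}}$ is a graph isomorphism from $H$ to $\collapse{\mathcal{R}}$, so there is an edge of $G$ between $R_v \subseteq X$ and $R_w \subseteq Y$, giving $XY \in \E{\mathcal{S}}$.

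The main obstacle is the inclusion $\edgefkt{vw} \subseteq X_v \cup X_w$ in the forward adjacency argument; it combines the pairwise disjointness of passable sets (\cref{lem:disjointpass}), their disjointness from the vertices of $H$ (\cref{torsoid-def-4}), and the cleaving hypothesis, which guarantees that no part of $V(\mathcal{S})$ is a \pcut{$\mathcal{T}$}.
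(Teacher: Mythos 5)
Your proof is correct and follows essentially the same route as the paper: it uses \cref{obs:refining_partition} to see that the vertices of $\mathcal{S}$ are \pvcuts{$\mathcal{T}$} or \icuts{$\mathcal{T}$} (so $\Brace{\oddIntersections{H}{X}}_{X \in V(\mathcal{S})}$ partitions $\V{H}$), localises an edge between two torso vertices via \cref{torsoid-def-6} and \cref{prop:edges_of_passable_set} for the forward direction, and lifts an edge of $H$ through a refining partition in correspondence with $\mathcal{T}$ for the backward direction. The only differences are cosmetic: you spell out the containment $\edgefkt{vw} \subseteq X_v \cup X_w$ more explicitly and quote \cref{cor:corrisiso} where the paper quotes \cref{thm:inducedtorsoid}, which amounts to the same isomorphism.
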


\begin{proof}
    By \cref{obs:refining_partition} any vertex of $\mathcal{S}$ is either a proper \vcut{$\mathcal{T}$} or a \icut{$\mathcal{T}$}. Thus $\Brace{\oddIntersections{H}{X}}_{X \in V(\mathcal{S})}$ is indeed a partition of $V(H).$ It remains to show that for every $X, Y \in V(\mathcal{S})$ there is an edge in $\Cut{G}{X} \cap \Cut{G}{Y}$ if and only if there is $xy \in E(H)$ with $x \in X$ and $y \in Y.$

    Let $X, Y \in V(\mathcal{S})$ be chosen arbitrarily. If there is an edge $e$ in $\Cut{G}{X} \cap \Cut{G}{Y}$, then there is $uv \in E(H)$ such that both endvertices of $e$ are contained in $u \cup \edgefkt{uv} \cup v$ by \cref{torsoid-def-6} and \cref{prop:edges_of_passable_set}. Since any vertex of $\mathcal{S}$ is either a proper \vcut{} or an \icut{}, $X$ and $Y$ each contain exactly one of $u, v$, which proves the forward implication.

    For the backwards implication, note that there is a tight set partition $\mathcal{P}$ corresponding to $\mathcal{T}$ that refines $V(\mathcal{S})$, as any vertex of $\mathcal{S}$ is either a proper \vcut{} or an \icut{}. If there is $xy \in E(H)$ with $x \in X$ and $y \in Y$, let $P_x, P_y \in \mathcal{P}$ such that $x \in P_x$ and $y \in P_y.$ By \cref{thm:inducedtorsoid}, there is an edge between $P_x$ and $P_y$ in $\collapse{\mathcal{P}}.$ This implies that there is an edge between $P_x$ and $P_y$ in $G$ and therefore an edge in $\Cut{G}{X} \cap \Cut{G}{Y}.$
\end{proof}

\begin{proof}[Proof of \cref{thm:correspond_torso_torsoid}]
    Let $\mathcal{C}^{\mathcal{T}}$ be as in \cref{prop:maximal_set}.
    By \cref{prop:number_torso}, the number of torsos of $\mathcal{C}^{\mathcal{T}}$ is $1$ if $H$ is a \BoB and $\frac{n}{2} - 1$ if $H$ is a cycle of length $n.$
    It remains to prove that there is a bijection between the set of torsos of $\mathcal{C}$ cleaving $\mathcal{T}$ and the set of torsos of $\mathcal{C}^{\mathcal{T}}.$
    
    Given a torso $\mathcal{S}$ of $\mathcal{C}$ cleaving $\mathcal{T}$ the graph $\collapse{\Brace{\oddIntersections{H}{X}}_{X \in V(\mathcal{S})}}$ is a torso of $\mathcal{C}^{\mathcal{T}}$ by \cref{prop:isomorphism_torso}.

    Thus the map $\beta$ sending a torso $\mathcal{S}$ of $\mathcal{C}$ cleaving $\mathcal{T}$ to the torso of $\mathcal{C}^{\mathcal{T}}$ at $\Brace{\oddIntersections{H}{X}}_{X \in V(\mathcal{S})}$ is well-defined.
    
    We show that the map $\beta$ is a bijection. Let $\mathcal{S}'$ be an arbitrary torso of $\mathcal{C}^{\mathcal{T}}.$
    For $J \in V(\mathcal{S}')$ let $X_J$ be the unique $(\mathcal{T}, \tightCutsToSets{\mathcal{C}})$-\maxcut tight set with $\oddIntersections{H}{X_J} = J.$
    By \cref{prop:maxcut}, the maximal star $\Brace{X_J}_{J \in \V{\mathcal{S}'}}$ is the only one whose torso is mapped to $\mathcal{S}'$ by $\beta.$
    Therefore $\beta$ is injective. 
    
    For surjectivity we have to show that $\Brace{X_J}_{J \in \V{\mathcal{S}'}}$ is indeed a maximal star of $\mathcal{C}.$
    It remains to prove that $(X_J)_{J \in V(\mathcal{S}')}$ is a  partition of $\V{G}.$
    For $J, K \in \V{\mathcal{S}'}$ the sets $X_J, X_K$ are not contained in each other and $\cut{}{X_J}, \cut{}{X_K}$ are nested, therefore $X_J, X_K$ are disjoint.
    Furthermore, $\bigcup \V{H} \subseteq \bigcup_{J \in \V{\mathcal{S}'}} X_J$ holds.
    We have to prove that $\bigcup_{e \in E(H)} \edgefkt{e} \subseteq \bigcup_{J \in \V{\mathcal{S}'}} X_J$ holds.
    
    Let $vw \in \E{H}$ be arbitrary. If $v, w$ are contained in $X_J$ for some $J \in \V{\mathcal{S}'}$, then also $\edgefkt{vw}$ is contained in $X_J$, as $X_J$ is a \icut{$\mathcal{T}$}.
    Therefore we can assume that there are $K \neq L \in V(\mathcal{S}')$ with $v \in K, w \in L.$
    Suppose towards a contradiction that $Y\coloneqq \edgefkt{vw} \setminus (X_K \cup X_L)$ is nonempty.
    Then $X_K \cup Y = X_K \cup \Brace{\Brace{v \cup \edgefkt{vw}} \cap \Brace{\V{G} \setminus X_L}}$ is a tight set.
    We show that $X_K \cup Y$ is contained in $\tightCutsToSets{\mathcal{C}}$, which contradicts the fact that $X_K$ is $(\mathcal{T}, \tightCutsToSets{\mathcal{C}})$-\maxcut.
    
    For every \vcut{} or \icut{} $Z \in \tightCutsToSets{\mathcal{C}}$ either $\V{G}_Z$ or $\V{G}_{\V{G} \setminus Z}$ is contained in some $J \in V(\mathcal{S}')$, since $\mathcal{S}'$ is a torso of $\mathcal{C}^{\mathcal{T}}.$
    As $X_J$ is $(\mathcal{T}, \tightCutsToSets{\mathcal{C}})$-\maxcut, $Z$ or its complement is contained in $X_J.$
    Therefore no \vcut{} and no \icut{} of $\tightCutsToSets{\mathcal{C}}$ crosses $X_K \cup Y.$
    As no \pcut{$\mathcal{T}$} $Z$ of $\tightCutsToSets{\mathcal{C}}$ at $vw$ crosses $X_K$ or $X_L$, $Z$ is contained in either $X_K$, $X_L$ or $Y$ and thus nested with $X_K \cup Y.$
    Every \pcut{$\mathcal{T}$} at a distinct edge is nested with $X_K \cup Y$, as it is nested with $X_K$ and $Y \subset \edgefkt{vw}.$
    Thus $X_K \cup Y$ is nested with $\tightCutsToSets{\mathcal{C}}$ and therefore contained in $\tightCutsToSets{\mathcal{C}}.$ This gives the desired contradiction.
    
    Indeed, $(X_J)_{J \in V(\mathcal{S}')}$ is a partition of $V(G).$ There are no non-trivial tight cuts in $(X_J)_{J \in V(\mathcal{S}')}$ since there are no non-trivial tight cuts in $\collapse{\V{\mathcal{S}'}}.$ Therefore $(X_J)_{J \in V(\mathcal{S}')}$ is a maximal star of $\mathcal{C}$, which completes the proof that the map $\beta$ is a bijection.
\end{proof}

\bibliographystyle{alpha}
\bibliography{reference}

\begin{thebibliography}{HRW19}

\bibitem[CK23]{Carmesin23}
Johannes Carmesin and Jan Kurkofka.
\newblock Canonical decompositions of 3-connected graphs, 2023.

\bibitem[Die17]{diestel17}
Reinhard Diestel.
\newblock {\em Graph {Theory}}, volume 173 of {\em Graduate {Texts} in
  {Mathematics}}.
\newblock Springer Berlin Heidelberg, Berlin, Heidelberg, 2017.

\bibitem[ELP82]{Edmonds1982}
Jack Edmonds, L{\'a}szl{\'o} Lov{\'a}sz, and William~R. Pulleyblank.
\newblock Brick decompositions and the matching rank of graphs.
\newblock {\em Combinatorica}, 2:247--274, 1982.

\bibitem[GW21]{Giannopoulou21}
{Archontia C.} Giannopoulou and Sebastian Wiederrecht.
\newblock The flat wall theorem for bipartite graphs with perfect matchings.
\newblock In Jaroslav Ne{\v{s}}et{\v{r}}il, Guillem Perarnau, Juanjo Ru{\'e},
  and Oriol Serra, editors, {\em Extended Abstracts EuroComb 2021}, pages
  429--435, Cham, 2021. Springer International Publishing.

\bibitem[HRW19]{hatzel2019}
Meike Hatzel, Roman Rabinovich, and Sebastian Wiederrecht.
\newblock Cyclewidth and the grid theorem for perfect matching width of
  bipartite graphs, 2019.

\bibitem[Lov87]{Lovasz1987}
L{\'a}szl{\'o} Lov{\'a}sz.
\newblock Matching structure and the matching lattice.
\newblock {\em Journal of Combinatorial Theory, Series B}, 43(2):187--222,
  1987.

\bibitem[LP09]{matchingtheory_book}
L{\'a}szl{\'o} Lov{\'a}sz and Michael~D. Plummer.
\newblock {\em Matching theory}.
\newblock Providence, RI: AMS Chelsea Publishing, reprint of the 1986 original
  published by {North}-{Holland} edition, 2009.

\bibitem[McC00]{mccuaig2000evendicycles}
William McCuaig.
\newblock Even dicycles.
\newblock {\em J. Graph Theory}, 35(1):46--68, 2000.

\bibitem[NT06]{Norine2006}
Serguei Norine and Robin Thomas.
\newblock Minimal bricks.
\newblock {\em Journal of Combinatorial Theory, Series B}, 96(4):505--513,
  2006.

\bibitem[NT07]{Norine2007}
Serguei Norine and Robin Thomas.
\newblock Generating bricks.
\newblock {\em Journal of Combinatorial Theory, Series B}, 97(5):769--817,
  2007.

\bibitem[RST99]{rst1999pfaffianorientations}
Neil Robertson, Paul~D. Seymour, and Robin Thomas.
\newblock Permanents, {Pfaffian} orientations, and even directed circuits.
\newblock {\em Ann. Math. (2)}, 150(3):929--975, 1999.

\bibitem[Tut66]{Tutte66}
William~T. Tutte.
\newblock {\em Connectivity in Graphs}.
\newblock University of Toronto Press, Toronto, 1966.

\end{thebibliography}

\end{document}